\providecommand{\U}[1]{\protect\rule{.1in}{.1in}}
\providecommand{\U}[1]{\protect\rule{.1in}{.1in}}
\numberwithin{equation}{section}
\newtheorem{theorem}{Theorem}[section]
\newtheorem{corollary}[theorem]{Corollary}
\newtheorem{definition}[theorem]{Definition}
\newtheorem{example}[theorem]{Example}
\newtheorem{lemma}[theorem]{Lemma}
\newtheorem{proposition}[theorem]{Proposition}
\newtheorem{remark}[theorem]{Remark}
\newenvironment{proof}[1][Proof]{\noindent\textbf{#1.} }{\ \rule{0.5em}{0.5em}}
\begin{document}

\title{Fundamental solutions and local solvability \\for nonsmooth H\"{o}rmander's operators\thanks{\textbf{2000
AMS\ Classification}: 35A08, 35A17, 35H20. \textbf{Keywords}: nonsmooth
H\"{o}rmander's vector fields, fundamental solution, solvability, H\"{o}lder
estimates.}}
\author{Marco Bramanti, Luca Brandolini,
\and Maria Manfredini, Marco Pedroni}
\maketitle

\begin{abstract}
We consider operators of the form $L=\sum_{i=1}^{n}X_{i}^{2}+X_{0}$ in a
bounded domain of $\mathbb{R}^{p}$ where $X_{0},X_{1},\ldots,X_{n}$ are
\textit{nonsmooth} H\"{o}rmander's vector fields of step $r$ such that the
highest order commutators are only H\"{o}lder continuous. Applying Levi's
parametrix method we construct a local fundamental solution $\gamma$ for $L$
and provide growth estimates for $\gamma$ and its first derivatives with
respect to the vector fields. Requiring the existence of one more derivative
of the coefficients we prove that $\gamma$ also possesses second derivatives,
and we deduce the local solvability of $L$, constructing, by means of $\gamma
$, a solution to $Lu=f$ with H\"{o}lder continuous $f$. We also prove
$C_{X,loc}^{2,\alpha}$ estimates on this solution.

\end{abstract}
\tableofcontents

\pagebreak\bigskip

\section{Introduction}

\textbf{Object and main results of the paper}

\noindent In the study of elliptic-parabolic degenerate partial differential
operators, an important class is represented by H\"{o}rmander's operators%
\begin{equation}
L=\sum_{i=1}^{n}X_{i}^{2}+X_{0} \label{H}%
\end{equation}
built on real smooth vector fields
\begin{equation}
X_{i}=\sum_{j=1}^{p}b_{ij}\left(  x\right)  \partial_{x_{j}} \label{X_i}%
\end{equation}
which are defined in some domain $\Omega\subset\mathbb{R}^{p}$. A famous
theorem by H\"{o}rmander \cite{H} states that if the Lie algebra generated by
the $X_{i}$'s ($i=0,1,2,...,n$) coincides with the whole $\mathbb{R}^{p}$ at
any point of $\Omega$, then $L$ is hypoelliptic in $\Omega$, that is any
distributional solution to the equation $Lu=f\in C^{\infty}\left(
\Omega\right)  $ belongs to $C^{\infty}\left(  \Omega\right)  $. Over the
years, a number of deep properties of H\"{o}rmander's operators and systems of
H\"{o}rmander's vector fields have been established. Some of them are related
to the metric induced by H\"{o}rmander's vector fields (connectivity property,
doubling property for metric balls, see \cite{NSW}), or to the
\textquotedblleft gradient\textquotedblright\ associated to H\"{o}rmander's
vector fields (Poincar\'{e}'s inequality, see \cite{J}); other properties are
related to second order H\"{o}rmander's operators (properties of fundamental
solutions, see \cite{Fo}, \cite{NSW}, \cite{SC}, or a priori estimates on the
second order derivatives with respect to the vector fields, see \cite{Fo},
\cite{RS}).

One can note that, apart from H\"{o}rmander's hypoellipticity theorem, which
intrinsically requires $C^{\infty}$ regularity of the vector fields, most of
the important existing results in this area are expressed by statements which
are meaningful, and hopefully still hold, under much less regularity of the
vector fields. So a natural question consists in asking how much of the
classical theory of H\"{o}rmander's vector fields and H\"{o}rmander's
operators still holds if we consider a family of vector fields whose
coefficients possess just the right number of derivatives which are enough to
check that H\"{o}rmander's condition at some step $r$ holds (see section 2 for
the definition). However, this generalization is far from being obvious, since
if one tries to repeat the classical proofs just paying attention to the
minimal regularity required, one finds that some arguments need the existence
of a very high number of derivatives (for instance, the double of the step
$r$), while others simply cannot be repeated. Experience shows that proving
relevant results about nonsmooth vector fields under reasonably weak
assumptions is almost always a hard task. Nevertheless, this is a natural
problem if one hopes to settle the basis for applications to nonlinear
equations which involve vector fields depending on the solution itself (such
as Levi-type equations that we will discuss later in this introduction).

This paper is the third step in a larger project started by three of us in
\cite{BBP} and \cite{BBP2}, and devoted to this issue. Our framework is the
following. Let $X_{0},X_{1},...,X_{n}$ be a system of real vector fields,
defined in a bounded domain $\Omega\subset\mathbb{R}^{p}.$ We assume that for
some integer $r\geqslant2\ $and some $\alpha\in(0,1]$ the coefficients of the
vector fields $X_{1},X_{2},...,X_{n}$ belong to $C^{r-1,\alpha}\left(
\Omega\right)  ,$ while the coefficients of $X_{0}$ belong to $C^{r-2,\alpha
\,}\left(  \Omega\right)  $. If $r=2,$ we assume $\alpha=1$. Here and in the
following, $C^{k,\alpha}$ stands for the classical space of functions which
are differentiable up to order $k$, with $\alpha$-H\"{o}lder continuous
derivatives of order $k$. Moreover, we assume that $X_{0},X_{1},...,X_{n}$
satisfy H\"{o}rmander's condition of \emph{weighted step} $r$ in $\Omega$: if
we assign weight $1$ to $X_{1},X_{2},...,X_{n}$ and weight $2$ to $X_{0},$
then the commutators of the vector fields $X_{i},$ up to weight $r$, span
$\mathbb{R}^{p}$ at any point of $\Omega$ (more precise definitions will be
given later)$.$

An extension to this nonsmooth context of some basic properties of the
distance induced by the vector fields, Chow's connectivity theorem, the
estimate on the volume of metric balls, the doubling condition, and
Poincar\'{e}'s inequality has been given in \cite{BBP}. These results also
imply a Sobolev embedding and the validity of Moser's iteration technique to
handle operators of the kind%
\[
\sum_{i,j=1}^{n}X_{i}^{\ast}\left(  a_{ij}\left(  x\right)  X_{j}u\right)  .
\]

In \cite{BBP2} the same authors have extended to the nonsmooth context the
lifting and approximation theory developed in the smooth case by
Rothschild-Stein \cite{RS} and some related results, such as the comparison
between volumes of balls in the lifted and original space. Starting with the
paper \cite{RS}, this technique has been used, in the smooth case, to reduce
the study of general H\"{o}rmander's operators (\ref{H}) to that of left
invariant homogeneous operators on homogeneous groups, for which Folland's
theory developed in \cite{Fo} applies, granting the existence of a homogeneous
left invariant fundamental solution, which is a good starting point to prove
a-priori estimates of several types.

Following this idea, in the present paper we use tools and results from
\cite{BBP} and \cite{BBP2} to study H\"{o}rmander's operators (\ref{H}) built
with nonsmooth vector fields or, briefly, \emph{nonsmooth H\"{o}rmander's
operators}. Namely, we are able to adapt to this situation the classical
Levi's parametrix method, in order to build a fundamental solution
$\gamma\left(  x,y\right)  $ for $L$ (in the small), possessing some good
properties. More precisely, under the above assumptions we prove (see Thm.
\ref{Thm gamma}) that for any $x_{0}\in\Omega$ there exists a neighborhood
$U\left(  x_{0}\right)  $ and a function $\gamma\left(  x,y\right)  $, defined
and continuous in the joint variables for $x,y\in U\left(  x_{0}\right)  $,
$x\neq y$, satisfying%
\begin{equation}
\int\gamma\left(  x,y\right)  L^{\ast}\omega\left(  x\right)  dx=-\omega
\left(  y\right)  \label{gamma 1 intro}%
\end{equation}
for any $\omega\in C_{0}^{\infty}\left(  U\left(  x_{0}\right)  \right)  $;
moreover, $\gamma$ satisfies the bounds%
\begin{align}
\left\vert \gamma\left(  x,y\right)  \right\vert  &  \leqslant c\frac{d\left(
x,y\right)  ^{2}}{\left\vert B\left(  x,d\left(  x,y\right)  \right)
\right\vert };\label{gamma 2 intro}\\
\left\vert X_{i}\gamma\left(  x,y\right)  \right\vert  &  \leqslant
c\frac{d\left(  x,y\right)  }{\left\vert B\left(  x,d\left(  x,y\right)
\right)  \right\vert },\text{\ \ \ \ }i=1,2,...,n, \label{ganmma 3 intro}%
\end{align}
where, here and in the following, $X_{i}\gamma\left(  x,y\right)  $ denotes
the $X_{i}$ derivative with respect to the \emph{first }variable, $x$, the
distance $d$ is the one induced by the vector fields $X_{i}$, and $B\left(
x,r\right)  $ are the corresponding balls.

Under the stronger assumption that the coefficients of the $X_{i}$'s
($i=1,2,...,n$) belong to $C^{r,\alpha}\left(  \Omega\right)  $ and the
coefficients of $X_{0}$ belong to $C^{r-1,\alpha\,}\left(  \Omega\right)  $,
we are able to prove that $\gamma$ also possesses second derivatives with
respect to the vector fields, satisfying the bounds%
\begin{align}
\left\vert X_{j}X_{i}\gamma\left(  x,y\right)  \right\vert  &  \leqslant
\frac{c}{\left\vert B\left(  x,d\left(  x,y\right)  \right)  \right\vert
},\text{\ \ \ \ }i,j=1,2,...,n,\label{gamma 4 intro}\\
\left\vert X_{0}\gamma\left(  x,y\right)  \right\vert  &  \leqslant\frac
{c}{\left\vert B\left(  x,d\left(  x,y\right)  \right)  \right\vert
},\nonumber
\end{align}
and that $\gamma\left(  \cdot,y\right)  $ is a classical solution to the
equation $L\gamma\left(  x,y\right)  =0$ for $x\neq y$ (see Thm.
\ref{Thm XXgamma}). Exploiting these results we prove (see Thm.
\ref{ThLocalSolv}) the following local solvability result for $L$: for every
$x_{0}\in\Omega$ there exists a neighborhood $U$ such that for any $\beta>0$
and $f\in C_{X}^{\beta}\left(  U\right)  $ (i.e., $\beta$-H\"{o}lder
continuous with respect to the distance $d$) there exists a classical solution
$u$ to the equation $Lu=f$ in $U$. Pushing even forward our analysis, we show
that the functions $X_{i}X_{j}\gamma$ satisfy the following local H\"{o}lder
estimate: for every $x_{1},x_{2},y\in U$ such that $d\left(  x_{1},y\right)
\geqslant2d\left(  x_{1},x_{2}\right)  $,%
\begin{equation}
\left\vert X_{i}X_{j}\gamma\left(  x_{1},y\right)  -X_{i}X_{j}\gamma\left(
x_{2},y\right)  \right\vert \leqslant c_{\varepsilon}\left(  \frac{d\left(
x_{1},x_{2}\right)  }{d\left(  x_{1},y\right)  }\right)  ^{\alpha-\varepsilon
}\frac{1}{\left\vert B\left(  x_{1},d\left(  x_{1},y\right)  \right)
\right\vert } \label{gamma 5 intro}%
\end{equation}
for any $\varepsilon\in\left(  0,\alpha\right)  $ and $i,j=1,2,...,n$, with
$c_{\varepsilon}$ depending on $\varepsilon$ (Thm. \ref{Thm schauder fundam}).
As a consequence, we eventually show that the local solution $w$ to $Lw=f$
that we have built for $f\in C_{X}^{\beta}\left(  U\right)  $, with
$\beta<\alpha$, actually belongs to $C_{X,loc}^{2,\beta}\left(  U\right)  $
(see Thm. \ref{Thm holder w}).

\bigskip

\noindent\textbf{Comparison with the existent literature}

\noindent The study of nonsmooth H\"{o}rmander's vector fields has been
carried out by several authors; we refer to the introduction of \cite{BBP} for
a detailed discussion of the related bibliography. Here we just point out that
the peculiarity of the research project consisting in the present paper and
\cite{BBP}, \cite{BBP2} is that of considering nonsmooth H\"{o}rmander's
vector fields of completely general form. Indeed, with the notable exception
of the papers \cite{MM3}, \cite{MM4}, \cite{MM5} by Montanari-Morbidelli and
some papers by Karmanova-Vodopyanov (see \cite{KV}, \cite{VK} and the
references therein), all the other previous results about nonsmooth vector
fields either hold only for vector fields with a particular structure, or
assume axiomatically some important properties of the metric induced by the
vector fields themselves. Another characteristic feature of the present
research is to take explicitly into account the possibility that one of the
vector fields $X_{0}$ (\textquotedblleft the drift\textquotedblright) could
have weight two, as in the case of H\"{o}rmander's operators (\ref{H}). This
is relevant for instance in view of the possible application of the present
theory to operators of Kolmogorov-Fokker-Planck type with nonsmooth drift.
While the literature devoted to the geometry of nonsmooth vector fields is
quite large, the one about \emph{H\"{o}rmander's operators }built on nonsmooth
vector fields is much narrower. Particular classes of operators of this kind
have been studied in the framework of regularity results for nonlinear
equations of Levi type by Citti, Lanconelli, Montanari, starting with the
paper \cite{C1} and continuing with \cite{CM1}, \cite{CLM}, \cite{Mon1} (see
also references therein). A somewhat related field of research is that about
the Levi-Monge-Amp\`{e}re equation, see \cite{ML}, \cite{ML2}, which also
motivates the study of nonvariational operators modeled on (possibly
nonsmooth) H\"{o}rmander's vector fields. Another application of this circle
of ideas to a nonlinear regularization problem has been given by Citti,
Pascucci, Polidoro in \cite{CPP}. However, the present paper seems to be the
first one where H\"{o}rmander's operators built with nonsmooth vector fields
of general structure are studied.

Let us come to some remarks about the techniques used. The parametrix method
was originally developed more than a century ago by E. E. Levi to study
uniformly elliptic equations of order $2n$ (see \cite{levi}), and later
extended to uniformly parabolic operators (see e.g. \cite{Fr}). For more
details about this method in the elliptic case we refer to \cite[\S \ 19]{Mi},
\cite[Part IV, Chap.3]{He} and \cite{K}. In particular, the last reference
contains a rich account of the previous literature on this subject and a
careful discussion of the assumptions made by different authors to implement
the method. The parametrix method was first adapted to hypoelliptic
ultraparabolic operators of Kolmogorov-Fokker-Planck type by Polidoro in
\cite{pol}, exploiting the knowledge of an explicit expression for the
fundamental solution of the \textquotedblleft frozen\textquotedblright%
\ operator, which had been constructed in \cite{LPol}. It was later adapted by
Bonfiglioli, Lanconelli, Uguzzoni in \cite{BLU} to a general class of
operators structured on homogeneous left invariant (smooth) vector fields on
Carnot groups, for which no explicit fundamental solution is known in general,
and by Bramanti, Brandolini, Lanconelli, Uguzzoni in \cite{BBLU} to the more
general context of arbitrary (smooth) H\"{o}rmander's vector fields. Finally,
in the nonsmooth context, the parametrix method has been exploited by
Manfredini in \cite{Ma} to deal with sum of squares of $C^{1,\alpha}%
$-intrinsic vector fields of step 2, with a particular structure.

In order to evaluate our assumptions about the regularity of vector fields,
one can draw a comparison with the assumptions made in the elliptic case, as
reported in \cite{K}. Rewriting our operator in the form%
\[
L=\sum_{j,k=1}^{p}a_{jk}\left(  x\right)  \partial_{x_{j}x_{k}}^{2}+\sum
_{k=1}^{p}b_{k}\left(  x\right)  \partial_{x_{k}}+c\left(  x\right)
\]
one can see that our stronger assumptions (see Assumptions B in
\S \ \ref{sec:further regularity}) imply in the simplest degenerate case $r=2$%
\[
a_{jk}\in C^{2,1}\left(  \Omega\right)  ,b_{k}\in C^{1,1}\left(
\Omega\right)  ,c\in C^{1,1}\left(  \Omega\right)
\]
while in the elliptic case \cite[Thm.3]{K} it is essentially required that%
\[
a_{jk}\in C^{2}\left(  \Omega\right)  \cap C^{0,\alpha}\left(  \Omega\right)
,b_{j}\in C^{1}\left(  \Omega\right)  \cap C^{0,\alpha}\left(  \Omega\right)
,c\in C^{0,\alpha}\left(  \Omega\right)  .
\]

\bigskip

\noindent\textbf{Strategy and plan of the paper}

\noindent The technique of \textquotedblleft lifting and
approximation\textquotedblright\ developed by Rothschild-Stein\ in \cite{RS}
and extended to nonsmooth vector fields in \cite{BBP2}, coupled with the
results by Folland \cite{Fo} suggests that, in order to study the (nonsmooth)
operator (\ref{H}), natural steps consist in lifting $L$, in a neighborhood of
a point $x_{0}\in\mathbb{R}^{p}$, to a new (nonsmooth) operator%
\[
\widetilde{L}=\sum_{i=1}^{n}\widetilde{X}_{i}^{2}+\widetilde{X}_{0}%
\]
defined in a neighborhood $\mathcal{U}$ of $\left(  x_{0},0\right)
\in\mathbb{R}^{p+m}$, and then approximate $\widetilde{L}$ with a (smooth)
left invariant homogeneous operator%
\[
\mathcal{L}=\sum_{i=1}^{n}Y_{i}^{2}+Y_{0}%
\]
which possesses a homogeneous left invariant fundamental solution
$\Gamma\left(  v^{-1}\circ u\right)  $, with respect to a structure of
homogeneous group in $\mathbb{R}^{p+m}$. Then, a natural parametrix of
$\widetilde{L}$ can be defined by
\[
P_{0}\left(  \xi,\eta\right)  =\Gamma\left(  \Theta_{\eta}\left(  \xi\right)
\right)  ,
\]
where the map $\Theta_{\eta}\left(  \xi\right)  $ (a nonsmooth version,
introduced in \cite{BBP2}, of the function defined by Rothschild-Stein in
\cite{RS}) is, for any fixed $\eta\in\mathcal{U}$, a smooth diffeomorphism
which allows to approximate $\widetilde{L}$ with $\mathcal{L}$ near $\eta$,
and $\Theta_{\eta}\left(  \xi\right)  $ depends on $\eta$ in a H\"{o}lder
continuous way. Hence $P_{0}\left(  \xi,\eta\right)  $ is smooth in $\xi$ but
just H\"{o}lder continuous in $\eta$ (or $C^{1,\alpha}$ in $\eta$, if the
coefficients of the $X_{i}$'s are $C^{r,\alpha}$ and the coefficients of
$X_{0}$ are $C^{r-1,\alpha}$, see Proposition \ref{Prop bad theta}). This
rough asymmetry in the properties of $P_{0}$ with respect to the two variables
prevents us from repeating Rothschild-Stein's technique to prove $L^{p}$ or
$C^{\alpha}$ estimates for second order derivatives with respect to the vector
fields, for a solution to $Lu=f$. Instead, one can think to adapt to this case
the classical Levi's parametrix method, which is compatible with a different
degree of regularity of $P_{0}$ in the two variables. Now, if we applied the
parametrix method directly to the kernel $P_{0}$ we would build a local
fundamental solution for $\widetilde{L}$. Starting from this object, however,
there is no obvious way to produce a local fundamental solution for $L$.
Instead, we have to define directly a parametrix for $L,$ shaped on $P_{0}$
saturating the lifted variables by integration, in the following way:
\begin{equation}
P\left(  x,y\right)  =\int_{\mathbb{R}^{m}}\left(  \int_{\mathbb{R}^{m}}%
\Gamma\left(  \Theta_{\left(  y,k\right)  }\left(  x,h\right)  \right)
\varphi\left(  h\right)  dh\right)  \varphi\left(  k\right)  dk,\text{ \ for
}x,y\in U, \label{P intro}%
\end{equation}
where $\varphi\in C_{0}^{\infty}\left(  \mathbb{R}^{m}\right)  $ is a cutoff
function fixed once and for all, equal to one in a neighborhood of the origin.
This $P$ turns out to be a good parametrix for $L$, and starting with it we
can actually construct a local fundamental solution for $L,$ satisfying
natural growth estimates and regularity properties. However, performing this
construction (see \S 4) is a hard task, since we are forced to work in a
metric measure space where the measure of balls does not behave like a fixed
power of the radius, in particular there is not a homogeneous dimension.
Therefore a good deal of preliminary work (see \S 3) has to be done to craft
the geometric and real analysis tools necessary to make the Levi method work.
In particular, it turns out that the right function to measure the size of a
kernel $k\left(  x,y\right)  $ is%
\[
\phi_{\beta}\left(  x,y\right)  =\int_{d\left(  x,y\right)  }^{R}%
\frac{r^{\beta-1}}{\left\vert B\left(  x,r\right)  \right\vert }dr
\]
which for $\beta\in\left(  0,p\right)  $, is bounded by (but not equivalent
to)%
\begin{equation}
c\frac{d\left(  x,y\right)  ^{\beta}}{\left\vert B\left(  x,d\left(
x,y\right)  \right)  \right\vert } \label{d beta}%
\end{equation}
and satisfies a key property which is very useful in iterative computations
(see Theorem \ref{Thm phi alfa}), and could not be proved for (\ref{d beta}).

The Levi method is then implemented as follows. We look for a fundamental
solution for $L$ of the form%
\[
\gamma\left(  x,y\right)  =P\left(  x,y\right)  +J\left(  x,y\right)
\]
where $P$ is as in (\ref{P intro}) and%
\[
J\left(  x,y\right)  =\int_{U}P\left(  x,z\right)  \Phi\left(  z,y\right)
dz.
\]
In turn, we will find $\Phi$ as the series
\[
\Phi\left(  z,y\right)  =\sum_{j=1}^{\infty}Z_{j}\left(  z,y\right)
\text{\ for }z\neq y
\]
where the $Z_{j}$'s are defined inductively by%
\begin{align*}
Z_{1}\left(  x,y\right)   &  =L\left(  P\left(  \cdot,y\right)  \right)
\left(  x\right) \\
Z_{j+1}\left(  x,y\right)   &  =\int_{U}Z_{1}\left(  x,z\right)  Z_{j}\left(
z,y\right)  dz\text{ }\ \ \ \text{for }x\neq y.
\end{align*}

In \S 4, exploiting the results of \S 3 and some results proved in \cite{BBP},
\cite{BBP2} and recalled in \S 2, we prove the basic properties and upper
bounds satisfied by the functions $Z_{1},Z_{j},\Phi,J,$ and we deduce the
existence of a local fundamental solution $\gamma$ satisfying
(\ref{gamma 1 intro}), (\ref{gamma 2 intro}), (\ref{ganmma 3 intro}).

The next step, in \S 5, is then to compute the second derivatives of $\gamma$,
that is%
\[
X_{i}X_{j}\gamma\left(  x,y\right)  =X_{i}X_{j}P\left(  x,y\right)  +X_{i}%
\int_{U}X_{j}P\left(  x,z\right)  \Phi\left(  z,y\right)  dz
\]
(all the $X_{i}$ derivatives being taken with respect to the $x$ variable). In
order to do that one has to exploit, in particular, H\"{o}lder continuity
(with respect to $d$) of $z\mapsto\Phi\left(  z,y\right)  $, to allow
differentiation under the integral sign. Proving H\"{o}lder continuity of
$\Phi$ and the existence of $X_{i}X_{j}\gamma$ forces us to deepen the
analysis of the properties of the map $\Theta_{\eta}\left(  \xi\right)  $ and
to strengthen our assumptions on the vector fields, requiring from now on
$X_{i}\in C^{r,\alpha}$ and $X_{0}\in C^{r-1,\alpha}$. Once the existence of
$X_{i}X_{j}\gamma$ and the upper bound (\ref{gamma 4 intro}) are proved, we
can show that for any $\beta>0\ $and $f\in C_{X}^{\beta}\left(  U\right)  $,
the function%
\begin{equation}
w\left(  x\right)  =-\int_{U}\gamma\left(  x,y\right)  f\left(  y\right)  dy
\label{w intro}%
\end{equation}
is a classical solution to the equation $Lw=f$ in $U$. In particular, we
establish an \textquotedblleft explicit\textquotedblright\ representation
formula for $X_{i}X_{j}w$ (see Corollary \ref{Corollary rep formula}),
containing singular integrals, fractional integrals, and multiplicative terms.
This formula, although rather involved, is designed in view of the subsequent
proof of H\"{o}lder continuity. The point is that, for technical reasons
related to the starting definition of the parametrix $P\left(  x,y\right)  $,
which is assigned by an integral with respect to the \textquotedblleft lifted
variables\textquotedblright, the singular part of
\[
X_{i}\int_{U}X_{j}\gamma\left(  x,y\right)  f\left(  y\right)  dy
\]
cannot be easily written in a form like%
\[
\lim_{\varepsilon\rightarrow0}\int_{d\left(  x,y\right)  >\varepsilon}%
X_{i}X_{j}\gamma\left(  x,y\right)  f\left(  y\right)  dy,
\]
which should allow to apply directly some abstract theory of singular
integrals. Instead, we have to rewrite properly the integral, to transform the
singular part into something like%
\begin{equation}
\int k\left(  x,y\right)  \left[  f\left(  y\right)  -f\left(  x\right)
\right]  dy \label{sing intro}%
\end{equation}
with $k$ singular near the diagonal.

In \S 5 we also prove H\"{o}lder estimates on $X_{i}X_{j}\gamma$, the
difficult part of the estimate being that on $X_{i}X_{j}J$. We then pass to
prove that the solution (\ref{w intro}) to $Lu=f$ possesses locally H\"{o}lder
continuous derivatives $X_{i}X_{j}w$. This amounts to proving H\"{o}lder
continuity of each term of the representation formula for $X_{i}X_{j}w$
previously established. While for the fractional integrals it is fairly enough
to exploit H\"{o}lder continuity of $X_{i}X_{j}J$, the singular integral term
also requires the proof of a cancellation property of the kind%
\[
\left\vert \int_{r_{1}<d\left(  x,y\right)  <r_{2}}k\left(  x,y\right)
dy\right\vert \leqslant c\text{ \ \ for any }r_{1}<r_{2}.
\]
In order to prove $C_{X}^{\alpha}$ continuity of singular and fractional
integrals we both apply some abstract results proved in \cite{BZ2}\ for
locally homogeneous spaces and revise some techniques used in \cite{BB}.

Finally, in Appendix we give some examples of nonsmooth H\"{o}rmander's
operators satisfying assumption A in \S 2 or assumption B in \S 5.

\section{Some known results about \newline nonsmooth H\"{o}rmander's vector
fields\label{sec known results}}

In this section we fix precisely our notation and assumptions, and recall a
number of known facts which will be used throughout the paper. In some cases,
we do not recall the complete definitions given in \cite{BBP,BBP2}, but only
the properties that are needed for our current purposes.

Let $X_{0},X_{1},...,X_{n}$ be a system of real vector fields%
\[
X_{i}=\sum_{j=1}^{p}b_{ij}\left(  x\right)  \partial_{x_{j}},
\]
defined in a bounded, arcwise connected open set $\Omega\subset\mathbb{R}%
^{p}.$ Let us assign to each $X_{i}$ a \textit{weight} $p_{i}$, saying that%
\[
p_{0}=2\text{ and }p_{i}=1\text{ for }i=1,2,\ldots,n.
\]
For any multiindex%
\[
I=\left(  i_{1},i_{2},\ldots,i_{k}\right)
\]
we define the \textit{weight} of $I$ as%
\[
\left\vert I\right\vert =\sum_{j=1}^{k}p_{i_{j}}
\]
and we set
\[
X_{I}=X_{i_{1}}X_{i_{2}}...X_{i_{k}}
\]
and
\[
X_{\left[  I\right]  }=\left[  X_{i_{1}},\left[  X_{i_{2}},...\left[
X_{i_{k-1}},X_{i_{k}}\right]  ...\right]  \right]  ,
\]
where $\left[  X,Y\right]  $ is the usual Lie bracket of vector fields. If
$I=\left(  i_{1}\right)  ,$ then%
\[
X_{\left[  I\right]  }=X_{i_{1}}=X_{I}.
\]
As usual, $X_{\left[  I\right]  }$ can be seen either as a differential
operator or as a vector field. We will write $X_{\left[  I\right]  }f$ to
denote the differential operator $X_{\left[  I\right]  }$ acting on a function
$f$, and $\left(  X_{\left[  I\right]  }\right)  _{x}$ to denote the vector
field $X_{\left[  I\right]  }$ evaluated at the point $x$.

For a positive integer $k$ and $\alpha\in(0,1]$ we define the (classical)
H\"{o}lder space $C^{k,\alpha}\left(  \Omega\right)  $ of functions $k$ times
differentiable (in classical sense), with derivatives of order $k$ belonging
to the H\"{o}lder (or Lipschitz) space $C^{\alpha}\left(  \Omega\right)  ,$
defined by the finiteness of the norm%
\[
\left\Vert f\right\Vert _{C^{\alpha}\left(  \Omega\right)  }=\sup_{x\in\Omega
}\left\vert f\left(  x\right)  \right\vert +\left\vert f\right\vert
_{C^{\alpha}\left(  \Omega\right)  },
\]
with%
\[
\left\vert f\right\vert _{C^{\alpha}\left(  \Omega\right)  }=\sup
_{x,y\in\Omega,x\neq y}\frac{\left\vert f\left(  x\right)  -f\left(  y\right)
\right\vert }{\left\vert x-y\right\vert ^{\alpha}}.
\]

\bigskip\noindent\textbf{Assumptions A. }We assume that for some integer
$r\geqslant2\ $and some $\alpha\in(0,1],$ the coefficients of the vector
fields $X_{1},X_{2},...,X_{n}$ belong to $C^{r-1,\alpha}\left(  \Omega\right)
,$ while the coefficients of $X_{0}$ belong to $C^{r-2,\alpha\,}\left(
\Omega\right)  $. If $r=2,$ we assume $\alpha=1$. Moreover, we assume that
$X_{0},X_{1},...,X_{n}$ satisfy H\"{o}rmander's condition of step $r$ in
$\Omega$, i.e. the vectors%
\[
\left\{  \left(  X_{\left[  I\right]  }\right)  _{x}\right\}  _{\left\vert
I\right\vert \leqslant r}%
\]
span $\mathbb{R}^{p}$ for any $x\in\Omega.$ (For examples of systems of vector
fields satisfying the assumptions, see the Appendix).

\bigskip

We note that under our assumptions, for any $1\leqslant k\leqslant r,$ the
differential operators $\left\{  X_{I}\right\}  _{\left\vert I\right\vert
\leqslant k}$ and the vector fields $\left\{  X_{\left[  I\right]  }\right\}
_{\left\vert I\right\vert \leqslant k}$ are well defined, and have
$C^{r-k,\alpha}$ coefficients.

We will sometimes need the \emph{transpose operator}
\begin{equation}
L^{\ast}=\sum_{i=1}^{n}\left(  X_{i}^{\ast}\right)  ^{2}+X_{0}^{\ast}
\label{transposed}%
\end{equation}
defined by the transpose operators $X_{i}^{\ast}$ of the vector fields, which
act on smooth functions as
\[
X_{i}^{\ast}u\left(  x\right)  =-\sum_{j=1}^{p}\partial_{x_{j}}\left(
b_{ij}\left(  x\right)  u\left(  x\right)  \right)  .
\]
Note that, in order for $L^{\ast}u$ to be well defined, at least as an
$L^{\infty}$ function, we need the $b_{ij}$'s to be at least $C^{1,1}$ for
$i=1,2,...,p,$ and $C^{0,1}$ for $i=0$. This is one of the reasons why we need
$\alpha=1$ if $r=2$. We will also use this in the proof of Theorem
\ref{Thm Calphatheta}.

\bigskip

The subelliptic metric, analogous to that introduced by Nagel-Stein-Wainger in
\cite{NSW}, is defined as follows:

\begin{definition}
\label{Definition CC distance}For any $\delta>0,$ let $C\left(  \delta\right)
$ be the class of absolutely continuous mappings $\varphi:\left[  0,1\right]
\longrightarrow\Omega$ which satisfy%
\[
\varphi^{\prime}\left(  t\right)  =\sum_{\left\vert I\right\vert \leqslant
r}a_{I}\left(  t\right)  \left(  X_{\left[  I\right]  }\right)  _{\varphi
\left(  t\right)  }\text{ a.e.}%
\]
with $a_{I}:\left[  0,1\right]  \rightarrow\mathbb{R}$ measurable functions,
\[
\left\vert a_{I}\left(  t\right)  \right\vert \leqslant\delta^{\left\vert
I\right\vert }.
\]
Then define%
\[
d\left(  x,y\right)  =\inf\left\{  \delta>0:\exists\varphi\in C\left(
\delta\right)  \text{ with }\varphi\left(  0\right)  =x,\varphi\left(
1\right)  =y\right\}
\]
and denote $B(x,\rho)$ the associated ball of center $x$ and radius $\rho$.
\end{definition}

The finiteness of $d$ for any couple of points of $\Omega,$ as well as the
basic properties of this distance in the nonsmooth context have been
established in \cite{BBP}. In particular, we will use the following facts:

\begin{proposition}
[Relation with the Euclidean distance]\label{Prop fefferman phong}There exist
a positive constant $c_{1}$ depending on $\Omega$ and the $X_{i}$'s and, for
every $\Omega^{\prime}\Subset\Omega,$ a positive constant $c_{2}$ depending on
$\Omega^{\prime}$ and the $X_{i}$'s, such that%
\begin{equation}
c_{1}\left\vert x-y\right\vert \leqslant d\left(  x,y\right)  \leqslant
c_{2}\left\vert x-y\right\vert ^{1/r}\text{ for any }x,y\in\Omega^{\prime}.
\label{fefferman-phong}%
\end{equation}
In particular, the distance $d$ induces Euclidean topology.
\end{proposition}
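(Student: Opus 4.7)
The statement is the standard Fefferman–Phong-type comparison between the control distance and the Euclidean one, with the two halves having very different characters. I would prove the left-hand inequality directly from the definition of $C(\delta)$, and the right-hand inequality from Hörmander's condition via a local change of coordinates tailored to a basis of commutators.

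\textbf{Lower bound $c_{1}|x-y|\leqslant d(x,y)$.} Under Assumptions A, every commutator $X_{[I]}$ with $|I|\leqslant r$ has continuous (in fact $C^{r-|I|,\alpha}$) coefficients on $\overline{\Omega}$, so there is a constant $M$ depending only on $\Omega$ and the $X_i$'s with $\sup_{x\in\overline{\Omega}}|(X_{[I]})_x|\leqslant M$ for all $|I|\leqslant r$. For any $\varphi\in C(\delta)$ joining $x$ to $y$, we then have pointwise
\[
|\varphi'(t)|\leqslant\sum_{|I|\leqslant r}|a_{I}(t)|\,|(X_{[I]})_{\varphi(t)}|\leqslant M\sum_{|I|\leqslant r}\delta^{|I|}\leqslant M'\max(\delta,\delta^{r}).
\]
Integrating over $[0,1]$ gives $|x-y|\leqslant M'\max(\delta,\delta^{r})$. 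Taking the infimum over $\delta$ yields the estimate for $d(x,y)\leqslant 1$ (which is the only relevant case, since $\Omega$ is bounded), and the constant $c_1$ can then be adjusted to cover the remaining case.

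\textbf{Upper bound $d(x,y)\leqslant c_{2}|x-y|^{1/r}$.} Fix $\Omega'\Subset\Omega$ and $x\in\Omega'$. By Hörmander's condition I can select $p$ multiindices $I_{1},\dots,I_{p}$ with $|I_{k}|\leqslant r$ such that $(X_{[I_{1}]})_{x},\dots,(X_{[I_{p}]})_{x}$ form a basis of $\mathbb{R}^{p}$; by continuity this remains true, with a uniform basis constant, in a Euclidean neighborhood $V$ of $x$ whose size depends only on $\Omega'$ and the $X_i$'s. Consider the map
\[
E_{x}(t_{1},\dots,t_{p})=\exp(t_{p}X_{[I_{p}]})\circ\cdots\circ\exp(t_{1}X_{[I_{1}]})(x),
\]
which, by the inverse/implicit function theorem applied carefully to the available regularity of the $X_{[I_k]}$, is a local homeomorphism from a neighborhood of $0$ in $\mathbb{R}^{p}$ onto a Euclidean neighborhood of $x$ containing a ball of fixed radius. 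Hence for $y$ sufficiently close to $x$ there exist parameters $t_{k}$ with $y=E_{x}(t)$ and $|t_{k}|\leqslant c\,|x-y|$. Now the $k$-th flow segment can be realized by a curve in $C(\delta)$ by setting $a_{I_{k}}\equiv\delta^{|I_{k}|}\,\mathrm{sgn}(t_{k})$ on a subinterval of length $|t_{k}|\,\delta^{-|I_{k}|}$ (all other $a_{J}\equiv 0$), provided these subintervals fit in $[0,1]$, i.e.\ provided $\delta^{|I_{k}|}\geqslant|t_{k}|$. Concatenating and rescaling time, this is possible for $\delta$ of the order of $\max_{k}|t_{k}|^{1/|I_{k}|}\leqslant c\,|x-y|^{1/r}$, giving the claim for $y$ near $x$; the general case in $\Omega'$ follows by a connectivity and covering argument.

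\textbf{Main obstacle.} In the classical smooth case the map $E_x$ is $C^{\infty}$ and one applies the inverse function theorem without comment. Here however the commutators of top weight $r$ have only Hölder coefficients, so the flows $\exp(tX_{[I_k]})$ are not guaranteed to be $C^1$ in the initial data, and the inversion of $E_x$ must be carried out with weaker regularity in mind. The cleanest route, which is the one used in \cite{BBP}, is to replace the honest exponential map by one built from smooth approximations of the $X_i$'s (or directly by a weighted Picard-type iteration), and to show that the resulting map is bi-Lipschitz in suitable anisotropic boxes with constants that stabilize under the approximation. This is the delicate step, and once it is in place the anisotropic inequality $|t_k|\leqslant c|x-y|$ combined with the admissibility budget $\delta^{|I_k|}\geqslant|t_k|$ yields exactly the exponent $1/r$. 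Finally, the fact that $d$ induces the Euclidean topology is an immediate consequence of the two-sided bound.
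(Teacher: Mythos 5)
The paper does not actually prove this proposition: immediately after Definition \ref{Definition CC distance} the authors state that the finiteness of $d$ and the basic properties of the distance in the nonsmooth context ``have been established in \cite{BBP}'', and Proposition \ref{Prop fefferman phong} is then recalled without an argument. So there is no in-paper proof to compare against; what follows assesses your proposal on its own terms.

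Your lower-bound argument is essentially complete and correct: the commutators $X_{[I]}$ with $|I|\leqslant r$ have $C^{r-|I|,\alpha}(\Omega)$ coefficients (the paper records this right after Assumptions~A), hence are bounded on $\Omega$ and your $M$ is finite and depends only on $\Omega$ and the $X_i$'s; the adjustment for $d(x,y)>1$ via $\operatorname{diam}(\Omega)$ is legitimate, and the inequality is vacuous when $d(x,y)=+\infty$. For the upper bound you have the right skeleton, and in particular you correctly exploit that Definition \ref{Definition CC distance} lets admissible curves flow along the commutators $X_{[I]}$ directly, so there is no need to emulate a bracket flow by base flows. The genuine gap, which you flag honestly, is the uniform quantitative invertibility of (a regularized version of) $E_x$ for $x\in\Omega'$: when $r>2$ and $\alpha<1$ the top-weight commutators are only $C^{0,\alpha}$, so their flows need not be unique, let alone $C^1$ in the initial point, and the inverse function theorem does not apply to $E_x$ as written. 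Replacing $E_x$ by a smoothly approximated or Picard-regularized map with anisotropic bi-Lipschitz bounds that are uniform under the approximation is precisely the substance of the relevant lemma in \cite{BBP}, not a routine adjustment; so as it stands your text is a correct outline with the central technical lemma deferred, rather than a self-contained proof. Two smaller points worth stating explicitly: the concatenated admissible curve must be shown to remain in $\Omega$, which holds for $|x-y|$ small because each leg has Euclidean length $O(|x-y|)$; and the final ``covering and connectivity'' step for general $x,y\in\Omega'$ quietly uses both the finiteness of $d$ (itself a theorem of \cite{BBP}) and a positive lower bound on $|x-y|$ between distinct patches of the cover.
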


\begin{theorem}
[Doubling condition]\label{Thm doubling nonsmooth}Under the previous
assumptions, for any domain $\Omega^{\prime}\Subset\Omega,$ there exist
positive constants $c,\rho_{0},$ depending on $\Omega,\Omega^{\prime}$ and the
$X_{i}$'s$,$ such that%
\[
\left\vert B\left(  x,2\rho\right)  \right\vert \leqslant c\left\vert B\left(
x,\rho\right)  \right\vert
\]
for any $x\in\Omega^{\prime},$ $\rho<\rho_{0}.$
\end{theorem}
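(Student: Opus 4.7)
The plan is to derive the doubling condition as a corollary of a two-sided Nagel--Stein--Wainger-type volume estimate for metric balls, which in turn follows from a ball-box theorem for the nonsmooth distance $d$. This is the natural route because once one has a polynomial-like expression for $|B(x,\rho)|$ of the form
\[
|B(x,\rho)| \asymp \sum_{I} |\lambda_{I}(x)|\,\rho^{|I|},
\]
where the sum runs over $p$-tuples of multi-indices $I=(I_{1},\ldots,I_{p})$ with $|I_{j}|\le r$ and $\lambda_{I}(x)=\det\bigl((X_{[I_{1}]})_{x},\ldots,(X_{[I_{p}]})_{x}\bigr)$, doubling is immediate: each monomial satisfies $(2\rho)^{|I|}\le 2^{r}\rho^{|I|}$, so the ratio $|B(x,2\rho)|/|B(x,\rho)|$ is bounded by $2^{r}$ times the hidden equivalence constants.

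First I would fix $\Omega'\Subset\Omega$ and, using Hörmander's condition at each point together with the continuity of the coefficients of the (iterated) commutators $X_{[I]}$, show that the set of $p$-tuples $I$ for which $|\lambda_{I}(x)|>0$ can be selected uniformly on $\Omega'$: more precisely, for each $x\in\Omega'$ there is a tuple $I(x)$ with $|\lambda_{I(x)}(x)|$ bounded below, and by continuity the same tuple works on a small neighborhood. A standard compactness argument then reduces the problem to a finite family of distinguished tuples. Next, for each such tuple, I would introduce the local exponential-like change of coordinates $\Phi_{I,x}(u)=\exp\!\bigl(\sum_{j}u_{j}X_{[I_{j}]}\bigr)(x)$ and prove, using the regularity assumptions on the coefficients (which guarantee that the commutators $X_{[I]}$ with $|I|\le r$ are at least Lipschitz, enough to make the ODE flow well-defined and $\Phi_{I,x}$ a bi-Lipschitz homeomorphism on a small box $\{|u_{j}|<\eta\rho^{|I_{j}|}\}$), the ball-box inclusions
\[
\Phi_{I(x),x}\bigl(\{|u_{j}|<c_{1}\rho^{|I_{j}|}\}\bigr)\subset B(x,\rho)\subset \Phi_{I(x),x}\bigl(\{|u_{j}|<c_{2}\rho^{|I_{j}|}\}\bigr),
\]
with $c_{1},c_{2}$ independent of $x\in\Omega'$ and $\rho<\rho_{0}$. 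The Jacobian of $\Phi_{I,x}$ at the origin is precisely $\lambda_{I}(x)$, and a quantitative control of the Jacobian on the box then yields the desired volume equivalence.

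The main obstacle will be running the above construction with the minimal regularity of Assumptions A: the commutators $X_{[I]}$ with $|I|=r$ are only H\"older continuous, so one cannot freely iterate Taylor expansions as in the smooth NSW argument. The remedy, already developed in \cite{BBP}, is to truncate the Baker--Campbell--Hausdorff-type expansion at the exact order permitted by the regularity of the coefficients and to absorb the H\"older remainder into a small $\rho$-dependent error, shrinking $\rho_{0}$ if necessary. Once the ball-box theorem is secured on $\Omega'$, uniformly in $x$ and in $\rho<\rho_{0}$, the doubling estimate follows as sketched above, with a constant $c$ depending only on $r$, $p$, and the geometric constants of $\Omega'$ and the $X_{i}$'s.
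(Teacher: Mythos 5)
The paper does not prove this theorem but quotes it from \cite{BBP}, and the route you sketch---ball-box theorem, two-sided Nagel--Stein--Wainger volume estimate, then doubling read off from the monomial structure---is exactly the one followed there, so the overall approach matches. Two small corrections are worth making. The exponent in your monomial bound is off: writing $|\mathcal{I}|=\sum_{j=1}^{p}|I_{j}|$, this quantity ranges from $p$ up to $pr$, so $(2\rho)^{|\mathcal{I}|}\le 2^{pr}\rho^{|\mathcal{I}|}$ and the resulting doubling constant is of order $2^{pr}$ (times the equivalence constants), not $2^{r}$. More substantively, your interim claim that the commutators $X_{[I]}$ with $|I|\le r$ are ``at least Lipschitz'' holds only when $r=2$, where Assumptions A force $\alpha=1$; for $r\geqslant 3$ the weight-$r$ commutators have merely $C^{0,\alpha}$ coefficients with possibly $\alpha<1$, so one cannot define $\Phi_{I,x}$ as the time-one flow of a Lipschitz vector field and invoke Picard--Lindel\"of. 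This is not merely an issue for the later BCH-type estimates (which you do flag); it already affects the well-posedness of the quasi-exponential map itself, and in \cite{BBP} the map is built from the genuinely $C^{1}$ lower-order fields with the top-order commutators entering only as controlled perturbations. With those two points repaired, the derivation of doubling from the volume estimate is correct.
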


\begin{theorem}
[Volume of metric balls]For any family $\mathcal{I}$ of $p$ multiindices
$I_{1},I_{2},...,I_{p}$ with $\left\vert I_{j}\right\vert \leqslant r$, let
$\left\vert \mathcal{I}\right\vert =\sum_{j=1}^{p}\left\vert I_{j}\right\vert
$ and $\lambda_{\mathcal{I}}\left(  x\right)  $ be the determinant of the
$p\times p$ matrix with rows $\left\{  \left(  X_{\left[  I_{j}\right]
}\right)  _{x}\right\}  _{I_{j}\in\mathcal{I}}$. For any $\Omega^{\prime
}\Subset\Omega$ there exist positive constants $c_{1},c_{2},\rho_{0}$
depending on $\Omega,\Omega^{\prime}$ and the $X_{i}$'s$,$ such that%
\begin{equation}
c_{1}\sum_{\mathcal{I}}\left\vert \lambda_{\mathcal{I}}\left(  x\right)
\right\vert \rho^{\left\vert \mathcal{I}\right\vert }\leqslant\left\vert
B\left(  x,\rho\right)  \right\vert \leqslant c_{2}\sum_{\mathcal{I}%
}\left\vert \lambda_{\mathcal{I}}\left(  x\right)  \right\vert \rho
^{\left\vert \mathcal{I}\right\vert } \label{volume}%
\end{equation}
for any $\rho<\rho_{0},$ $x\in\Omega^{\prime}$, where the sum is taken over
any family $\mathcal{I}$ with the above properties.
\end{theorem}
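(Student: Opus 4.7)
This statement is the Nagel--Stein--Wainger volume estimate transplanted to the nonsmooth setting, and it is recalled here as one of the main results of \cite{BBP}; the natural strategy is therefore to follow the original scheme of \cite{NSW} while tracking the minimal regularity allowed by Assumptions~A. I would fix $\Omega'\Subset\Omega$ and a small scale $\rho<\rho_0$, and work uniformly in $x\in\Omega'$.

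The geometric core is, for each ordered $p$-tuple $\mathcal{I}=(I_1,\ldots,I_p)$ with $|I_j|\leqslant r$ and each base point $x$, a quasi-exponential coordinate map
\[
E_{\mathcal{I},x}(u)=\exp\!\Bigl(\sum_{j=1}^{p}u_{j}X_{[I_j]}\Bigr)(x),
\]
interpreted as the time-$1$ flow of $\sum_j u_j X_{[I_j]}$ starting at $x$. Under Assumptions~A each $X_{[I_j]}$ has $C^{r-|I_j|,\alpha}\subset C^{0,\alpha}$ coefficients, which is just enough for classical Cauchy--Lipschitz theory to produce $E_{\mathcal{I},x}$ on a uniform neighborhood of $0$. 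After the anisotropic rescaling $u_j=t_j\rho^{|I_j|}$, the Jacobian of the resulting map at $t=0$ equals $\lambda_{\mathcal{I}}(x)\rho^{|\mathcal{I}|}$, and by the very definition of $d$ (Definition \ref{Definition CC distance}) the curve obtained by concatenating the appropriately scaled integral pieces is admissible, so $E_{\mathcal{I},x}(t_1\rho^{|I_1|},\ldots,t_p\rho^{|I_p|})\in B(x,c\rho)$ whenever $|t_j|\leqslant 1$.

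For the lower bound in \eqref{volume} I would select a dominant family $\mathcal{I}^{*}=\mathcal{I}^{*}(x,\rho)$ realizing (up to a fixed factor) the maximum of $|\lambda_{\mathcal{I}}(x)|\rho^{|\mathcal{I}|}$. A quantitative inverse function argument, uniform in $x\in\Omega'$, would show that the rescaled map is a bi-Lipschitz diffeomorphism on some absolute box $\{|t_j|\leqslant\delta_0\}$; the change of variables formula then produces a subset of $B(x,c\rho)$ of measure comparable to $|\lambda_{\mathcal{I}^{*}}(x)|\rho^{|\mathcal{I}^{*}|}$. Since the total number of admissible families is bounded a priori, maximum and sum are comparable, which gives the desired lower bound after rescaling $\rho$. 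For the upper bound one has to go in the opposite direction: any endpoint of a CC-curve of parameter $\leqslant\rho$ issuing from $x$ must be captured by $E_{\mathcal{I}^{*},x}$ on a box $|t_j|\leqslant c\rho^{|I_j|}$, so that $B(x,\rho)$ is contained in the image of such a box and the reverse Jacobian estimate closes the argument.

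\emph{Main obstacle.} This last containment is the heart of the proof. In the smooth NSW setting it rests on iterated Baker--Campbell--Hausdorff expansions that express the flows of $X_0,X_1,\ldots,X_n$ as near-linear combinations of the flows of the commutators in $\mathcal{I}^{*}$. In our nonsmooth context the top-order commutators are only $C^{0,\alpha}$, so these expansions cannot be taken to high order and their remainders must be controlled through Hölder rather than Taylor estimates; this is precisely the reason for requiring $\alpha=1$ when $r=2$, and it is why the full proof is deferred to \cite{BBP} instead of being reproduced here.
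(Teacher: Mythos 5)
The paper does not itself prove this theorem: it appears in Section~2 among the facts explicitly recalled from \cite{BBP}, so there is no internal proof to compare your proposal against. Your sketch correctly identifies the statement as the Nagel--Stein--Wainger volume estimate transplanted to the nonsmooth setting, and the outline you give --- quasi-exponential maps $E_{\mathcal{I},x}$, anisotropic rescaling so that the Jacobian at the origin becomes $\lambda_{\mathcal{I}}(x)\rho^{|\mathcal{I}|}$, a lower bound obtained by selecting a dominant family $\mathcal{I}^*$ and invoking a quantitative inverse function theorem, and an upper bound by showing $B(x,\rho)$ is trapped in the image of a scaled box --- is indeed the NSW scheme that \cite{BBP} adapts.

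That said, as you yourself acknowledge, what you have written is a strategy sketch rather than a proof, and the piece left unaddressed is precisely the hard technical core. For the upper bound, the claim that every endpoint of an admissible curve of parameter $\leqslant\rho$ lies in $E_{\mathcal{I}^*,x}$ of a box $\{|t_j|\leqslant c\rho^{|I_j|}\}$ is the ball--box containment; in the smooth case it rests on iterated Campbell--Hausdorff manipulations that express flows of the original fields and their brackets in exponential coordinates, with polynomial control of remainders. Under Assumptions~A the highest-order commutators are only $C^{0,\alpha}$, so those expansions terminate early and the remainders can only be controlled in H\"older norm; making this quantitative, with constants depending only on $\Omega'$, the $C^{r-1,\alpha}$ norms of the coefficients and the nondegeneracy constant, is essentially the whole content of the theorem and is not a routine transplant. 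Similarly, for the lower bound you assert but do not argue that the inverse function theorem applies uniformly in $x\in\Omega'$ to the rescaled $E_{\mathcal{I}^*,x}$, given only H\"older control of its differential; this requires an explicit quantitative version with the right modulus of continuity. So the proposal reads as a correct high-level plan with the central estimates deferred to \cite{BBP} --- which is appropriate given that the paper under review does exactly the same.
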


\begin{definition}
[H\"{o}lder spaces]\label{Def Holder spaces}For any $U\Subset\Omega$ we can
introduce H\"{o}lder spaces $C_{X}^{\alpha}\left(  U\right)  $ with respect to
the distance $d$, letting for $\alpha>0$,%
\[
\left\Vert f\right\Vert _{C_{X}^{\alpha}\left(  U\right)  }=\sup_{x\in
U}\left\vert f\left(  x\right)  \right\vert +\left\vert f\right\vert
_{C_{X}^{\alpha}\left(  U\right)  },
\]
with%
\[
\left\vert f\right\vert _{C_{X}^{\alpha}\left(  U\right)  }=\sup_{x,y\in
U,x\neq y}\frac{\left\vert f\left(  x\right)  -f\left(  y\right)  \right\vert
}{d\left(  x,y\right)  ^{\alpha}}.
\]

Also, we let
\[
C_{X}^{2\,,\alpha}(U)=\{f:U\rightarrow\mathbb{R}|\,\left\Vert f\right\Vert
_{C_{X}^{2,\alpha}\left(  U\right)  }<\infty\}
\]
where
\[
\left\Vert f\right\Vert _{C_{X}^{2,\alpha}\left(  U\right)  }=\left\Vert
f\right\Vert _{C_{X}^{\alpha}\left(  U\right)  }+\sum_{|I|\leqslant
2}\left\Vert X_{I}f\right\Vert _{C_{X}^{\beta}\left(  U\right)  }.
\]

\end{definition}

By (\ref{fefferman-phong}) the following hold:%
\begin{align*}
f  &  \in C^{\alpha}\left(  \Omega^{\prime}\right)  \Rightarrow f\in
C_{X}^{\alpha}\left(  \Omega^{\prime}\right) \\
f  &  \in C_{X}^{\alpha}\left(  \Omega^{\prime}\right)  \Rightarrow f\in
C^{\alpha/r}\left(  \Omega^{\prime}\right)  .
\end{align*}
Note, in particular, that saying \textquotedblleft$f\in C^{\beta}\left(
\Omega^{\prime}\right)  $ for some $\beta>0$\textquotedblright\ is the same as
\textquotedblleft$f\in C_{X}^{\beta}\left(  \Omega^{\prime}\right)  $ for some
$\beta>0$\textquotedblright.

We will also need the following property, which is similar to that proved in
\cite[Thm. 5.11]{BBP}. For convenience of the reader, we recall here its short proof.

\begin{proposition}
\label{Prop Lagrange}Let $\overline{x}\in\Omega$ and let $B\left(
\overline{x},R\right)  \subset\Omega$. For any $f\in C^{1}(B\left(
\overline{x},R\right)  )$, one has
\[
\left\vert f(x)-f(\overline{x})\right\vert \leqslant d\left(  x,\overline
{x}\right)  \left(  \overset{n}{\underset{i=1}{\sum}}\underset{B\left(
\overline{x},R\right)  }{\sup}\left\vert X_{i}f\right\vert +d\left(
x,\overline{x}\right)  \underset{B\left(  \overline{x},R\right)  }{\sup
}\left\vert X_{0}f\right\vert \right)
\]
for any $x\in B\left(  \overline{x},R\right)  $.
\end{proposition}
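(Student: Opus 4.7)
The goal is a quantitative mean-value bound in which the weight-$2$ field $X_0$ pays an extra power of $d(x,\bar x)$ compared to the weight-$1$ fields. The natural strategy is to write $f(x)-f(\bar x)$ as an integral along a controlled curve joining $\bar x$ to $x$, so that differentiating $f$ along the curve produces exactly the terms $(X_i f)(\varphi(t))$ appearing on the right-hand side.

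Concretely, I would fix $\delta>d(x,\bar x)$ and, using the nonsmooth Chow-type connectivity result of \cite{BBP}, replace the curve class $C(\delta)$ of Definition \ref{Definition CC distance} (whose velocity may involve arbitrary commutators $X_{[I]}$ with $|I|\leq r$) by the more restrictive class of ``horizontal'' curves with velocity $\varphi'(t)=\sum_{i=0}^{n} a_i(t)(X_i)_{\varphi(t)}$ and $|a_i(t)|\leq C\delta^{p_i}$. The weights $p_0=2$, $p_i=1$ then enter directly into the coefficient bounds, which is precisely the source of the asymmetric scaling in the statement. Picking such a curve $\varphi:[0,1]\to B(\bar x,R)$ with $\varphi(0)=\bar x$ and $\varphi(1)=x$, the chain rule (applicable since $f\in C^1$) gives
\[
f(x)-f(\bar x)=\int_0^1 \sum_{i=0}^{n} a_i(t)\,(X_i f)(\varphi(t))\,dt,
\]
and combining the pointwise bounds $|a_i(t)|\leq C\delta^{p_i}$ with $\varphi(t)\in B(\bar x,R)$ immediately yields
\[
|f(x)-f(\bar x)|\leq C\delta\sum_{i=1}^{n}\sup_{B(\bar x,R)}|X_i f|+C\delta^{2}\sup_{B(\bar x,R)}|X_0 f|.
\]
Letting $\delta\searrow d(x,\bar x)$ concludes the argument.

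The one delicate step is the initial passage from general commutator-driven curves to horizontal ones. In the smooth setting one would appeal to the Baker-Campbell-Hausdorff formula to simulate a commutator motion by a short composition of flows along the $X_i$'s, but this is not available here since the highest-order commutators are only H\"older continuous. The replacement is provided by the approximation and connectivity theory developed in \cite{BBP}, which shows that commutator motions can still be realized by horizontal motions at a cost proportional to $\delta^{p_i}$ in the $i$-th coefficient. Once this is taken for granted, the remainder of the proof is essentially one line of chain rule and a trivial pointwise estimate, consistent with the description of the proof as ``short.''
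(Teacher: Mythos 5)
Your approach mirrors the paper's: integrate $\frac{d}{dt}f(\varphi(t))$ along a curve whose velocity is a combination of $X_0,\dots,X_n$ only, then use the coefficient bounds to estimate the integral. You correctly flag a subtlety the paper's proof glides over: Definition~\ref{Definition CC distance} defines $d$ via curves whose velocity involves \emph{all} commutators $X_{[I]}$ with $|I|\leq r$, so chain-ruling along such a curve would produce uncontrolled terms $(X_{[I]}f)(\varphi(t))$; one must first pass to a horizontal curve, which is indeed the content of the Chow-type connectivity theory from \cite{BBP}. The paper's own proof simply asserts that Definition~\ref{Definition CC distance} directly yields a curve with velocity $\sum_{i=0}^n\lambda_i(t)(X_i)_{\varphi(t)}$ and the sharp bounds $|\lambda_0|\leq d(x,\bar x)^2$, $|\lambda_i|\leq d(x,\bar x)$, leaving the horizontality and the absence of a constant to the cited result.

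The discrepancy is that your coefficient bound $|a_i(t)|\leq C\delta^{p_i}$ carries a constant $C\geq 1$. This propagates to a factor $C$ in the final estimate (which the proposition's statement does not have), and — more substantively — it means the horizontal curve you obtain may leave $B(\bar x,R)$ when $d(x,\bar x)$ is close to $R$, so the suprema on the right-hand side might have to be over a set larger than the domain of $f$. The paper avoids both issues by asserting the sharp horizontal coefficient bound, which is tantamount to claiming that the purely horizontal CC metric \emph{equals} $d$, not merely that it is equivalent to it. If \cite{BBP} proves that sharp form, your proof closes cleanly with $C=1$; otherwise the statement should really carry a constant, which would be harmless in all the paper's later uses (e.g., in Lemma~\ref{Prop Z_1 Holder} it is always absorbed into a generic $c$). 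So: right idea, right lemma to invoke, but you should confirm the Chow-type result of \cite{BBP} delivers the constant-free horizontal representation, or add the constant and shrink the ball.
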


\begin{proof}
Let $x\in B\left(  \overline{x},R\right)  $, hence by Definition
\ref{Definition CC distance} there exists a curve $\varphi(t)$, such that
$\varphi(0)=\overline{x},\varphi(1)=x$, and
\[
\varphi^{\prime}(t)=\overset{n}{\underset{i=0}{\sum}}\lambda_{i}(t)\left(
X_{i}\right)  _{\varphi(t)}%
\]
with $\left\vert \lambda_{0}(t)\right\vert \leqslant d\left(  x,\overline
{x}\right)  ^{2}$ and $\left\vert \lambda_{i}(t)\right\vert \leqslant d\left(
x,\overline{x}\right)  $ for $i=1,\ldots,n$. Moreover, every point
$\gamma\left(  t\right)  $ for $t\in\left(  0,1\right)  $ belongs to $B\left(
\overline{x},R\right)  $. Then we can write:
\begin{align*}
\left\vert f(x)-f(\overline{x})\right\vert  &  =\left\vert \int_{0}^{1}%
\frac{d}{dt}f(\varphi(t))\,dt\right\vert =\left\vert \int_{0}^{1}\overset
{n}{\underset{i=0}{\sum}}\lambda_{i}(t)\left(  X_{i}f\right)  _{\varphi
(t)}\,dt\right\vert \\
&  \leqslant d\left(  x,\overline{x}\right)  \overset{n}{\underset{i=1}{\sum}%
}\underset{B\left(  \overline{x},R\right)  }{\sup}\left\vert X_{i}f\right\vert
+d\left(  x,\overline{x}\right)  ^{2}\underset{B\left(  \overline{x},R\right)
}{\sup}\left\vert X_{0}f\right\vert ,
\end{align*}
as desired.
\end{proof}

In \cite{BBP2} an extension to nonsmooth vector fields of some known results
by Rothschild-Stein \cite{RS} are proved. The first one is:

\begin{theorem}
[Lifting theorem]\label{Thm lifting}For every $x_{0}\in\Omega$, there exist a
neighborhood $U\left(  x_{0}\right)  ,$ an integer $m$ and vector fields of
the form%
\begin{equation}
\widetilde{X}_{k}=X_{k}+\sum_{j=1}^{m}u_{kj}\left(  x,h_{1},h_{2}%
,...,h_{j-1}\right)  \frac{\partial}{\partial h_{j}} \label{Liftati}%
\end{equation}
$\left(  k=0,1,...,n\right)  $, where the $u_{kj}$'s are polynomials of degree
at most $r-1$, such that the $\widetilde{X}_{k}$'s are free up to step $r$ and
such that $\left\{  \left(  \widetilde{X}_{\left[  I\right]  }\right)
_{\left(  x,h\right)  }\right\}  _{\left\vert I\right\vert \leqslant r}$ span
$\mathbb{R}^{p+m}\equiv\mathbb{R}^{N}$ for every $\left(  x,h\right)  \in
U\left(  x_{0}\right)  \times I$, where $I$ is a neighborhood of $0\in$
$\mathbb{R}^{m}$.
\end{theorem}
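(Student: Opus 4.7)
The strategy is the classical Rothschild--Stein lifting construction, carried out with enough care to respect the limited regularity of Assumptions A. The integer $m$ is determined algebraically: consider the free nilpotent Lie algebra $\mathfrak{n}_{n,r}$ on $n+1$ generators $Y_0,Y_1,\ldots,Y_n$ of weights $2,1,\ldots,1$, truncated at weighted step $r$, and let $N$ be its dimension. Fix a Hall-type basis $\{Y_{[J_\ell]}\}_{\ell=1}^N$ ordered by nondecreasing weight, and set $m=N-p$. By the spanning hypothesis at $x_0$, one may extract a sub-basis $J_{\ell_1},\ldots,J_{\ell_p}\subset\{1,\ldots,N\}$ such that $(X_{[J_{\ell_i}]})_{x_0}$ are linearly independent in $\mathbb{R}^p$; the remaining $m$ indices are assigned the new coordinates $h_1,\ldots,h_m$, ordered by nondecreasing weight.

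Next I would seek the lifted fields in the form (\ref{Liftati}) and determine the polynomials $u_{kj}$ inductively, layer by layer in weight. For each weight $w\leqslant r$, I require that the commutators $\widetilde{X}_{[J_\ell]}$ corresponding to weight-$w$ basis elements reproduce at $(x_0,0)$ the universal free-algebra relations modulo vectors of strictly lower weight. Identifying $\partial_{h_j}$ with the $j$-th missing free basis vector turns this into a system of polynomial identities in $h_1,\ldots,h_{j-1}$, solvable by a purely algebraic computation inside $\mathfrak{n}_{n,r}$. The outcome is that $u_{kj}$ is a universal polynomial in $h_1,\ldots,h_{j-1}$ (independent of the $b_{ij}$'s), of degree at most $r-1$, with triangular dependence forced by the weight ordering; this is essentially the second-kind exponential coordinate formula on the truncated free nilpotent group. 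Linear independence of $\{(\widetilde{X}_{[J_\ell]})_{(x_0,0)}\}_{\ell=1}^N$ in $\mathbb{R}^{p+m}$ then follows by construction, and extends by continuity to a full neighborhood $U(x_0)\times I$ of $(x_0,0)$.

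The main obstacle, and the reason this deserves a dedicated argument in the nonsmooth setting, is the regularity bookkeeping. A bracket $\widetilde{X}_{[I]}$ with $|I|=k$ involves derivatives of the $b_{ij}$'s of $X_1,\ldots,X_n$ of order at most $k-1$, and derivatives of the $X_0$-coefficients of order at most $k-2$ (using the weight count, which assigns $X_0$ weight $2$); for $k\leqslant r$ this stays within the $C^{r-1,\alpha}$ (respectively $C^{r-2,\alpha}$) hypothesis and produces a result at least $C^{0,\alpha}$, which is exactly what is needed for the open-condition argument for linear independence near $(x_0,0)$. The universal polynomial character of the $u_{kj}$'s in the $h$-variables is crucial here, because it decouples the nonsmoothness of the $b_{ij}$'s from the lifting apparatus: no extra derivatives of $b_{ij}$ are forced by the $\partial_{h_j}$-terms, and the iterated brackets touching the $h$-variables only produce smooth polynomial contributions. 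Once this tight derivative count is verified, the freeness of $\widetilde{X}_0,\widetilde{X}_1,\ldots,\widetilde{X}_n$ up to step $r$ and the spanning conclusion on $U(x_0)\times I$ are immediate; this is the content of the corresponding theorem of \cite{BBP2} that is being quoted.
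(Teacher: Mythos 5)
The paper does not prove Theorem~\ref{Thm lifting}: it is stated as a result established in \cite{BBP2} (the nonsmooth extension of Rothschild--Stein's lifting theorem), and the present paper simply quotes it. There is therefore no proof in this paper to compare your proposal against. That said, your sketch is a reasonable summary of the construction one expects in \cite{BBP2}: the free nilpotent model $\mathfrak{n}_{n,r}$ (weighted, with $X_0$ of weight $2$), the inductive determination of the polynomial corrections layer by layer in weight, and --- crucially --- the derivative bookkeeping that shows a commutator $\widetilde{X}_{[I]}$ with weighted length $|I|=k\leqslant r$ never requires more than $k-1$ derivatives of the $X_1,\ldots,X_n$ coefficients nor more than $k-2$ derivatives of the $X_0$ coefficient, which matches the claim recorded in the paper that $X_{[I]}$ has $C^{r-k,\alpha}$ coefficients. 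This bookkeeping is indeed the one nontrivial adaptation required in the nonsmooth setting, and you have identified it correctly.

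Two caveats on accuracy. First, you assert that the $u_{kj}$'s are universal polynomials in $h$ alone, independent of the $b_{ij}$'s. The statement you are proving allows $u_{kj}$ to depend on $x$ as well, i.e.\ $u_{kj}(x,h_1,\ldots,h_{j-1})$; while the classical Rothschild--Stein version does produce polynomials in the added variables only, you should not assume without checking that the \cite{BBP2} construction matches that exactly --- the statement is written so as to permit $x$-dependence, and your sketch silently contradicts it. Second, the phrases ``solvable by a purely algebraic computation'' and ``linear independence then follows by construction'' compress the substantial algebraic core of the Rothschild--Stein argument (the inductive linear-algebra lemma that shows the missing commutator directions can be supplied consistently at each weight, and that this yields freeness). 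Those are not immediate and are precisely what a reader consulting \cite{BBP2} for the proof would expect to find spelled out. As a high-level outline your proposal is sound, but it is a sketch of the strategy rather than a self-contained proof.
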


We do not repeat here the exact definition of \emph{free }vector fields, in
our weighted situations, because we will never use it explicitly.

An easy consequence of the structure (\ref{Liftati}) of the lifted vector
fields is that for any differentiable function $f\left(  x,h\right)  $ and any
smooth cutoff function $\varphi\left(  h\right)  $ we have
\begin{equation}
\int_{\mathbb{R}^{m}}\widetilde{X}_{k}\left[  f\left(  x,h\right)
\varphi\left(  h\right)  \right]  dh=\int_{\mathbb{R}^{m}}X_{k}f\left(
x,h\right)  \varphi\left(  h\right)  dh \label{X Xtilda}%
\end{equation}
since the integrals $\int_{\mathbb{R}^{m}}\frac{\partial}{\partial h_{j}%
}\left(  ...\right)  dh$ vanish.

We will denote by $\widetilde{d}$ the distance induced in $U\left(
x_{0}\right)  \times I$ by the lifted vector fields $\widetilde{X}_{i}$
$\left(  i=0,1,2,...,n\right)  ,$ as in Definition
\ref{Definition CC distance}, and by $\widetilde{B}(\eta,r)$ the corresponding
metric ball of center $\eta$ and radius $r$. We will also set
\[
\widetilde{L}=\sum_{i=1}^{n}\widetilde{X}_{i}^{2}+\widetilde{X}_{0}.
\]

Let us recall that a structure of \emph{homogeneous group }$\mathbb{G}$ on
$\mathbb{R}^{N}$ consists in a Lie group operation $\circ$ (which we think of
as \emph{translation}) such that the origin is the unit in the group and the
Euclidean opposite is the inverse in the group, and a one-parameter family
$\left\{  D\left(  \lambda\right)  \right\}  _{\lambda>0}$ of group
automorphisms (which we think of as \emph{dilations}), acting as follows:%
\begin{equation}
D\left(  \lambda\right)  \left(  u_{1},u_{2},...,u_{N}\right)  =\left(
\lambda^{\alpha_{1}}u_{1},\lambda^{\alpha_{2}}u_{2},...,\lambda^{\alpha_{N}%
}u_{N}\right)  , \label{dilation}%
\end{equation}
for some positive integers $\alpha_{1},\alpha_{2},...,\alpha_{N}.$ The sum of
these integers is called the \emph{homogeneous dimension }$Q$ of $\mathbb{G}.$

A homogeneous norm on $\mathbb{G}$ is any function $\left\Vert \cdot
\right\Vert :\mathbb{G\rightarrow}[0,\infty)$ such that

$\left\Vert u\right\Vert =0\Longleftrightarrow u=0$, $\left\Vert D\left(
\lambda\right)  u\right\Vert =\lambda\left\Vert u\right\Vert $ for any
$\lambda>0$,

$\left\Vert u_{1}\circ u_{2}\right\Vert \leqslant c\left(  \left\Vert
u_{1}\right\Vert +\left\Vert u_{2}\right\Vert \right)  $, $\left\Vert
u^{-1}\right\Vert \leqslant c\left\Vert u\right\Vert $ for any $u,u_{1}%
,u_{2}\in\mathbb{G}$.

Such a homogeneous norm naturally induces a distance $\left\Vert u_{1}%
^{-1}\circ u_{2}\right\Vert $ in $\mathbb{G}$; the (Lebesgue) measure of the
corresponding ball in $\mathbb{G}$ is translation invariant, and multiple of
$r^{Q}$. In the following we will use a fixed homogeneous norm on $\mathbb{G}$.

\begin{definition}
\label{defweight}(See \cite{BBP2}) We say that a vector field%
\[
R=\sum_{j=1}^{N}c_{j}\left(  u\right)  \partial_{u_{j}}%
\]
on the group $\mathbb{G}$ has \emph{weight} $\geqslant\beta$, for some
$\beta\in\mathbb{R}$, if%
\[
\left\vert c_{j}\left(  u\right)  \right\vert \leqslant c\left\Vert
u\right\Vert ^{\alpha_{j}+\beta}%
\]
for $u$ in a neighborhood of $0$.
\end{definition}

The second basic result proved in \cite{BBP2} is:

\begin{theorem}
[\textbf{Approximation by left invariant H\"{o}rmander's operator}%
]\label{Thm liftapprox} Let $x_{0}$, $U\left(  x_{0}\right)  $, and $I$ be as
in the lifting theorem. There exist a structure of homogeneous group
$\mathbb{G}$ on $\mathbb{R}^{N}$, $N=p+m$, a family of homogeneous left
invariant H\"{o}rmander's vector fields $Y_{0},Y_{1},Y_{2},...,Y_{n}$ on
$\mathbb{G}$ and an open set $V\subset U\left(  x_{0}\right)  \times I,$ such
that for any $\eta\in V$ there exists a smooth diffeomorphism $\Theta_{\eta}$
from a neighborhood of $\eta$ containing $V$ onto a neighborhood of the origin
in $\mathbb{G}$ such that $\Theta_{\eta}\left(  \xi\right)  $ and its first
order derivatives with respect to $\xi$ depend on $\eta$ in a $C^{\alpha}$
continuous way, locally uniformly in $\xi$, and for any smooth function
$f:\mathbb{G}\rightarrow\mathbb{R},$%
\begin{equation}
\widetilde{X}_{i}\left(  f\circ\Theta_{\eta}\right)  \left(  \xi\right)
=\left(  Y_{i}f+R_{i}^{\eta}f\right)  \left(  \Theta_{\eta}\left(  \xi\right)
\right)  \text{ }\forall\xi,\eta\in V \label{approx theta}%
\end{equation}
($i=0,1,...,n$) where $R_{i}^{\eta}$ are $C^{r-p_{i},\alpha}$ vector fields of
weight $\geqslant\alpha-p_{i}$. Moreover:

\begin{enumerate}
\item The following equivalences hold:%
\begin{equation}
c_{1}\left\vert \Theta_{\eta}\left(  \xi\right)  \right\vert \leqslant
c_{2}\widetilde{d}\left(  \eta,\xi\right)  \leqslant\left\Vert \Theta_{\eta
}\left(  \xi\right)  \right\Vert \leqslant c_{3}\widetilde{d}\left(  \eta
,\xi\right)  \text{ }\leqslant c_{4}\left\vert \Theta_{\eta}\left(
\xi\right)  \right\vert ^{1/r} \label{equiv Theta d}%
\end{equation}
for any $\xi,\eta\in V$. Also,%
\begin{equation}
c_{1}\rho^{Q}\leqslant\left\vert \widetilde{B}\left(  \xi,\rho\right)
\right\vert \leqslant c_{2}\rho^{Q}\text{ for any }\xi\in V,\rho\leqslant
\rho_{0} \label{meas B^tilde}%
\end{equation}
where $Q$ is the homogeneous dimension of the group $\mathbb{G}$ and
$c_{i},\ \rho_{0}$ are suitable positive constants.

\item The modulus of the Jacobian determinant of $\xi\mapsto\Theta_{\eta
}\left(  \xi\right)  $ has the form%
\begin{equation}
d\xi=c\left(  \eta\right)  \left(  1+O\left(  \left\Vert u\right\Vert \right)
\right)  du, \label{d xi}%
\end{equation}
where
\[
c\left(  \eta\right)  =\left\vert \det\left(  \left(  \widetilde{X}_{\left[
I\right]  }\right)  _{I\in B}\right)  _{\eta}\right\vert
\]
is a $C^{\alpha}$ function, bounded and bounded away from zero. (Here $B$ is
the set of multiindices giving the basis $\{\widetilde{X}_{\left[  I\right]
}\}_{I\in B}$ involved in the definition of the map $\Theta_{\eta}$.) More
explicitly, (\ref{d xi}) means that
\[
d\xi=c\left(  \eta\right)  \left[  1+\omega\left(  \eta,u\right)  \right]  du
\]
with $\left\vert \omega\left(  \eta,u\right)  \right\vert \leqslant
c\left\Vert u\right\Vert $, $\omega$ smooth in $u$ and $C^{\alpha}$ with
respect to $\eta,$ uniformly in $u$.
\end{enumerate}
\end{theorem}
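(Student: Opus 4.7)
My plan is to adapt the classical Rothschild--Stein construction to the present nonsmooth setting, starting from the lifted vector fields furnished by Theorem \ref{Thm lifting}. The group $\mathbb{G}$ is a purely algebraic object: I would take the free nilpotent Lie algebra of weighted step $r$ on $n+1$ generators $Y_0,Y_1,\ldots,Y_n$ (with $Y_0$ of weight $2$ and $Y_1,\ldots,Y_n$ of weight $1$) and declare the dilations (\ref{dilation}) so that every iterated commutator $Y_{[I]}$ is homogeneous of degree $|I|$. Since, by Theorem \ref{Thm lifting}, the $\widetilde{X}_i$ are free up to step $r$ and $\{\widetilde{X}_{[I]}\}_{|I|\leqslant r}$ spans $\mathbb{R}^N$ at every point of $V$, fixing a subfamily $B$ of multiindices yielding a basis at $\eta$ identifies $\mathbb{G}$ with $\mathbb{R}^N$ and fixes the group law.

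Next I would define $\Theta_\eta$ via canonical coordinates of the first kind, setting
\[
\Theta_\eta^{-1}(u) = \exp\Bigl(\sum_{I \in B} u_I\,\widetilde{X}_{[I]}\Bigr)(\eta).
\]
For $|I|<r$ the commutator $\widetilde{X}_{[I]}$ is $C^{r-|I|,\alpha}$ and its flow is classical; for $|I|=r$ only $C^{\alpha}$ regularity is available, so those flows must be handled through the $L^{\infty}$-theory of ODEs. The inverse function theorem applied at $u=0$, whose differential is the matrix with columns $(\widetilde{X}_{[I]})_\eta$ for $I\in B$, produces a local diffeomorphism with Jacobian at $u=0$ equal to $c(\eta)=|\det((\widetilde{X}_{[I]})_{I\in B})_\eta|$. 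This $c(\eta)$ is bounded and bounded away from zero on $V$ by the spanning hypothesis, and is $C^{\alpha}$ in $\eta$ by the regularity of the weight-$r$ commutators; a first-order expansion of the Jacobian in $u$ then gives exactly (\ref{d xi}).

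The approximation identity (\ref{approx theta}) would follow by Taylor expanding the coefficients of each $\widetilde{X}_i$ around $\eta$ and transporting the result via $\Theta_\eta$. By construction of the group law and of the basis $B$, the principal homogeneous term of the expansion coincides with the left invariant field $Y_i$, while the remaining terms, regrouped by homogeneous weight, define $R_i^\eta$. The bound weight$(R_i^\eta)\geqslant\alpha-p_i$ of Definition \ref{defweight} comes from the fact that the highest-order H\"older remainder of a $C^{r-p_i,\alpha}$ coefficient of $\widetilde{X}_i$ contributes an additional factor $\|u\|^{\alpha}$ beyond the nominal polynomial weight of the expansion. From (\ref{approx theta}) one then compares the control distance $\widetilde{d}$ with the homogeneous norm $\|\cdot\|$ along admissible curves in $V$ to obtain (\ref{equiv Theta d}), and (\ref{meas B^tilde}) follows by integrating (\ref{d xi}) over the image of $\widetilde{B}(\xi,\rho)$ in $\mathbb{G}$, using that homogeneous balls have volume $\asymp\rho^{Q}$.

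The main obstacle throughout is the bottleneck created by the mere $C^{\alpha}$ regularity of the top-order commutators $\widetilde{X}_{[I]}$ with $|I|=r$: it simultaneously controls the H\"older dependence of $\Theta_\eta(\xi)$ on $\eta$, the $C^{\alpha}$ regularity of $c(\eta)$, and the precise weight threshold of the $R_i^\eta$. The technical core of the proof therefore lies in giving a rigorous meaning to the flows of these rough commutators, establishing their continuous and H\"older dependence on initial data and parameters, and tracking carefully how those H\"older losses propagate through the compositions and inversions defining $\Theta_\eta$ — so that, in the end, every quantitative statement of the theorem remains tight at the single threshold $\alpha$.
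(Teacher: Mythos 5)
The paper does not actually prove this theorem here: it is quoted as a result established in \cite{BBP2}, and the only construction the text supplies is the remark immediately following the statement, that $\Theta_{\eta}$ is the inverse of
\[
u\mapsto E\left(  u,\eta\right)  =\exp\left(  \sum_{I\in B}u_{I}S_{\left[
I\right]  ,\eta}\right)  \left(  \eta\right)  ,
\]
where the $S_{\left[  I\right]  ,\eta}$ are \emph{smooth} vector fields depending on $\eta$ in a $C^{\alpha}$ way. You instead propose to set
\[
\Theta_{\eta}^{-1}\left(  u\right)  =\exp\Bigl(\sum_{I\in B}u_{I}\,\widetilde{X}_{\left[  I\right]  }\Bigr)\left(  \eta\right)  ,
\]
i.e.\ to flow along the raw nonsmooth commutators. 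This is a genuine gap, not a technicality to be patched with ``$L^{\infty}$-theory of ODEs.'' For $\left\vert I\right\vert =r$ the commutator $\widetilde{X}_{\left[  I\right]  }$ is merely $C^{\alpha}$; its integral curves exist by Peano but are in general non-unique when $\alpha<1$, and any selected flow is at best $C^{\alpha}$ in the initial point. The resulting map would therefore not be a smooth diffeomorphism of $\xi$, contradicting an explicit claim of the theorem, and you could not differentiate it as required to set up the parametrix (compare Theorem \ref{Thm Calphatheta}, whose proof differentiates the Cauchy problem for $E\left(  u,\eta\right)  $ in $u$ and in $\eta$, which makes sense precisely because $S_{\left[  I\right]  ,\eta}$ is smooth in the flow variable).

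The essential device you are missing is the \emph{freezing}: replace each rough commutator by an auxiliary vector field $S_{\left[  I\right]  ,\eta}$ that is smooth in the spatial variable, agrees with $\widetilde{X}_{\left[  I\right]  }$ at the base point $\eta$ (so that your formula $c\left(  \eta\right)  =\left\vert \det\left(  \left(  \widetilde{X}_{\left[  I\right]  }\right)  _{I\in B}\right)  _{\eta}\right\vert $ for the Jacobian at the origin is correct), and carries the $C^{\alpha}$ roughness only in the parameter $\eta$. Once that substitution is made, the rest of your outline --- the free nilpotent group of weighted step $r$ on $n+1$ generators, the Taylor expansion of the $\widetilde{X}_{i}$ coefficients to extract $Y_{i}$ and the remainders $R_{i}^{\eta}$ of weight $\geqslant\alpha-p_{i}$, the comparison of $\widetilde{d}$ with the homogeneous norm, and the volume estimate --- does match the Rothschild--Stein template adapted in \cite{BBP2}. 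But as written the central object of the theorem is constructed incorrectly; you need to change the vector fields inside the exponential, not the ODE theory around them.
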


The diffeomorphism $\Theta_{\eta}\left(  \cdot\right)  $ is defined as the
inverse of the exponential function
\[
u\mapsto E\left(  u,\eta\right)  =\exp\left(  \sum_{I\in B}u_{I}S_{\left[
I\right]  ,\eta}\right)  \left(  \eta\right)
\]
where the vector fields $S_{\left[  I\right]  ,\eta}$ are smooth vector fields
depending on $\eta$ in a $C^{\alpha}$ way (see \cite[\S 3]{BBP2} for the details).

In the next theorem we will show that both $E\left(  \cdot,\eta\right)  $ and
$\Theta_{\eta}\left(  \cdot\right)  $ have derivatives that depend on $\eta$
in a $C^{\alpha}$ way. As a consequence we will prove some properties of the
coefficients of the vector fields $R_{i}^{\eta}$.

\begin{theorem}
\label{Thm Calphatheta} ~

\begin{enumerate}
\item For every multi-index $\beta$ the derivatives $\frac{\partial
^{\left\vert \beta\right\vert }E}{\partial u^{\beta}}\left(  u,\eta\right)  $
and $\frac{\partial^{\left\vert \beta\right\vert }\Theta_{\eta}}{\partial
\xi^{\beta}}\left(  \xi\right)  $ depend on $\eta$ in a $C^{\alpha}$ way.

\item If $R_{i}^{\eta}=\sum_{k=1}^{N}c_{ik}^{\eta}\left(  u\right)
\partial_{u_{k}}$ then:

\begin{enumerate}
\item[i.] the functions $c_{ik}^{\eta}\left(  u\right)  $ (for $0\leqslant
i\leqslant n$) and $\frac{\partial c_{ik}^{\eta}}{\partial u_{j}}\left(
u\right)  $ (for $1\leqslant i\leqslant n$) depend on $\eta$ in a $C^{\alpha}$
way, locally uniformly with respect to $u$;

\item[ii.] the vector fields $\sum_{k=1}^{N}\frac{\partial c_{ik}^{\eta}%
}{\partial u_{j}}\left(  u\right)  \partial_{u_{k}}$ (for $1\leqslant
i\leqslant n,1\leqslant j\leqslant N$) have weight $\geqslant\alpha-2$.
\end{enumerate}
\end{enumerate}
\end{theorem}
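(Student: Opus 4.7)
\medskip
\noindent\textbf{Proof proposal.}
The plan is to derive everything from the defining formula $E(u,\eta)=\exp\bigl(\sum_{I\in B}u_{I}S_{[I],\eta}\bigr)(\eta)$, for which we already know that the smooth vector fields $S_{[I],\eta}$ depend on $\eta$ in a $C^{\alpha}$ way (as recalled just before the theorem). Write $E(u,\eta)=\gamma_{u,\eta}(1)$, where $\gamma_{u,\eta}:[0,1]\to\mathbb{R}^{N}$ solves
\[
\gamma'_{u,\eta}(t)=\sum_{I\in B}u_{I}\,S_{[I],\eta}(\gamma_{u,\eta}(t)),\qquad\gamma_{u,\eta}(0)=\eta.
\]
Since the right-hand side is smooth in $(t,u,x)$ and $C^{\alpha}$ in $\eta$, standard ODE theory with parameters gives that $\gamma_{u,\eta}$ is smooth in $(t,u)$ and $C^{\alpha}$ in $\eta$. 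The $u$-derivatives $\partial_{u}^{\beta}\gamma_{u,\eta}(t)$ satisfy linear ODEs whose coefficients are built from $\partial_{x}^{\gamma}S_{[I],\eta}$ (smooth in $x$, $C^{\alpha}$ in $\eta$) composed with lower-order derivatives already under control; a Gr\"{o}nwall estimate and an induction on $|\beta|$ transfer the $C^{\alpha}$ dependence on $\eta$ from the $S_{[I],\eta}$ to every $\partial_{u}^{\beta}E$. For $\Theta_{\eta}$, use the inverse function theorem: from $E(\Theta_{\eta}(\xi),\eta)=\xi$ one gets $\partial_{\xi}\Theta_{\eta}(\xi)=\bigl[\partial_{u}E(\Theta_{\eta}(\xi),\eta)\bigr]^{-1}$; matrix inversion preserves $C^{\alpha}$, and higher $\xi$-derivatives are obtained by iterated application of the chain rule (Fa\`{a} di Bruno). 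This settles Part~1.

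For Part~2(i) the starting point is the identity (\ref{approx theta}). Expanding the chain rule in the left-hand side and setting $\xi=E(u,\eta)$ gives the explicit formula
\[
c_{ik}^{\eta}(u)=(\widetilde{X}_{i}\Theta_{\eta}^{k})(E(u,\eta))-y_{ik}(u),
\]
where $y_{ik}$ are the (polynomial, $\eta$-independent) coefficients of $Y_{i}$. The function $\widetilde{X}_{i}\Theta_{\eta}^{k}$ is the product of a coefficient of $\widetilde{X}_{i}$ (smooth in $\xi$, independent of $\eta$) with a first $\xi$-derivative of $\Theta_{\eta}^{k}$ ($C^{\alpha}$ in $\eta$ by Part~1); composing with $E(\cdot,\eta)$ (also $C^{\alpha}$ in $\eta$) yields $C^{\alpha}$ dependence of $c_{ik}^{\eta}$ on $\eta$, locally uniformly in $u$, for every $i=0,1,\ldots,n$. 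To differentiate $c_{ik}^{\eta}$ once more in $u_{j}$ one needs an extra $\xi$-derivative of $\widetilde{X}_{i}\Theta_{\eta}^{k}$, hence a first derivative of the coefficients of $\widetilde{X}_{i}$: this is legitimate precisely when $i\geqslant 1$, since under Assumptions~A these coefficients are $C^{r-1,\alpha}$ with $r\geqslant 2$ (and $\alpha=1$ when $r=2$). Exactly this obstruction forces the restriction $1\leqslant i\leqslant n$ for the derivative statement, and explains the role of the transpose observation made after (\ref{transposed}).

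The main obstacle is Part~2(ii): showing that $|\partial_{u_{j}}c_{ik}^{\eta}(u)|\leqslant c\,\|u\|^{\alpha_{k}+\alpha-2}$. The starting bound, coming from the known weight $\geqslant\alpha-p_{i}=\alpha-1$ of $R_{i}^{\eta}$, is $|c_{ik}^{\eta}(u)|\leqslant c\,\|u\|^{\alpha_{k}+\alpha-1}$, so we need to prove that one Euclidean derivative costs at most one unit of homogeneous weight, uniformly over $j$. The plan is to go back to the explicit representation of $c_{ik}^{\eta}$ just obtained and to exploit the fact that, in the composition $(\widetilde{X}_{i}\Theta_{\eta}^{k})(E(u,\eta))-y_{ik}(u)$, the $u$-dependence enters only through $E(u,\eta)$ and through the polynomial $y_{ik}$. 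Differentiating in $u_{j}$ one obtains, by the chain rule,
\[
\partial_{u_{j}}c_{ik}^{\eta}(u)=\sum_{\ell=1}^{N}\bigl(\partial_{\xi_{\ell}}(\widetilde{X}_{i}\Theta_{\eta}^{k})\bigr)(E(u,\eta))\,\partial_{u_{j}}E^{\ell}(u,\eta)-\partial_{u_{j}}y_{ik}(u).
\]
Both terms can then be analyzed via a Taylor expansion at $u=0$ of the functions $E^{\ell}(u,\eta)$ and $\widetilde{X}_{i}\Theta_{\eta}^{k}$ along the group-homogeneous directions, using the flow characterization of $E$ to identify the leading term of each $\partial_{u_{j}}E^{\ell}$; the cancellations between $\widetilde{X}_{i}\Theta_{\eta}^{k}\circ E$ and $y_{ik}$ up to order $\alpha-1$ (built into $Y_{i}$ being the principal part of $\widetilde{X}_{i}$ at $\eta$) should propagate to the derivative with only one further loss of weight. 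The technical heart of the proof is thus to make this Taylor-cancellation argument rigorous in the non-smooth setting, uniformly in $\eta$, relying on Part~1 to control the $u$-derivatives of $E$ and $\Theta_{\eta}$ and on the structural description of $R_{i}^{\eta}$ recalled in Theorem~\ref{Thm liftapprox}.
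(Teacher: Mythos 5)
Parts 1 and 2(i) of your proposal follow essentially the same lines as the paper: the ODE/flow characterization of $E$, differentiation of the Cauchy problem in the parameters $u_J$, and the identity $c_{ik}^{\eta}(u)=(\widetilde{X}_i\Theta_\eta^k)(\Theta_\eta^{-1}(u))-Y_iu_k$, together with the observation that the derivative formula needs one $\xi$-derivative of the coefficients of $\widetilde{X}_i$ and is therefore restricted to $i\geqslant 1$ under Assumptions~A. These parts are sound.

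Part 2(ii) is a genuine gap. You correctly identify it as the difficult point, but what you write is a plan (``the technical heart of the proof is thus to make this Taylor-cancellation argument rigorous'') rather than an argument. There are two concrete problems. First, the goal you set yourself --- that ``one Euclidean derivative costs at most one unit of homogeneous weight'' --- is not true for a generic $C^{1}$ function on a homogeneous group: $\partial_{u_j}$ is a vector field of homogeneity $-\alpha_j$, which for $\alpha_j>1$ loses more than one unit; the statement only holds here because of the very high Euclidean vanishing order of $c_{ik}^\eta$ at the origin, which is exactly what your sketch does not exploit. Second, your cancellation mechanism (expanding $E$ and $\widetilde{X}_i\Theta_\eta^k$ and matching Taylor terms with $y_{ik}$) is never pinned down, and it is not clear it converges to the stated weight bound. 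The paper takes a much shorter route: from the proof of Proposition 3.5 in \cite{BBP2} one has the \emph{Euclidean} bound $|c_{ik}^\eta(u)|\leqslant c|u|^{r-1+\alpha}$ near $0$, while $c_{ik}^\eta(\cdot)\in C^{r-1,\alpha}$; a Taylor expansion at $0$ forces all Taylor coefficients up to order $r-1$ to vanish, so $|\partial_{u_j}c_{ik}^\eta(u)|\leqslant c|u|^{r-2+\alpha}$, and since $|u|\leqslant c\|u\|$ near $0$ and $\alpha_k\leqslant r$, this is $\leqslant c\|u\|^{\alpha_k-2+\alpha}$. That single observation replaces the whole ``Taylor-cancellation'' machinery you propose, and you should use it.
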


\begin{proof}
We start with $\frac{\partial^{\left\vert \beta\right\vert }E}{\partial
u^{\beta}}$. We know that%
\[
E\left(  u,\eta\right)  =\gamma\left(  1,u,\eta\right)
\]
where $\gamma$ solves the Cauchy problem%
\[
\left\{
\begin{array}
[c]{l}%
\displaystyle\frac{d}{dt}\gamma\left(  t,u,\eta\right)  =\sum_{I\in B}%
u_{I}\left(  S_{\left[  I\right]  ,\eta}\right)  _{\gamma\left(
t,u,\eta\right)  }\\[2ex]%
\gamma\left(  0,u,\eta\right)  =\eta.
\end{array}
\right.
\]
For a fixed $\eta$ the solution $\gamma\left(  \cdot,\cdot,\eta\right)  $ is
smooth; moreover $\gamma$ depends on $\eta$ in a $C^{\alpha}$ way. Therefore%
\[
\left\{
\begin{array}
[c]{l}%
\displaystyle\frac{d}{dt}\displaystyle\frac{\partial\gamma}{\partial u_{J}%
}\left(  t,u,\eta\right)  =\sum_{I\in B}u_{I}\frac{\partial S_{\left[
I\right]  ,\eta}}{\partial\xi}\left(  \gamma\left(  t,u,\eta\right)  \right)
\frac{\partial\gamma}{\partial u_{J}}\left(  t,u,\eta\right)  +\left(
S_{\left[  J\right]  ,\eta}\right)  _{\gamma\left(  t,u,\eta\right)  }\\[2ex]%
\displaystyle\frac{\partial\gamma}{\partial u_{J}}\left(  0,u,\eta\right)  =0.
\end{array}
\right.
\]
Let now%
\begin{align*}
\omega\left(  t,u,\eta\right)   &  =\frac{\partial\gamma}{\partial u_{J}%
}\left(  t,u,\eta\right)  ,\\
A\left(  t,u,\eta\right)   &  =\sum_{I\in B}u_{I}~\frac{\partial S_{\left[
I\right]  ,\eta}}{\partial\xi}\left(  \gamma\left(  t,u,\eta\right)  \right)
,\\
B_{J}\left(  t,u,\eta\right)   &  =\left(  S_{\left[  J\right]  ,\eta}\right)
_{\gamma\left(  t,u,\eta\right)  }.
\end{align*}
Since $\left(  S_{\left[  J\right]  ,\eta}\right)  _{\xi}$ and $\frac{\partial
S_{\left[  I\right]  ,\eta}}{\partial\xi}\left(  \xi\right)  $ are smooth in
the $\xi$ variable and $C^{\alpha}$ in the $\eta$ variable, the functions
$A\left(  t,u,\eta\right)  $ and $B_{J}\left(  t,u,\eta\right)  $ are smooth
in $\left(  t,u\right)  $ and $C^{\alpha}$ in $\eta$. With the above notation,%
\[
\left\{
\begin{array}
[c]{l}%
\frac{d}{dt}\omega\left(  t,u,\eta\right)  =A\left(  t,u,\eta\right)
~\omega\left(  t,u,\eta\right)  +B_{J}\left(  t,u,\eta\right) \\[2ex]%
\omega\left(  0,u,\eta\right)  =0
\end{array}
\right.
\]
whence we readily see that $\omega$ is $C^{\alpha}$ in $\eta$. This shows that
$\frac{\partial E}{\partial u_{J}}\left(  u,\eta\right)  =\omega\left(
1,u,\eta\right)  $ has the same property. An iteration of this argument shows
that also $\frac{\partial^{\left\vert \beta\right\vert }E}{\partial u^{\beta}%
}$ is $C^{\alpha}$ with respect to $\eta$.

To prove the analogous result for $\frac{\partial^{\left\vert \beta\right\vert
}\Theta_{\eta}}{\partial\xi^{\beta}}\left(  \xi\right)  $ we differentiate
with respect to $\xi$ the identity%
\[
\xi=E\left(  \Theta_{\eta}\left(  \xi\right)  ,\eta\right)
\]
finding the matrix identity%
\[
I=\frac{\partial E}{\partial u}\left(  \Theta_{\eta}\left(  \xi\right)
,\eta\right)  \frac{\partial\Theta_{\eta}}{\partial\xi}\left(  \xi\right)
\]
and then%
\[
\frac{\partial\Theta_{\eta}}{\partial\xi}\left(  \xi\right)  =\left[
\frac{\partial E}{\partial u}\left(  \Theta_{\eta}\left(  \xi\right)
,\eta\right)  \right]  ^{-1}.
\]
Since $\frac{\partial E}{\partial u}\left(  \xi,\eta\right)  $ is smooth in
$\xi$ and $C^{\alpha}$ in $\eta$ and $\Theta_{\eta}\left(  \xi\right)  $ is
$C^{\alpha}$ in $\eta$, we get the desired result. An iteration of this
argument shows that also $\frac{\partial^{\left\vert \beta\right\vert }%
\Theta_{\eta}}{\partial\xi^{\beta}}\left(  \xi\right)  $ is $C^{\alpha}$ in
$\eta$.

To prove 2.i, let $f\left(  u\right)  =u_{k}$ and $g_{\eta}\left(  \xi\right)
=f\left(  \Theta_{\eta}\left(  \xi\right)  \right)  =\left(  \Theta_{\eta
}\left(  \xi\right)  \right)  _{k}$. Then, by (\ref{approx theta}), we have%
\[
\widetilde{X}_{i}g_{\eta}\left(  \xi\right)  =\left(  Y_{i}u_{k}\right)
\left(  \Theta_{\eta}\left(  \xi\right)  \right)  +c_{ik}^{\eta}\left(
\Theta_{\eta}\left(  \xi\right)  \right)  ,
\]
so that%
\begin{equation}
c_{ik}^{\eta}\left(  u\right)  =\widetilde{X}_{i}g_{\eta}\left(  \Theta_{\eta
}^{-1}\left(  u\right)  \right)  -Y_{i}u_{k}. \label{c_iketa}%
\end{equation}
Since $Y_{i}u_{k}$ is independent of $\eta$ it is enough to consider the term
$\widetilde{X}_{i}g_{\eta}\left(  \Theta_{\eta}^{-1}\left(  u\right)  \right)
$. Let us write%
\begin{align*}
&  \left\vert \widetilde{X}_{i}g_{\eta_{1}}\left(  \Theta_{\eta_{1}}%
^{-1}\left(  u\right)  \right)  -\widetilde{X}_{i}g_{\eta_{2}}\left(
\Theta_{\eta_{2}}^{-1}\left(  u\right)  \right)  \right\vert \\
&  \leqslant\left\vert \widetilde{X}_{i}g_{\eta_{1}}\left(  \Theta_{\eta_{1}%
}^{-1}\left(  u\right)  \right)  -\widetilde{X}_{i}g_{\eta_{1}}\left(
\Theta_{\eta_{2}}^{-1}\left(  u\right)  \right)  \right\vert +\left\vert
\widetilde{X}_{i}g_{\eta_{1}}\left(  \Theta_{\eta_{2}}^{-1}\left(  u\right)
\right)  -\widetilde{X}_{i}g_{\eta_{2}}\left(  \Theta_{\eta_{2}}^{-1}\left(
u\right)  \right)  \right\vert
\end{align*}
and
\[
\widetilde{X}_{i}g_{\eta}\left(  \xi\right)  =\sum\widetilde{b}_{ij}\left(
\xi\right)  \frac{\partial g_{\eta}}{\partial\xi_{j}}\left(  \xi\right)  .
\]
By Assumption A the coefficients $\widetilde{b}_{ij}$ are at least Lipschitz.
Since $\frac{\partial g_{\eta}}{\partial\xi_{j}}\left(  \xi\right)  $ are
smooth in $\xi$ we have%
\begin{align*}
\left\vert \widetilde{X}_{i}g_{\eta_{1}}\left(  \Theta_{\eta_{1}}^{-1}\left(
u\right)  \right)  -\widetilde{X}_{i}g_{\eta_{1}}\left(  \Theta_{\eta_{2}%
}^{-1}\left(  u\right)  \right)  \right\vert  &  \leqslant c\left\vert
\Theta_{\eta_{1}}^{-1}\left(  u\right)  -\Theta_{\eta_{2}}^{-1}\left(
u\right)  \right\vert \\
&  \leqslant c\left\vert \eta_{1}-\eta_{2}\right\vert ^{\alpha}.
\end{align*}
Also, since $\frac{\partial g_{\eta}}{\partial\xi_{j}}\left(  \xi\right)  $
depends on $\eta$ in a $C^{\alpha}$ way, we have%
\[
\left\vert \widetilde{X}_{i}g_{\eta_{1}}\left(  \Theta_{\eta_{2}}^{-1}\left(
u\right)  \right)  -\widetilde{X}_{i}g_{\eta_{2}}\left(  \Theta_{\eta_{2}%
}^{-1}\left(  u\right)  \right)  \right\vert \leqslant c\left\vert \eta
_{1}-\eta_{2}\right\vert ^{\alpha}.
\]
By (\ref{c_iketa}) this shows that $\eta\mapsto c_{ik}^{\eta}\left(  u\right)
$ is $C^{\alpha}$. Let us consider now
\[
\frac{\partial c_{ik}^{\eta}}{\partial u_{j}}\left(  u\right)  =\nabla_{\xi
}\left(  \widetilde{X}_{i}g_{\eta}\right)  \left(  \Theta_{\eta}^{-1}\left(
u\right)  \right)  \cdot\frac{\partial\Theta_{\eta}^{-1}\left(  u\right)
}{\partial u_{j}}-\frac{\partial}{\partial u_{j}}\left(  Y_{i}u_{k}\right)  .
\]
Since $\frac{\partial\Theta_{\eta}^{-1}\left(  u\right)  }{\partial u_{j}}$
depends on $\eta$ in a $C^{\alpha}$ way it is enough to study $\nabla_{\xi
}\left(  \widetilde{X}_{i}g_{\eta}\right)  \left(  \Theta_{\eta}^{-1}\left(
u\right)  \right)  $. We have%
\[
\frac{\partial}{\partial\xi_{\ell}}\widetilde{X}_{i}g_{\eta}\left(
\xi\right)  =\sum\frac{\partial\widetilde{b}_{ij}}{\partial\xi_{\ell}}\left(
\xi\right)  \frac{\partial g_{\eta}}{\partial\xi_{j}}\left(  \xi\right)
+\sum\widetilde{b}_{ij}\left(  \xi\right)  \frac{\partial^{2}g_{\eta}%
}{\partial\xi_{\ell}\partial\xi_{j}}\left(  \xi\right)  .
\]
By Assumption A, for $i\neq0$, $\widetilde{b}_{ij}\in C^{r-1,\alpha}$, so that
$\frac{\partial\widetilde{b}_{ij}}{\partial\xi_{\ell}}\in C^{r-2,\alpha}$.
Since for $r=2$ we have $\alpha=1$, $\frac{\partial\widetilde{b}_{ij}%
}{\partial\xi_{\ell}}$ is at least Lipschitz therefore $\frac{\partial
}{\partial\xi_{\ell}}\widetilde{X}_{i}g_{\eta}\left(  \xi\right)  $ is
Lipschitz with respect to $\xi$ and $C^{\alpha}$ with respect to $\eta$. The
proof now follows as in the previous case.

To show 2.ii, we first note that, from the proof of \cite[Prop. 3.5]{BBP2},
one reads that%
\begin{equation}
\left\vert c_{ik}^{\eta}\left(  u\right)  \right\vert \leqslant c\left\vert
u\right\vert ^{r-1+\alpha}. \label{C^1+alfa bound}%
\end{equation}
On the other hand, we know that $c_{ik}^{\eta}\left(  \cdot\right)  \in
C^{r-1,\alpha}$, hence the Taylor expansion of $c_{ik}^{\eta}\left(
\cdot\right)  $ and the bound (\ref{C^1+alfa bound}) imply%
\[
\left\vert \frac{\partial c_{ik}^{\eta}}{\partial u_{j}}\left(  u\right)
\right\vert \leqslant c\left\vert u\right\vert ^{r-2+\alpha}\leqslant
c\left\Vert u\right\Vert ^{\alpha_{k}-2+\alpha}.
\]
This implies 2.ii.
\end{proof}

The assertions on the \textquotedblleft weight\textquotedblright\ of the
remainders $R_{i}^{\eta}$ in point 2.ii of the previous theorem in particular
mean that, whenever $f:\mathbb{G}\rightarrow\mathbb{R}$ is homogeneous of
degree $-k$ (with respect to the dilations $D\left(  \lambda\right)  $), then
near the origin%
\begin{equation}
\left\vert R_{i}^{\eta}f\left(  u\right)  \right\vert \leqslant\frac
{c}{\left\Vert u\right\Vert ^{k+p_{i}-\alpha}}\text{ for }i=0,1,...,n.
\label{remainder}%
\end{equation}

Moreover, the statements 2.i and 2.ii in the above theorem immediately imply:

\begin{corollary}
\label{Coroll second order remainder}All the differential operators
$D_{ij}^{\eta}$ defined by the compositions%
\[
Y_{j}R_{i}^{\eta}\text{, }R_{i}^{\eta}Y_{j}\text{, }R_{i}^{\eta}R_{j}^{\eta
}\text{ }\left(  i,j=1,2,...,n\right)
\]
satisfy the bound%
\begin{equation}
\left\vert D_{ij}^{\eta}f\left(  u\right)  \right\vert \leqslant\frac
{c}{\left\Vert u\right\Vert ^{k+2-\alpha}} \label{second order remainder}%
\end{equation}
for $u$ in a neighborhood of the origin, whenever $f:\mathbb{G}\rightarrow
\mathbb{R}$ is $D\left(  \lambda\right)  $-homogeneous of degree $-k$. Also,
the coefficients of $D_{ij}^{\eta}$ depend on $\eta$ in a $C^{\alpha}$ way.
\end{corollary}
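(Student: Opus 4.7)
My plan is to write each of the three second-order compositions explicitly in the coordinates $\partial_{u_k}$ and $\partial^{2}_{u_j u_k}$, bound the resulting variable coefficients using the weight data recorded in Theorem~\ref{Thm liftapprox} and Theorem~\ref{Thm Calphatheta}(2), and then combine these with the standard pointwise bounds for derivatives of a $D(\lambda)$-homogeneous function. To avoid a clash with summation indices below I will use $m$ for the degree of homogeneity of $f$ (so $f$ is $-m$-homogeneous).

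Concretely, writing $Y_j=\sum_{s} b_{js}(u)\partial_{u_s}$ and $R_i^\eta=\sum_{k} c_{ik}^\eta(u)\partial_{u_k}$, the product rule gives
\[
Y_j R_i^\eta=\sum_{s,k} b_{js}(u)\frac{\partial c_{ik}^\eta}{\partial u_s}(u)\,\partial_{u_k}+\sum_{s,k} b_{js}(u)\,c_{ik}^\eta(u)\,\partial^{2}_{u_s u_k},
\]
with entirely analogous expansions for $R_i^\eta Y_j$ (in which $\partial b_{j\ell}/\partial u_k$ replaces $\partial c_{ik}^\eta/\partial u_s$) and for $R_i^\eta R_j^\eta$ (in which $\partial c_{j\ell}^\eta/\partial u_k$ does). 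I would then invoke three size bounds: (a) $|b_{js}(u)|\leqslant c\|u\|^{\alpha_s-1}$, coming from the homogeneity of $Y_j$; (b) $|c_{ik}^\eta(u)|\leqslant c\|u\|^{\alpha_k+\alpha-1}$, which is the weight $\geqslant\alpha-1$ of $R_i^\eta$ recorded in Theorem~\ref{Thm liftapprox}; and (c) $|\partial c_{ik}^\eta/\partial u_s(u)|\leqslant c\|u\|^{\alpha_k+\alpha-2}$, which is precisely 2.ii of Theorem~\ref{Thm Calphatheta}. Since $f$ is $D(\lambda)$-homogeneous of degree $-m$, I will also use $|\partial_{u_\ell}f(u)|\leqslant c\|u\|^{-m-\alpha_\ell}$ and $|\partial^{2}_{u_\ell u_k}f(u)|\leqslant c\|u\|^{-m-\alpha_\ell-\alpha_k}$. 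Multiplying the relevant factors in each monomial produces a power of $\|u\|$ of the form $-m-2+\alpha+(\text{nonnegative})$; on a bounded neighborhood of the origin the nonnegative excess is absorbed into the constant (using $\alpha_s\geqslant1$), yielding exactly the claimed bound $c/\|u\|^{m+2-\alpha}$.

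The $C^\alpha$ dependence of the coefficients of $D_{ij}^\eta$ on $\eta$ is then automatic from part~2.i of Theorem~\ref{Thm Calphatheta}: the $b_{js}$ and their polynomial derivatives are $\eta$-independent, while both $c_{ik}^\eta$ and $\partial c_{ik}^\eta/\partial u_s$ were shown there to be $C^\alpha$ in $\eta$, locally uniformly in $u$. I do not foresee any substantive obstacle here, since all the real work has been done in the preceding theorem. The one point that repays a little care is the case $R_i^\eta R_j^\eta$, whose first-order part has coefficient of the form $c_{ik}^\eta\,\partial c_{j\ell}^\eta/\partial u_k$; here the elementary inequality $\alpha_k+\alpha\geqslant 1$ is exactly what makes the resulting power of $\|u\|$ acceptable, and this is precisely where the weight information in 2.ii is indispensable, which explains why that statement was singled out in the statement of Theorem~\ref{Thm Calphatheta}.
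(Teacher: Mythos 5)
Your proposal is correct and is precisely the elementary coordinate computation the paper declares to be an immediate consequence of Theorem \ref{Thm Calphatheta}(2.i)--(2.ii); the paper gives no further argument, so there is no genuinely different route to compare against.
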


Next, we have to point out some properties related to the volume of metric balls.

\begin{remark}
In contrast with (\ref{meas B^tilde}), if we apply the estimates
(\ref{volume}) for $x$ in the neighborhood $U\left(  x_{0}\right)  $ where the
lifting theorem applies, we find the following useful inequalities%
\begin{equation}
c_{1}\left(  \frac{r_{1}}{r_{2}}\right)  ^{p}\leqslant\frac{\left\vert
B\left(  x,r_{1}\right)  \right\vert }{\left\vert B\left(  x,r_{2}\right)
\right\vert }\leqslant c_{2}\left(  \frac{r_{1}}{r_{2}}\right)  ^{Q}
\label{volume inequalities}%
\end{equation}
for any $r_{1},r_{2}$ with $\rho_{0}>r_{1}>r_{2}>0.$ This follows from the
inequalities $p\leqslant\left\vert \mathcal{I}\right\vert \leqslant Q,$
holding for each $\mathcal{I}$ in the sums appearing in (\ref{volume}).
\end{remark}

The following nonsmooth version of a well-known result by S\'anchez-Calle
\cite{SC} and Nagel, Stein,\ Wainger \cite{NSW}, has been proved in
\cite{BBP}, and allows one to compare the volume of balls in the lifted and in
the original variables.

\begin{theorem}
\label{Thm Sanchez-Calle} Let $x_{0}$, $U\left(  x_{0}\right)  $, and $I$ be
as in the lifting theorem. Then, up to possibly shrinking the set $U\left(
x_{0}\right)  $, there exist positive constants $c_{1},c_{2},\rho_{0},$ and
$\delta\in\left(  0,1\right)  $ such that for any $\left(  x,h\right)  \in
U\left(  x_{0}\right)  \times I$, any $y\in B\left(  x,\delta\rho\right)  ,$
$0<\rho<\rho_{0},$ we have
\begin{equation}
c_{1}\frac{\left\vert \widetilde{B}\left(  \left(  x,h\right)  ,\rho\right)
\right\vert }{\left\vert B\left(  x,\rho\right)  \right\vert }\leqslant
\int_{\mathbb{R}^{m}}\chi_{\widetilde{B}\left(  \left(  x,h\right)
,\rho\right)  }\left(  y,s\right)  ds\leqslant c_{2}\frac{\left\vert
\widetilde{B}\left(  \left(  x,h\right)  ,\rho\right)  \right\vert
}{\left\vert B\left(  x,\rho\right)  \right\vert }. \label{Sanchez-Calle}%
\end{equation}
Actually the second inequality holds for every $y\in U\left(  x_{0}\right)  $.
Also, the projection of $\widetilde{B}\left(  \left(  x,h\right)
,\rho\right)  $ on $\mathbb{R}^{p}$ is exactly $B\left(  x,\rho\right)  .$
\end{theorem}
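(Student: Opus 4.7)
The plan is to establish the theorem in three logically separate parts: the projection identity $\pi(\widetilde{B}((x,h),\rho))=B(x,\rho)$ (where $\pi:\mathbb{R}^{p+m}\to\mathbb{R}^p$ is the canonical projection), an upper bound on the measure of the vertical fibers valid for all $y\in U(x_0)$, and a lower bound that only holds for $y\in B(x,\delta\rho)$.

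First I would prove the projection identity. The inclusion $\subseteq$ follows from the triangular structure (\ref{Liftati}): every lifted commutator $\widetilde{X}_{[I]}$ decomposes as $X_{[I]}+R_I$, with $R_I$ lying in the span of the $\partial/\partial h_j$. Hence any admissible curve $\widetilde{\varphi}$ realizing $\widetilde{d}((x,h),(y,s))<\rho$ projects via $\pi$ to an admissible curve from $x$ to $y$ in $\Omega$ with the same coefficient functions $a_I(t)$, so $y\in B(x,\rho)$. The reverse inclusion is obtained by lifting: given an admissible curve $\varphi$ from $x$ to $y$ in $\Omega$ with parameters bounded by $\rho^{|I|}$, one constructs $\widetilde{\varphi}$ by integrating the triangular polynomial ODE in the $h$-components dictated by (\ref{Liftati}); the resulting curve ends at some $(y,s)$ and witnesses $\widetilde{d}((x,h),(y,s))<\rho$.

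Once this identity is in hand, Fubini yields
\[
|\widetilde{B}((x,h),\rho)|=\int_{B(x,\rho)}|F_y|\,dy,\qquad F_y:=\{s\in\mathbb{R}^m:(y,s)\in\widetilde{B}((x,h),\rho)\},
\]
so the statement reduces to the pointwise bounds $|F_y|\leqslant c_2\,|\widetilde{B}((x,h),\rho)|/|B(x,\rho)|$ for all $y\in U(x_0)$ and $|F_y|\geqslant c_1\,|\widetilde{B}((x,h),\rho)|/|B(x,\rho)|$ for $y\in B(x,\delta\rho)$. For the upper bound I would show that $F_y$ is contained in a box whose $j$-th side has length comparable to $\rho^{\sigma_j}$, where $\sigma_j$ is the weight attached to $h_j$ in the free stratified structure granted by Theorem \ref{Thm lifting}. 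Comparing with the volume formula (\ref{volume}) applied to $\widetilde{B}((x,h),\rho)$ — whose dominant multi-index families must include $m$ indices whose lifted commutators span the $h$-directions — $\prod_j\rho^{\sigma_j}$ appears as a factor of $|\widetilde{B}|/|B|$, hence the desired upper bound on $|F_y|$.

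The lower bound is the main difficulty. My plan is to construct an $m$-parameter family of admissible curves ending in the fiber over $y$ and sweeping out an $h$-volume of the correct order. The construction proceeds in two stages: first use part of the budget, of order $\delta\rho$, to lift an admissible curve from $x$ to $y$, arriving at some $(y,\widetilde{h}(y))$; then exhaust the remaining budget of order $\rho$ by flowing along carefully chosen combinations of lifted commutators whose projections to $\mathbb{R}^p$ vanish at $y$, producing pure $h$-displacements. Freeness of $\widetilde{X}_0,\ldots,\widetilde{X}_n$ up to step $r$ guarantees an $m$-dimensional family of such pure $h$-motions, and a Jacobian estimate — worked out within the limited regularity allowed by Assumptions A — shows that their image covers a subset of $\mathbb{R}^m$ of the correct measure, uniformly in $(x,h)\in U(x_0)\times I$ and $y\in B(x,\delta\rho)$. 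The restriction $y\in B(x,\delta\rho)$ is exactly what allows the two stages to fit within total budget $\rho$, and is the reason the lower estimate cannot be extended beyond this ball.
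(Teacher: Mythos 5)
The paper does not prove Theorem \ref{Thm Sanchez-Calle} at all: the statement is immediately followed by a Remark explaining that the inequality is taken from \cite{BBP} (where it is proved for the drift-free case), that the underlying smooth result is due to \cite{SC}, \cite{NSW} and \cite{J}, and that ``the same is true for nonsmooth H\"ormander's vector fields.'' In other words, the paper's ``proof'' is a citation together with a short bibliographic commentary on how the smooth result with drift transfers to the nonsmooth setting. Your proposal is therefore a genuinely different route: a from-scratch geometric argument in the spirit of S\'anchez-Calle, rather than a reduction to the smooth case as in \cite{BBP}.

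On the merits of your plan: the projection identity and the Fubini reduction are sound, and they reflect correctly the triangular structure (\ref{Liftati}). But both halves of the fiber estimate contain gaps. For the upper bound, the claim that $F_y$ sits in a box of Euclidean sides $\rho^{\sigma_j}$ and that $\prod_j\rho^{\sigma_j}$ matches $|\widetilde B|/|B(x,\rho)|$ requires relating the lifted variables $h_j$ to the graded exponential coordinates of the free group; the $h$-coordinates produced by Theorem \ref{Thm lifting} are not those coordinates, and in the nonsmooth setting this dictionary must be established via the map $\Theta_\eta$ and the volume formula (\ref{volume}), which you do not do. For the lower bound, you correctly note that the admissible combinations keeping $x$ fixed at $y$ are $\sum_I a_I(t)\widetilde X_{[I]}$ with $\sum_I a_I(t)(X_{[I]})_y=0$, but the essential step --- the Jacobian lower bound showing this family sweeps out $h$-volume of order $|\widetilde B|/|B(x,\rho)|$, uniformly in $(x,h)$, $y$ and $\rho$, under only Assumptions A --- is precisely the technical core of the S\'anchez-Calle/Jerison argument, and you leave it entirely unaddressed. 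In short, the outline is a sensible sketch of the classical direct proof, but the two volume estimates where all the work lives are still open in your write-up; the paper sidesteps all of this by deferring to \cite{BBP} and the smooth literature.
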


\begin{remark}
Actually (\ref{Sanchez-Calle}) is stated in \cite{BBP} when $X_{0}$ is
lacking; however, the proof given in \cite{BBP} relies on the analog result
which holds for smooth H\"{o}rmander's vector fields. In turn, the result for
smooth H\"{o}rmander's vector fields has been proved in \cite{SC} when $X_{0}$
is lacking, while just one of the two inequalities in (\ref{Sanchez-Calle})
has been proved in \cite{NSW} also in presence of $X_{0};$ however, as shown
in \cite{J}, the same argument used in \cite{NSW} allows one to prove also the
other inequality. Hence (\ref{Sanchez-Calle}) holds in the smooth case also in
presence of $X_{0},$ and the same is true for nonsmooth H\"{o}rmander's vector fields.
\end{remark}

\bigskip

\textbf{Notation.\label{Notation} }Throughout the paper we will handle four
types of vector fields, which will be regarded as differential operators
acting on different variables. The vector fields%
\[
Y_{i}\text{ and }R_{i}^{\eta}%
\]
act on the variable $u$ in the group $\mathbb{G}$ (that is, they are written
in the coordinates $u$), and we will often have $u=\Theta_{\eta}\left(
\xi\right)  $; moreover, the coefficients of the $R_{i}^{\eta}$'s depend on
the variable $\eta$ as a parameter. The vector fields
\[
X_{i}\text{ and }\widetilde{X}_{i}%
\]
act on $\mathbb{R}^{p}$, $\mathbb{R}^{N}$, respectively; they are often
applied on a function of two variables, and in this case, they will
\emph{always} be seen as acting on the \emph{first }variable, which in
$\mathbb{R}^{p}$ is called $x$ and in $\mathbb{R}^{N}$ is called $\xi=\left(
x,h\right)  $. For instance,%
\begin{align*}
X_{i}f\left(  x,y\right)   &  =X_{i}\left[  f\left(  \cdot,y\right)  \right]
\left(  x\right)  ;\\
\widetilde{X}_{i}f\left(  \xi,\eta\right)   &  =\widetilde{X}_{i}\left[
f\left(  \cdot,\eta\right)  \right]  \left(  \xi\right)  .
\end{align*}
These conventions will be applied consistently throughout the paper.

\section{Geometric estimates\label{sec geometric}}

In this section we establish some estimates which relate the growth of some
kernels defined in the lifted space with that of kernels defined in the
original space $\mathbb{R}^{p}$. The fact that the volume of metric balls in
$\mathbb{R}^{p}$ does not behave like a fixed power of the radius makes these
estimates delicate to be proved. These results will be fundamental throughout
the following.

Let $\Omega\subset\mathbb{R}^{p}$ be a domain where our assumptions are
satisfied, $\Omega^{\prime}\Subset\Omega,x_{0}\in\Omega^{\prime},$ $U\left(
x_{0}\right)  =B\left(  x_{0},r_{0}\right)  \Subset\Omega$ a neighborhood of
$x_{0}$ where the lifting and approximation theorem is applicable, $R$ a
number small enough so that $B\left(  x,2R\right)  \Subset\Omega$ for any
$x\in U\left(  x_{0}\right)  $. Let $\varphi,\psi\in C_{0}^{\infty}\left(
\mathbb{R}^{m}\right)  $ be supported in the neighborhood $I$ of the origin
which appears in Theorem \ref{Thm lifting}. Shrinking if necessary \ $U\left(
x_{0}\right)  $ and the supports of $\varphi,\psi,$ we can assume that
$4r_{0}\leqslant R$ and%
\[
\widetilde{d}\left(  \left(  x,h\right)  ,\left(  y,k\right)  \right)  <R
\]
for $x,y\in U\left(  x_{0}\right)  $ and $h,k$ in the supports of
$\varphi,\psi,$ respectively. With this notation, we have the following:

\begin{lemma}
\label{Lemma NSW}For every $\beta\in\mathbb{R}$ there exists $c>0$ such that%
\[
\int_{\mathbb{R}^{m}}\int_{\mathbb{R}^{m}}\frac{\psi\left(  h\right)
}{\left\Vert \Theta_{\left(  y,k\right)  }\left(  x,h\right)  \right\Vert
^{Q-\beta}}dh\,\varphi\left(  k\right)  dk\leqslant c\int_{d\left(
x,y\right)  }^{R}\frac{r^{\beta-1}}{\left\vert B\left(  x,r\right)
\right\vert }dr
\]
for any $x,y\in U\left(  x_{0}\right)  $.
\end{lemma}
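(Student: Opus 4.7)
The plan is to bound the inner integral (for each fixed $k$ with $\varphi(k)\neq 0$) by a dyadic decomposition with respect to the lifted distance $\widetilde{d}((x,h),(y,k))$, which by (\ref{equiv Theta d}) is comparable to $\|\Theta_{(y,k)}(x,h)\|$. Shrinking $\operatorname{supp}\psi$ as in the setup guarantees that $\widetilde{d}((x,h),(y,k))\leqslant R$ on the support of the integrand. On the other hand, the projection property in Theorem \ref{Thm Sanchez-Calle} (the projection of $\widetilde{B}((x,h),\rho)$ on $\mathbb{R}^{p}$ equals $B(x,\rho)$) forces $\widetilde{d}((x,h),(y,k))\geqslant d(x,y)$, so only the dyadic levels with $\rho\geqslant d(x,y)$ contribute.

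Concretely, for each integer $j\geqslant 0$ with $2^{-j}R\geqslant d(x,y)$, I would set
\[
A_{j}=\{h\in\operatorname{supp}\psi:2^{-j-1}R<\widetilde{d}((x,h),(y,k))\leqslant 2^{-j}R\}
\]
so that on $A_{j}$ the integrand is comparable to $(2^{-j}R)^{\beta-Q}$. The key step is to bound $|A_{j}|$ (as a subset of $\mathbb{R}^{m}$, with $x$ fixed) using Theorem \ref{Thm Sanchez-Calle} with the roles of the two variables swapped: applying (\ref{Sanchez-Calle}) with center $(y,k)$ and with $x$ playing the role of the ``first variable'' of the integrated point gives
\[
|A_{j}|\leqslant\int_{\mathbb{R}^{m}}\chi_{\widetilde{B}((y,k),2^{-j}R)}(x,h)\,dh\leqslant c\,\frac{|\widetilde{B}((y,k),2^{-j}R)|}{|B(y,2^{-j}R)|}.
\]
Then (\ref{meas B^tilde}) turns the numerator into $(2^{-j}R)^{Q}$, and the doubling condition (Theorem \ref{Thm doubling nonsmooth}) replaces $|B(y,2^{-j}R)|$ by $|B(x,2^{-j}R)|$ up to a constant, because $2^{-j}R\geqslant d(x,y)$. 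So
\[
\int_{A_{j}}\frac{\psi(h)\,dh}{\|\Theta_{(y,k)}(x,h)\|^{Q-\beta}}\leqslant c\,\frac{(2^{-j}R)^{\beta}}{|B(x,2^{-j}R)|}.
\]

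To finish, I would compare each dyadic piece with an integral: on the interval $[2^{-j-1}R,2^{-j}R]$, both $r^{\beta}$ and $|B(x,r)|$ are comparable to their values at the endpoint (again by doubling), so
\[
\frac{(2^{-j}R)^{\beta}}{|B(x,2^{-j}R)|}\leqslant c\int_{2^{-j-1}R}^{2^{-j}R}\frac{r^{\beta-1}}{|B(x,r)|}\,dr.
\]
Summing over $j$ from $0$ up to the largest index with $2^{-j}R\geqslant d(x,y)$ telescopes to an upper bound of the form $c\int_{d(x,y)}^{R}r^{\beta-1}/|B(x,r)|\,dr$, with a constant independent of $k$. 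Since the bound no longer depends on $k$, integrating against $\varphi(k)$ only contributes the finite factor $\int\varphi$, completing the proof.

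The main obstacle I anticipate is the slice estimate in step two: Sánchez-Calle's inequality is stated as an integral over the ``fiber'' variable around a center, and one has to be careful that swapping the roles still produces a bound controlled by $|B(x,\rho)|$ rather than $|B(y,\rho)|$. This is where the condition $\rho\geqslant d(x,y)$ (forced by the projection property) is essential, and where doubling is used to replace $y$-based balls with $x$-based ones. Once this comparison is in hand, the remaining steps are routine geometric-series manipulations.
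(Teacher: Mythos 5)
Your proof is correct and follows essentially the same route the paper intends: the paper cites NSW Theorem 5 and says it transfers via Theorem \ref{Thm Sanchez-Calle}, and your dyadic decomposition combined with the fiber-volume comparison from (\ref{Sanchez-Calle}), the equivalence (\ref{equiv Theta d}), the volume estimate (\ref{meas B^tilde}), and doubling to pass from $|B(y,\rho)|$ to $|B(x,\rho)|$ (valid since $\rho\geqslant d(x,y)$) is exactly how that argument adapts. The only cosmetic point is at the bottom of the telescoping sum: the lowest dyadic level starts at $2^{-j_{\max}-1}R\in[\tfrac{1}{2}d(x,y),d(x,y))$ rather than at $d(x,y)$, but one more application of doubling (cf.\ the estimate (\ref{comparable phi}) used later in the paper) absorbs this into the constant.
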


This is just \cite[Thm.\ 5]{NSW}, in our nonsmooth context. It can be proved
at the same way using Theorem \ref{Thm Sanchez-Calle}.

It is convenient to give a name to the function which appears in the previous
Lemma, since it will be a central object throughout the following.

\begin{definition}
For $x,y\in U\left(  x_{0}\right)  ,x\neq y\ $and $\beta\in\mathbb{R},$ let%
\begin{equation}
\phi_{\beta}\left(  x,y\right)  =\int_{d\left(  x,y\right)  }^{R}%
\frac{r^{\beta-1}}{\left\vert B\left(  x,r\right)  \right\vert }dr.
\label{Phi beta}%
\end{equation}

\end{definition}

The estimate in the previous lemma is made more readable by the next:

\begin{lemma}
\label{Lemma nonintegrale}For $x,y\in U\left(  x_{0}\right)  ,x\neq y$, the
following inequalities hold:%
\[
\phi_{\beta}\left(  x,y\right)  \leqslant\left\{
\begin{array}
[c]{ll}%
\displaystyle c\frac{d\left(  x,y\right)  ^{\beta}}{\left\vert B\left(
x,d\left(  x,y\right)  \right)  \right\vert } & \text{for }\beta<p\\
& \\
\displaystyle c\frac{d\left(  x,y\right)  ^{p}}{\left\vert B\left(  x,d\left(
x,y\right)  \right)  \right\vert }\log\frac{R}{d\left(  x,y\right)  } &
\text{for }\beta=p\\
& \\
\displaystyle c\frac{d\left(  x,y\right)  ^{p}}{\left\vert B\left(  x,d\left(
x,y\right)  \right)  \right\vert }R^{\beta-p} & \text{for }\beta>p
\end{array}
\right.
\]
(recall that $p$ is the Euclidean dimension of the space of variables $x,y$).
\end{lemma}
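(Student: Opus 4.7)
The plan is a direct estimate of the integral defining $\phi_\beta$, using the volume growth inequality (\ref{volume inequalities}) to pull the denominator $|B(x,r)|$ out of the integral, after which the remaining integrand is an explicit power of $r$ whose primitive is elementary.

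More precisely, since for $x,y \in U(x_0)$ we have $d(x,y) < R < \rho_0$ (after possibly shrinking $R$), we can apply the left-hand inequality of (\ref{volume inequalities}) with $r_1 = r$ and $r_2 = d(x,y)$ for every $r \in [d(x,y), R]$ to obtain
\[
\frac{1}{|B(x,r)|} \leqslant \frac{1}{c_1\,|B(x,d(x,y))|}\left(\frac{d(x,y)}{r}\right)^p.
\]
Substituting this into (\ref{Phi beta}) and pulling out the factor $d(x,y)^p/|B(x,d(x,y))|$ gives
\[
\phi_\beta(x,y) \leqslant \frac{c\, d(x,y)^p}{|B(x,d(x,y))|} \int_{d(x,y)}^{R} r^{\beta - 1 - p}\,dr.
\]

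The last step is to evaluate (or bound) the elementary integral in the three cases. For $\beta < p$ the primitive is $r^{\beta-p}/(\beta-p)$ and the value at $r=R$ is negligible (of the correct sign), leaving $d(x,y)^{\beta-p}/(p-\beta)$, which combines with the prefactor $d(x,y)^p$ to yield the first bound $c\, d(x,y)^\beta / |B(x,d(x,y))|$. For $\beta = p$ the integral is exactly $\log(R/d(x,y))$, producing the middle case. For $\beta > p$ the primitive is $r^{\beta - p}/(\beta - p)$ and the dominant contribution is $R^{\beta - p}/(\beta - p)$, which gives the third case.

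There is no real obstacle: the only subtlety is verifying that the hypothesis $r_1 < \rho_0$ of (\ref{volume inequalities}) is in force for all $r \in [d(x,y),R]$, which is guaranteed by the initial choice of $R$ in the setup of Section \ref{sec geometric} (where $4r_0 \leqslant R$ and $R$ can be further shrunk so that $R < \rho_0$). Once this is in place, the computation is a one-line inequality followed by a three-case integration.
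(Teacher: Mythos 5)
Your proof is correct and follows essentially the same route as the paper's: use (\ref{volume inequalities}) to bound $|B(x,r)|^{-1}$ by $c\,|B(x,d(x,y))|^{-1}(d(x,y)/r)^p$, pull the constant prefactor out of the integral, and evaluate the resulting elementary power integral in the three regimes $\beta<p$, $\beta=p$, $\beta>p$. The paper writes out only the case $\beta<p$ explicitly and declares the other two analogous, which is precisely the three-case computation you carry out.
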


\begin{proof}
By (\ref{volume inequalities}) we have:%
\[
\left\vert B\left(  x,r\right)  \right\vert \geqslant c\left\vert B\left(
x,d\left(  x,y\right)  \right)  \right\vert \left(  \frac{r}{d\left(
x,y\right)  }\right)  ^{p}\text{ for }d\left(  x,y\right)  <r<R.
\]
Hence, for $\beta<p,$%
\begin{align*}
\int_{d\left(  x,y\right)  }^{R}\frac{r^{\beta-1}}{\left\vert B\left(
x,r\right)  \right\vert }dr  &  \leqslant c\frac{d\left(  x,y\right)  ^{p}%
}{\left\vert B\left(  x,d\left(  x,y\right)  \right)  \right\vert }%
\int_{d\left(  x,y\right)  }^{R}\frac{r^{\beta-1}}{r^{p}}dr\\
&  =c\frac{d\left(  x,y\right)  ^{p}}{\left\vert B\left(  x,d\left(
x,y\right)  \right)  \right\vert }\left[  \frac{d\left(  x,y\right)
^{\beta-p}-R^{\beta-p}}{p-\beta}\right] \\
&  \leqslant c\frac{d\left(  x,y\right)  ^{p}}{\left\vert B\left(  x,d\left(
x,y\right)  \right)  \right\vert }d\left(  x,y\right)  ^{\beta-p}%
=c\frac{d\left(  x,y\right)  ^{\beta}}{\left\vert B\left(  x,d\left(
x,y\right)  \right)  \right\vert }.
\end{align*}
The proof in other cases is analogous.
\end{proof}

By a standard computation the previous lemma immediately implies

\begin{corollary}
\label{coroll Psi beta}For any $\beta>0$ the following bounds hold:%
\[
\Psi_{\beta}\left(  x,r\right)  \equiv\int_{d\left(  x,y\right)  <r}%
\phi_{\beta}\left(  x,y\right)  dy\leqslant\left\{
\begin{array}
[c]{ll}%
cr^{\beta} & \text{if }\beta<p\\
c_{\varepsilon}r^{\beta-\varepsilon} & \text{if }\beta=p\text{ (any
}\varepsilon>0\text{)}\\
cr^{p} & \text{if }\beta>p
\end{array}
\right.
\]
where in case $\beta<p$ the constant $c$ is independent of $R$. In any case,
$\Psi_{\beta}\left(  x,r\right)  \rightarrow0$ as $r\rightarrow0,$ uniformly
in $x$.
\end{corollary}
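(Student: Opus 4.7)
\noindent The plan is to prove the estimate by a Fubini argument followed by the volume comparison (\ref{volume inequalities}). Writing out the definition of $\phi_{\beta}$ and swapping the order of integration gives
\[
\Psi_{\beta}(x,r)=\int_{d(x,y)<r}\int_{d(x,y)}^{R}\frac{s^{\beta-1}}{|B(x,s)|}\,ds\,dy=\int_{0}^{R}\frac{s^{\beta-1}}{|B(x,s)|}|\{y:d(x,y)<\min(s,r)\}|\,ds.
\]
Splitting the outer integral at $s=r$, the piece with $s<r$ collapses to $\int_{0}^{r}s^{\beta-1}\,ds=r^{\beta}/\beta$, while the piece with $s\geqslant r$ equals $|B(x,r)|\int_{r}^{R}\frac{s^{\beta-1}}{|B(x,s)|}\,ds$.

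\noindent The next step is to estimate the second piece using the left inequality in (\ref{volume inequalities}), which yields $|B(x,r)|/|B(x,s)|\leqslant c(r/s)^{p}$ for $0<r<s<\rho_{0}$. After possibly shrinking $R$ so that $R<\rho_{0}$, this gives
\[
|B(x,r)|\int_{r}^{R}\frac{s^{\beta-1}}{|B(x,s)|}\,ds\leqslant c\,r^{p}\int_{r}^{R}s^{\beta-1-p}\,ds.
\]
A routine case analysis on $\beta-p$ then concludes: for $\beta<p$ the integral is bounded by $r^{\beta-p}/(p-\beta)$ independently of $R$, producing the bound $cr^{\beta}$ with $c$ independent of $R$; for $\beta>p$ the integral is at most $R^{\beta-p}/(\beta-p)$, producing the bound $cr^{p}$; for $\beta=p$ the integral is $\log(R/r)$, and one absorbs this logarithm into any prescribed small power $r^{-\varepsilon}$ via $\log(R/r)\leqslant c_{\varepsilon}r^{-\varepsilon}$ for $r$ small.

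\noindent In every case the contribution $r^{\beta}/\beta$ from the first piece is dominated by the stated bound (using $r\leqslant R$ when $\beta\geqslant p$). The final assertion that $\Psi_{\beta}(x,r)\to0$ as $r\to0$ uniformly in $x$ is then immediate from the explicit bounds, since the constants produced depend only on $\beta,\varepsilon,R$, and the structural constants from (\ref{volume inequalities}) on the neighborhood $U(x_{0})$. I do not foresee a real obstacle here; the only subtlety is keeping track of the $R$-independence in the subcritical case $\beta<p$ (which matters for the later iterative arguments mentioned in the introduction) and the logarithmic borderline $\beta=p$, but both are handled transparently by the split above.
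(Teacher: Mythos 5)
Your argument is correct, and it is a genuinely different (and arguably cleaner) route than the paper's. The paper derives the corollary from Lemma \ref{Lemma nonintegrale}: one first proves the pointwise bound $\phi_\beta(x,y)\leqslant c\,d(x,y)^\beta/|B(x,d(x,y))|$ (with a $\log$ factor if $\beta=p$, an $R^{\beta-p}$ factor if $\beta>p$), then integrates this over $\{d(x,y)<r\}$ by the usual dyadic decomposition into annuli $2^{-k-1}r\leqslant d(x,y)<2^{-k}r$, using the doubling property and summing the resulting geometric series. You instead apply Tonelli directly to the double integral defining $\Psi_\beta$, which reduces everything to the one-dimensional integral $\int_0^R s^{\beta-1}|B(x,\min(s,r))|/|B(x,s)|\,ds$; the split at $s=r$ then handles all three regimes in a single computation, with the volume comparison $(r/s)^p$ entering in exactly one place. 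Your approach bypasses Lemma \ref{Lemma nonintegrale} entirely and avoids the dyadic summation; the paper's approach has the advantage of reusing an estimate already established for other purposes. Both correctly identify that the $R$-independence when $\beta<p$ comes from the convergence of $\int_r^\infty s^{\beta-1-p}\,ds$, and your handling of the borderline $\beta=p$ via $\log(R/r)\leqslant c_\varepsilon r^{-\varepsilon}$ is fine (the constant $c_\varepsilon$ depends on $R$, as it must). The only small caveat, which you flag yourself, is the need for $R\leqslant\rho_0$ so that (\ref{volume inequalities}) applies on the range of integration; the paper implicitly makes the same assumption.
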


\begin{theorem}
\label{Thm phi alfa}We have the following:

1) there exists $c>0$ such that for every $\beta,\gamma>0$ :%
\[
\int_{U\left(  x_{0}\right)  }\phi_{\beta}\left(  x,y\right)  \phi_{\gamma
}\left(  y,z\right)  dy\leqslant c\left(  \frac{1}{\beta}+\frac{1}{\gamma
}\right)  \phi_{\beta+\gamma}\left(  x,z\right)
\]
for every $x,z\in U\left(  x_{0}\right)  $.

2) there exists $c>0$ such that for every $\gamma>Q$%
\[
\phi_{\gamma}\left(  x,y\right)  \leqslant cR^{\gamma-Q}%
\]
for every $x,y\in U\left(  x_{0}\right)  $. (Recall that $Q$ is the
homogeneous dimension of the group in the lifted space).
\end{theorem}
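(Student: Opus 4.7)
\textbf{Proof plan for Theorem \ref{Thm phi alfa}.} For part 2), my plan is direct. By the volume comparison (\ref{volume inequalities}) applied with $r_1=R$, $r_2=r$, I get $|B(x,r)| \geq c (r/R)^Q |B(x,R)|$ for $r<R$. Via Proposition \ref{Prop fefferman phong}, the ball $B(x,R)$ contains a Euclidean ball of fixed radius, so $|B(x,R)|$ is bounded below uniformly for $x\in U(x_0)$. Substituting into the definition of $\phi_\gamma$ and integrating $r^{\gamma-Q-1}$ over $[d(x,y),R]$ (which is integrable since $\gamma>Q$) gives
\begin{equation*}
\phi_\gamma(x,y) \leq \frac{cR^Q}{|B(x,R)|}\int_0^R r^{\gamma-Q-1}\,dr = \frac{cR^\gamma}{(\gamma-Q)\,|B(x,R)|} \leq cR^{\gamma-Q}.
\end{equation*}

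For part 1), the idea is to expand both factors by their integral definition and apply Fubini. Setting $I:=\int_{U(x_0)}\phi_\beta(x,y)\phi_\gamma(y,z)\,dy$ and using the symmetry of $d$ (so $d(y,z)<s$ iff $y\in B(z,s)$), I get
\begin{equation*}
I = \int_0^R\!\!\int_0^R \frac{r^{\beta-1}s^{\gamma-1}}{|B(x,r)|}\left(\int_{B(x,r)\cap B(z,s)}\!\!\frac{dy}{|B(y,s)|}\right)dr\,ds.
\end{equation*}
For $y\in B(z,s)$, the doubling Theorem \ref{Thm doubling nonsmooth} gives $|B(y,s)|\geq c|B(z,s)|$; and of course $|B(x,r)\cap B(z,s)|\leq \min(|B(x,r)|,|B(z,s)|)$. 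Moreover the quasi-triangle inequality forces $r+s\gtrsim d(x,z)$ for the intersection to be non-empty.

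I then split the $(r,s)$-region into $\{r\leq s\}$ and $\{r>s\}$. On $\{r\leq s\}$ I keep $|B(x,r)|$ in the minimum (cancelling the $|B(x,r)|$ in the denominator), and the constraint $r+s\gtrsim d(x,z)$ forces $s\gtrsim d(x,z)/2$. Computing the $r$-integral as $s^\beta/\beta$, the contribution becomes
\begin{equation*}
\frac{c}{\beta}\int_{d(x,z)/c}^{R}\frac{s^{\beta+\gamma-1}}{|B(z,s)|}\,ds.
\end{equation*}
Since $s\gtrsim d(x,z)$ in this range, doubling gives $|B(z,s)|\asymp |B(x,s)|$, and the residual piece on $[d(x,z)/c,d(x,z)]$ is dominated by $cd(x,z)^{\beta+\gamma}/|B(x,d(x,z))|$, itself bounded by $c\phi_{\beta+\gamma}(x,z)$ via the usual comparison with its leading term (estimate its integral over $[d(x,z),2d(x,z)]$ and use doubling). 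The symmetric case $\{r>s\}$, which by the same argument swaps the roles of $x,z$ and $\beta,\gamma$, yields the $1/\gamma$ contribution. Adding the two bounds produces the claim.

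The main point of technical care is the chain of doubling comparisons used to migrate $|B(y,s)|$ to $|B(z,s)|$ and then $|B(z,s)|$ to $|B(x,s)|$, and to shift the lower integration limit from $d(x,z)/c$ to $d(x,z)$ without losing a factor. These steps are routine consequences of Theorem \ref{Thm doubling nonsmooth} and (\ref{volume inequalities}), but they are precisely where the nonsmooth/variable-dimension nature of the space matters: $|B(x,r)|$ is not a fixed power of $r$, so standard Riesz-potential arguments cannot be quoted verbatim and must be redone using the integral representation of $\phi_\beta$ throughout.
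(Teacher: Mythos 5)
Your proof of part 2) is correct and essentially identical to the paper's (the paper infers $\inf_x|B(x,R)|>0$ directly from the doubling condition rather than via Proposition \ref{Prop fefferman phong}, but the substance is the same). For part 1) your route is genuinely different and, I think, cleaner. The paper first partitions the $y$-domain into the three regions $\{d(x,y)<\tfrac12 d(x,z)\}$, $\{d(z,y)<\tfrac12 d(x,z)\}$, and the complementary ``both far'' set, and then applies Fubini separately in each region (with further sub-splittings of the $r$-range inside the first region). You instead apply Fubini once, writing the whole thing as a double $(r,s)$-integral with inner $y$-integral over $B(x,r)\cap B(z,s)$, and then split \emph{the $(r,s)$-plane} into $\{r\leqslant s\}$ and $\{r>s\}$, using the nonemptiness constraint $r+s>d(x,z)$ to force the large variable to exceed $\tfrac12 d(x,z)$. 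This collapses the paper's casework into two symmetric computations and isolates exactly where the $1/\beta$ and $1/\gamma$ factors come from (the $r$-integral and the $s$-integral respectively, integrated from $0$). The residual ingredients you invoke — $|B(y,s)|\gtrsim|B(z,s)|$ for $y\in B(z,s)$, $|B(z,s)|\asymp|B(x,s)|$ for $s\gtrsim d(x,z)$, and the harmless shift of the lower limit from $\tfrac12 d(x,z)$ to $d(x,z)$ at the cost of a constant independent of $\beta+\gamma$ — are precisely the paper's preliminary facts $\phi_\beta(x,y)\leqslant c\,\phi_\beta(y,x)$ and estimate (\ref{comparable phi}), re-derived inline; the comparison $d(x,z)^{\beta+\gamma}/|B(x,d(x,z))|\lesssim\phi_{\beta+\gamma}(x,z)$ that you use for the leftover piece is sound because $\int_{\rho}^{2\rho} r^{\mu-1}dr=(2^\mu-1)\rho^\mu/\mu\gtrsim\rho^\mu$ uniformly in $\mu>0$. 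The only cosmetic remark: in the $\{r>s\}$ case the cancellation leaves $1/|B(x,r)|$ rather than $1/|B(z,s)|$, so the ``symmetry'' is in the structure of the estimate rather than a literal relabeling of $(x,\beta)\leftrightarrow(z,\gamma)$ — worth spelling out, but it does not affect correctness.
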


\begin{remark}
Comparing point 2) in the statement of the above theorem with the case
$\beta>p$ in the statement of Lemma \ref{Lemma nonintegrale}, one can see why
in our context it is necessary to work with the functions $\phi_{\beta}$
instead of the simpler functions
\[
\psi_{\beta}\left(  x,y\right)  =\frac{d\left(  x,y\right)  ^{\beta}%
}{\left\vert B\left(  x,d\left(  x,y\right)  \right)  \right\vert }.
\]
The point is that the functions $\phi_{\beta}$ are bounded for $\beta$ large
enough, so that an iterative construction involving integrals of the kind
\[
\int_{U\left(  x_{0}\right)  }\phi_{\beta}\left(  x,y\right)  \phi_{\gamma
}\left(  y,z\right)  dy
\]
ends with a bounded function. On the other hand, if one tries to prove an
analog of the previous theorem for the $\psi_{\beta}$'s$,$ the best upper
bound one can find is%
\[
\frac{d\left(  x,y\right)  ^{p}}{\left\vert B\left(  x,d\left(  x,y\right)
\right)  \right\vert }%
\]
which is generally unbounded, because $\left\vert B\left(  x,d\left(
x,y\right)  \right)  \right\vert \geqslant cd\left(  x,y\right)  ^{Q}$ with
$Q>p.$ This \textquotedblleft dimensional gap\textquotedblright\ occurs in our
general context since the measure of a ball does not behave like a fixed power
of the radius.
\end{remark}

\begin{proof}
We start by noting that%
\[
\phi_{\beta}\left(  x,y\right)  \leqslant c\phi_{\beta}\left(  y,x\right)  .
\]
Indeed,
\[
\phi_{\beta}\left(  x,y\right)  =\int_{d\left(  x,y\right)  }^{R}%
\frac{r^{\beta-1}}{\left\vert B\left(  x,r\right)  \right\vert }dr\leqslant
c\int_{d\left(  x,y\right)  }^{R}\frac{r^{\beta-1}}{\left\vert B\left(
y,r\right)  \right\vert }dr=c\phi_{\beta}\left(  y,x\right)
\]
since for $d\left(  x,y\right)  <r$ we have $B\left(  y,r\right)  \subset
B\left(  x,2r\right)  ;$ for $x\in U\left(  x_{0}\right)  $ and $r\leqslant R$
the doubling condition is applicable and gives%
\[
\left\vert B\left(  y,r\right)  \right\vert \leqslant\left\vert B\left(
x,2r\right)  \right\vert \leqslant c\left\vert B\left(  x,r\right)
\right\vert .
\]
Also, since $R\geqslant4r_{0}\geqslant2d\left(  x,y\right)  $ for any $x,y\in
U\left(  x_{0}\right)  $, we have%
\begin{align}
\int_{\frac{1}{2}d\left(  x,y\right)  }^{R}\frac{r^{\beta-1}}{\left\vert
B\left(  x,r\right)  \right\vert }dr  &  =\int_{\frac{1}{2}d\left(
x,y\right)  }^{R/2}\frac{r^{\beta-1}}{\left\vert B\left(  x,r\right)
\right\vert }dr+\int_{R/2}^{R}\frac{r^{\beta-1}}{\left\vert B\left(
x,r\right)  \right\vert }dr\nonumber\\
&  \leqslant\frac{c}{2^{\beta}}\int_{d\left(  x,y\right)  }^{R}\frac
{r^{\beta-1}}{\left\vert B\left(  x,r\right)  \right\vert }dr+\int_{d\left(
x,y\right)  }^{R}\frac{r^{\beta-1}}{\left\vert B\left(  x,r\right)
\right\vert }dr\nonumber\\
&  \leqslant c\int_{d\left(  x,y\right)  }^{R}\frac{r^{\beta-1}}{\left\vert
B\left(  x,r\right)  \right\vert }dr \label{comparable phi}%
\end{align}
where $c$ is independent of $\beta$. Now,%
\begin{align*}
&  \int_{U\left(  x_{0}\right)  }\phi_{\beta}\left(  x,y\right)  \phi_{\gamma
}\left(  y,z\right)  dy\\
&  =\int_{d\left(  x,y\right)  <\frac{1}{2}d\left(  x,z\right)  }\left(
...\right)  dy+\int_{d\left(  z,y\right)  <\frac{1}{2}d\left(  x,z\right)
}\left(  ...\right)  dy+\int_{\substack{d\left(  x,y\right)  \geqslant\frac
{1}{2}d\left(  x,z\right)  \\d\left(  z,y\right)  \geqslant\frac{1}{2}d\left(
x,z\right)  }}\left(  ...\right)  dy\\
&  \equiv I+II+III.
\end{align*}

To bound $I$ we note that $\frac{1}{2}d\left(  y,z\right)  \leqslant d\left(
x,z\right)  \leqslant2d\left(  y,z\right)  $, hence%
\begin{align*}
I  &  =\int_{d\left(  x,y\right)  <\frac{1}{2}d\left(  x,z\right)  }\left(
\int_{d\left(  x,y\right)  }^{R}\frac{r^{\beta-1}}{\left\vert B\left(
x,r\right)  \right\vert }dr\int_{d\left(  y,z\right)  }^{R}\frac{s^{\gamma-1}%
}{\left\vert B\left(  z,s\right)  \right\vert }ds\right)  dy\\
&  \leqslant c\int_{\frac{1}{2}d\left(  x,z\right)  }^{R}\frac{s^{\gamma-1}%
}{\left\vert B\left(  z,s\right)  \right\vert }ds\int_{d\left(  x,y\right)
<\frac{1}{2}d\left(  x,z\right)  }\left(  \int_{d\left(  x,y\right)  }%
^{R}\frac{r^{\beta-1}}{\left\vert B\left(  x,r\right)  \right\vert }dr\right)
dy
\end{align*}
and, applying Fubini's theorem in the integral in $drdy$,
\[
d\left(  x,y\right)  <\frac{1}{2}d\left(  x,z\right)  ,d\left(  x,y\right)
<r<R\Longrightarrow0<r<R,d\left(  x,y\right)  <\min\left(  \frac{1}{2}d\left(
x,z\right)  ,r\right)  ,
\]
we have that
\begin{align*}
I  &  \leqslant c\int_{\frac{1}{2}d\left(  x,z\right)  }^{R}\frac{s^{\gamma
-1}}{\left\vert B\left(  z,s\right)  \right\vert }ds\int_{0}^{R}\frac
{r^{\beta-1}}{\left\vert B\left(  x,r\right)  \right\vert }\left(
\int_{d\left(  x,y\right)  <\frac{1}{2}d\left(  x,z\right)  \wedge
r}dy\right)  dr\\
&  =c\int_{\frac{1}{2}d\left(  x,z\right)  }^{R}\frac{s^{\gamma-1}}{\left\vert
B\left(  z,s\right)  \right\vert }ds\left\{  \int_{0}^{\frac{1}{2}d\left(
x,z\right)  }\frac{r^{\beta-1}}{\left\vert B\left(  x,r\right)  \right\vert
}\left(  \int_{d\left(  x,y\right)  <r}dy\right)  dr\right. \\
&  ~~~~~~~+\left.  \int_{\frac{1}{2}d\left(  x,z\right)  }^{R}\frac
{r^{\beta-1}}{\left\vert B\left(  x,r\right)  \right\vert }\left(
\int_{d\left(  x,y\right)  <\frac{1}{2}d\left(  x,z\right)  }dy\right)
dr\right\} \\
&  \leqslant c\int_{\frac{1}{2}d\left(  x,z\right)  }^{R}\frac{s^{\gamma-1}%
}{\left\vert B\left(  z,s\right)  \right\vert }ds\left\{  \int_{0}^{\frac
{1}{2}d\left(  x,z\right)  }r^{\beta-1}dr+\int_{\frac{1}{2}d\left(
x,z\right)  }^{R}\frac{r^{\beta-1}}{\left\vert B\left(  x,r\right)
\right\vert }\left\vert B\left(  x,d\left(  x,z\right)  \right)  \right\vert
dr\right\} \\
&  \equiv I_{A}+I_{B}.
\end{align*}
In turn,%
\[
I_{A}=\frac{c}{\beta}\left(  \frac{1}{2}d\left(  x,z\right)  \right)  ^{\beta
}\int_{\frac{1}{2}d\left(  x,z\right)  }^{R}\frac{s^{\gamma-1}}{\left\vert
B\left(  z,s\right)  \right\vert }ds\leqslant\frac{c}{\beta}\int_{d\left(
x,z\right)  }^{R}\frac{s^{\beta+\gamma-1}}{\left\vert B\left(  z,s\right)
\right\vert }ds
\]
and, using the notation $B(x;z)=B\left(  x,d\left(  x,z\right)  \right)  $ and
applying (\ref{comparable phi}),%
\begin{align*}
I_{B}  &  \leqslant c\left\vert B\left(  x;z\right)  \right\vert
\int_{d\left(  x,z\right)  }^{R}\frac{s^{\gamma-1}}{\left\vert B\left(
z,s\right)  \right\vert }ds\int_{d\left(  x,z\right)  }^{R}\frac{r^{\beta-1}%
}{\left\vert B\left(  x,r\right)  \right\vert }dr\\
&  =c\left\vert B\left(  x;z\right)  \right\vert \int_{d\left(  x,z\right)
}^{R}\frac{s^{\gamma-1}}{\left\vert B\left(  z,s\right)  \right\vert }\left(
\int_{d\left(  x,z\right)  }^{s}\frac{r^{\beta-1}}{\left\vert B\left(
x,r\right)  \right\vert }dr+\int_{s}^{R}\frac{r^{\beta-1}}{\left\vert B\left(
x,r\right)  \right\vert }dr\right)  ds\\
&  \equiv I_{B_{1}}+I_{B_{2}},
\end{align*}
where, since in $I_{B_{2}}$ we have $d\left(  x,z\right)  <s<r$, then
$\left\vert B\left(  x;z\right)  \right\vert \leqslant\left\vert B\left(
z,s\right)  \right\vert $ and therefore
\begin{equation}
I_{B_{2}}\leqslant c\int_{d\left(  x,z\right)  }^{R}s^{\gamma-1}\left(
\int_{s}^{R}\frac{r^{\beta-1}}{\left\vert B\left(  x,r\right)  \right\vert
}dr\right)  ds \label{I B 2}%
\end{equation}
applying Fubini's theorem:

$d\left(  x,z\right)  <s<R,s<r<R\Longrightarrow d\left(  x,z\right)
<r<R,d\left(  x,z\right)  <s<r$%
\begin{align*}
&  =c\int_{d\left(  x,z\right)  }^{R}\frac{r^{\beta-1}}{\left\vert B\left(
x,r\right)  \right\vert }\left(  \int_{d\left(  x,z\right)  }^{r}s^{\gamma
-1}ds\right)  dr\\
&  \leqslant c\int_{d\left(  x,z\right)  }^{R}\frac{r^{\beta-1}}{\left\vert
B\left(  x,r\right)  \right\vert }\left(  \int_{0}^{r}s^{\gamma-1}ds\right)
dr\\
&  =\frac{c}{\gamma}\int_{d\left(  x,z\right)  }^{R}\frac{r^{\beta+\gamma-1}%
}{\left\vert B\left(  x,r\right)  \right\vert }dr.
\end{align*}
As to $I_{B_{1}},$ applying once more Fubini's theorem,
\[
d\left(  x,z\right)  <s<R,d\left(  x,z\right)  <r<s\Longrightarrow d\left(
x,z\right)  <r<R,r<s<R,
\]
we have
\[
I_{B_{1}}=c\left\vert B\left(  x;z\right)  \right\vert \int_{d\left(
x,z\right)  }^{R}\frac{r^{\beta-1}}{\left\vert B\left(  x,r\right)
\right\vert }\left(  \int_{r}^{R}\frac{s^{\gamma-1}}{\left\vert B\left(
z,s\right)  \right\vert }ds\right)  dr
\]
since $d\left(  x,z\right)  <r$ implies $\left\vert B\left(  x;z\right)
\right\vert \leqslant\left\vert B\left(  x,r\right)  \right\vert $,%
\[
\leqslant c\int_{d\left(  x,z\right)  }^{R}r^{\beta-1}\left(  \int_{r}%
^{R}\frac{s^{\gamma-1}}{\left\vert B\left(  z,s\right)  \right\vert
}ds\right)  dr
\]
and this can be handled as $I_{B_{2}}$ (see (\ref{I B 2})).

We have therefore proved that $I$ satisfies the desired bound. The term $II$
can be handled analogously (by symmetry).

Let us come to the bound on $III$. Since%
\[
d\left(  x,y\right)  \geqslant\frac{1}{2}d\left(  x,z\right)  ,\text{
}d\left(  z,y\right)  \geqslant\frac{1}{2}d\left(  x,z\right)  \text{ and
}d\left(  x,y\right)  <r<R,d\left(  y,z\right)  <s<R
\]
imply%
\[
\frac{1}{2}d\left(  x,z\right)  <r<R,\frac{1}{2}d\left(  x,z\right)  <s<R
\]
and
\[
\text{ }\frac{1}{2}d\left(  x,z\right)  <d\left(  x,y\right)  <r,\frac{1}%
{2}d\left(  x,z\right)  <d\left(  y,z\right)  <s,
\]
applying Fubini's theorem in the triple integral gives%
\begin{align*}
III  &  =\int_{\frac{1}{2}d\left(  x,z\right)  }^{R}\frac{r^{\beta-1}%
}{\left\vert B\left(  x,r\right)  \right\vert }\int_{\frac{1}{2}d\left(
x,z\right)  }^{R}\frac{s^{\gamma-1}}{\left\vert B\left(  z,s\right)
\right\vert }\left(  \int_{\substack{\frac{1}{2}d\left(  x,z\right)  <d\left(
x,y\right)  <r\\\frac{1}{2}d\left(  x,z\right)  <d\left(  y,z\right)
<s}}dy\right)  dsdr\\
&  \leqslant\int_{\frac{1}{2}d\left(  x,z\right)  }^{R}\frac{r^{\beta-1}%
}{\left\vert B\left(  x,r\right)  \right\vert }\left(  \int_{\frac{1}%
{2}d\left(  x,z\right)  }^{R}\frac{s^{\gamma-1}}{\left\vert B\left(
z,s\right)  \right\vert }\left\vert B\left(  x,r\right)  \cap B\left(
z,s\right)  \right\vert ds\right)  dr\\
&  =\int_{\frac{1}{2}d\left(  x,z\right)  }^{R}\frac{r^{\beta-1}}{\left\vert
B\left(  x,r\right)  \right\vert }\left(  \int_{\frac{1}{2}d\left(
x,z\right)  }^{r}+\int_{r}^{R}\right)  \left(  \frac{s^{\gamma-1}}{\left\vert
B\left(  z,s\right)  \right\vert }\left\vert B\left(  x,r\right)  \cap
B\left(  z,s\right)  \right\vert ds\right)  dr\\
&  \equiv III_{A}+III_{B}.
\end{align*}
Now,%
\begin{align*}
III_{A}  &  \leqslant\int_{\frac{1}{2}d\left(  x,z\right)  }^{R}\frac
{r^{\beta-1}}{\left\vert B\left(  x,r\right)  \right\vert }\left(  \int
_{\frac{1}{2}d\left(  x,z\right)  }^{r}s^{\gamma-1}ds\right)  dr\\
&  \leqslant\int_{\frac{1}{2}d\left(  x,z\right)  }^{R}\frac{r^{\beta-1}%
}{\left\vert B\left(  x,r\right)  \right\vert }\left(  \int_{0}^{r}%
s^{\gamma-1}ds\right)  dr\\
&  =\frac{1}{\gamma}\int_{\frac{1}{2}d\left(  x,z\right)  }^{R}\frac
{r^{\beta+\gamma-1}}{\left\vert B\left(  x,r\right)  \right\vert }dr\\
&  \leqslant\frac{c}{\gamma}\int_{d\left(  x,z\right)  }^{R}\frac
{r^{\beta+\gamma-1}}{\left\vert B\left(  x,r\right)  \right\vert }dr
\end{align*}
by (\ref{comparable phi}). As to $III_{B}$, since%
\[
\frac{1}{2}d\left(  x,z\right)  <r<R,r<s<R\Longrightarrow\frac{1}{2}d\left(
x,z\right)  <s<R,\frac{1}{2}d\left(  x,z\right)  <r<s,
\]
by Fubini's theorem,%
\begin{align*}
III_{B}  &  \leqslant\int_{\frac{1}{2}d\left(  x,z\right)  }^{R}r^{\beta
-1}\left(  \int_{r}^{R}\frac{s^{\gamma-1}}{\left\vert B\left(  z,s\right)
\right\vert }ds\right)  dr\\
&  =\int_{\frac{1}{2}d\left(  x,z\right)  }^{R}\frac{s^{\gamma-1}}{\left\vert
B\left(  z,s\right)  \right\vert }\left(  \int_{\frac{1}{2}d\left(
x,z\right)  }^{s}r^{\beta-1}dr\right)  ds\\
&  \leqslant\int_{\frac{1}{2}d\left(  x,z\right)  }^{R}\frac{s^{\gamma-1}%
}{\left\vert B\left(  z,s\right)  \right\vert }\left(  \int_{0}^{s}r^{\beta
-1}dr\right)  ds\\
&  =\frac{1}{\beta}\int_{\frac{1}{2}d\left(  x,z\right)  }^{R}\frac
{s^{\beta+\gamma-1}}{\left\vert B\left(  z,s\right)  \right\vert }ds\\
&  \leqslant\frac{c}{\beta}\int_{d\left(  x,z\right)  }^{R}\frac
{s^{\beta+\gamma-1}}{\left\vert B\left(  z,s\right)  \right\vert }ds.
\end{align*}
This shows that also $III$ satisfies the desired bound, and point 1 of the
theorem is proved.

As to point 2, the volume estimate (\ref{volume inequalities}) gives, for any
$r<R,$%
\[
\left\vert B\left(  x,r\right)  \right\vert \geqslant c\left(  \frac{r}%
{R}\right)  ^{Q}\left\vert B\left(  x,R\right)  \right\vert \geqslant cr^{Q}%
\]
since
\[
\inf_{x\in\Omega^{\prime}}\left\vert B\left(  x,R\right)  \right\vert
\geqslant c>0
\]
as easily follows by the doubling condition. Then, for any $\gamma>Q$,
\[
\int_{d\left(  x,y\right)  }^{R}\frac{r^{\gamma-1}}{\left\vert B\left(
x,r\right)  \right\vert }dr\leqslant\int_{d\left(  x,y\right)  }^{R}%
\frac{r^{\gamma-1}}{cr^{Q}}dr\leqslant c\int_{0}^{R}r^{\gamma-1-Q}%
dr=cR^{\gamma-Q}.
\]

\end{proof}

In order to deal with continuity matters of the next sections, we will need
the following

\begin{proposition}
\label{Lemma joint continuity}Let $T\subset U\left(  x_{0}\right)  $ be an
open set.

(i) Let $f\left(  x,y\right)  ,g\left(  x,y\right)  $ be two functions defined
in $T\times T$ satisfying%
\begin{align*}
\left\vert f\left(  x,y\right)  \right\vert  &  \leqslant c\phi_{\beta}\left(
x,y\right)  ;\\
\left\vert g\left(  x,y\right)  \right\vert  &  \leqslant c\phi_{\gamma
}\left(  x,y\right)  ,
\end{align*}
for some $\beta,\gamma>0$ and any $x,y\in T,x\neq y$. Assume that both $f$ and
$g$ are continuous in the joint variables $\left(  x,y\right)  $ for $x\neq
y$. Then the function%
\[
h\left(  x,y\right)  =\int_{T}f\left(  x,z\right)  g\left(  z,y\right)  dz
\]
is jointly continuous in $T\times T$ for $x\neq y$.

(ii) Let $f\left(  x,y\right)  $ be a function defined in $T\times T$
satisfying%
\[
\left\vert f\left(  x,y\right)  \right\vert \leqslant c\frac{d\left(
x,y\right)  ^{\beta}}{\left\vert B\left(  x,d\left(  x,y\right)  \right)
\right\vert }%
\]
for some $\beta>0,$ $f\left(  x,y\right)  $ measurable with respect to $y$ for
every $x$, and continuous with respect to $x$ at any $x\neq y,$ for a.e. $y$.
Then the function%
\[
m\left(  x\right)  =\int_{T}f\left(  x,y\right)  dy
\]
is continuous in $T$.
\end{proposition}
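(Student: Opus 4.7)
My plan for both parts is the standard localization strategy: split the domain of integration into a small neighborhood of the points where the integrand is singular (and size-controlled by the $\phi_\beta$ bounds) and its complement (where the integrand is jointly continuous and uniformly bounded). The small part is made arbitrarily small via the uniform decay of $\Psi_\beta(\cdot,\rho)\to0$ from Corollary \ref{coroll Psi beta}, combined with the doubling condition; the large part is handled by dominated convergence.

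For part (i), fix $(x_0,y_0)$ with $x_0\neq y_0$. Given $\varepsilon>0$, I would first choose $\rho>0$ small compared to $d(x_0,y_0)$ so that $\Psi_\beta(x,2\rho),\Psi_\gamma(y,2\rho)<\varepsilon$ uniformly for $x$ near $x_0$ and $y$ near $y_0$, and then restrict $(x,y)$ to a neighborhood of $(x_0,y_0)$ of radius less than $\rho/2$. I would split
\[
h(x,y)=\int_{B(x_0,\rho)}+\int_{B(y_0,\rho)}+\int_{T\setminus(B(x_0,\rho)\cup B(y_0,\rho))}
\]
and do the same for $h(x_0,y_0)$. On $B(x_0,\rho)$ the point $z$ is uniformly bounded away from $y$ and $y_0$, so $|g(z,y)|$ and $|g(z,y_0)|$ are bounded by a constant, and the corresponding pieces are controlled by $c\int_{B(x_0,\rho)}\phi_\beta(x,z)\,dz\leqslant c\,\Psi_\beta(x,2\rho)<c\,\varepsilon$, using $B(x_0,\rho)\subset B(x,2\rho)$. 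The ball $B(y_0,\rho)$ is handled symmetrically. On the complement $d(z,x),d(z,x_0),d(z,y),d(z,y_0)\geqslant\rho/2$, so $f(x,z)g(z,y)$ is jointly continuous in $(x,y,z)$ and uniformly bounded, and dominated convergence yields convergence of this piece to the corresponding one at $(x_0,y_0)$. Combining, $\limsup|h(x,y)-h(x_0,y_0)|\leqslant C\varepsilon$, so $h$ is continuous at $(x_0,y_0)$.

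For part (ii), fix $x_0\in T$ and let $d(x,x_0)<\delta/2$. I would split
\[
m(x)-m(x_0)=\int_{B(x_0,\delta)}[f(x,y)-f(x_0,y)]\,dy+\int_{T\setminus B(x_0,\delta)}[f(x,y)-f(x_0,y)]\,dy.
\]
A dyadic decomposition into shells $\{2^{-k-1}\cdot 2\delta<d(x,y)\leqslant 2^{-k}\cdot 2\delta\}$ of $B(x,2\delta)\supset B(x_0,\delta)$, combined with doubling, gives $\int_{B(x_0,\delta)}|f(x,y)|\,dy\leqslant c\,\delta^\beta$, and similarly with $x_0$ in place of $x$; so the first piece is $\leqslant c\,\delta^\beta$, uniformly in $x$. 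On the complement, doubling together with $d(x,y)\leqslant 2d(x_0,y)$ and $|B(x,d(x,y))|\geqslant c|B(x_0,d(x_0,y))|$ (valid once $d(x_0,y)\geqslant\delta\geqslant 2d(x,x_0)$) provides the $x$-uniform integrable dominant $c\,d(x_0,y)^\beta/|B(x_0,d(x_0,y))|$ for $|f(x,y)-f(x_0,y)|$, while the continuity hypothesis gives a.e.\ pointwise convergence; dominated convergence then makes this piece go to zero as $x\to x_0$. Sending $\delta\to 0$ finishes the argument.

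The main technical obstacle is precisely keeping the small-neighborhood estimates uniform in the moving arguments: in a metric measure space without a fixed homogeneous dimension, $|B(x,\rho)|$ depends delicately on $x$, and only the doubling condition together with the uniform vanishing of $\Psi_\beta(\cdot,\rho)$ as $\rho\to 0$ allows one to quote the needed uniform bounds. Beyond these two ingredients the argument is a routine dominated-convergence exercise.
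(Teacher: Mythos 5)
Your proposal is correct and takes essentially the same approach as the paper: both arguments isolate the contribution of the singular region using the uniform vanishing of $\Psi_\beta(\cdot,\rho)$ from Corollary \ref{coroll Psi beta} (together with doubling) and then apply dominated convergence on the remaining part, where the integrand is jointly continuous and uniformly bounded. The only difference is organizational: you split the domain of integration into small balls around the fixed points $x_0,y_0$ and estimate the difference $h(x,y)-h(x_0,y_0)$ directly, whereas the paper multiplies the kernels by a cutoff $\varphi_\varepsilon(d(\cdot,\cdot))$, observes that the truncated $h_\varepsilon$ is continuous by Lebesgue's theorem, and then proves $h_\varepsilon\to h$ locally uniformly away from the diagonal — two routine implementations of the same idea.
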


\begin{proof}
(i) Let $\varphi_{\varepsilon}:[0,\infty)\rightarrow\left[  0,1\right]  $ be a
continuous function such that $\varphi_{\varepsilon}\left(  t\right)  =0$ for
$t\leqslant\varepsilon/2,$ $\varphi_{\varepsilon}\left(  t\right)  =1$ for
$t\geqslant\varepsilon,$ and define%
\begin{align*}
f_{\varepsilon}\left(  x,y\right)   &  =f\left(  x,y\right)  \varphi
_{\varepsilon}\left(  d\left(  x,y\right)  \right)  ;\\
g_{\varepsilon}\left(  x,y\right)   &  =g\left(  x,y\right)  \varphi
_{\varepsilon}\left(  d\left(  x,y\right)  \right)  ;\\
h_{\varepsilon}\left(  x,y\right)   &  =\int_{T}f_{\varepsilon}\left(
x,z\right)  g_{\varepsilon}\left(  z,y\right)  dz.
\end{align*}
For any fixed $\varepsilon>0$ the function%
\[
f_{\varepsilon}\left(  x,z\right)  g_{\varepsilon}\left(  z,y\right)
\]
is measurable with respect to $z$ for every $\left(  x,y\right)  $ and, for
any $z\in T,$ continuous in the joint variables $\left(  x,y\right)  .$
Moreover by our assumption on $f,g$ and Lemma \ref{Lemma nonintegrale},%
\[
\left\vert f_{\varepsilon}\left(  x,z\right)  g_{\varepsilon}\left(
z,y\right)  \right\vert \leqslant c\frac{1}{\left\vert B\left(  x,\varepsilon
\right)  \right\vert }\frac{1}{\left\vert B\left(  y,\varepsilon\right)
\right\vert }\leqslant c\left(  \varepsilon\right)  .
\]
Then, by Lebesgue theorem, $h_{\varepsilon}$ is continuous in $T\times T$,
since $T$ has finite measure. Let us show that $h_{\varepsilon}\left(
x,y\right)  \rightarrow h\left(  x,y\right)  $ locally uniformly for $x\neq
y,$ which will imply the continuity of $h$. To see this, let us write%
\begin{align*}
h_{\varepsilon}\left(  x,y\right)  -h\left(  x,y\right)   &  =\int_{T}\left[
f_{\varepsilon}\left(  x,z\right)  -f\left(  x,z\right)  \right]
g_{\varepsilon}\left(  z,y\right)  dz\\
&  +\int_{T}f\left(  x,z\right)  \left[  g_{\varepsilon}\left(  z,y\right)
-g\left(  z,y\right)  \right]  dz
\end{align*}
and%
\begin{align*}
\left\vert h_{\varepsilon}\left(  x,y\right)  -h\left(  x,y\right)
\right\vert  &  \leqslant c\int_{d\left(  x,z\right)  <\varepsilon}\phi
_{\beta}\left(  x,z\right)  \phi_{\gamma}\left(  z,y\right)  dz\\
&  +c\int_{d\left(  z,y\right)  <\varepsilon}\phi_{\beta}\left(  x,z\right)
\phi_{\gamma}\left(  z,y\right)  dz=I+II.
\end{align*}
Now, for $d\left(  x,y\right)  \geqslant\delta>0$ and $\varepsilon<\delta/2,$
$d\left(  x,z\right)  <\varepsilon$ implies $d\left(  z,y\right)  >\delta/2,$
hence by Lemma \ref{Lemma nonintegrale} $\phi_{\gamma}\left(  z,y\right)
\leqslant c\left(  \delta\right)  $ and%
\[
I\leqslant c\left(  \delta\right)  \int_{d\left(  x,z\right)  <\varepsilon
}\phi_{\beta}\left(  x,z\right)  dz=c\left(  \delta\right)  \Psi_{\beta
}\left(  x,\varepsilon\right)  \rightarrow0
\]
as $\varepsilon\rightarrow0,$ uniformly for $d\left(  x,y\right)
\geqslant\delta>0$ (see Corollary \ref{coroll Psi beta}). Analogously%
\[
II\leqslant c\left(  \delta\right)  \int_{d\left(  z,y\right)  <\varepsilon
}\phi_{\gamma}\left(  z,y\right)  dz=c\left(  \delta\right)  \Psi_{\gamma
}\left(  y,\varepsilon\right)  \rightarrow0
\]
as $\varepsilon\rightarrow0,$ uniformly for $d\left(  x,y\right)
\geqslant\delta>0.$ Hence (i) is proved. The proof of (ii) is similar but easier.
\end{proof}

\section{The parametrix method\label{sec parametrix}}

Let $x_{0}$, $U\left(  x_{0}\right)  $, and $I$ be as in the previous
sections. To shorten notation, in the following we will write $U$ instead of
$U\left(  x_{0}\right)  $. We will denote by $\xi,\eta$ lifted variables
ranging in the small domain%
\[
V\subset U\times I\subset\mathbb{R}^{p+m},
\]
as in the approximation theorem. By known results of Folland \cite{Fo}, the
operator
\[
\mathcal{L}=\sum_{i=1}^{p}Y_{i}^{2}+Y_{0}%
\]
possesses a fundamental solution $\Gamma$ on $\mathbb{G},$ left invariant and
homogeneous of degree $2-Q.$ (Recall that, in order for Folland's theory to be
applicable, the homogeneous dimension $Q$ of $\mathbb{G}$ must be $\geqslant
3$. However, this restriction only rules out uniformly elliptic operators in
two variables).

In particular, this means that for some positive constant $c$ we have%

\begin{align}
\left\vert \Gamma\left(  \Theta_{\eta}\left(  \xi\right)  \right)
\right\vert  &  \leqslant\frac{c}{\left\Vert \Theta_{\eta}\left(  \xi\right)
\right\Vert ^{Q-2}};\nonumber\\
\left\vert \left(  Y_{i}\Gamma\right)  \left(  \Theta_{\eta}\left(
\xi\right)  \right)  \right\vert ,\left\vert \widetilde{X}_{i}\left[
\Gamma\left(  \Theta_{\eta}\left(  \xi\right)  \right)  \right]  \right\vert
&  \leqslant\frac{c}{\left\Vert \Theta_{\eta}\left(  \xi\right)  \right\Vert
^{Q-1}};\nonumber\\
& \label{bound Gamma}\\
\left\vert \left(  Y_{i}Y_{j}\Gamma\right)  \left(  \Theta_{\eta}\left(
\xi\right)  \right)  \right\vert ,\left\vert \widetilde{X}_{i}\widetilde
{X}_{j}\left[  \Gamma\left(  \Theta_{\eta}\left(  \xi\right)  \right)
\right]  \right\vert  &  \leqslant\frac{c}{\left\Vert \Theta_{\eta}\left(
\xi\right)  \right\Vert ^{Q}};\nonumber\\
\left\vert \left(  Y_{0}\Gamma\right)  \left(  \Theta_{\eta}\left(
\xi\right)  \right)  \right\vert ,\left\vert \widetilde{X}_{0}\left[
\Gamma\left(  \Theta_{\eta}\left(  \xi\right)  \right)  \right]  \right\vert
&  \leqslant\frac{c}{\left\Vert \Theta_{\eta}\left(  \xi\right)  \right\Vert
^{Q}},\nonumber
\end{align}
for every $\eta,\xi\in V,$ $\eta\neq\xi$, where the $\widetilde{X}%
$-derivatives act on the $\xi$ variable. Recall that, according to the
Notation stated at the end of \S \ \ref{sec known results}, we will always
assume that differential operators act on the $\xi$ variable of $\Gamma\left(
\Theta_{\eta}\left(  \xi\right)  \right)  $. Also, recall that by
(\ref{equiv Theta d}) $\left\Vert \Theta_{\eta}\left(  \xi\right)  \right\Vert
$ is equivalent to $\widetilde{d}\left(  \eta,\xi\right)  $.

Let us define the following (local) \emph{parametrix for the operator} $L$.
For $x,y\in U$, we set
\begin{equation}
P\left(  x,y\right)  =\int_{\mathbb{R}^{m}}\left(  \int_{\mathbb{R}^{m}}%
\Gamma\left(  \Theta_{\left(  y,k\right)  }\left(  x,h\right)  \right)
\varphi\left(  h\right)  dh\right)  \varphi\left(  k\right)  dk, \label{P}%
\end{equation}
where $\varphi\in C_{0}^{\infty}\left(  \mathbb{R}^{m}\right)  $ is a cutoff
function fixed once and for all, equal to one in a neighborhood of the origin
and supported in $I$. It is worth telling that the alternative definition
\[
\int_{\mathbb{R}^{m}}\Gamma\left(  \Theta_{\left(  y,0\right)  }\left(
x,h\right)  \right)  \varphi\left(  h\right)  dh
\]
of the parametrix (as in \cite[eq.(20)]{NSW}) would be fit for the purposes of
this section, but not for those of section \ref{sec:further regularity}. Let
us also note that, in case our vector fields $X_{i}$ were free up to step $s$,
the lifting procedure would be unnecessary, we would simply have
$\widetilde{X}_{i}=X_{i}$ and:
\[
P\left(  x,y\right)  =\Gamma\left(  \Theta_{y}\left(  x\right)  \right)  .
\]

As already sketched in the introduction, the strategy is then the following.
We look for a fundamental solution for $L$ of the form%
\[
\gamma\left(  x,y\right)  =P\left(  x,y\right)  +J\left(  x,y\right)
\]
where%
\[
J\left(  x,y\right)  =\int_{U}P\left(  x,z\right)  \Phi\left(  z,y\right)
dz.
\]
In turn, we will find $\Phi$ as the series
\begin{equation}
\Phi\left(  z,y\right)  =\sum_{j=1}^{\infty}Z_{j}\left(  z,y\right)
\text{\ for }z\neq y \label{phi}%
\end{equation}
where the $Z_{j}$'s are defined inductively by%
\begin{align}
Z_{1}\left(  x,y\right)   &  =LP\left(  x,y\right) \label{Z_1}\\
Z_{j+1}\left(  x,y\right)   &  =\int_{U}Z_{1}\left(  x,z\right)  Z_{j}\left(
z,y\right)  dz\text{ }\ \ \ \text{for }x\neq y.\nonumber
\end{align}
More precisely, we will eventually find that the above identities need to be
slightly modified multiplying some of the involved functions by a suitable
coefficient $c_{0}\left(  x\right)  $; the necessity of this will be clear in
the following.

Before carrying out this plan step by step, let us clarify the way how our
constants will depend on the vector fields:

\bigskip\noindent\textbf{Dependence of the constants.} All the constants in
the upper bounds proved in this section will depend on the vector fields
$X_{i}$'s only through the following quantities:

(i) the norms $C^{r-1,\alpha}\left(  \Omega\right)  $ of the coefficients of
$X_{i}$ $\left(  i=1,2,...,n\right)  $ and the norms $C^{r-2,\alpha}\left(
\Omega\right)  $ of the coefficients of $X_{0}$;

(ii) a positive constant $c_{0}$ such that the following bound holds:%
\[
\inf_{x\in\Omega}\max_{\left\vert I_{1}\right\vert ,\left\vert I_{2}%
\right\vert ,...,\left\vert I_{p}\right\vert \leqslant r}\left\vert
\det\left(  \left(  X_{\left[  I_{1}\right]  }\right)  _{x},\left(  X_{\left[
I_{2}\right]  }\right)  _{x},...,\left(  X_{\left[  I_{p}\right]  }\right)
_{x}\right)  \right\vert \geqslant c_{0},
\]
where \textquotedblleft$\det$\textquotedblright\ denotes the determinant of
the $p\times p$ matrix having the vectors $\left(  X_{\left[  I_{i}\right]
}\right)  _{x}$ as rows.

\begin{proposition}
[Properties of $P$]\label{Prop P}Under the above assumptions and with the
above notation, we have, for any $x,y\in U$:%
\begin{align}
P\left(  \cdot,y\right)   &  \in C^{\infty}\left(  U\setminus\left\{
y\right\}  \right)  ;\label{P reg 1}\\
P\left(  x,\cdot\right)   &  \in C_{loc}^{\alpha}\left(  U\setminus\left\{
x\right\}  \right)  ;\label{P reg 2}\\
P  &  \in C\left(  U\times U\setminus\Delta\right)  ;\label{P reg 3}\\
X_{i}P,~X_{j}X_{i}P,~X_{0}P  &  \in C\left(  U\times U\setminus\Delta\right)
\label{P reg 4}%
\end{align}
for $i,j=1,2,...,n,$ where $\Delta=\left\{  \left(  x,x\right)  :x\in
U\right\}  $ and the exponent $\alpha\in(0,1]$ is the one appearing in the
assumptions on the coefficients of the vector fields $X_{i}$'s. Moreover:%
\begin{align}
\left\vert P\left(  x,y\right)  \right\vert  &  \leqslant c\phi_{2}\left(
x,y\right)  ;\label{Stima P}\\
\left\vert X_{i}P\left(  x,y\right)  \right\vert  &  \leqslant c\phi
_{1}\left(  x,y\right)  \text{ for }i=1,2,...,n;\label{Stima XP}\\
\left\vert X_{j}X_{i}P\left(  x,y\right)  \right\vert ,\left\vert
X_{0}P\left(  x,y\right)  \right\vert  &  \leqslant c\phi_{0}\left(
x,y\right)  \text{ for }i,j=1,2,...,n. \label{Stima XXP}%
\end{align}

\end{proposition}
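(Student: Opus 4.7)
The plan rests on three ingredients: the pointwise bounds (\ref{bound Gamma}) on $\Gamma \circ \Theta_\eta$, the identity (\ref{X Xtilda}) which lets one commute $X_i$ with the $h$-integration at the cost of passing to $\widetilde{X}_i$, and Lemma \ref{Lemma NSW}, which turns the resulting lifted integrals into the functions $\phi_\beta$ in the base space. All four estimates (\ref{Stima P})--(\ref{Stima XXP}) and the regularity statements (\ref{P reg 1})--(\ref{P reg 4}) are obtained from these three tools plus a simple observation on where the kernel is singular.

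For the size bound (\ref{Stima P}) I insert $|\Gamma(u)| \leq c\|u\|^{2-Q}$ into (\ref{P}) and apply Lemma \ref{Lemma NSW} with $\beta = 2$. For (\ref{Stima XP}) I differentiate under the integral and use (\ref{X Xtilda}) on the inner $h$-integration to rewrite
\[
X_i P(x,y) = \int_{\mathbb{R}^m}\!\int_{\mathbb{R}^m} \widetilde{X}_i\bigl[\Gamma(\Theta_{(y,k)}(x,h))\,\varphi(h)\bigr]\,dh\,\varphi(k)\,dk.
\]
Leibniz splits the integrand into a principal term $\widetilde{X}_i[\Gamma(\Theta_{(y,k)}(\cdot,\cdot))](x,h)\,\varphi(h)$, bounded by $c\|\Theta\|^{1-Q}$ by (\ref{bound Gamma}), plus a remainder $\Gamma(\Theta)\sum_j u_{ij}(x,h)\,\partial_{h_j}\varphi(h)$ with the milder singularity $c\|\Theta\|^{2-Q}$; Lemma \ref{Lemma NSW} with $\beta = 1$ and $\beta = 2$, together with the trivial inequality $\phi_2 \leq c\phi_1$ on the bounded set $U$, yields $|X_i P(x,y)| \leq c\phi_1(x,y)$. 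For (\ref{Stima XXP}) I apply the same maneuver a second time for $X_j X_i P$ (or once for $X_0 P$, whose weight is $2$, using the last line of (\ref{bound Gamma})): the worst resulting term is controlled by $c\|\Theta\|^{-Q}$, and Lemma \ref{Lemma NSW} with $\beta = 0$ converts it into $\phi_0$.

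The regularity assertions rely on the following observation: on any set $K \times K' \subset U \times U$ with $\overline{K} \cap \overline{K'} = \emptyset$, the quantity $\|\Theta_{(y,k)}(x,h)\|$ is uniformly bounded below for $(h,k)$ in the compact support of $\varphi \otimes \varphi$; indeed by (\ref{equiv Theta d}) and the fact that $\widetilde{d}$ induces the Euclidean topology on the lifted space one has $\|\Theta_{(y,k)}(x,h)\| \geq c\,\widetilde{d}((y,k),(x,h)) \geq c'|x-y|^{1/r}$ there. Hence the integrand in (\ref{P}) and all of its $\xi$-derivatives are uniformly bounded on such sets, and one can differentiate in $x$ under the integral sign to any order, proving (\ref{P reg 1}). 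For (\ref{P reg 2}) I invoke Theorem \ref{Thm Calphatheta}, which gives that $\Theta_\eta(\xi)$ and its $\xi$-derivatives of every order are $C^\alpha$ in $\eta$ locally uniformly in $\xi$; composition with the smooth $\Gamma$ and integration in $h,k$ preserve this dependence. Joint continuity (\ref{P reg 3})--(\ref{P reg 4}) then follows from the uniform integrand bounds by dominated convergence.

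The one piece of bookkeeping that requires genuine care is the second-order estimate: a double application of (\ref{X Xtilda}) generates several mixed terms in which derivatives fall partly on $\Gamma \circ \Theta$ and partly on $\varphi$, and one must check case by case that each is dominated by a kernel of the form $\|\Theta\|^{\beta-Q}$ with $\beta \geq 0$ so that Lemma \ref{Lemma NSW} applies and returns a $\phi_\beta$ with $\beta \geq 0$. Everything else is essentially an exercise in organizing the estimates; no new geometric input beyond the machinery of \S\ref{sec geometric} and the approximation theorem is needed.
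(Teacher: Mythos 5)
Your proposal is correct and follows essentially the same route as the paper: bound $\Gamma\circ\Theta$ and its lifted derivatives via (\ref{bound Gamma}), push the $X_i$'s through the $h$-integration using (\ref{X Xtilda}), and convert the resulting $\|\Theta\|$-kernels to $\phi_\beta$'s via Lemma \ref{Lemma NSW}; the paper merely makes the decomposition $\widetilde X_i[\Gamma\circ\Theta_\eta]=(Y_i\Gamma+R_i^\eta\Gamma)\circ\Theta_\eta$ explicit before invoking the same bounds, and its argument for (\ref{P reg 2})--(\ref{P reg 4}) is the same local-Lipschitz-plus-$C^\alpha$-in-$\eta$ observation you make. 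One inconsequential slip: the lower bound should read $\|\Theta_{(y,k)}(x,h)\|\geq c\,\widetilde d\geq c'\,|x-y|$ (via (\ref{fefferman-phong})), not $c'|x-y|^{1/r}$, though either suffices to bound $\|\Theta\|$ below on $K\times K'$ with $\overline K\cap\overline{K'}=\emptyset$.
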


(For the meaning of the symbol $X_{i}P\left(  x,y\right)  $, recall the
Notation fixed at the end of \S \ \ref{sec known results}). Note that,
regardless the infinite differentiability of $P\left(  \cdot,y\right)  $, only
$r$ derivatives of $P\left(  \cdot,y\right)  $ \emph{with respect to the
vector fields }$X_{i}$ exist (since the vector fields themselves are
nonsmooth). In particular, recalling that $r\geqslant2$, we have that
$X_{i}X_{j}P\left(  x,y\right)  $ is well defined for any $x\neq y$.

\bigskip

\begin{proof}
From (\ref{P}) we read that for any $x\neq y$ the integral defining $P$ is
absolutely convergent, and $P$ can be differentiated under the integral sign.
Since $\Gamma$ is smooth outside the origin, by the properties of the map
$\Theta$ stated in Theorem \ref{Thm liftapprox}, condition (\ref{P reg 1})
immediately follows. To prove (\ref{P reg 2}) and\ (\ref{P reg 3}) we will
show that for $x\neq y$ we have a locally uniform (in $x$) control on the
$C_{loc}^{\alpha}$ modulus of continuity in $y$ for $P\left(  x,\cdot\right)
$. Namely, since $\Gamma$ is smooth outside the origin, we can write%
\[
\left\vert \Gamma\left(  u_{1}\right)  -\Gamma\left(  u_{2}\right)
\right\vert \leqslant c\left(  \delta\right)  \left\vert u_{1}-u_{2}%
\right\vert
\]
if $\left\vert u_{1}\right\vert \geqslant\delta$ and $\left\vert u_{1}%
-u_{2}\right\vert \leqslant\delta/2$. Also, we know that, by Theorem
\ref{Thm liftapprox}
\[
d\left(  x,y\right)  \leqslant\widetilde{d}\left(  \left(  x,h\right)
,\left(  y,k\right)  \right)  \leqslant c\left\Vert \Theta_{\left(
y,k\right)  }\left(  x,h\right)  \right\Vert \leqslant c\left\vert
\Theta_{\left(  y,k\right)  }\left(  x,h\right)  \right\vert ^{1/r}%
\]
and%
\[
\left\vert \Theta_{\left(  y_{1},k\right)  }\left(  x,h\right)  -\Theta
_{\left(  y_{2},k\right)  }\left(  x,h\right)  \right\vert \leqslant
c\left\vert y_{1}-y_{2}\right\vert ^{\alpha},
\]
hence there exist constants $c_{1},c_{2}$ such that for any fixed $\delta>0$,
if $d\left(  x,y_{1}\right)  \geqslant c_{1}\delta^{1/r}$ and $\left\vert
y_{1}-y_{2}\right\vert \leqslant c_{2}\delta^{1/\alpha}$ then%
\begin{align*}
\left\vert P\left(  x,y_{1}\right)  -P\left(  x,y_{2}\right)  \right\vert  &
\leqslant\int_{\mathbb{R}^{m}}\left(  \int_{\mathbb{R}^{m}}\left\vert
\Gamma\left(  \Theta_{\left(  y_{1},k\right)  }\left(  x,h\right)  \right)
-\Gamma\left(  \Theta_{\left(  y_{2},k\right)  }\left(  x,h\right)  \right)
\right\vert \varphi\left(  h\right)  dh\right)  \varphi\left(  k\right)  dk\\
&  \leqslant c\left(  \delta\right)  \left\vert y_{1}-y_{2}\right\vert
^{\alpha}%
\end{align*}
which means that $P\left(  x,\cdot\right)  $ is $C^{\alpha}$ locally uniformly
for $x\neq y$.

Lemma \ref{Lemma NSW} with $\beta=2$ together with (\ref{equiv Theta d}),
(\ref{bound Gamma}) and (\ref{P}) implies (\ref{Stima P}).

Moreover, by (\ref{X Xtilda}) and (\ref{approx theta}),%
\begin{align*}
&  X_{i}P\left(  x,y\right)  =\int_{\mathbb{R}^{m}}\int_{\mathbb{R}^{m}%
}\widetilde{X}_{i}\left[  \Gamma\left(  \Theta_{\left(  y,k\right)  }\left(
x,h\right)  \right)  \varphi\left(  h\right)  \right]  dh\,\varphi\left(
k\right)  dk\\
=  &  \int_{\mathbb{R}^{m}}\int_{\mathbb{R}^{m}}\left\{  \left[  \left(
Y_{i}\Gamma\right)  \left(  \Theta_{\left(  y,k\right)  }\left(  x,h\right)
\right)  +\left(  R_{i}^{\left(  y,k\right)  }\Gamma\right)  \left(
\Theta_{\left(  y,k\right)  }\left(  x,h\right)  \right)  \right]
\varphi\left(  h\right)  \right. \\
&  +\left.  \Gamma\left(  \Theta_{\left(  y,k\right)  }\left(  x,h\right)
\right)  \widetilde{X}_{i}\varphi\left(  h\right)  \right\}  dh\,\varphi
\left(  k\right)  dk\\
&  \equiv\int_{\mathbb{R}^{m}}\int_{\mathbb{R}^{m}}\left(  Y_{i}\Gamma\right)
\left(  \Theta_{\left(  y,k\right)  }\left(  x,h\right)  \right)
\varphi\left(  h\right)  dh\,\varphi\left(  k\right)  dk\\
&  +\int_{\mathbb{R}^{m}}\int_{\mathbb{R}^{m}}\sum_{l=1}^{2}Q_{l}\left(
y,k;~x,h\right)  \varphi_{l}\left(  h\right)  dh\,\varphi\left(  k\right)  dk
\end{align*}
where $\varphi_{l}\in C_{0}^{\infty}\left(  \mathbb{R}^{m}\right)  $ and, by
(\ref{remainder}) and (\ref{bound Gamma}),
\begin{align*}
\left\vert Q_{l}\left(  y,k;~x,h\right)  \right\vert  &  \leqslant\frac
{c}{\left\Vert \Theta_{\left(  y,k\right)  }\left(  x,h\right)  \right\Vert
^{Q-1+\alpha}},\\
\left\vert \left(  Y_{i}\Gamma\right)  \left(  \Theta_{\left(  y,k\right)
}\left(  x,h\right)  \right)  \right\vert  &  \leqslant\frac{c}{\left\Vert
\Theta_{\left(  y,k\right)  }\left(  x,h\right)  \right\Vert ^{Q-1}},
\end{align*}
so that Lemma \ref{Lemma NSW} implies (\ref{Stima XP}).

The proof of (\ref{Stima XXP}) is an iteration of the previous argument:%
\begin{align*}
X_{j}X_{i}P\left(  x,y\right)  =  &  \int_{\mathbb{R}^{m}}\int_{\mathbb{R}%
^{m}}\widetilde{X}_{j}\widetilde{X}_{i}\left[  \Gamma\left(  \Theta_{\left(
y,k\right)  }\left(  x,h\right)  \right)  \varphi\left(  h\right)  \right]
dh\,\varphi(k)dk\\
=  &  \int_{\mathbb{R}^{m}}\int_{\mathbb{R}^{m}}\left\{  \left(  Y_{j}%
Y_{i}\Gamma\right)  \left(  \Theta_{\left(  y,k\right)  }\left(  x,h\right)
\right)  \varphi\left(  h\right)  \right. \\
&  +\left(  Y_{j}R_{i}^{\left(  y,k\right)  }\Gamma+R_{j}^{\left(  y,k\right)
}Y_{i}\Gamma+R_{j}^{\left(  y,k\right)  }R_{i}^{\left(  y,k\right)  }%
\Gamma\right)  \left(  \Theta_{\left(  y,k\right)  }\left(  x,h\right)
\right)  \varphi\left(  h\right) \\
&  +\left[  \left(  Y_{j}\Gamma\right)  \left(  \Theta_{\left(  y,k\right)
}\left(  x,h\right)  \right)  +\left(  R_{j}^{\left(  y,k\right)  }%
\Gamma\right)  \left(  \Theta_{\left(  y,k\right)  }\left(  x,h\right)
\right)  \right]  \widetilde{X}_{i}\varphi\left(  h\right) \\
&  +\left[  \left(  Y_{i}\Gamma\right)  \left(  \Theta_{\left(  y,k\right)
}\left(  x,h\right)  \right)  +\left(  R_{i}^{\left(  y,k\right)  }%
\Gamma\right)  \left(  \Theta_{\left(  y,k\right)  }\left(  x,h\right)
\right)  \right]  \widetilde{X}_{j}\varphi\left(  h\right) \\
&  \left.  +\Gamma\left(  \Theta_{\left(  y,k\right)  }\left(  x,h\right)
\right)  \widetilde{X}_{j}\widetilde{X}_{i}\varphi\left(  h\right)  \right\}
dh\,\varphi(k)dk\\
=  &  \int_{\mathbb{R}^{m}}\int_{\mathbb{R}^{m}}\left(  Y_{j}Y_{i}%
\Gamma\right)  \left(  \Theta_{\left(  y,k\right)  }\left(  x,h\right)
\right)  \varphi\left(  h\right)  dh\varphi(k)dk\\
&  +\int_{\mathbb{R}^{m}}\int_{\mathbb{R}^{m}}\sum_{s}Q_{s}\left(
y,k;~x,h\right)  \varphi_{s}\left(  h\right)  dh\,\varphi\left(  k\right)  dk
\end{align*}
with $\varphi_{s}\in C_{0}^{\infty}\left(  \mathbb{R}^{m}\right)  $ and, as
above, exploiting now also Corollary \ref{Coroll second order remainder},%
\begin{align*}
\left\vert Q_{s}\left(  y,k;~x,h\right)  \right\vert  &  \leqslant\frac
{c}{\left\Vert \Theta_{\left(  y,k\right)  }\left(  x,h\right)  \right\Vert
^{Q-\alpha}},\\
\left\vert \left(  Y_{j}Y_{i}\Gamma\right)  \left(  \Theta_{\left(
y,k\right)  }\left(  x,h\right)  \right)  \right\vert  &  \leqslant\frac
{c}{\left\Vert \Theta_{\left(  y,k\right)  }\left(  x,h\right)  \right\Vert
^{Q}}%
\end{align*}
which by Lemma \ref{Lemma NSW} implies (\ref{Stima XXP}) for $X_{j}%
X_{i}P\left(  x,y\right)  $. The proof of the bound on $X_{0}P\left(
x,y\right)  $ is similar.

Finally, the explicit expression of the derivatives $X_{i}P,X_{j}X_{i}%
P,X_{0}P$ allows us to repeat the argument used to prove (\ref{P reg 3}),
showing that also (\ref{P reg 4}) holds.
\end{proof}

\begin{proposition}
[Properties of $Z_{1}$]\label{Prop bound Z_1}Let $L$ be as in (\ref{H}) and,
for $x,y\in U$, $x\neq y$, let%
\begin{equation}
Z_{1}\left(  x,y\right)  =LP\left(  x,y\right)  . \label{defZ_1}%
\end{equation}
Under the above assumptions and with the above notation, we have:
\begin{align}
Z_{1}\left(  \cdot,y\right)   &  \in C_{loc}^{r-2,\alpha}\left(
U\setminus\left\{  y\right\}  \right)  ;\label{Z1 cont 1}\\
Z_{1}\left(  x,\cdot\right)   &  \in C_{loc}^{\alpha}\left(  U\setminus
\left\{  x\right\}  \right)  ;\label{Z1 cont 2}\\
Z_{1}  &  \in C\left(  U\times U\setminus\Delta\right)  . \label{Z1 cont 3}%
\end{align}
Moreover:%
\begin{equation}
\left\vert Z_{1}\left(  x,y\right)  \right\vert \leqslant c_{1}\phi_{\alpha
}\left(  x,y\right)  . \label{Z1 bound}%
\end{equation}

\end{proposition}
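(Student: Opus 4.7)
The plan is to compute $Z_{1}(x,y)=LP(x,y)$ by pushing $L$ under the double integral defining $P$, using the identity (\ref{X Xtilda}), and then exploiting the cancellation $\mathcal{L}\Gamma=0$ (away from the pole) to reduce the main singular term from order $\|\Theta\|^{-Q}$ to order $\|\Theta\|^{-(Q-\alpha)}$. Concretely, applying (\ref{X Xtilda}) twice, exactly as in the proof of Proposition \ref{Prop P}, yields
\begin{equation*}
LP(x,y)=\int_{\mathbb{R}^{m}}\!\int_{\mathbb{R}^{m}}\widetilde{L}\bigl[\Gamma(\Theta_{(y,k)}(\cdot,\cdot))\,\varphi(\cdot)\bigr](x,h)\,dh\,\varphi(k)\,dk.
\end{equation*}
By Leibniz the integrand splits into a main piece $\varphi(h)\,\widetilde{L}[\Gamma\circ\Theta_{(y,k)}]$ and two cross pieces $2\sum_{i}\widetilde{X}_{i}\varphi\,\widetilde{X}_{i}[\Gamma\circ\Theta_{(y,k)}]+\widetilde{L}\varphi\cdot\Gamma(\Theta_{(y,k)})$.

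Applying (\ref{approx theta}) twice and invoking $\mathcal{L}\Gamma=0$ off the origin, the main piece becomes
\begin{equation*}
\widetilde{L}[\Gamma\circ\Theta_{(y,k)}]=\Bigl(R_{0}^{(y,k)}\Gamma+\sum_{i=1}^{n}\bigl(Y_{i}R_{i}^{(y,k)}+R_{i}^{(y,k)}Y_{i}+R_{i}^{(y,k)}R_{i}^{(y,k)}\bigr)\Gamma\Bigr)\circ\Theta_{(y,k)}.
\end{equation*}
Since $\Gamma$ is $D(\lambda)$-homogeneous of degree $2-Q$, each term in parentheses is pointwise bounded by $c\|\Theta_{(y,k)}(x,h)\|^{-(Q-\alpha)}$: the $R_{0}^{(y,k)}\Gamma$ summand by (\ref{remainder}) with $p_{0}=2$, and the remaining three by Corollary \ref{Coroll second order remainder} with $k=Q-2$. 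The cross pieces are bounded via (\ref{bound Gamma}) by $c|\widetilde{X}_{i}\varphi(h)|\,\|\Theta\|^{-(Q-1)}$ and $c|\widetilde{L}\varphi(h)|\,\|\Theta\|^{-(Q-2)}$. Integrating in $(h,k)$, Lemma \ref{Lemma NSW} with $\beta=\alpha,1,2$ returns $\phi_{\alpha}$, $\phi_{1}$, $\phi_{2}$, respectively; since $\phi_{1},\phi_{2}\leqslant cR^{1-\alpha}\phi_{\alpha}$ on $U\times U$ (by their definition, using $\alpha\leqslant 1$), the bound (\ref{Z1 bound}) follows.

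The regularity assertions then follow by standard arguments. The $C^{r-2,\alpha}$ smoothness of $Z_{1}(\cdot,y)$ comes from the fact that $L$, as a second-order differential operator, has coefficients of regularity at worst $C^{r-2,\alpha}$ (expanding $X_{i}^{2}$ produces at most one derivative of $b_{ij}\in C^{r-1,\alpha}$, while $X_{0}$ has $C^{r-2,\alpha}$ coefficients), applied to $P(\cdot,y)\in C^{\infty}(U\setminus\{y\})$. The local $C^{\alpha}$ modulus in $y$ and the joint continuity on $U\times U\setminus\Delta$ follow from Theorem \ref{Thm Calphatheta}: both the derivatives of $\Theta_{\eta}$ and the coefficients of $R_{i}^{\eta}$ depend on $\eta=(y,k)$ in a $C^{\alpha}$ way, locally uniformly in the other variable; coupled with the locally uniform integrability just established, this is precisely the setting of the continuity argument used in the proof of Proposition \ref{Prop P}. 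The main obstacle throughout is producing the gain of $\alpha$ in the exponent of $\|\Theta\|$ for the dominant term, which is what makes the whole parametrix construction feasible: without the cancellation $\mathcal{L}\Gamma=0$ combined with the sharp weight estimates of Corollary \ref{Coroll second order remainder}, one would only obtain $\phi_{0}$, which is not small enough to drive the subsequent convolution iteration based on Theorem \ref{Thm phi alfa}.
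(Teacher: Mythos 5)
Your argument follows essentially the paper's own route: commute $L$ past the saturation integrals via (\ref{X Xtilda}), expand by Leibniz and the approximation identity (\ref{approx theta}), kill the leading term with $\mathcal{L}\Gamma=0$, bound the surviving remainder and cutoff terms by $\|\Theta\|^{-(Q-\alpha)}$, $\|\Theta\|^{-(Q-1)}$, $\|\Theta\|^{-(Q-2)}$, and integrate in $(h,k)$ with Lemma~\ref{Lemma NSW}. The only genuine (small) deviation is in (\ref{Z1 cont 1}): you argue directly that $L$, written as a second-order operator in coordinates, has $C^{r-2,\alpha}$ coefficients applied to the smooth $P(\cdot,y)$, which is a clean shortcut; the paper instead reads the $C^{r-2,\alpha}$ regularity off the post-approximation expression (the worst term being $R_j^{\eta}R_j^{\eta}\Gamma\circ\Theta_{\eta}$), and both arguments are valid. (Minor slip: $\phi_2\leqslant R^{2-\alpha}\phi_\alpha$, not $R^{1-\alpha}\phi_\alpha$, but this has no effect on the conclusion.)
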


\begin{proof}
Let us first prove (\ref{Z1 bound}). The computation is similar to that of the
previous proof. However we have to write it explicitly because we will need it
in the following. By (\ref{X Xtilda}) and Theorem \ref{Thm liftapprox} we have%
\begin{align*}
&  Z_{1}\left(  x,y\right)  =\int_{\mathbb{R}^{m}}\int_{\mathbb{R}^{m}%
}\widetilde{L}\left[  \Gamma\left(  \Theta_{\left(  y,k\right)  }\left(
x,h\right)  \right)  \varphi\left(  h\right)  \right]  dh\,\varphi(k)dk\\
&  =\int_{\mathbb{R}^{m}}\int_{\mathbb{R}^{m}}\left\{  \left(  \mathcal{L}%
\Gamma\right)  \left(  \Theta_{\left(  y,k\right)  }\left(  x,h\right)
\right)  \varphi\left(  h\right)  +\right. \\
&  +\left(  \sum_{j}\left(  Y_{j}R_{j}^{\left(  y,k\right)  }\Gamma
+R_{j}^{\left(  y,k\right)  }Y_{j}\Gamma+R_{j}^{\left(  y,k\right)  }%
R_{j}^{\left(  y,k\right)  }\Gamma\right)  +R_{0}^{\left(  y,k\right)  }%
\Gamma\right)  \left(  \Theta_{\left(  y,k\right)  }\left(  x,h\right)
\right)  \varphi\left(  h\right) \\
&  +2\sum_{j}\left[  \left(  Y_{j}\Gamma\right)  \left(  \Theta_{\left(
y,k\right)  }\left(  x,h\right)  \right)  +R_{j}^{\left(  y,k\right)  }%
\Gamma\left(  \Theta_{\left(  y,k\right)  }\left(  x,h\right)  \right)
\right]  \widetilde{X}_{j}\varphi\left(  h\right) \\
&  +\left.  \Gamma\left(  \Theta_{\left(  y,k\right)  }\left(  x,h\right)
\right)  \widetilde{L}\varphi\left(  h\right)  \right\}  dh\,\varphi(k)dk.
\end{align*}
Since $\left(  \mathcal{L}\Gamma\right)  \left(  u\right)  =0$ for $u\neq0,$
then $\left(  \mathcal{L}\Gamma\right)  \left(  \Theta_{\left(  y,k\right)
}\left(  x,h\right)  \right)  =0$ for $\left(  x,h\right)  \neq\left(
y,k\right)  $, so that, for $x\neq y$,
\begin{equation}
Z_{1}\left(  x,y\right)  =\int_{\mathbb{R}^{m}}\int_{\mathbb{R}^{m}}\sum
_{i=1}^{3}Q_{i}\left(  y,k,~x,h\right)  \varphi_{i}\left(  h\right)
dh\,\varphi(k)dk \label{Z1}%
\end{equation}
where $\varphi_{i}\in C_{0}^{\infty}\left(  \mathbb{R}^{m}\right)  $ and, by
Corollary \ref{Coroll second order remainder},
\[
\left\vert Q_{i}\left(  y,k,~x,h\right)  \right\vert \leqslant\frac
{c}{\left\Vert \Theta_{\left(  y,k\right)  }\left(  x,h\right)  \right\Vert
^{Q-\alpha}}.
\]
It follows that
\[
\left\vert Z_{1}\left(  x,y\right)  \right\vert \leqslant c\int_{\mathbb{R}%
^{m}}\left(  \int_{\mathbb{R}^{m}}\frac{\psi\left(  h\right)  }{\left\Vert
\Theta_{\left(  y,k\right)  }\left(  x,h\right)  \right\Vert ^{Q-\alpha}%
}dh\right)  \varphi(k)dk
\]
for some $\psi\in C_{0}^{\infty}\left(  \mathbb{R}^{m}\right)  $. By Lemma
\ref{Lemma NSW}, (\ref{Z1 bound}) follows.

As to the regularity of $Z_{1}$, let us inspect for instance the term%
\[
\sum_{j}\int_{\mathbb{R}^{m}}\int_{\mathbb{R}^{m}}\left(  R_{j}^{\left(
y,k\right)  }R_{j}^{\left(  y,k\right)  }\Gamma\right)  \left(  \Theta
_{\left(  y,k\right)  }\left(  x,h\right)  \right)  \varphi\left(  h\right)
dh\,\varphi\left(  k\right)  dk
\]
(all the others being more regular). By Corollary
\ref{Coroll second order remainder},%
\[
u\mapsto R_{j}^{\left(  y,k\right)  }R_{j}^{\left(  y,k\right)  }\Gamma\left(
u\right)
\]
is a $C_{loc}^{r-2,\alpha}$ function outside the origin. Since $\xi
\mapsto\Theta_{\left(  y,k\right)  }\left(  \xi\right)  $ is smooth,
\[
\left(  x,h\right)  \mapsto R_{j}^{\left(  y,k\right)  }R_{j}^{\left(
y,k\right)  }\Gamma\left(  \Theta_{\left(  y,k\right)  }\left(  x,h\right)
\right)
\]
is at least $C_{loc}^{r-2,\alpha}$ for $\left(  x,h\right)  \neq\left(
y,k\right)  ,$ and $Z_{1}\left(  \cdot,y\right)  \in C_{loc}^{r-2,\alpha
}\left(  U\setminus\left\{  y\right\}  \right)  .$

To deal with the regularity of $Z_{1}\left(  x,\cdot\right)  $ note that by
Corollary \ref{Coroll second order remainder}, $R_{j}^{\eta}R_{j}^{\eta}%
\Gamma\left(  u\right)  $ depends on $\eta$ in a $C^{\alpha}$ continuous way
locally uniformly in $u\neq0$. It follows that%
\[
y\mapsto\left(  R_{j}^{\left(  y,k\right)  }R_{j}^{\left(  y,k\right)  }%
\Gamma\right)  \left(  \Theta_{\left(  y,k\right)  }\left(  x,h\right)
\right)
\]
is $C_{loc}^{\alpha}\left(  U\setminus\left\{  x\right\}  \right)  $ and the
same is true for $Z_{1}\left(  x,\cdot\right)  $, by an argument similar to
that used in the proof of Proposition \ref{Prop P} to deal with $P\left(
x,\cdot\right)  $. Joint continuity of $Z_{1}$ in $\left(  x,y\right)  ,$
outside the diagonal, also follows from these facts by a local uniformity argument.
\end{proof}

Next, we can prove:

\begin{proposition}
[Properties of $\Phi$]\label{Prop Phi}Let, for $j=1,2,3,...,$%
\begin{equation}
Z_{j+1}\left(  x,y\right)  =\int_{U}Z_{1}\left(  x,z\right)  Z_{j}\left(
z,y\right)  dz\text{ }\ \ \ \text{for }x,y\in U,x\neq y. \label{defZj}%
\end{equation}
Then the functions $Z_{j}\left(  x,y\right)  $ are well defined for $x,y\in
U,x\neq y.$ Moreover, shrinking $U$ if necessary, the series%
\begin{equation}
\Phi\left(  x,y\right)  =\sum_{j=1}^{\infty}Z_{j}\left(  x,y\right)
\label{Phi}%
\end{equation}
converges for $x,y\in U,x\neq y$ and the function $\Phi$ satisfies the bound%
\begin{equation}
\left\vert \Phi\left(  x,y\right)  \right\vert \leqslant c\phi_{\alpha}\left(
x,y\right)  \label{phi bound}%
\end{equation}
and the integral equation%
\begin{equation}
\Phi\left(  x,y\right)  =Z_{1}\left(  x,y\right)  +\int_{U}Z_{1}\left(
x,z\right)  \Phi\left(  z,y\right)  dz\text{ \ \ for }x,y\in U,x\neq y.
\label{Int Eq}%
\end{equation}
Finally,
\[
Z_{j},\Phi\in C\left(  U\times U\setminus\Delta\right)  .
\]

\end{proposition}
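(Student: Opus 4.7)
The plan is to run the classical Levi iteration in two regimes, using the two parts of Theorem~\ref{Thm phi alfa} as the pivot. In the first regime, $j\alpha \leqslant Q$, I bound each $Z_j$ inductively in terms of $\phi_{j\alpha}$: setting $|Z_1(x,y)| \leqslant c_1 \phi_\alpha(x,y)$ as the base from Proposition~\ref{Prop bound Z_1} and assuming $|Z_j(x,y)| \leqslant K_j \phi_{j\alpha}(x,y)$, Theorem~\ref{Thm phi alfa}(1) gives
\[
|Z_{j+1}(x,y)| \leqslant c_1 K_j \int_U \phi_\alpha(x,z)\phi_{j\alpha}(z,y)\,dz \leqslant c\,c_1 K_j \Bigl(\tfrac{1}{\alpha} + \tfrac{1}{j\alpha}\Bigr)\phi_{(j+1)\alpha}(x,y),
\]
so each $Z_j$ is well defined off $\Delta$ by an absolutely convergent integral, with $K_{j+1} = c c_1 (j+1)/(j\alpha)\,K_j$. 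Letting $N$ be the smallest integer with $N\alpha > Q$, the pointwise inequality $\phi_{j\alpha}(x,y) \leqslant R^{(j-1)\alpha}\phi_\alpha(x,y)$ yields $\sum_{j=1}^{N} |Z_j(x,y)| \leqslant C\phi_\alpha(x,y)$.

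For the second regime, $j > N$, Theorem~\ref{Thm phi alfa}(2) gives $\phi_{N\alpha}(x,y)\leqslant cR^{N\alpha-Q}$, whence $B_N := \sup|Z_N| < \infty$, and the recursion
\[
B_{j+1} \leqslant c_1 B_j \sup_{x\in U}\int_U \phi_\alpha(x,z)\,dz \leqslant c_1 B_j \sup_{x\in U}\Psi_\alpha(x,\mathrm{diam}\,U)
\]
combined with Corollary~\ref{coroll Psi beta} --- which gives $\Psi_\alpha(x,\rho)\to 0$ uniformly as $\rho\to 0$ --- shows that after shrinking $U$ so the last factor is at most $1/(2c_1)$, one has $B_j \leqslant 2^{-(j-N)}B_N$. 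Thus $\sum_{j>N}B_j < \infty$; and since on the (small) domain $U\times U$ one has $d(x,y)\leqslant R/2$ and hence $\phi_\alpha(x,y)\geqslant (R^\alpha-(R/2)^\alpha)/(\alpha|B(x,R)|) \geqslant c_0 > 0$, this bounded tail is absorbed into $c\phi_\alpha(x,y)$. Combining the two regimes yields~\eqref{phi bound} and the absolute convergence of~\eqref{Phi}.

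The integral equation~\eqref{Int Eq} then follows by rearrangement,
\[
\Phi(x,y) - Z_1(x,y) = \sum_{j\geqslant 1} Z_{j+1}(x,y) = \sum_{j\geqslant 1}\int_U Z_1(x,z)Z_j(z,y)\,dz = \int_U Z_1(x,z)\Phi(z,y)\,dz,
\]
the swap of sum and integral being justified by dominated convergence via the $\phi$-bounds above. Continuity of each $Z_j$ off $\Delta$ is inherited inductively from Proposition~\ref{Lemma joint continuity}(i), with base case Proposition~\ref{Prop bound Z_1}; the series converges locally uniformly away from $\Delta$ --- the first $N$ terms are continuous on compact subsets of $U\times U\setminus\Delta$, and the tail is uniformly geometric --- so $\Phi\in C(U\times U\setminus\Delta)$.

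The main obstacle is matching the iterative output, a sum of $\phi_{j\alpha}$'s of ever-increasing order, to a single target bound of the form $c\phi_\alpha$. Shrinking the cutoff $R$ in the definition of $\phi_\beta$ is not an option since $R$ is fixed by the ambient geometry, so the sum $\sum K_j R^{(j-1)\alpha}$ cannot be forced to converge on its own; the only leverage is to shrink $U$, which via Corollary~\ref{coroll Psi beta} drives $\int_U \phi_\alpha(x,z)\,dz$ to zero --- and this leverage becomes effective only after Theorem~\ref{Thm phi alfa}(2) has flattened $\phi_{N\alpha}$ to a constant, which is precisely why the two-phase scheme is needed.
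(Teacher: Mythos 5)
Your proof is correct and follows essentially the same two-phase scheme as the paper's: iterate $|Z_j|\leqslant K_j\phi_{j\alpha}$ via Theorem~\ref{Thm phi alfa}(1), flatten to a constant at the first index past $Q/\alpha$ via Theorem~\ref{Thm phi alfa}(2), then run a geometric recursion whose ratio is made $<1$ by shrinking $U$, bounding the finite head by $c\phi_\alpha$ and absorbing the bounded tail since $\phi_\alpha$ is bounded below. One small remark: your assertion that ``shrinking $R$ is not an option since $R$ is fixed by the ambient geometry'' is not strictly accurate --- in the paper's setup $R$ is only constrained from above (and by $4r_0\leqslant R$), so it can be shrunk along with $U$, which is one natural reading of the paper's ``choose $U$ small enough so $cc_1R^\alpha<1$''; but your alternative mechanism via $\Psi_\alpha(x,\mathrm{diam}\,U)\to 0$ from Corollary~\ref{coroll Psi beta} is equally valid and amounts to the same thing.
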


\begin{proof}
By definition of $Z_{j},$ the bound (\ref{Z1 bound}) and Theorem
\ref{Thm phi alfa} we have, recursively:%
\begin{align*}
\left\vert Z_{2}\left(  x,y\right)  \right\vert  &  \leqslant c_{1}^{2}%
c\frac{2}{\alpha}\phi_{2\alpha}\left(  x,y\right)  ;\\
\left\vert Z_{3}\left(  x,y\right)  \right\vert  &  \leqslant c_{1}^{3}\left(
c\frac{2}{\alpha}\right)  c\left(  \frac{1}{\alpha}+\frac{1}{2\alpha}\right)
\phi_{3\alpha}\left(  x,y\right)  \leqslant c_{1}^{3}\left(  c\frac{2}{\alpha
}\right)  ^{2}\phi_{3\alpha}\left(  x,y\right)  ;\\
&  ...\\
\left\vert Z_{j_{0}}\left(  x,y\right)  \right\vert  &  \leqslant c_{1}%
^{j_{0}}\left(  c\frac{2}{\alpha}\right)  ^{j_{0}-1}\phi_{j_{0}\alpha}\left(
x,y\right)  \leqslant CR^{j_{0}\alpha-Q}\text{ }\leqslant CR^{\alpha},
\end{align*}
where $j_{0}$ is the least integer such that $j_{0}>Q/\alpha$. Then:%
\[
\left\vert Z_{j_{0}+k}\left(  x,y\right)  \right\vert \leqslant CR^{\alpha
}\left(  cc_{1}R^{\alpha}\right)  ^{k}\text{ for any }k\geqslant0.
\]
We now choose $U$ small enough in order to get
\[
\delta\equiv cc_{1}R^{\alpha}<1.
\]
Then%
\[
\left\vert Z_{j_{0}+k}\left(  x,y\right)  \right\vert \leqslant C\delta^{k}%
\]
so that the series%
\[
\sum_{j=j_{0}}^{\infty}Z_{j}\left(  x,y\right)
\]
totally converges and the upper bound (\ref{phi bound}) holds. Moreover, we
can write, for any $x,y\in U,x\neq y$:%
\begin{align*}
&  Z_{1}\left(  x,y\right)  +\int_{U}Z_{1}\left(  x,z\right)  \Phi\left(
z,y\right)  dz\\
&  =Z_{1}\left(  x,y\right)  +\int_{U}Z_{1}\left(  x,z\right)  \sum
_{j=1}^{\infty}Z_{j}\left(  z,y\right)  dz\\
&  =Z_{1}\left(  x,y\right)  +\sum_{j=1}^{\infty}\int_{U}Z_{1}\left(
x,z\right)  Z_{j}\left(  z,y\right)  dz\\
&  =Z_{1}\left(  x,y\right)  +\sum_{j=1}^{\infty}Z_{j+1}\left(  x,y\right) \\
&  =\sum_{j=1}^{\infty}Z_{j}\left(  x,y\right)  =\Phi\left(  x,y\right)
\end{align*}
so that (\ref{Int Eq}) holds. Let us come to the continuity properties of
$Z_{j},\Phi.$ By (\ref{Z1 cont 3}) and Lemma \ref{Lemma joint continuity}, the
definition (\ref{defZj}) recursively implies that
\[
Z_{j}\in C\left(  U\times U\setminus\Delta\right)  \text{ for }j=2,3,...
\]
Since, by the above proof, the series in (\ref{phi}) totally converges, this
also implies that%
\[
\Phi\in C\left(  U\times U\setminus\Delta\right)  .
\]

\end{proof}

\begin{proposition}
[Properties of $J$]\label{Prop stime P J}Let $U$ be as in the previous
proposition. For $x,y\in U,x\neq y$, let%
\begin{equation}
J\left(  x,y\right)  =\int_{U}P\left(  x,z\right)  \Phi\left(  z,y\right)  dz.
\label{J}%
\end{equation}
Then: $J$ and $X_{i}J$ ($i=1,2,...,n$) are well defined for any $x,y\in
U,x\neq y;$%
\begin{equation}
J,\,X_{i}J\in C\left(  U\times U\setminus\Delta\right)  ; \label{J cont}%
\end{equation}
moreover, the following estimates hold ($i=1,2,...,n$):%
\begin{align}
\left\vert J\left(  x,y\right)  \right\vert  &  \leqslant c\phi_{2+\alpha
}\left(  x,y\right)  ;\label{stima J}\\
\left\vert X_{i}J\left(  x,y\right)  \right\vert  &  \leqslant c\phi
_{1+\alpha}\left(  x,y\right)  . \label{stima XJ}%
\end{align}

\end{proposition}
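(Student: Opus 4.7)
The plan has three stages: first obtain the pointwise bounds on $J$ and $X_{i}J$ by combining the already established estimates on $P$ and $\Phi$ with the iteration estimate in Theorem \ref{Thm phi alfa}; second, justify passing $X_{i}$ under the integral sign by a dominated convergence argument along integral curves of $X_{i}$; third, obtain joint continuity by invoking Proposition \ref{Lemma joint continuity}(i).

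For the bound on $J$, combining $|P(x,z)|\leqslant c\phi_{2}(x,z)$ from (\ref{Stima P}) with $|\Phi(z,y)|\leqslant c\phi_{\alpha}(z,y)$ from (\ref{phi bound}), and applying Theorem \ref{Thm phi alfa}(1) with $\beta=2$, $\gamma=\alpha$, one immediately gets
\[
|J(x,y)|\leqslant c\int_{U}\phi_{2}(x,z)\,\phi_{\alpha}(z,y)\,dz\leqslant c\,\phi_{2+\alpha}(x,y),
\]
and in particular the integral defining $J(x,y)$ converges absolutely whenever $x\neq y$. The same reasoning, using (\ref{Stima XP}) in place of (\ref{Stima P}) and Theorem \ref{Thm phi alfa}(1) with $\beta=1$, $\gamma=\alpha$, will yield $|X_{i}J(x,y)|\leqslant c\,\phi_{1+\alpha}(x,y)$, provided one can establish the identity
\[
X_{i}J(x,y)=\int_{U}X_{i}P(x,z)\,\Phi(z,y)\,dz.
\]

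The main step is therefore this interchange of $X_{i}$ with the integral in $z$. Fix $x,y\in U$ with $x\neq y$ and let $\sigma:(-t_{0},t_{0})\to U$ be the integral curve of $X_{i}$ with $\sigma(0)=x$, chosen so that $\sigma(s)$ stays in a ball $B(x,r)$ with $r\ll d(x,y)$. Writing the difference quotient
\[
\frac{P(\sigma(t),z)-P(x,z)}{t}=\frac{1}{t}\int_{0}^{t}X_{i}P(\sigma(s),z)\,ds
\]
(which is legitimate because $P(\cdot,z)\in C^{\infty}(U\setminus\{z\})$ by (\ref{P reg 1})), and invoking (\ref{Stima XP}), the doubling property (Theorem \ref{Thm doubling nonsmooth}) together with the comparability $\phi_{1}(\sigma(s),z)\leqslant c\,\phi_{1}(x,z)$ for $z$ at distance $\gg r$ from $x$, one obtains a uniform (in $s$) dominant of the form $c\,\phi_{1}(x,z)$ outside a small neighborhood of $x$; for $z$ in such a neighborhood the singularity of $X_{i}P(\sigma(s),z)$ is still of order $\phi_{1}(\sigma(s),z)$, which is locally integrable by Lemma \ref{Lemma nonintegrale} (case $\beta=1<p$), and $|\Phi(z,y)|\leqslant c\,\phi_{\alpha}(z,y)$ is uniformly bounded there because $y$ is far from $x$. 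Hence the integrand is dominated by the absolutely integrable function $c\,\phi_{1}(x,z)\,\phi_{\alpha}(z,y)$, and dominated convergence produces the claimed formula for $X_{i}J(x,y)$, together with the estimate (\ref{stima XJ}).

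For joint continuity, (\ref{P reg 3}) and Proposition \ref{Prop Phi} place the pair $(P,\Phi)$ under the hypotheses of Proposition \ref{Lemma joint continuity}(i) with $\beta=2$, $\gamma=\alpha$, giving $J\in C(U\times U\setminus\Delta)$; similarly, (\ref{P reg 4}) and Proposition \ref{Prop Phi} apply the same lemma to $(X_{i}P,\Phi)$ with $\beta=1$, $\gamma=\alpha$, yielding $X_{i}J\in C(U\times U\setminus\Delta)$. The delicate point of the whole argument is precisely the uniform integrable domination used in stage two: the singularity of $X_{i}P(\sigma(s),z)$ tracks the moving basepoint $\sigma(s)$, and one needs the stability of the measure of balls under small perturbations of the center to replace the family $\{\phi_{1}(\sigma(s),\cdot)\}_{s}$ by a single integrable dominant independent of $s$.
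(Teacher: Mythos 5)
Your overall strategy — the bounds via Theorem \ref{Thm phi alfa}, the differentiation under the integral, and joint continuity via Proposition \ref{Lemma joint continuity} — reproduces the paper's argument, and the bounds (\ref{stima J}), (\ref{stima XJ}) and the continuity assertions are handled exactly as in the paper. However, your attempted justification of the interchange $X_{i}\int=\int X_{i}$ contains a genuine gap. You claim that the difference quotients are dominated by $c\,\phi_{1}(x,z)\,\phi_{\alpha}(z,y)$, but this cannot be true uniformly in $s$: the singularity of $z\mapsto X_{i}P(\sigma(s),z)$ sits at $z=\sigma(s)$, not at $z=x$, so for $z$ with $d(\sigma(s),z)$ very small while $d(x,z)\sim r$, the quantity $|X_{i}P(\sigma(s),z)|\sim\phi_{1}(\sigma(s),z)$ is arbitrarily large, whereas $\phi_{1}(x,z)$ stays bounded. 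Hence the family $\{X_{i}P(\sigma(s),\cdot)\Phi(\cdot,y)\}_{s}$ has no single integrable dominant of the form you propose, and dominated convergence cannot be applied as written; ``stability of the measure of balls'' does not repair this, since the problem is the moving location of the singularity, not the fluctuation of $|B(\cdot,\rho)|$.

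The repair is short and uses tools already at your disposal. Write, as you do,
\[
\frac{J(\sigma(t),y)-J(x,y)}{t}=\int_{U}\frac{1}{t}\int_{0}^{t}X_{i}P(\sigma(s),z)\,ds\,\Phi(z,y)\,dz,
\]
and apply Fubini's theorem, whose hypothesis holds because
\[
\int_{0}^{t}\int_{U}\bigl|X_{i}P(\sigma(s),z)\Phi(z,y)\bigr|\,dz\,ds
\leqslant c\int_{0}^{t}\phi_{1+\alpha}(\sigma(s),y)\,ds<\infty
\]
(uniformly for $s\in[0,t]$ since $\sigma(s)$ stays away from $y$, by (\ref{Stima XP}), (\ref{phi bound}) and Theorem \ref{Thm phi alfa}). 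This converts the difference quotient into $\frac{1}{t}\int_{0}^{t}g(\sigma(s))\,ds$ with $g(w)=\int_{U}X_{i}P(w,z)\Phi(z,y)\,dz$. Proposition \ref{Lemma joint continuity}(i), which you already invoke for the continuity of $X_{i}J$, makes $g$ continuous, so the average converges to $g(x)$ as $t\to0$, giving $X_{i}J(x,y)=g(x)$. This replaces the dominated-convergence step without changing anything else in your proof. (For the record, the paper itself states the interchange without any justification.)
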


\begin{proof}
By (\ref{Stima P}), (\ref{phi bound}) and Theorem \ref{Thm phi alfa}, we have%
\[
\left\vert J\left(  x,y\right)  \right\vert \leqslant c\int_{U}\phi_{2}\left(
x,z\right)  \phi_{\alpha}\left(  z,y\right)  dz\leqslant c\phi_{2+\alpha
}\left(  x,y\right)  .
\]
Also, $X_{i}J$ is well defined, indeed%
\begin{align*}
X_{i}J\left(  x,y\right)   &  =X_{i}\int_{U}P\left(  x,z\right)  \Phi\left(
z,y\right)  dz\\
&  =\int_{U}X_{i}P\left(  x,z\right)  \Phi\left(  z,y\right)  dz
\end{align*}
and, by Propositions \ref{Prop P}, \ref{Prop Phi} and Theorem
\ref{Thm phi alfa},%
\[
\left\vert X_{i}J\left(  x,z\right)  \right\vert \leqslant c\int_{U}\phi
_{1}\left(  x,z\right)  \phi_{\alpha}\left(  z,y\right)  dz\leqslant
c\phi_{1+\alpha}\left(  x,y\right)  .
\]
As to the continuity properties: by Proposition \ref{Prop P} and Proposition
\ref{Prop Phi} we know that $P\left(  x,y\right)  ,X_{i}P\left(  x,y\right)
,\Phi\left(  x,y\right)  $ are continuous in the joint variables for $x\neq y$
and satisfy the bounds%
\begin{align*}
\left\vert P\left(  x,y\right)  \right\vert  &  \leqslant c\phi_{2}\left(
x,y\right)  ;\\
\left\vert X_{i}P\left(  x,y\right)  \right\vert  &  \leqslant c\phi
_{1}\left(  x,y\right)  \text{ }\left(  i=1,2,...,n\right)  ;\\
\left\vert \Phi\left(  x,y\right)  \right\vert  &  \leqslant c\phi_{\alpha
}\left(  x,y\right)  .
\end{align*}
Hence Proposition \ref{Lemma joint continuity} implies (\ref{J cont}).
\end{proof}

\begin{proposition}
\label{Prop J}The following identity and upper bound hold in weak sense:%
\begin{align}
LJ\left(  x,y\right)   &  =\int_{U}Z_{1}\left(  x,z\right)  \Phi\left(
z,y\right)  dz-c_{0}\left(  x\right)  \Phi\left(  x,y\right)
,\label{LJ identity}\\
\left\vert LJ\left(  x,y\right)  \right\vert  &  \leqslant c\phi_{\alpha
}\left(  x,y\right)  \label{stima LJ}%
\end{align}
where%
\[
c_{0}\left(  x\right)  =\int_{\mathbb{R}^{m}}c\left(  x,k\right)  \varphi
^{2}(k)dk
\]
and $c\left(  x,k\right)  $ is defined in (\ref{d xi}).

Explicitly, denoting by $G\left(  x,y\right)  $ the right hand side of
(\ref{LJ identity}), we have%
\begin{equation}
\int_{U}J\left(  x,y\right)  L^{\ast}\psi\left(  x\right)  dx=\int_{U}G\left(
x,y\right)  \psi\left(  x\right)  dx \label{LJ identity weak}%
\end{equation}
for any $\psi\in C_{0}^{\infty}\left(  U\right)  $ and $y\in U$, where
$L^{\ast}$ is the transposed operator of $L$ (see (\ref{transposed})), and%
\[
\left\vert G\left(  x,y\right)  \right\vert \leqslant c\phi_{\alpha}\left(
x,y\right)  .
\]

\end{proposition}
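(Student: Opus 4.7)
The strategy is to first establish the weak \emph{parametrix identity}
\[
\int_U P(x,z)\,L^\ast\psi(x)\,dx \;=\; -c_0(z)\psi(z) + \int_U Z_1(x,z)\psi(x)\,dx \qquad (\star)
\]
for every $\psi\in C_0^\infty(U)$ and every $z\in U$, and then to deduce (\ref{LJ identity weak}) from it by Fubini. Indeed, substituting $(\star)$ into $\int J(x,y)L^\ast\psi(x)\,dx=\int_U\Phi(z,y)\bigl[\int_U P(x,z)L^\ast\psi(x)\,dx\bigr]\,dz$ and interchanging integration orders gives exactly $\int_U G(x,y)\psi(x)\,dx$ with $G$ as in the statement. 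Fubini is legitimate because $|P(x,z)|\leqslant c\phi_2(x,z)$ and $|\Phi(z,y)|\leqslant c\phi_\alpha(z,y)$ are locally integrable.

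To prove $(\star)$ we work in the lifted space. The crucial observation is that the test function $\widetilde\psi(\xi):=\psi(x)$ (independent of $h$) satisfies
$\widetilde L^\ast\widetilde\psi(\xi) = L^\ast\psi(x)$
on the support of $\varphi(h)$. This holds because $u_{ij}(x,h_1,\ldots,h_{j-1})$ is independent of $h_j$, so $\partial_{h_j}u_{ij}=0$ and all extra divergence terms in $\widetilde X_i^\ast$ vanish when applied to an $h$-independent function. Combining this with the compact $h$-support provided by $\varphi$ and with the adjoint form of (\ref{X Xtilda}), we rewrite
\[
\int_U P(x,z)L^\ast\psi(x)\,dx \;=\; \int_{\mathbb{R}^m}\!\varphi(k)\,\Bigl\langle \widetilde L_\xi\bigl[\Gamma(\Theta_{(z,k)}(\xi))\varphi(h)\bigr],\,\psi(x)\Bigr\rangle\,dk,
\]
where the bracket is the standard distributional pairing on the lifted space.

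The next step is to decompose $\widetilde L[\Gamma(\Theta_\eta(\xi))\varphi(h)]$ into a singular and a regular part, following the computation already carried out in the proof of Proposition \ref{Prop bound Z_1}. Expanding by Leibniz and using (\ref{approx theta}) writes $\widetilde L[\Gamma\circ\Theta_\eta]$ as $(\mathcal L\Gamma)\circ\Theta_\eta$ plus combinations of $Y_jR_j^\eta\Gamma$, $R_j^\eta Y_j\Gamma$, $R_j^\eta R_j^\eta\Gamma$, $R_0^\eta\Gamma$ evaluated at $\Theta_\eta(\xi)$, all of which are pointwise bounded by $c\|\Theta_\eta(\xi)\|^{-Q+\alpha}$ by Corollary \ref{Coroll second order remainder}; the Leibniz terms involving $\widetilde X_j\varphi$ and $\widetilde L\varphi$ are similarly integrable. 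Since $\mathcal L\Gamma=-\delta_0$ on $\mathbb G$, the change of variables formula (\ref{d xi}) shows that $(\mathcal L\Gamma)\circ\Theta_\eta$ contributes, as a distribution in $\xi$, precisely $-c(\eta)\delta_\eta$, the Jacobian of $\xi\mapsto\Theta_\eta(\xi)$ at $\xi=\eta$ being $1/c(\eta)$. Multiplying by $\varphi(h)$ and pairing with $\psi(x)$ at $\eta=(z,k)$ produces the singular contribution $-c(z,k)\varphi(k)\psi(z)$; integrating against $\varphi(k)\,dk$ and invoking the definition of $c_0$ yields the term $-c_0(z)\psi(z)$ in $(\star)$. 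The regular remainders, integrated in $h$ and against $\varphi(k)\,dk$, reconstruct exactly the integral representation of $Z_1(x,z)$ in (\ref{Z1}), so by Fubini they contribute $\int_U Z_1(x,z)\psi(x)\,dx$, completing the proof of $(\star)$.

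The bound (\ref{stima LJ}) is then immediate: $|c_0(x)\Phi(x,y)|\leqslant c\phi_\alpha(x,y)$ since $c_0$ is bounded, while $\bigl|\int_U Z_1(x,z)\Phi(z,y)\,dz\bigr|\leqslant c\phi_{2\alpha}(x,y)\leqslant cR^\alpha\phi_\alpha(x,y)$ by Propositions \ref{Prop bound Z_1}, \ref{Prop Phi} and Theorem \ref{Thm phi alfa}. The main technical obstacle is the rigorous identification of the distributional singular part of $\widetilde L[\Gamma(\Theta_\eta(\xi))\varphi(h)]$ as $-c(\eta)\varphi(\eta_h)\delta_\eta$; I expect this to be handled by a standard regularization: replace $\Gamma$ by a smooth approximant $\Gamma_\varepsilon$ (e.g.\ truncated outside a small ball around the origin of $\mathbb G$), perform the classical integration by parts, and pass to the limit $\varepsilon\to 0$ using the $\phi_\beta$-integrability bounds of Section \ref{sec geometric} to control the non-singular remainders uniformly in $\varepsilon$.
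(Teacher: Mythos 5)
Your proposal is correct and follows essentially the paper's route: regularize $\Gamma$ by a cutoff, change variables via (\ref{d xi}) to extract the $-c_0$ contribution from the Dirac mass of $\mathcal L\Gamma$, and identify the regular remainders with the integral representation of $Z_1$ from (\ref{Z1}). The one organizational difference is that you isolate the parametrix identity $(\star)$ for $P$ alone and then obtain (\ref{LJ identity weak}) for $J=\int P\,\Phi$ by Fubini, whereas the paper runs the whole computation directly on $J_\varepsilon$; in fact the paper uses precisely your $(\star)$ later, in the proof of Theorem \ref{Thm gamma}, so your modularization is a small genuine gain in exposition. Your choice to lift $L^*\psi$ to $\widetilde L^*\widetilde\psi$ (using that the coefficients $u_{kj}$ in (\ref{Liftati}) do not depend on $h_j$, so $\widetilde L^*$ preserves $h$-independence) is a valid alternative to the paper's simpler observation that $\int LJ_\varepsilon\,\psi\,dx=\int J_\varepsilon\,L^*\psi\,dx$ by classical integration by parts on $\mathbb{R}^p$; the two routes are equivalent. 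You correctly recognize at the end that the rigorous content is the regularization argument; be aware that to control the non-$\delta$ remainders \emph{uniformly in} $\varepsilon$ one needs the estimate $\bigl|\sum_i\bigl(Y_iR_i^\eta+R_i^\eta Y_i+R_i^\eta R_i^\eta+R_0^\eta\bigr)\Gamma_\varepsilon(u)\bigr|\leqslant c\|u\|^{-(Q-\alpha)}$, which is where the paper invokes Lemma \ref{Cutoff} explicitly and which your sketch leaves implicit.
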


For the proof of the above proposition we need the following lemma.

\begin{lemma}
\label{Cutoff}Let $\omega$ be a smooth function on $\mathbb{G}$ such that
$\omega\left(  u\right)  =0$ for $\left\Vert u\right\Vert <\frac{1}{2}$ and
$\omega\left(  u\right)  =1$ for $\left\Vert u\right\Vert >1$ and let
$\omega_{\varepsilon}\left(  u\right)  =\omega\left(  D\left(  \varepsilon
^{-1}\right)  u\right)  $. Let $R_{1}$ and $R_{2}$ be vector fields on
$\mathbb{G}$ given by
\[
R_{1}=\sum_{j=1}^{N}a_{j}\left(  u\right)  \partial_{u_{j}},R_{2}=\sum
_{j=1}^{N}b_{j}\left(  u\right)  \partial_{u_{j}}%
\]
and assume that, for a couple of $s_{1},s_{2}\in\mathbb{R}$ and some constant
$c>0,$ every $j,k=1,2,...,N,$%
\begin{align*}
\left\vert a_{j}\left(  u\right)  \right\vert  &  \leqslant c\left\Vert
u\right\Vert ^{s_{1}+\alpha_{j}};\\
\left\vert b_{j}\left(  u\right)  \right\vert  &  \leqslant c\left\Vert
u\right\Vert ^{s_{2}+\alpha_{j}};\\
\left\vert \partial_{u_{k}}b_{j}\left(  u\right)  \right\vert  &  \leqslant
c\left\Vert u\right\Vert ^{s_{2}+\alpha_{j}-\alpha_{k}}%
\end{align*}
where the $\alpha_{j}$'s are as in (\ref{dilation}). Then there exists
$c^{\prime}>0$ such that for every $\varepsilon>0$ and $u\in\mathbb{G}$%
\begin{align*}
\left\vert R_{1}\omega_{\varepsilon}\left(  u\right)  \right\vert  &
\leqslant c^{\prime}\left\Vert u\right\Vert ^{s_{1}}\\
\left\vert R_{1}R_{2}\omega_{\varepsilon}\left(  u\right)  \right\vert  &
\leqslant c^{\prime}\left\Vert u\right\Vert ^{s_{1}+s_{2}}.
\end{align*}

\end{lemma}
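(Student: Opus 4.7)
The plan is to exploit the fact that $\omega_\varepsilon$ is constant (either $0$ or $1$) outside the annular region $A_\varepsilon = \{u : \varepsilon/2 \leqslant \|u\| \leqslant \varepsilon\}$, so that all derivatives of $\omega_\varepsilon$ are supported there. On $A_\varepsilon$ we have $\|u\| \asymp \varepsilon$, which is exactly what will let the dilation factors produced by differentiation cancel against the growth bounds assumed on the coefficients of $R_1,R_2$.

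First I would compute the effect of the dilation on ordinary derivatives. Since $D(\varepsilon^{-1})u = (\varepsilon^{-\alpha_1}u_1,\dots,\varepsilon^{-\alpha_N}u_N)$, the chain rule gives
\[
\partial_{u_j}\omega_\varepsilon(u) = \varepsilon^{-\alpha_j}(\partial_{u_j}\omega)(D(\varepsilon^{-1})u),\qquad \partial_{u_k}\partial_{u_j}\omega_\varepsilon(u) = \varepsilon^{-\alpha_j-\alpha_k}(\partial_{u_k}\partial_{u_j}\omega)(D(\varepsilon^{-1})u),
\]
and since $\omega$ is smooth with all derivatives bounded, we obtain $|\partial_{u_j}\omega_\varepsilon(u)| \leqslant C\varepsilon^{-\alpha_j}$ and $|\partial_{u_k}\partial_{u_j}\omega_\varepsilon(u)| \leqslant C\varepsilon^{-\alpha_j-\alpha_k}$, with both bounds supported in $A_\varepsilon$.

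For the first inequality, I would write $R_1\omega_\varepsilon(u) = \sum_j a_j(u)\,\partial_{u_j}\omega_\varepsilon(u)$, and use the hypothesis on $a_j$ together with the chain rule bound to get
\[
|R_1\omega_\varepsilon(u)| \leqslant \sum_j c\|u\|^{s_1+\alpha_j}\cdot C\varepsilon^{-\alpha_j}\chi_{A_\varepsilon}(u).
\]
Since on $A_\varepsilon$ we have $\varepsilon^{-\alpha_j} \leqslant 2^{\alpha_j}\|u\|^{-\alpha_j}$, each term reduces to $c'\|u\|^{s_1}$, giving the claim (and the bound is trivial outside $A_\varepsilon$ where the left-hand side vanishes).

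For the second inequality, I would expand
\[
R_1R_2\omega_\varepsilon(u) = \sum_{j,k}a_j(u)\,(\partial_{u_j}b_k)(u)\,\partial_{u_k}\omega_\varepsilon(u) + \sum_{j,k}a_j(u)\,b_k(u)\,\partial_{u_j}\partial_{u_k}\omega_\varepsilon(u).
\]
Using the hypotheses on $a_j, b_k, \partial_{u_j}b_k$ together with the derivative bounds on $\omega_\varepsilon$, each term in the first sum is controlled on $A_\varepsilon$ by $\|u\|^{s_1+\alpha_j}\cdot\|u\|^{s_2+\alpha_k-\alpha_j}\cdot\varepsilon^{-\alpha_k}\asymp \|u\|^{s_1+s_2}$, and each term in the second sum by $\|u\|^{s_1+\alpha_j}\cdot\|u\|^{s_2+\alpha_k}\cdot\varepsilon^{-\alpha_j-\alpha_k}\asymp \|u\|^{s_1+s_2}$, as desired. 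I do not anticipate any serious obstacle: the entire argument is a careful bookkeeping of exponents, and the key point — the localization of $\nabla\omega_\varepsilon$ to the shell where $\|u\|\asymp\varepsilon$ — is built into the definition of $\omega_\varepsilon$.
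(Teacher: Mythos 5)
Your proof is correct and follows essentially the same route as the paper's: compute the chain-rule factors $\varepsilon^{-\alpha_j}$ produced by the dilation, observe that the derivatives of $\omega_\varepsilon$ are supported where $\|u\|\leqslant\varepsilon$, expand $R_1R_2\omega_\varepsilon$ by the Leibniz rule, and cancel the $\varepsilon^{-\alpha_j}$ factors against the $\|u\|^{\alpha_j}$ factors in the coefficient bounds. One small remark: in justifying $\varepsilon^{-\alpha_j}\leqslant 2^{\alpha_j}\|u\|^{-\alpha_j}$ you appeal to the annulus $\varepsilon/2\leqslant\|u\|\leqslant\varepsilon$, but the inequality only uses the \emph{upper} bound $\|u\|\leqslant\varepsilon$ (which already gives $\varepsilon^{-\alpha_j}\leqslant\|u\|^{-\alpha_j}$); the lower bound $\|u\|\geqslant\varepsilon/2$ is irrelevant to the exponent bookkeeping, and would in any case point in the wrong direction. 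The paper cites only the upper bound.
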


\begin{proof}
We have%
\begin{align*}
\left\vert R_{1}\omega_{\varepsilon}\left(  u\right)  \right\vert  &
\leqslant\sum\left\vert a_{j}\left(  u\right)  \right\vert \left\vert
\frac{\partial\omega_{\varepsilon}}{\partial u_{j}}\left(  u\right)
\right\vert \\
&  \leqslant\sum\left\Vert u\right\Vert ^{s_{1}+\alpha_{j}}\frac
{1}{\varepsilon^{\alpha_{j}}}\left\vert \frac{\partial\omega}{\partial u_{j}%
}\left(  D\left(  \varepsilon^{-1}\right)  u\right)  \right\vert \\
&  \leqslant c\left\Vert u\right\Vert ^{s_{1}}%
\end{align*}
since on the support of $\frac{\partial\omega}{\partial u_{j}}\left(  D\left(
\varepsilon^{-1}\right)  u\right)  $ we have $\left\Vert u\right\Vert
\leqslant\varepsilon$. Similarly,%
\begin{align*}
&  \left\vert R_{1}R_{2}\omega_{\varepsilon}\left(  u\right)  \right\vert =\\
&  =\left\vert \left(  \sum_{k=1}^{N}a_{k}\left(  u\right)  \partial_{u_{k}%
}\sum_{j=1}^{N}b_{j}\left(  u\right)  \partial_{u_{j}}\right)  \omega
_{\varepsilon}\left(  u\right)  \right\vert \\
&  \leqslant c\sum_{k=1}^{N}\left\Vert u\right\Vert ^{s_{1}+\alpha_{k}}%
\sum_{j=1}^{N}\left\vert \partial_{u_{k}}b_{j}\left(  u\right)  \partial
_{u_{j}}\omega_{\varepsilon}\left(  u\right)  +b_{j}\left(  u\right)
\partial_{u_{k}u_{j}}^{2}\omega_{\varepsilon}\right\vert \\
&  \leqslant c\sum_{k=1}^{N}\left\Vert u\right\Vert ^{s_{1}+\alpha_{k}}%
\sum_{j=1}^{N}\left(  \frac{\left\Vert u\right\Vert ^{s_{2}+\alpha_{j}%
-\alpha_{k}}}{\varepsilon^{\alpha_{j}}}\left\vert \partial_{u_{j}}%
\omega\left(  D\left(  \frac{1}{\varepsilon}\right)  u\right)  \right\vert
+\frac{\left\Vert u\right\Vert ^{s_{2}+\alpha_{j}}}{\varepsilon^{\alpha
_{k}+\alpha_{j}}}\left\vert \partial_{u_{k}u_{j}}^{2}\omega\left(  D\left(
\frac{1}{\varepsilon}\right)  u\right)  \right\vert \right) \\
&  \leqslant c\sum_{k=1}^{N}\left\Vert u\right\Vert ^{s_{1}+\alpha_{k}%
}\left\Vert u\right\Vert ^{s_{2}-\alpha_{k}}=c\left\Vert u\right\Vert
^{s_{1}+s_{2}}.
\end{align*}

\end{proof}

\begin{proof}
[Proof of Proposition \ref{Prop J}]To prove (\ref{LJ identity}) we use a
distributional argument. Let $\omega_{\varepsilon}$ be as in the previous
Lemma, let $\Gamma_{\varepsilon}=\omega_{\varepsilon}\Gamma$ and define%
\[
P_{\varepsilon}\left(  x,y\right)  =\int_{\mathbb{R}^{m}}\left(
\int_{\mathbb{R}^{m}}\Gamma_{\varepsilon}\left(  \Theta_{\left(  y,k\right)
}\left(  x,h\right)  \right)  \varphi\left(  h\right)  dh\right)
\,\varphi\left(  k\right)  dk
\]
and%
\[
J_{\varepsilon}\left(  x,y\right)  =\int_{U}P_{\varepsilon}\left(  x,z\right)
\Phi\left(  z,y\right)  dz.
\]
We have%
\[
J_{\varepsilon}\left(  x,y\right)  =\int_{\mathbb{R}^{m}}\left(
\int_{\mathbb{R}^{m}}\int_{U}\Gamma_{\varepsilon}\left(  \Theta_{\left(
z,k\right)  }\left(  x,h\right)  \right)  \Phi\left(  z,y\right)
dz\ \varphi\left(  h\right)  dh\right)  \,\varphi\left(  k\right)  dk
\]
and%
\begin{align*}
&  LJ_{\varepsilon}\left(  x,y\right) \\
&  =\int_{\mathbb{R}^{m}}\int_{\mathbb{R}^{m}}\int_{U}\widetilde{L}\left[
\Gamma_{\varepsilon}\left(  \Theta_{\left(  z,k\right)  }\left(  x,h\right)
\right)  \varphi\left(  h\right)  \right]  \Phi\left(  z,y\right)
\varphi\left(  k\right)  dzdhdk
\end{align*}%
\begin{align*}
&  =\int_{\mathbb{R}^{m}}\int_{\mathbb{R}^{m}}\int_{U}\mathcal{L}%
\Gamma_{\varepsilon}\left(  \Theta_{\left(  z,k\right)  }\left(  x,h\right)
\right)  \varphi\left(  h\right)  \Phi\left(  z,y\right)  \varphi\left(
k\right)  dzdhdk\\
&  +\int_{\mathbb{R}^{m}}\int_{\mathbb{R}^{m}}\int_{U}\left(  \sum_{i}\left(
Y_{i}R_{i}^{\left(  z,k\right)  }+R_{i}^{\left(  z,k\right)  }Y_{i}%
+R_{i}^{\left(  z,k\right)  }R_{i}^{\left(  z,k\right)  }+R_{0}^{\left(
z,k\right)  }\right)  \Gamma_{\varepsilon}\left(  \Theta_{\left(  z,k\right)
}\left(  x,h\right)  \right)  \right)  \times\\
&  ~~~~~~~~~~\times\varphi\left(  h\right)  \Phi\left(  z,y\right)
\varphi\left(  k\right)  dzdhdk\\
&  +\int_{\mathbb{R}^{m}}\int_{\mathbb{R}^{m}}\int_{U}2\sum_{i}Y_{i}%
\Gamma_{\varepsilon}\left(  \Theta_{\left(  z,k\right)  }\left(  x,h\right)
\right)  \widetilde{X}_{i}\varphi\left(  h\right)  \Phi\left(  z,y\right)
\varphi\left(  k\right)  dzdhdk\\
&  +\int_{\mathbb{R}^{m}}\int_{\mathbb{R}^{m}}\int_{U}\Gamma_{\varepsilon
}\left(  \Theta_{\left(  z,k\right)  }\left(  x,h\right)  \right)
\widetilde{L}^{\left(  x,h\right)  }\varphi\left(  h\right)  \Phi\left(
z,y\right)  \varphi\left(  k\right)  dzdhdk.
\end{align*}
To bound
\[
\sum_{i}\left(  Y_{i}R_{i}^{\left(  z,k\right)  }+R_{i}^{\left(  z,k\right)
}Y_{i}+R_{i}^{\left(  z,k\right)  }R_{i}^{\left(  z,k\right)  }+R_{0}^{\left(
z,k\right)  }\right)  \Gamma_{\varepsilon}\left(  u\right)
\]
we now recall that, by Theorem \ref{Thm liftapprox}, the vector fields
$R_{i}^{\left(  z,k\right)  },Y_{i},R_{0}^{\left(  z,k\right)  }$ satisfy the
assumptions of Lemma \ref{Cutoff} \ with $s_{1}$ or $s_{2}$ equal to
$\alpha-1,-1,\alpha-2,$ respectively. A simple computation shows that%
\[
\left\vert \sum_{i}\left(  Y_{i}R_{i}^{\left(  z,k\right)  }+R_{i}^{\left(
z,k\right)  }Y_{i}+R_{i}^{\left(  z,k\right)  }R_{i}^{\left(  z,k\right)
}+R_{0}^{\left(  z,k\right)  }\right)  \Gamma_{\varepsilon}\left(  u\right)
\right\vert \leqslant\frac{c}{\left\Vert u\right\Vert ^{Q-\alpha}}.
\]
Hence for suitable $\varphi_{j}\in C_{0}^{\infty}\left(  \mathbb{R}%
^{m}\right)  $ and $Q_{\varepsilon,j}$ satisfying%
\[
\left\vert Q_{\varepsilon,j}\left(  z,k;~x,h\right)  \right\vert
\leqslant\frac{c}{\left\Vert \Theta_{\left(  z,k\right)  }\left(  x,h\right)
\right\Vert ^{Q-\alpha}}%
\]
we have
\begin{align*}
LJ_{\varepsilon}\left(  x,y\right)   &  =\int_{\mathbb{R}^{m}}\int
_{\mathbb{R}^{m}}\int_{U}\mathcal{L}\Gamma_{\varepsilon}\left(  \Theta
_{\left(  z,k\right)  }\left(  x,h\right)  \right)  \varphi\left(  h\right)
\Phi\left(  z,y\right)  \varphi\left(  k\right)  dzdhdk\\
&  +\int_{\mathbb{R}^{m}}\int_{\mathbb{R}^{m}}\int_{U}\sum_{j=1}%
^{3}Q_{\varepsilon,j}\left(  z,k;~x,h\right)  \varphi_{j}\left(  h\right)
\Phi\left(  z,y\right)  \varphi\left(  k\right)  dzdhdk.
\end{align*}
Let now $\psi\in C_{0}^{\infty}\left(  U\right)  $ be any test function. Then%
\begin{align*}
&  \int_{\mathbb{R}^{p}}LJ_{\varepsilon}\left(  x,y\right)  \psi\left(
x\right)  dx=\\
&  =\int_{U}\int_{\mathbb{R}^{m}}\int_{\mathbb{R}^{m}}\int_{\mathbb{R}^{p}%
}\mathcal{L}\Gamma_{\varepsilon}\left(  \Theta_{\left(  z,k\right)  }\left(
x,h\right)  \right)  \psi\left(  x\right)  \varphi\left(  h\right)
\varphi\left(  k\right)  dxdhdk\Phi\left(  z,y\right)  dz\\
&  +\int_{U}\int_{\mathbb{R}^{m}}\int_{\mathbb{R}^{m}}\int_{\mathbb{R}^{p}%
}\sum_{j=1}^{3}Q_{\varepsilon,j}\left(  z,k;~x,h\right)  \psi\left(  x\right)
\varphi_{j}\left(  h\right)  \varphi\left(  k\right)  dxdhdk\Phi\left(
z,y\right)  dz.
\end{align*}
Let now change variable in the first integral setting $u=\Theta_{\left(
z,k\right)  }\left(  x,h\right)  .$ Then, by (\ref{d xi}),%
\[
dxdh=c\left(  z,k\right)  \left(  1+O\left(  \left\Vert u\right\Vert \right)
\right)  du,
\]
and setting
\[
\widehat{\varphi}_{(z,k)}\left(  u\right)  =\left.  \varphi\left(  h\right)
\psi\left(  x\right)  \vphantom{\int}\right\vert _{u=\Theta_{\left(
z,k\right)  }\left(  x,h\right)  }%
\]
we have
\begin{align*}
&  \int_{\mathbb{R}^{p}}LJ_{\varepsilon}\left(  x,y\right)  \psi\left(
x\right)  dx=\int_{U}\int_{\mathbb{R}^{m}}\int_{\mathbb{G}}\mathcal{L}%
\Gamma_{\varepsilon}\left(  u\right)  \widehat{\varphi}_{(z,k)}\left(
u\right)  du\ c(z,k)\varphi(k)dk\Phi\left(  z,y\right)  dz\\
&  +\int_{U}\int_{\mathbb{R}^{m}}\int_{\mathbb{G}}\mathcal{L}\Gamma
_{\varepsilon}\left(  u\right)  O\left(  \left\Vert u\right\Vert \right)
\widehat{\varphi}_{(z,k)}\left(  u\right)  du\ c(z,k)\varphi(k)dk\Phi\left(
z,y\right)  dz\\
&  +\int_{U}\int_{\mathbb{R}^{m}}\int_{\mathbb{R}^{m}}\int_{\mathbb{R}^{p}%
}\sum_{j=1}^{3}Q_{\varepsilon,j}\left(  z,k;~x,h\right)  \varphi_{j}\left(
h\right)  \psi\left(  x\right)  dx\varphi(k)dhdk\Phi\left(  z,y\right)  dz.
\end{align*}
Since $\mathcal{L}\Gamma_{\varepsilon}\left(  u\right)  =0$ for $\left\Vert
u\right\Vert >\varepsilon$, letting $\varepsilon\rightarrow0$ the second
integral in the right hand side vanishes, by Lebesgue's theorem, and
integrating by part in the first integral we get:%
\begin{align*}
\lim_{\varepsilon\rightarrow0}  &  \int_{\mathbb{R}^{p}}LJ_{\varepsilon
}\left(  x,y\right)  \psi\left(  x\right)  dx=\\
&  =\int_{U}\int_{\mathbb{R}^{m}}\int_{\mathbb{G}}\Gamma\left(  u\right)
\mathcal{L}^{\ast}\widehat{\varphi}_{(z,k)}\left(  u\right)  du\ c(z,k)\varphi
(k)dk\Phi\left(  z,y\right)  dz\\
&  +\int_{U}\int_{\mathbb{R}^{m}}\int_{\mathbb{R}^{m}}\int_{\mathbb{R}^{p}%
}\sum_{j=1}^{3}Q_{j}\left(  z,k;~x,h\right)  \varphi_{j}\left(  h\right)
\psi\left(  x\right)  \varphi(k)dxdhdk\Phi\left(  z,y\right)  dz
\end{align*}
where $Q_{j}$ are as in (\ref{Z1}) and
\[
\mathcal{L}^{\ast}=\sum_{i=1}^{n}Y_{i}^{2}-Y_{0}%
\]
is the adjoint operator of $\mathcal{L}$, so that%
\[
\int_{\mathbb{G}}\Gamma\left(  u\right)  \mathcal{L}^{\ast}\widehat{\varphi
}_{(z,k)}\left(  u\right)  du=-\widehat{\varphi}_{(z,k)}\left(  0\right)
=-\varphi\left(  k\right)  \psi\left(  z\right)
\]
and%
\begin{align*}
&  \lim_{\varepsilon\rightarrow0}\int_{\mathbb{R}^{p}}LJ_{\varepsilon}\left(
x,y\right)  \psi\left(  x\right)  dx\\
&  =-\int_{U}\psi\left(  z\right)  c_{0}\left(  z\right)  \Phi\left(
z,y\right)  dz+\int_{U}\int_{\mathbb{R}^{p}}Z_{1}\left(  x,z\right)
\Phi\left(  z,y\right)  \psi\left(  x\right)  dxdz,
\end{align*}
having set%
\begin{equation}
c_{0}\left(  z\right)  =\int_{\mathbb{R}^{m}}c\left(  z,k\right)  \varphi
^{2}(k)dk. \label{c_0}%
\end{equation}
On the other hand,
\[
\lim_{\varepsilon\rightarrow0}\int_{\mathbb{R}^{p}}LJ_{\varepsilon}\left(
x,y\right)  \psi\left(  x\right)  dx=\lim_{\varepsilon\rightarrow0}%
\int_{\mathbb{R}^{p}}J_{\varepsilon}\left(  x,y\right)  L^{\ast}\psi\left(
x\right)  dx=\int_{\mathbb{R}^{p}}J\left(  x,y\right)  L^{\ast}\psi\left(
x\right)  dx,
\]
which easily follows by Lebesgue's dominated convergence theorem and the bound
(\ref{stima J})\ on $J,J_{\varepsilon}$. Therefore%
\[
LJ\left(  x,y\right)  =-c_{0}\left(  x\right)  \Phi\left(  x,y\right)
+\int_{U}Z_{1}\left(  x,y\right)  \Phi\left(  z,y\right)  dz,
\]
which is (\ref{LJ identity}). This also implies, by (\ref{phi bound}),
(\ref{Z1 bound}) and Theorem \ref{Thm phi alfa}:%
\begin{align*}
\left\vert LJ\left(  x,y\right)  \right\vert  &  \leqslant c\phi_{\alpha
}\left(  x,y\right)  +c\int_{U}\phi_{\alpha}\left(  x,z\right)  \phi_{\alpha
}\left(  z,y\right)  dz\\
&  \leqslant c\phi_{\alpha}\left(  x,y\right)  +c\phi_{2\alpha}\left(
x,y\right)  \leqslant c\phi_{\alpha}\left(  x,y\right)  ,
\end{align*}
which is (\ref{stima LJ}).
\end{proof}

In view of the presence of the term $c_{0}\left(  x\right)  $ in the identity
(\ref{LJ identity}) we now modify our previous construction as follows:%
\begin{align*}
Z_{1}^{\prime}\left(  x,y\right)   &  =\frac{1}{c_{0}\left(  x\right)  }%
Z_{1}\left(  x,y\right)  ;\\
Z_{k+1}^{\prime}\left(  x,y\right)   &  =\int_{U}Z_{1}^{\prime}\left(
x,z\right)  Z_{k}^{\prime}\left(  z,y\right)  dz;\\
\Phi^{\prime}\left(  x,y\right)   &  =\sum_{k=1}^{\infty}Z_{k}^{\prime}\left(
x,y\right)  ;\\
J^{\prime}\left(  x,y\right)   &  =\int_{U}P\left(  x,z\right)  \Phi^{\prime
}\left(  z,y\right)  dz.
\end{align*}
With these definitions, the following hold:%
\begin{align}
\Phi^{\prime}\left(  x,y\right)   &  =Z_{1}^{\prime}\left(  x,y\right)
+\int_{U}Z_{1}^{\prime}\left(  x,z\right)  \Phi^{\prime}\left(  z,y\right)
dz;\label{int Eq prime}\\
c_{0}\left(  x\right)  \Phi^{\prime}\left(  x,y\right)   &  =Z_{1}\left(
x,y\right)  +\int_{U}Z_{1}\left(  x,z\right)  \Phi^{\prime}\left(  z,y\right)
dz;\label{c Phi'}\\
LJ^{\prime}\left(  x,y\right)   &  =\int_{U}Z_{1}\left(  x,z\right)
\Phi^{\prime}\left(  z,y\right)  dz-c_{0}\left(  x\right)  \Phi^{\prime
}\left(  x,y\right)  . \label{LJ'}%
\end{align}

\begin{remark}
\label{Remark c_0}Recalling that%
\[
0<c_{1}\leqslant c_{0}\left(  x\right)  \leqslant c_{2}%
\]
for any $x\in U$, and that $c_{0}\in C^{\alpha}\left(  U\right)  $ (since by
Theorem \ref{Thm liftapprox} the function $c$ is H\"{o}lder continuous), it is
immediate to check that the functions $Z_{1}^{\prime},Z_{k}^{\prime}%
,\Phi^{\prime},J^{\prime}$ satisfy the same upper bounds (with different
constants) and continuity properties proved in Propositions
\ref{Prop bound Z_1}, \ref{Prop Phi}, \ref{Prop J} for $Z_{1},Z_{k},\Phi,J$, respectively.
\end{remark}

We have, at last:

\begin{theorem}
[Existence of fundamental solution]\label{Thm gamma}Let%
\[
\gamma\left(  x,y\right)  =\frac{1}{c_{0}\left(  y\right)  }\left[  P\left(
x,y\right)  +J^{\prime}\left(  x,y\right)  \right]  .
\]
Then $\gamma\left(  x,y\right)  $ and $X_{i}\gamma\left(  x,y\right)  $
($i=1,2,...,n$) are well defined and continuous in the joint variables $x,y\in
U,x\neq y$, and satisfy the following bounds:%
\begin{align}
\left\vert \gamma\left(  x,y\right)  \right\vert  &  \leqslant c\phi
_{2}\left(  x,y\right)  ;\label{gam 1}\\
\left\vert X_{i}\gamma\left(  x,y\right)  \right\vert  &  \leqslant c\phi
_{1}\left(  x,y\right)  . \label{gam 2}%
\end{align}
Moreover, $\gamma\left(  \cdot,y\right)  $ is a weak solution to
$L\gamma\left(  \cdot,y\right)  =-\delta_{y}$, that is:%
\begin{equation}
\int_{U}\gamma\left(  x,y\right)  L^{\ast}\psi\left(  x\right)  dx=-\psi
\left(  y\right)  \label{gam 4}%
\end{equation}
for any $\psi\in C_{0}^{\infty}\left(  U\right)  ,y\in U$. Finally, if
$X_{0}\equiv0$, then there exists $\varepsilon>0$ such that%
\begin{equation}
\gamma\left(  x,y\right)  >0\text{ for }d\left(  x,y\right)  <\varepsilon.
\label{gam 3}%
\end{equation}

\end{theorem}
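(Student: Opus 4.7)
The growth estimates (\ref{gam 1})--(\ref{gam 2}) and the joint continuity are almost bookkeeping. From Propositions \ref{Prop P} and \ref{Prop stime P J} (applied to the primed quantities via Remark \ref{Remark c_0}) we have $|P|\leqslant c\phi_{2}$, $|X_{i}P|\leqslant c\phi_{1}$, $|J'|\leqslant c\phi_{2+\alpha}$, $|X_{i}J'|\leqslant c\phi_{1+\alpha}$; the trivial estimate $\phi_{\beta+\alpha}(x,y)\leqslant R^{\alpha}\phi_{\beta}(x,y)$ (factor out $r^{\alpha}\leqslant R^{\alpha}$ inside the integral defining $\phi_{\beta+\alpha}$) reduces the $J'$-contributions to multiples of $\phi_{2}$ and $\phi_{1}$, and since $c_{0}\in C^{\alpha}(U)$ is bounded away from zero, dividing by $c_{0}(y)$ preserves the bounds. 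Joint continuity of $\gamma$ and $X_{i}\gamma$ off the diagonal follows from the joint continuity of $P,X_{i}P,J',X_{i}J'$ together with the continuity of $y\mapsto c_{0}(y)^{-1}$.

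The central step is the weak identity (\ref{gam 4}). The plan is to derive first the analogue of (\ref{LJ identity}) for $P$, namely
\begin{equation}\label{P-weak-identity}
\int_{U}P(x,y)\,L^{\ast}\psi(x)\,dx=-c_{0}(y)\psi(y)+\int_{U}Z_{1}(x,y)\,\psi(x)\,dx
\end{equation}
for every $\psi\in C_{0}^{\infty}(U)$, by repeating the smoothing argument of Proposition \ref{Prop J} with $P_{\varepsilon}(x,y)=\iint\Gamma_{\varepsilon}(\Theta_{(y,k)}(x,h))\varphi(h)\varphi(k)\,dh\,dk$ in place of $J_{\varepsilon}$. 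Expanding $\widetilde{L}[\Gamma_{\varepsilon}(\Theta_{(y,k)}(\cdot))\varphi]$ via (\ref{approx theta}), the principal term is $(\mathcal{L}\Gamma_{\varepsilon})(\Theta_{(y,k)}(\xi))\varphi(h)$ and the remainders are controlled by Corollary \ref{Coroll second order remainder}. After integrating against $\psi(x)\,dx\,\varphi(k)\,dk$ and performing the change of variable $u=\Theta_{(y,k)}(\xi)$ with Jacobian (\ref{d xi}), integration by parts on $\mathbb{G}$ together with the relation $\int_{\mathbb{G}}\Gamma\,\mathcal{L}^{\ast}\widehat{\varphi}_{(y,k)}\,du=-\widehat{\varphi}_{(y,k)}(0)=-\varphi(k)\psi(y)$ (since $\Theta_{(y,k)}(y,k)=0$) produces in the limit $\varepsilon\to 0$ exactly $-\psi(y)\int c(y,k)\varphi^{2}(k)\,dk=-c_{0}(y)\psi(y)$; the $O(\|u\|)$ Jacobian correction vanishes by dominated convergence and the remainders recombine to give $\int Z_{1}(x,y)\psi(x)\,dx$, yielding (\ref{P-weak-identity}). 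Adding (\ref{P-weak-identity}) to the weak form of (\ref{LJ'}), the three $Z_{1}$- and $c_{0}\Phi'$-terms cancel against one another thanks to the integral equation (\ref{c Phi'}), leaving $\int(P+J')L^{\ast}\psi\,dx=-c_{0}(y)\psi(y)$; division by $c_{0}(y)$ then gives (\ref{gam 4}).

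For the positivity assertion (\ref{gam 3}), when $X_{0}\equiv 0$ Folland's fundamental solution $\Gamma$ for $\mathcal{L}=\sum Y_{i}^{2}$ is strictly positive and homogeneous of degree $2-Q$. Running the Sanchez--Calle comparison that underlies Lemma \ref{Lemma NSW} but retaining the lower inequality in (\ref{Sanchez-Calle}) yields a matching lower bound $P(x,y)\geqslant c'\phi_{2}(x,y)$ for $x,y\in U$; a case-by-case comparison using Lemma \ref{Lemma nonintegrale} in numerator and denominator shows $\phi_{2+\alpha}(x,y)/\phi_{2}(x,y)\to 0$ as $d(x,y)\to 0$, so $|J'(x,y)|/P(x,y)\to 0$. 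Choosing $\varepsilon$ so small that this ratio stays below $1/2$ on $\{d(x,y)<\varepsilon\}$ gives $\gamma(x,y)\geqslant\tfrac{1}{2c_{0}(y)}P(x,y)>0$ there.

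The principal obstacle is the smoothing-and-change-of-variable derivation of (\ref{P-weak-identity}): one must correctly identify the boundary contribution $-\varphi(k)\psi(y)$ coming from the integration by parts, check that the $O(\|u\|)$ Jacobian correction gives a vanishing contribution in the limit $\varepsilon\to 0$, and see that the remainder terms reassemble into $\int Z_{1}\psi\,dx$. The template of Proposition \ref{Prop J} makes this technical rather than conceptually new, and once (\ref{P-weak-identity}) is in hand the whole theorem collapses via the algebraic cancellation provided by (\ref{c Phi'}).
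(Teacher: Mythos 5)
Your proposal matches the paper's proof essentially step for step: the growth and continuity claims are reduced to Propositions \ref{Prop P}, \ref{Prop stime P J} and Remark \ref{Remark c_0}; the weak identity is obtained by establishing $\int P\,L^{\ast}\psi = -c_{0}(y)\psi(y)+\int Z_{1}\psi$ via the same $P_{\varepsilon}$-smoothing argument used for $J$ in Proposition \ref{Prop J}, then combining with (\ref{LJ'}) and cancelling through (\ref{c Phi'}); and the positivity claim uses the strict positivity of Folland's $\Gamma$, the lower Sanchez--Calle inequality to get $P\geqslant c\,d(x,y)^{2}/|B(x,d(x,y))|$, and the upper bound $|J'|\leqslant c\,d(x,y)^{2+\alpha}/|B(x,d(x,y))|$. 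The only minor imprecision is citing Lemma \ref{Lemma nonintegrale} for the lower bound on $\phi_{2}$ (it only provides upper bounds; the lower bound $\phi_{2}(x,y)\geqslant c\,d(x,y)^{2}/|B(x,d(x,y))|$ follows instead from restricting the integral to $[d(x,y),2d(x,y)]$ and doubling), but the argument is otherwise sound.
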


\begin{remark}
When $X_{0}$ does not vanish the fundamental solution $\Gamma$ of the
homogeneous operator can be proved to be only non-negative, as the example of
the heat operator suggests. As a consequence nothing can be said in this case
about the sign of $\gamma$ near the pole.
\end{remark}

\begin{proof}
By (\ref{P reg 3}), (\ref{P reg 4}), (\ref{J cont}) and Remark
\ref{Remark c_0} the functions $\gamma\left(  x,y\right)  \ $and $X_{i}%
\gamma\left(  x,y\right)  $ are continuous in the joint variables $x,y\in
U,x\neq y$.

The bounds (\ref{gam 1}), (\ref{gam 2}) follow from Proposition
\ref{Prop stime P J} and Proposition \ref{Prop P}. As to (\ref{gam 3}),%
\[
\left\vert c_{0}\left(  y\right)  \gamma\left(  x,y\right)  -P\left(
x,y\right)  \right\vert =\left\vert J^{\prime}\left(  x,y\right)  \right\vert
\leqslant c\phi_{2+\alpha}\left(  x,y\right)  \leqslant c\frac{d\left(
x,y\right)  ^{2+\alpha}}{\left\vert B\left(  x,d\left(  x,y\right)  \right)
\right\vert }.
\]
If $X_{0}\equiv0$, then also $Y_{0}\equiv0$ and by \cite[Prop.\ 5.3.13,
p.243]{BLU} the function $\Gamma$ is strictly positive, hence%
\[
\Gamma\left(  u\right)  \geqslant\frac{c}{\left\Vert u\right\Vert ^{Q-2}}%
\]
and, reasoning like in Lemma \ref{Lemma NSW} one can check that%
\begin{align*}
P\left(  x,y\right)   &  \geqslant c\int_{\left\vert k\right\vert
\leqslant\varepsilon}\int_{\left\vert h\right\vert \leqslant\varepsilon}%
\frac{dhdk}{\left\Vert \Theta_{\left(  y,k\right)  }\left(  x,h\right)
\right\Vert ^{Q-2}}\\
&  \geqslant c\int_{d\left(  x,y\right)  }^{R}\frac{r}{\left\vert B\left(
x,r\right)  \right\vert }dr\geqslant c\frac{d\left(  x,y\right)  ^{2}%
}{\left\vert B\left(  x,d\left(  x,y\right)  \right)  \right\vert }%
\end{align*}
and (\ref{gam 3}) follows.

To prove (\ref{gam 4}), we have to show that for any test function $\psi$
\[
-\psi\left(  y\right)  c_{0}\left(  y\right)  =\int P\left(  x,y\right)
L^{\ast}\psi\left(  x\right)  dx+\int J^{\prime}\left(  x,y\right)  L^{\ast
}\psi\left(  x\right)  dx\equiv A+B.
\]
As to $A,$ exploiting the same computation performed in the proof of
Proposition \ref{Prop J},
\begin{align*}
A  &  =\lim_{\varepsilon\rightarrow0}\int P_{\varepsilon}\left(  x,y\right)
L^{\ast}\psi\left(  x\right)  dx=\lim_{\varepsilon\rightarrow0}\int
LP_{\varepsilon}\left(  x,y\right)  \psi\left(  x\right)  dx\\
&  =-\psi\left(  y\right)  c_{0}\left(  y\right)  +\int_{\mathbb{R}^{p}}%
Z_{1}\left(  x,y\right)  \psi\left(  x\right)  dx.
\end{align*}
On the other hand, by (\ref{LJ'}),
\[
B=\int_{\mathbb{R}^{p}}\psi\left(  x\right)  \left\{  \int Z_{1}\left(
x,z\right)  \Phi^{\prime}\left(  z,y\right)  dz-c_{0}\left(  x\right)
\Phi^{\prime}\left(  x,y\right)  \right\}  dx.
\]
By (\ref{c Phi'}),%
\begin{align*}
A+B  &  =-\psi\left(  y\right)  c_{0}\left(  y\right)  +\\
&  +\int_{\mathbb{R}^{p}}\psi\left(  x\right)  \left\{  Z_{1}\left(
x,y\right)  +\int Z_{1}\left(  x,z\right)  \Phi^{\prime}\left(  z,y\right)
dz-c_{0}\left(  x\right)  \Phi^{\prime}\left(  x,y\right)  \right\}  dx\\
&  =-\psi\left(  y\right)  c_{0}\left(  y\right)
\end{align*}
and we are done.
\end{proof}

\section{Further regularity of the fundamental solution \newline and local
solvability of $L$}

\label{sec:further regularity}In this section, under a stronger regularity
assumption on the coefficients of the vector fields, we will show that the
fundamental solution $\gamma\left(  \cdot,y\right)  $ actually possesses
second order derivatives with respect to the vector fields, satisfying natural
growth bounds, and $\gamma\left(  \cdot,y\right)  $ satisfies the equation
$Lu=0$ (outside the pole) in classical sense. As a consequence, we can
establish a local solvability result for the operator $L$.

\bigskip\noindent\textbf{Assumptions B. }In this section we assume that for
some integer $r\geqslant2\ $and some $\alpha\in(0,1],$ the coefficients of the
vector fields $X_{1},X_{2},...,X_{n}$ belong to $C^{r,\alpha}\left(
\Omega\right)  ,$ while the coefficients of $X_{0}$ belong to $C^{r-1,\alpha
\,}\left(  \Omega\right)  $. If $r=2$ we assume $\alpha=1$. Moreover, we still
assume that $X_{0},X_{1},...,X_{n}$ satisfy H\"{o}rmander's condition of step
$r$ in $\Omega$: the vectors
\[
\left\{  \left(  X_{\left[  I\right]  }\right)  _{x}\right\}  _{\left\vert
I\right\vert \leqslant r}%
\]
span $\mathbb{R}^{p}$ for any $x\in\Omega$. (For examples of systems of vector
fields satisfying the assumptions, see the Appendix).

\medskip Throughout this section we keep using the notation introduced in
\S \ref{sec parametrix}; in particular, $U$ stands for a fixed neighborhood of
a point $x_{0}\in\Omega$ where all the previous construction can be performed.
Accordingly to Assumptions B, from now on the constants appearing in our
estimates will have the following dependence on the vector fields:

\bigskip\noindent\textbf{Dependence of the constants.} All the constants
appearing in the upper bounds proved in this section will depend on the vector
fields only through the following quantities:

(i) the norms $C^{r,\alpha}\left(  \Omega\right)  $ of the coefficients of
$X_{i}$ $\left(  i=1,2,...,n\right)  $ and the norms $C^{r-1,\alpha}\left(
\Omega\right)  $ of the coefficients of $X_{0}$;

(ii) a positive constant $c_{0}$ such that the following bound holds:%
\[
\inf_{x\in\Omega}\max_{\left\vert I_{1}\right\vert ,\left\vert I_{2}%
\right\vert ,...,\left\vert I_{p}\right\vert \leqslant r}\left\vert
\det\left(  \left(  X_{\left[  I_{1}\right]  }\right)  _{x},\left(  X_{\left[
I_{2}\right]  }\right)  _{x},...,\left(  X_{\left[  I_{p}\right]  }\right)
_{x}\right)  \right\vert \geqslant c_{0}.
\]

\bigskip

Before proceeding we need to define precisely our functional framework and the
notion of solution.

\begin{definition}
\label{def solution}If $u$ is a function, not necessarily smooth, defined in
an open set $D\subseteq\Omega$, then:

we say that $X_{i}u$ exists in $D$ if the classical $X_{i}$-directional
derivative of $u$ exists in $D$;

we say that $u\in C_{X}^{1}\left(  D\right)  $ if for $i=1,2,...,n,$ the
derivatives $X_{i}u$ exist and are continuous in $D$;

we say that $u\in C_{X}^{2}\left(  D\right)  $ if $u\in C_{X}^{1}\left(
D\right)  $ and for $i,j=1,2,...,n,$ the derivatives $X_{i}X_{j}u$ and
$X_{0}u$ exist and are continuous in $D$.

Let $f$ be a continuous function in $D$. We say that $u$ is a (classical)
solution to%
\[
Lu=f\text{ in }D
\]
if $u\in C_{X}^{2}\left(  D\right)  \ $and $Lu\left(  x\right)  =f\left(
x\right)  $ for every $x\in D$.

We say that the operator $L$ is \emph{locally solvable }in $\Omega$ if for
every $x_{0}\in\Omega$ there exists a neighborhood $U\left(  x_{0}\right)  $
such that for every $\beta>0$ and $f\in C^{\beta}\left(  U\left(
x_{0}\right)  \right)  $ the equation $Lw=f$ has at least a $C_{X}^{2}\left(
U\left(  x_{0}\right)  \right)  $ solution.
\end{definition}

Note that, by Proposition \ref{Prop Lagrange}, any $C_{X}^{2}\left(  D\right)
$ function is necessarily continuous; if $X_{0}\equiv0$ the same conclusion
holds for $C_{X}^{1}\left(  D\right)  $ functions.

\begin{remark}
We recall that, even for the classical Laplacian, under the mere assumption of
continuity of $f$ in $D$, a $C^{2}\left(  D\right)  $ solution to $\Delta w=f$
may not exist. A counterexample is given for instance in \cite[exercise 4.9,
p.71]{GT}. Therefore the condition $f\in C^{\beta}\left(  D\right)  $ in the
definition of solvability is a natural requirement.
\end{remark}

The existence of $X_{i}u$ will be sometimes established by the following:

\begin{lemma}
\label{Lemma derivata classica}Let $D\subset\mathbb{R}^{p}$ be an open set and
let $X$ be a $C^{1}\left(  D\right)  $ vector field. Let $w$ be a $C\left(
D\right)  $ function and let $w_{\varepsilon}\in C^{1}\left(  D\right)  $ be
such that for $x\in D$, $w_{\varepsilon}\left(  x\right)  \rightarrow w\left(
x\right)  $ as $\varepsilon\rightarrow0$ and $Xw_{\varepsilon}\rightarrow g$
uniformly on $D$. Then $w$ is differentiable along $X$ and $Xw=g$.
\end{lemma}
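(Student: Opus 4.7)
The plan is to reduce to the one-dimensional fundamental theorem of calculus along integral curves of $X$. Since $X \in C^{1}(D)$, for any $x_{0} \in D$ there is a unique integral curve $\gamma:(-\delta,\delta) \to D$ of $X$ with $\gamma(0) = x_{0}$, defined for some $\delta > 0$, and $\gamma$ is $C^{1}$. For each $\varepsilon > 0$, since $w_{\varepsilon} \in C^{1}(D)$, the chain rule gives
\[
w_{\varepsilon}(\gamma(t)) - w_{\varepsilon}(x_{0}) = \int_{0}^{t} (Xw_{\varepsilon})(\gamma(s))\,ds
\]
for every $t \in (-\delta,\delta)$.

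Next I would pass to the limit as $\varepsilon \to 0$. The left-hand side converges to $w(\gamma(t)) - w(x_{0})$ by the pointwise convergence $w_{\varepsilon} \to w$. On the right, since $Xw_{\varepsilon} \to g$ uniformly on $D$ (in particular uniformly on the compact image of $\gamma|_{[-|t|,|t|]}$), we may interchange limit and integral to obtain
\[
w(\gamma(t)) - w(x_{0}) = \int_{0}^{t} g(\gamma(s))\,ds.
\]
Note also that $g$ is continuous on $D$ as a uniform limit of the continuous functions $Xw_{\varepsilon}$, so in particular $s \mapsto g(\gamma(s))$ is continuous near $0$.

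Finally, dividing by $t \neq 0$ and letting $t \to 0$, the standard fundamental theorem of calculus gives
\[
\lim_{t \to 0}\frac{w(\gamma(t)) - w(x_{0})}{t} = g(\gamma(0)) = g(x_{0}).
\]
By the definition of directional derivative along the vector field $X$, this is exactly $(Xw)(x_{0}) = g(x_{0})$. Since $x_{0} \in D$ was arbitrary, $w$ is differentiable along $X$ on all of $D$ and $Xw = g$. There is no real obstacle here; the only point to be careful about is that the integral representation requires only pointwise convergence of $w_{\varepsilon}$ (used at the two endpoints $0$ and $t$) combined with uniform convergence of $Xw_{\varepsilon}$ (needed to pass the limit under the integral), and that $C^{1}$ regularity of $X$ is enough to ensure the existence of the integral curve $\gamma$.
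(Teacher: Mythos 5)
Your proof is correct and follows essentially the same approach as the paper: both arguments restrict to an integral curve of $X$ and exploit the classical one-variable fact that pointwise convergence of functions together with uniform convergence of their derivatives implies the limit is differentiable with the expected derivative. The only difference is that you spell out this fact via the fundamental theorem of calculus, whereas the paper invokes it directly by interchanging $\lim_{\varepsilon\to 0}$ and $\frac{d}{dt}$.
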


\begin{proof}
Let $x\in D$ and let $\upsilon\left(  t\right)  $ be an integral curve of $X$
such that $\upsilon\left(  0\right)  =x$ and let $h_{\varepsilon}\left(
t\right)  =w_{\varepsilon}\left(  \upsilon\left(  t\right)  \right)  $. Since
$h_{\varepsilon}^{\prime}\left(  t\right)  $ converges uniformly we have%
\begin{align*}
g\left(  \upsilon\left(  t\right)  \right)   &  =\lim_{\varepsilon
\rightarrow0}Xw_{\varepsilon}\left(  \upsilon\left(  t\right)  \right)
=\lim_{\varepsilon\rightarrow0}h_{\varepsilon}^{\prime}\left(  t\right) \\
&  =\frac{d}{dt}\left(  \lim_{\varepsilon\rightarrow0}h_{\varepsilon}\left(
t\right)  \right)  =\frac{d}{dt}\left(  \lim_{\varepsilon\rightarrow
0}w_{\varepsilon}\left(  \upsilon\left(  t\right)  \right)  \right)
=Xw\left(  \upsilon\left(  t\right)  \right)  ,
\end{align*}
so that%
\[
g\left(  x\right)  =Xw\left(  x\right)  .
\]

\end{proof}

\subsection{Preliminary results}

We now need to sharpen the analysis of the map $\Theta_{\eta}\left(
\xi\right)  $ performed in \cite{BBP2} showing that, under the above
(stronger) Assumptions B, this function possesses reasonable properties also
with respect to the \textquotedblleft bad\textquotedblright\ variable $\eta.$
Namely, the following holds:\pagebreak

\begin{proposition}
\label{Prop bad theta}Under Assumptions B:

\begin{enumerate}
\item[i)] the vector fields $R_{i}^{\eta}$ appearing in (\ref{approx theta})
are $C^{r+1-p_{i},\alpha}$ vector fields of weight $\geqslant\alpha-p_{i},$
depending on $\eta$ in a $C^{\alpha}$ way.

\item[ii)] the coefficients of the differential operators $D_{i}^{\eta}$
defined by the compositions%
\[
Y_{i}R_{j}^{\eta}R_{k}^{\eta},\text{ }R_{i}^{\eta}R_{j}^{\eta}R_{k}^{\eta
},\text{ }Y_{i}Y_{j}R_{k}^{\eta},\text{ }R_{i}^{\eta}Y_{j}^{\eta}R_{k}^{\eta
},\text{ }Y_{i}R_{j}^{\eta}Y_{k},\text{ }R_{i}^{\eta}R_{j}^{\eta}Y_{k},\text{
}Y_{i}R_{0}^{\eta},\text{ }R_{i}^{\eta}R_{0}^{\eta}\text{\ }%
\]
$\left(  i=0,1,2,...,n;j,k=1,2,...,n\right)  $ satisfy the bound%
\[
\left\vert D_{i}^{\eta}f\left(  u\right)  \right\vert \leqslant\frac
{c}{\left\Vert u\right\Vert ^{\mu+2+p_{i}-\alpha}}%
\]
for $u$ in a neighborhood of the origin, whenever $f:\mathbb{G}\rightarrow
\mathbb{R}$ is $D\left(  \lambda\right)  $-homogeneous of degree $-\mu$. Also,
the coefficients of $D_{i}^{\eta}$ depend on $\eta$ in a $C^{\alpha}$ way.

\item[iii)] the change of variables $\eta\mapsto u=\Theta_{\eta}\left(
\xi\right)  $ is a $C^{1,\alpha}$ diffeomorphism in a neighborhood of the
origin and its inverse $\eta=\Theta_{(\cdot)}(\xi)^{-1}(u)$ is $C^{1,\alpha}$
in the joint variables $(\xi,u)$. Moreover we have%
\[
d\eta=c\left(  \xi\right)  \left(  1+\chi\left(  \xi,u\right)  \right)  du,
\]
where, analogously to Theorem \ref{Thm liftapprox}, $c\left(  \cdot\right)  $
is a $C^{\alpha}$ function, bounded and bounded away from zero, $\chi\left(
\xi,u\right)  $ is $C^{\alpha}$ in the joint variables $\left(  \xi,u\right)
$ and for every $\gamma_{1},\gamma_{2}\geqslant0$ such that $\gamma_{1}%
+\gamma_{2}\leqslant\alpha$ there exists a constant $c$ such that%
\[
\left\vert \chi\left(  \xi_{1},u\right)  -\chi\left(  \xi_{2},u\right)
\right\vert \leqslant c\left\vert \xi_{1}-\xi_{2}\right\vert ^{\gamma_{1}%
}\left\Vert u\right\Vert ^{\gamma_{2}}.
\]
In particular%
\[
\left\vert \chi\left(  \xi,u\right)  \right\vert \leqslant c\left\Vert
u\right\Vert ^{\alpha}.
\]

\end{enumerate}
\end{proposition}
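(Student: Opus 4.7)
The three parts of the proposition are successive refinements, under the extra derivative granted by Assumptions B, of Theorems \ref{Thm liftapprox}, \ref{Thm Calphatheta} and Corollary \ref{Coroll second order remainder}. My plan is to address them in order, reusing in each case the arguments of those earlier statements and tracking one additional derivative throughout the construction.

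For (i), I would revisit the defining identity
\[
c_{ik}^{\eta}(u)=\widetilde{X}_{i}g_{\eta}\bigl(\Theta_{\eta}^{-1}(u)\bigr)-Y_{i}u_{k},\qquad g_\eta(\xi)=\bigl(\Theta_\eta(\xi)\bigr)_k,
\]
from the proof of Theorem \ref{Thm Calphatheta} 2.i. The regularity of $c_{ik}^\eta$ in $u$ is controlled by the regularity of the coefficients $\widetilde{b}_{ij}$ of $\widetilde{X}_i$ and by that of $\Theta_\eta$ in $\xi$; under Assumptions B both gain one derivative over Assumptions A, so $R_i^\eta\in C^{r+1-p_i,\alpha}$. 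The $C^\alpha$-dependence on $\eta$ survives verbatim the argument of Theorem \ref{Thm Calphatheta}~2.i, while the weight bound $\geqslant\alpha-p_i$ rests on the Taylor-remainder estimate \eqref{C^1+alfa bound}, which holds a fortiori. For (ii), I would mimic the proof of Corollary \ref{Coroll second order remainder} after expanding each triple composition in coordinates and invoking the product rule. When applied to a function of $D(\lambda)$-homogeneous degree $-\mu$, each $Y_j$ multiplies the pointwise bound by $\|u\|^{-p_j}$ while each $R_j^\eta$ contributes $\|u\|^{-p_j+\alpha}$ (via the remainder bound and its first derivatives, which are exactly those now made available by (i)). The worst-case accumulation over three successive compositions produces the exponent $\mu+2+p_i-\alpha$ stated. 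The $C^\alpha$-dependence on $\eta$ of the coefficients of $D_i^\eta$ follows from applying the argument of Theorem \ref{Thm Calphatheta}~2.i to the first derivatives of $c_{ik}^\eta$, a step which is legal here precisely because those derivatives now exist and are suitably controlled.

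For (iii), the crucial step is to upgrade the $C^\alpha$ regularity in $\eta$ of $\Theta_\eta$ (given by Theorem \ref{Thm Calphatheta}) to $C^{1,\alpha}$. The vector fields $S_{[I],\eta}$ appearing in the exponential definition $\xi=E(u,\eta)=\exp\bigl(\sum_{I\in B}u_I S_{[I],\eta}\bigr)(\eta)$ now depend on $\eta$ in a $C^{1,\alpha}$ way, so the ODE-with-parameters argument used to prove Theorem \ref{Thm Calphatheta} delivers $C^{1,\alpha}$ joint regularity of $E(u,\eta)$. Since $\partial_u E(0,\eta)$ is the matrix with columns $(\widetilde{X}_{[I]})_\eta$ for $I\in B$, which is invertible with $|\det\partial_u E(0,\eta)|=c(\eta)$ bounded away from zero, the implicit function theorem applied to $\xi=E(u,\eta)$ yields, for $u$ near $0$, a $C^{1,\alpha}$ solution $\eta=\Theta_{(\cdot)}^{-1}(\xi)(u)$ depending jointly in a $C^{1,\alpha}$ way on $(\xi,u)$. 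Implicit differentiation gives
\[
\frac{\partial\eta}{\partial u}(\xi,u)=-\Bigl(\frac{\partial E}{\partial\eta}(u,\eta)\Bigr)^{-1}\frac{\partial E}{\partial u}(u,\eta),
\]
and at $u=0$ (where $\eta=\xi$) one has $\partial_\eta E(0,\xi)=I$, so $|\det(\partial\eta/\partial u)(\xi,0)|=c(\xi)$. The function $\chi(\xi,u)$ is then \emph{defined} by $|\det(\partial\eta/\partial u)(\xi,u)|=c(\xi)\bigl(1+\chi(\xi,u)\bigr)$, and by construction $\chi(\xi,0)=0$ while $\chi$ is $C^\alpha$ jointly in $(\xi,u)$.

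The main obstacle is the mixed H\"older estimate on $\chi$. The two easy bounds $|\chi(\xi_1,u)-\chi(\xi_2,u)|\leqslant c|\xi_1-\xi_2|^\alpha$ (from joint $C^\alpha$ regularity in $\xi$) and $|\chi(\xi,u)|\leqslant c\|u\|^\alpha$ (from joint $C^\alpha$ regularity in $u$, combined with $\chi(\xi,0)=0$) hold directly, and the second gives a fortiori $|\chi(\xi_1,u)-\chi(\xi_2,u)|\leqslant 2c\|u\|^\alpha$. The mixed estimate for $\gamma_1+\gamma_2\leqslant\alpha$ is then obtained by elementary interpolation: writing
\[
|\chi(\xi_1,u)-\chi(\xi_2,u)|=|\chi(\xi_1,u)-\chi(\xi_2,u)|^{1-\theta}\cdot|\chi(\xi_1,u)-\chi(\xi_2,u)|^\theta
\]
with $\theta=\gamma_2/\alpha$, and bounding the first factor by the $\xi$-H\"older estimate and the second by the vanishing-at-zero estimate, yields the claimed inequality with $\gamma_1=\alpha(1-\theta)$ and $\gamma_2=\alpha\theta$.
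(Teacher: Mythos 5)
Your plan for (i) and (ii) coincides with what the paper does --- the paper's proof there is a two-sentence remark pointing to \cite[Thm.~3.9]{BBP2}, Theorem \ref{Thm Calphatheta}~2.i, and Corollary \ref{Coroll second order remainder}, observing that under Assumptions B the same arguments yield one extra derivative --- and your elaboration of those references is accurate. For (iii) the structural part of your argument (propagating $C^{1,\alpha}$ joint regularity through the ODE defining $E$, the implicit function theorem with $\partial_\eta E(0,\eta)=I$, the evaluation $\partial_u\eta(\xi,0)=-\bigl((\widetilde X_{[I]})_\xi\bigr)_{I\in B}$, the definition of $\chi$ with $\chi(\xi,0)=0$) is identical to the paper's. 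The only real divergence is in how the mixed H\"older estimate is deduced from the two pure estimates $|\chi(\xi_1,u)-\chi(\xi_2,u)|\leqslant c|\xi_1-\xi_2|^\alpha$ and $|\chi(\xi_1,u)-\chi(\xi_2,u)|\leqslant c\|u\|^\alpha$. The paper splits into the cases $|\xi_1-\xi_2|<\|u\|$ and $|\xi_1-\xi_2|\geqslant\|u\|$ and converts one power into the other using the ordering together with $\gamma_1+\gamma_2\leqslant\alpha$; you replace that case split by the one-line interpolation $A\leqslant B_1^{1-\theta}B_2^{\theta}$ with $\theta=\gamma_2/\alpha$. Your version is cleaner. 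One small gap: as stated, the interpolation gives exactly $|\xi_1-\xi_2|^{\alpha-\gamma_2}\|u\|^{\gamma_2}$, which covers only the diagonal $\gamma_1+\gamma_2=\alpha$; for the full claim $\gamma_1+\gamma_2\leqslant\alpha$ you should add that $\alpha-\gamma_2\geqslant\gamma_1$ and $|\xi_1-\xi_2|$ is bounded, so $|\xi_1-\xi_2|^{\alpha-\gamma_2}\leqslant C|\xi_1-\xi_2|^{\gamma_1}$. The paper's case split delivers the full exponent range directly.
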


\begin{proof}
i) This follows with the same proof of \cite[Thm.\ 3.9]{BBP2}, under
assumption B.

ii) This follows as Corollary \ref{Coroll second order remainder}, by point
2.i of Theorem \ref{Thm Calphatheta}. Actually, the same proof of point 2.i of
Theorem \ref{Thm Calphatheta} implies this stronger conclusion, under the
stronger assumption B.

iii) With the notations of \cite[section 3.2]{BBP2} let%
\[
\xi=E\left(  u,\eta\right)  =\exp\left(  \sum_{I\in B}u_{I}S_{\left[
I\right]  ,\eta}\right)  \left(  \eta\right)
\]
and recall that $\Theta_{\eta}\left(  \xi\right)  $ is defined by $E\left(
\Theta_{\eta}\left(  \xi\right)  ,\eta\right)  =\xi$. Observe that, for every
fixed $\xi$, to express $\eta$ as a function of $u$ is equivalent to solve
with respect to $\eta$ the equation%
\begin{equation}
E\left(  u,\eta\right)  -\xi=0\text{.} \label{Dini}%
\end{equation}
Revising the proof of \cite[Thm.\ 3.9]{BBP2} under the assumption $b_{ij}\in
C^{r,\alpha}\left(  \Omega\right)  $, one can see that the smooth vector
fields $S_{\left[  I\right]  ,\eta}$ depend on $\eta$ in a $C^{1,\alpha}$ way.
This implies that $\xi=E\left(  u,\eta\right)  $ depends in a $C^{1,\alpha}$
way on the joint variables $\left(  u,\eta\right)  $ (see \cite[Prop.
30]{BBP2}). Since $E\left(  0,\eta\right)  =\eta$ we have $\frac{\partial
E}{\partial\eta}\left(  0,\eta\right)  =I$. The implicit function theorem
applied to equation (\ref{Dini}) shows that $\eta=\eta\left(  u,\xi\right)  $
is at least $C^{1}$ in the joint variables. The standard argument used to
prove the further regularity of the implicit function allows to prove that
this function is indeed $C^{1,\alpha}$ in the joint variables. Also, since%
\[
E\left(  u,\eta\left(  u,\xi\right)  \right)  -\xi=0
\]
differentiating with respect to $u$ yields
\[
\frac{\partial E}{\partial u}\left(  u,\eta\left(  u,\xi\right)  \right)
+\frac{\partial E}{\partial\eta}\left(  u,\eta\left(  u,\xi\right)  \right)
\frac{\partial\eta}{\partial u}\left(  u,\xi\right)  =0.
\]
Evaluating this identity for $u=0$ (that is $\eta=\xi$) gives%
\[
\frac{\partial E}{\partial u}\left(  0,\xi\right)  +I\frac{\partial\eta
}{\partial u}\left(  0,\xi\right)  =0
\]
so that%
\[
\frac{\partial\eta}{\partial u}\left(  0,\xi\right)  =-\frac{\partial
E}{\partial u}\left(  0,\xi\right)  =-\left(  \left(  S_{\left[  I\right]
,\xi}\right)  _{\xi}\right)  _{I\in B}=-\left(  \left(  \widetilde{X}_{\left[
I\right]  }\right)  _{\xi}\right)  _{I\in B}.
\]
Since%
\[
d\eta=J_{\xi}\left(  u\right)  du
\]
with $J_{\xi}\left(  u\right)  =\left\vert \det\frac{\partial\eta}{\partial
u}\left(  \xi,u\right)  \right\vert $, we have%
\begin{equation}
J_{\xi}\left(  u\right)  =\left\vert \det\left(  \left(  \widetilde
{X}_{\left[  I\right]  }\right)  _{\xi}\right)  _{I\in B}\right\vert +\chi
_{0}\left(  \xi,u\right)  . \label{chi zero}%
\end{equation}
Note that $\chi_{0}\left(  \xi,u\right)  $ is $C^{\alpha}$ in the joint
variables $\left(  u,\xi\right)  $ since $\eta\left(  u,\xi\right)  $ is
$C^{1,\alpha}$.

Assume now $\left\vert \xi_{1}-\xi_{2}\right\vert <\left\vert u\right\vert $,
then for any $\gamma_{1},\gamma_{2}\geqslant0$ with $\gamma_{1}+\gamma
_{2}\leqslant0$,
\[
\left\vert \chi_{0}\left(  u,\xi_{1}\right)  -\chi_{0}\left(  u,\xi
_{2}\right)  \right\vert \leqslant c\left\vert \xi_{1}-\xi_{2}\right\vert
^{\alpha}\leqslant c\left\vert \xi_{1}-\xi_{2}\right\vert ^{\gamma_{1}%
}\left\vert u\right\vert ^{\gamma_{2}}.
\]
If $\left\vert \xi_{1}-\xi_{2}\right\vert \geqslant\left\vert u\right\vert ,$
since $\chi_{0}\left(  0,\xi_{1}\right)  =\chi_{0}\left(  0,\xi_{2}\right)
=0$ we have%
\begin{align*}
\left\vert \chi_{0}\left(  u,\xi_{1}\right)  -\chi_{0}\left(  u,\xi
_{2}\right)  \right\vert  &  \leqslant\left\vert \chi_{0}\left(  u,\xi
_{1}\right)  -\chi_{0}\left(  0,\xi_{1}\right)  \right\vert +\left\vert
\chi_{0}\left(  u,\xi_{2}\right)  -\chi_{0}\left(  0,\xi_{2}\right)
\right\vert \\
&  \leqslant c\left\vert u\right\vert ^{\alpha}\leqslant c\left\vert \xi
_{1}-\xi_{2}\right\vert ^{\gamma_{1}}\left\vert u\right\vert ^{\gamma_{2}}.
\end{align*}
Hence in any case%
\begin{equation}
\left\vert \chi_{0}\left(  \xi_{1},u\right)  -\chi_{0}\left(  \xi
_{2},u\right)  \right\vert \leqslant c\left\vert \xi_{1}-\xi_{2}\right\vert
^{\gamma_{1}}\left\vert \left\vert u\right\vert \right\vert ^{\gamma_{2}}.
\label{biholder}%
\end{equation}
Then (\ref{chi zero}) can be rewritten as%
\[
d\eta=c\left(  \xi\right)  \left(  1+\chi\left(  \xi,u\right)  \right)  du
\]
where%
\[
c\left(  \xi\right)  =\left\vert \det\left(  \left(  \widetilde{X}_{\left[
I\right]  }\right)  _{\xi}\right)  _{I\in B}\right\vert
\]
is $C^{\alpha}$ and locally bounded away from zero, while%
\[
\chi\left(  \xi,u\right)  =\frac{\chi_{0}\left(  \xi,u\right)  }{c\left(
\xi\right)  }%
\]
still satisfies (\ref{biholder}). Hence point (iii) is proved.
\end{proof}

The following H\"{o}lder continuity estimate on the function $\Phi^{\prime}%
\ $will be crucial.

\begin{proposition}
\label{Phi holder}For any $\varepsilon\in\left(  0,\alpha\right)  $ there
exists $c>0$ such that%
\[
\left\vert \Phi^{\prime}\left(  x_{1},y\right)  -\Phi^{\prime}\left(
x_{2},y\right)  \right\vert \leqslant cd\left(  x_{1},x_{2}\right)
^{\alpha-\varepsilon}\phi_{\varepsilon}\left(  x_{1},y\right)
\]
for any $x_{1},x_{2},y\in U$ with $d\left(  x_{1},y\right)  \geqslant3d\left(
x_{1},x_{2}\right)  $.
\end{proposition}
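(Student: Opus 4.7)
The starting point is the integral equation (\ref{int Eq prime}),
$$\Phi'(x,y) = Z_1'(x,y) + \int_U Z_1'(x,z)\,\Phi'(z,y)\,dz,$$
which reduces the H\"older estimate for $\Phi'$ to the analogous estimate for $Z_1'$, plus control of the additional integral. My plan is to establish H\"older continuity of $Z_1'$ in the first variable first, and then to bootstrap the estimate to $\Phi'$ by splitting the integral domain into a ``far'' region (where the H\"older bound on $Z_1'$ applies directly) and a ``near'' region (where only pointwise bounds are available).

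\textbf{Step 1: H\"older estimate for $Z_1'$.} I would start from the explicit integral representation (\ref{Z1}) of $Z_1$ used in the proof of Proposition~\ref{Prop bound Z_1}, and compute one more $X$-derivative with respect to the first variable. Using the intertwining property (\ref{X Xtilda}), the approximation identity (\ref{approx theta}), and the refined control on compositions given in Proposition~\ref{Prop bad theta}(ii) (available under Assumptions B), one shows that the integrand of $X_i Z_1(x,y)$ is bounded by a multiple of $\|\Theta_{(y,k)}(x,h)\|^{-(Q+1-\alpha)}$, and that of $X_0 Z_1$ by a multiple of $\|\Theta\|^{-(Q+2-\alpha)}$. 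Lemma~\ref{Lemma NSW} then yields
$$|X_i Z_1(x,y)|\leq c\,\phi_{\alpha-1}(x,y),\qquad |X_0 Z_1(x,y)|\leq c\,\phi_{\alpha-2}(x,y).$$
Applying the Lagrange-type inequality of Proposition~\ref{Prop Lagrange} on the ball $B(x_1,d(x_1,y)/2)$ (which contains $x_2$ since $d(x_1,y)\geq 3d(x_1,x_2)$, and on which $d(\cdot,y)$ stays comparable to $d(x_1,y)$), combined with Lemma~\ref{Lemma nonintegrale} and the elementary lower bound $\phi_\varepsilon(x_1,y)\geq c\,d(x_1,y)^\varepsilon/|B(x_1,d(x_1,y))|$, gives
$$|Z_1(x_1,y)-Z_1(x_2,y)|\leq c\,d(x_1,x_2)^{\alpha-\varepsilon}\,\phi_\varepsilon(x_1,y)$$
for every $\varepsilon\in(0,\alpha)$. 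Finally, since $c_0\in C^{\alpha}(U)$ and is bounded away from zero, the product decomposition
$$Z_1'(x_1,y)-Z_1'(x_2,y)=\frac{Z_1(x_1,y)-Z_1(x_2,y)}{c_0(x_1)}+\left(\frac{1}{c_0(x_1)}-\frac{1}{c_0(x_2)}\right)Z_1(x_2,y),$$
together with the pointwise bound $|Z_1|\leq c\,\phi_\alpha$, transfers the estimate from $Z_1$ to $Z_1'$.

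\textbf{Step 2: Bootstrapping to $\Phi'$.} Set $\rho=d(x_1,x_2)$ and $R=d(x_1,y)$. Write
$$\Phi'(x_1,y)-\Phi'(x_2,y)=\bigl[Z_1'(x_1,y)-Z_1'(x_2,y)\bigr]+I,\qquad I=\int_U\bigl[Z_1'(x_1,z)-Z_1'(x_2,z)\bigr]\Phi'(z,y)\,dz.$$
In the regime $R\geq 6\rho$, split $I=I_{\mathrm{far}}+I_{\mathrm{near}}$ at $d(x_1,z)=R/2$. In $I_{\mathrm{far}}$ the hypothesis $d(x_1,z)\geq R/2\geq 3\rho$ permits applying Step~1; combining with $|\Phi'|\leq c\,\phi_\alpha$ and Theorem~\ref{Thm phi alfa} one obtains
$$|I_{\mathrm{far}}|\leq c\,\rho^{\alpha-\varepsilon}\int_U\phi_\varepsilon(x_1,z)\phi_\alpha(z,y)\,dz\leq c\,\rho^{\alpha-\varepsilon}\phi_{\alpha+\varepsilon}(x_1,y)\leq c\,\rho^{\alpha-\varepsilon}\phi_\varepsilon(x_1,y),$$
the last bound using $r^{\alpha+\varepsilon-1}\leq R_0^\alpha r^{\varepsilon-1}$ in the definition of $\phi_\beta$. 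In $I_{\mathrm{near}}$ the factor $\phi_\alpha(z,y)$ is comparable (by the doubling condition and (\ref{comparable phi})) to $\phi_\alpha(x_1,y)$ because $d(z,y)\sim R$, and using $|Z_1'(x_i,z)|\leq c\,\phi_\alpha(x_i,z)$ together with Corollary~\ref{coroll Psi beta} yields $|I_{\mathrm{near}}|\leq c\,R^\alpha\,\phi_\alpha(x_1,y)$, which by Lemma~\ref{Lemma nonintegrale} is absorbed into $c\,\rho^{\alpha-\varepsilon}\phi_\varepsilon(x_1,y)$ (since $R$ is bounded and $\rho\leq R$, so $R^{2\alpha-\varepsilon}\leq c\,\rho^{\alpha-\varepsilon}$ when $\alpha-\varepsilon\geq 0$). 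In the remaining regime $3\rho\leq R<6\rho$, one just uses $|\Phi'(x_1,y)-\Phi'(x_2,y)|\leq c\bigl[\phi_\alpha(x_1,y)+\phi_\alpha(x_2,y)\bigr]\leq c\,\phi_\alpha(x_1,y)$, which already has the required form because $R\sim\rho$ allows trading powers of $R$ for powers of $\rho$.

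\textbf{Main obstacle.} The delicate step is Step~1: extracting bounds for one further $X$-derivative of $Z_1$ forces one to control compositions like $Y_iR_j^{\eta}R_k^{\eta}$, $Y_iR_0^{\eta}$, $Y_iY_jR_k^{\eta}$ acting on $\Gamma$, and it is exactly for this that Assumptions~B (one more derivative on the coefficients) and the sharpened properties of $\Theta_\eta$ in Proposition~\ref{Prop bad theta} are invoked; without them the derivative bounds $\phi_{\alpha-1}$ and $\phi_{\alpha-2}$ would not be available. The secondary technical difficulty in Step~2 is choosing the splitting radius so that the near-region integral still delivers a bound of the form $\rho^{\alpha-\varepsilon}\phi_\varepsilon$ rather than merely $\phi_\alpha$; the two-case analysis at the threshold $R\sim 6\rho$ is engineered precisely to handle this.
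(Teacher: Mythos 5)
Your overall strategy matches the paper's: bound $|Z_1'(x_1,y)-Z_1'(x_2,y)|$ using the Lagrange inequality and the refined remainder bounds from Proposition~\ref{Prop bad theta}, then bootstrap through the integral equation~(\ref{int Eq prime}). Step~1 is essentially correct (the paper's Lemma~\ref{Prop Z_1 Holder} obtains the slightly sharper $d(x_1,x_2)^{\alpha}\phi_{0}(x_1,y)$, which implies your $d(x_1,x_2)^{\alpha-\varepsilon}\phi_{\varepsilon}(x_1,y)$ after using $d(x_1,x_2)^{\varepsilon}\le d(x_1,y)^{\varepsilon}\le r^{\varepsilon}$ inside the integral defining $\phi_0$).

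However, Step~2 has a genuine gap in the near-region estimate. Splitting at $d(x_1,z)=R/2$ with $R=d(x_1,y)$ is too coarse. In the region $d(x_1,z)<R/2$ you use only the pointwise bounds $|Z_1'(x_i,z)|\le c\,\phi_{\alpha}(x_i,z)$, so Corollary~\ref{coroll Psi beta} gives
\[
|I_{\mathrm{near}}|\leq c\,\phi_{\alpha}(x_1,y)\int_{d(x_1,z)<R/2}\bigl[\phi_{\alpha}(x_1,z)+\phi_{\alpha}(x_2,z)\bigr]dz\leq c\,R^{\alpha}\,\phi_{\alpha}(x_1,y),
\]
and the power of the \emph{small} quantity $\rho=d(x_1,x_2)$ never appears. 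Your claimed absorption $R^{2\alpha-\varepsilon}\le c\,\rho^{\alpha-\varepsilon}$ is false in the regime $R\ge 6\rho$: fix $R$ and let $\rho\to 0$ — the left side stays bounded below while the right side tends to $0$. So $R^{\alpha}\phi_{\alpha}(x_1,y)$ is \emph{not} dominated by $\rho^{\alpha-\varepsilon}\phi_{\varepsilon}(x_1,y)$ in general.

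The correct decomposition — and this is exactly what the paper does in its Lemma~\ref{Lemma Holder beta} — splits at $d(x_1,z)=2d(x_1,x_2)=2\rho$, not at $R/2$. Then in the far region $d(x_1,z)\ge 2\rho$ one can apply Step~1 uniformly (the Hölder bound on $Z_1'$ holds there), and the near region has radius $2\rho$, so
\[
\int_{d(x_1,z)<2\rho}\bigl[\phi_{\alpha}(x_1,z)+\phi_{\alpha}(x_2,z)\bigr]dz\le c\,\rho^{\alpha},
\]
which is the power of $\rho$ you need. The key geometric observation that makes the near region tractable is that $d(x_1,y)\ge 3\rho$ and $d(x_1,z)<2\rho$ together force $d(z,y)\ge\rho$ and $d(x_1,y)\le 3d(z,y)$, so $\phi_{\alpha}(z,y)$ is comparable to $\phi_{\alpha}(x_1,y)$ — you identified this comparability but applied it in the wrong region. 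With the $2\rho$ split, the near-region bound becomes $c\rho^{\alpha}\phi_{\alpha}(x_1,y)$, which then absorbs using $\rho^{\alpha}=\rho^{\alpha-\varepsilon}\rho^{\varepsilon}\le\rho^{\alpha-\varepsilon}d(x_1,y)^{\varepsilon}$ and $d(x_1,y)^{\varepsilon}r^{\alpha-1}\le r^{\alpha+\varepsilon-1}$ inside the integral, giving $\rho^{\alpha-\varepsilon}\phi_{\alpha+\varepsilon}(x_1,y)\le c\rho^{\alpha-\varepsilon}\phi_{\varepsilon}(x_1,y)$. No separate case analysis on $R$ versus $\rho$ is needed with this split.
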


Note that the same result holds if the number $3$ is replaced by another
constant $k>1,$ with $c$ depending on $k$.

The following easy variation of the previous result will be also useful:

\begin{corollary}
\label{coroll Phi holder}For any $\varepsilon\in\left(  0,\alpha\right)  $
there exists $c>0$ such that%
\[
\left\vert \Phi^{\prime}\left(  x_{1},y\right)  -\Phi^{\prime}\left(
x_{2},y\right)  \right\vert \leqslant cd\left(  x_{1},x_{2}\right)
^{\alpha-\varepsilon}\left[  \frac{d\left(  x_{1},y\right)  ^{\varepsilon}%
}{\left\vert B\left(  x_{1},d\left(  x_{1},y\right)  \right)  \right\vert
}+\frac{d\left(  x_{2},y\right)  ^{\varepsilon}}{\left\vert B\left(
x_{2},d\left(  x_{2},y\right)  \right)  \right\vert }\right]
\]
for any $x_{1},x_{2},y\in U$ with $y\neq x_{1},x_{2}.$
\end{corollary}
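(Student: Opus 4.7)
The plan is to derive the corollary from Proposition~\ref{Phi holder} by splitting into a ``far'' and a ``near'' case according to the ratio $d(x_1,y)/d(x_1,x_2)$, and in each case converting the integral quantity $\phi_\beta$ into the pointwise quotient $d^{\beta}/|B|$ via Lemma~\ref{Lemma nonintegrale}. Since $\varepsilon<\alpha\leqslant 1$ and (in the standing setting) $p\geqslant 2$, we have $\varepsilon,\alpha<p$, so the first (non-logarithmic) alternative of Lemma~\ref{Lemma nonintegrale} always applies.

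In the far case $d(x_1,y)\geqslant 3\,d(x_1,x_2)$, I would invoke Proposition~\ref{Phi holder} directly to get
\[
|\Phi'(x_1,y)-\Phi'(x_2,y)|\leqslant c\,d(x_1,x_2)^{\alpha-\varepsilon}\,\phi_\varepsilon(x_1,y),
\]
and then apply Lemma~\ref{Lemma nonintegrale} with $\beta=\varepsilon<p$ to bound $\phi_\varepsilon(x_1,y)\leqslant c\,d(x_1,y)^\varepsilon/|B(x_1,d(x_1,y))|$. This already produces the first of the two summands on the right-hand side of the corollary, and the second one is merely thrown away (it is non-negative).

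In the near case $d(x_1,y)<3\,d(x_1,x_2)$, the triangle inequality yields $d(x_2,y)<4\,d(x_1,x_2)$, hence both $d(x_i,y)$ are bounded by a constant multiple of $d(x_1,x_2)$. I would then estimate the difference crudely by the sum
\[
|\Phi'(x_1,y)-\Phi'(x_2,y)|\leqslant|\Phi'(x_1,y)|+|\Phi'(x_2,y)|\leqslant c\bigl[\phi_\alpha(x_1,y)+\phi_\alpha(x_2,y)\bigr]
\]
using the size bound from Proposition~\ref{Prop Phi} in its primed form (Remark~\ref{Remark c_0}). Applying Lemma~\ref{Lemma nonintegrale} with $\beta=\alpha<p$ and then writing $d(x_i,y)^\alpha=d(x_i,y)^{\alpha-\varepsilon}d(x_i,y)^\varepsilon$, the factor $d(x_i,y)^{\alpha-\varepsilon}$ is absorbed into $c\,d(x_1,x_2)^{\alpha-\varepsilon}$ thanks to the near-case inequality $d(x_i,y)\leqslant 4\,d(x_1,x_2)$, yielding exactly the claimed bound with both summands present.

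There is essentially no obstacle here: the hard analytic work is already contained in Proposition~\ref{Phi holder}, and the corollary is an elementary bookkeeping exercise on distances. The only point worth double-checking is that the exponents $\varepsilon$ and $\alpha$ fall in the range where Lemma~\ref{Lemma nonintegrale} gives the clean (non-logarithmic) pointwise majorant, which, as noted, is automatic under Assumption~B.
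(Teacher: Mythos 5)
Your proof is correct and follows the same approach as the paper: combine Proposition \ref{Phi holder} with Lemma \ref{Lemma nonintegrale} in the far regime $d(x_1,y)\geqslant 3\,d(x_1,x_2)$, and use the crude size bound $|\Phi'|\leqslant c\,\phi_\alpha$ together with Lemma \ref{Lemma nonintegrale} in the near regime. The paper formally keeps three cases (far from $x_1$, far from $x_2$, and near from both), whereas you observe that $d(x_1,y)<3\,d(x_1,x_2)$ already forces $d(x_2,y)<4\,d(x_1,x_2)$ by the triangle inequality, collapsing the last two into one; this is a minor streamlining of the same argument.
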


\begin{proof}
[Proof of Corollary \ref{coroll Phi holder}]If $d\left(  x_{1},y\right)
\geqslant3d\left(  x_{1},x_{2}\right)  $ by Proposition \ref{Phi holder} and
Lemma \ref{Lemma nonintegrale} we can bound%
\[
\left\vert \Phi^{\prime}\left(  x_{1},y\right)  -\Phi^{\prime}\left(
x_{2},y\right)  \right\vert \leqslant cd\left(  x_{1},x_{2}\right)
^{\alpha-\varepsilon}\phi_{\varepsilon}\left(  x_{1},y\right)  \leqslant
cd\left(  x_{1},x_{2}\right)  ^{\alpha-\varepsilon}\frac{d\left(
x_{1},y\right)  ^{\varepsilon}}{\left\vert B\left(  x_{1},d\left(
x_{1},y\right)  \right)  \right\vert }.
\]
Analogously if $d\left(  x_{2},y\right)  \geqslant3d\left(  x_{1}%
,x_{2}\right)  $ we can write%
\[
\left\vert \Phi^{\prime}\left(  x_{1},y\right)  -\Phi^{\prime}\left(
x_{2},y\right)  \right\vert \leqslant cd\left(  x_{1},x_{2}\right)
^{\alpha-\varepsilon}\phi_{\varepsilon}\left(  x_{2},y\right)  \leqslant
cd\left(  x_{1},x_{2}\right)  ^{\alpha-\varepsilon}\frac{d\left(
x_{2},y\right)  ^{\varepsilon}}{\left\vert B\left(  x_{2},d\left(
x_{1},y\right)  \right)  \right\vert }.
\]
Hence, let us assume $3d\left(  x_{1},x_{2}\right)  >\max\left(  d\left(
x_{1},y\right)  ,d\left(  x_{2},y\right)  \right)  $. Then by Proposition
\ref{Prop Phi} and Lemma \ref{Lemma nonintegrale}:
\begin{align*}
\left\vert \Phi^{\prime}\left(  x_{1},y\right)  -\Phi^{\prime}\left(
x_{2},y\right)  \right\vert  &  \leqslant c\left\{  \phi_{\alpha}\left(
x_{1},y\right)  +\phi_{\alpha}\left(  x_{2},y\right)  \right\} \\
&  \leqslant c\left\{  \frac{d\left(  x_{1},y\right)  ^{\alpha}}{\left\vert
B\left(  x_{1},d\left(  x_{1},y\right)  \right)  \right\vert }+\frac{d\left(
x_{2},y\right)  ^{\alpha}}{\left\vert B\left(  x_{2},d\left(  x_{2},y\right)
\right)  \right\vert }\right\} \\
&  \leqslant cd\left(  x_{1},x_{2}\right)  ^{\alpha-\varepsilon}\left\{
\frac{d\left(  x_{1},y\right)  ^{\varepsilon}}{\left\vert B\left(
x_{1},d\left(  x_{1},y\right)  \right)  \right\vert }+\frac{d\left(
x_{2},y\right)  ^{\varepsilon}}{\left\vert B\left(  x_{2},d\left(
x_{2},y\right)  \right)  \right\vert }\right\}  .
\end{align*}

\end{proof}

Proposition \ref{Phi holder} will be proved in several steps, establishing
first an analogous result for the functions $Z_{1}^{\prime}$ and
$Z_{k}^{\prime}$.

\begin{lemma}
\label{Prop Z_1 Holder}For every $x_{1},x_{2},y\in U$ with $d\left(
x_{1},y\right)  \geqslant2d\left(  x_{1},x_{2}\right)  $ we have%
\begin{equation}
\left\vert Z_{1}^{\prime}\left(  x_{1},y\right)  -Z_{1}^{\prime}\left(
x_{2},y\right)  \right\vert \leqslant cd\left(  x_{1},x_{2}\right)  ^{\alpha
}\phi_{0}\left(  x_{1},y\right)  . \label{HolderZprimo}%
\end{equation}

\end{lemma}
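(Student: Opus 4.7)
The plan has three stages: (a) reduce the estimate to one for $Z_1$ itself using the $C^\alpha$ regularity of $c_0$; (b) derive sharp pointwise bounds on the first $X$-derivatives of $Z_1(\cdot,y)$, which under Assumption B are now available in classical sense; (c) apply the Lagrange-type inequality on a ball of radius $\sim d(x_1,y)/2$. For (a), writing $Z_1'(x,y)=Z_1(x,y)/c_0(x)$ and splitting
\[
Z_1'(x_1,y)-Z_1'(x_2,y) = \frac{Z_1(x_1,y)-Z_1(x_2,y)}{c_0(x_1)} + \Bigl(\frac{1}{c_0(x_1)}-\frac{1}{c_0(x_2)}\Bigr)Z_1(x_2,y),
\]
and using $|Z_1(x_2,y)|\le c\,\phi_\alpha(x_2,y)$ together with the fact that $c_0$ is $C^\alpha$ and bounded away from zero, the second summand is dominated by $c\,d(x_1,x_2)^\alpha\,\phi_\alpha(x_2,y)$; under the hypothesis $d(x_1,y)\ge 2d(x_1,x_2)$, doubling and Lemma \ref{Lemma nonintegrale} give $\phi_\alpha(x_2,y)\le c\,\phi_0(x_1,y)$, so it remains to prove the same bound with $Z_1$ in place of $Z_1'$.

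For (b), I would start from the explicit representation
\[
Z_1(x,y)=\int_{\mathbb{R}^m}\int_{\mathbb{R}^m} \sum_{i=1}^3 Q_i(y,k;x,h)\,\varphi_i(h)\,\varphi(k)\,dh\,dk
\]
obtained in the proof of Proposition \ref{Prop bound Z_1}, in which each $Q_i$ has the form $D^{(y,k)}\Gamma\circ\Theta_{(y,k)}$ with $D^\eta$ a second-order composition of $Y_j,R_j^\eta$ factors or the single operator $R_0^\eta$. Differentiating in $x$ under the integral sign by means of $\widetilde X_j=(Y_j+R_j^{(y,k)})\circ\Theta_{(y,k)}$ and $\widetilde X_0=(Y_0+R_0^{(y,k)})\circ\Theta_{(y,k)}$ from Theorem \ref{Thm liftapprox}, the new integrands are third-order compositions of $Y_i$'s and $R_j^\eta$'s acting on $\Gamma$, together with terms $Y_iR_0^\eta,R_i^\eta R_0^\eta$; these are precisely the compositions listed in Proposition \ref{Prop bad theta}(ii). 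Since $\Gamma$ is homogeneous of degree $2-Q$, that proposition combined with Lemma \ref{Lemma NSW} yields
\[
|X_j Z_1(x,y)|\le c\,\phi_{\alpha-1}(x,y),\qquad |X_0 Z_1(x,y)|\le c\,\phi_{\alpha-2}(x,y).
\]

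For (c), Proposition \ref{Prop Lagrange} applied on $B(x_1,R')$ with $R'=d(x_1,y)/2$ (which contains $x_2$ by hypothesis), combined with the triangle-inequality bound $d(x,y)\ge d(x_1,y)/2$ on that ball, Lemma \ref{Lemma nonintegrale} (with $\alpha-1,\alpha-2<p$), and doubling, gives the supremum bounds $c\,d(x_1,y)^{\alpha-1}/|B(x_1,d(x_1,y))|$ and $c\,d(x_1,y)^{\alpha-2}/|B(x_1,d(x_1,y))|$ for $|X_j Z_1(\cdot,y)|$ and $|X_0 Z_1(\cdot,y)|$ respectively. Since $\alpha\le 1$ and $d(x_1,x_2)\le d(x_1,y)$, the elementary inequalities $d(x_1,x_2)\,d(x_1,y)^{\alpha-1}\le d(x_1,x_2)^\alpha$ and $d(x_1,x_2)^2\,d(x_1,y)^{\alpha-2}\le d(x_1,x_2)^\alpha$ hold; combined with the reverse bound $1/|B(x_1,d(x_1,y))|\le c\,\phi_0(x_1,y)$ (obtained by restricting the integral defining $\phi_0$ to $r\in[d(x_1,y),2d(x_1,y)]$ and using doubling), this closes the proof.

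The main obstacle is step (b): after one differentiation the integrand contains triple compositions with two $R^\eta$ factors such as $R_j^\eta R_k^\eta R_\ell^\eta$ or $Y_iR_j^\eta R_k^\eta$, and the crucial point is that these lose only a single $\|u\|^{-\alpha}$ relative to the sharp homogeneous scaling; were each $R^\eta$ to contribute its own loss of $\alpha$, the $X$-derivative estimate would only give $\phi_{2\alpha-1}$, and the Lagrange step would fail to yield the desired exponent $\alpha$ in $d(x_1,x_2)$. This single-loss control is exactly the substance of Proposition \ref{Prop bad theta}(ii), and is precisely why Assumption B (one extra derivative on the coefficients) is essential here.
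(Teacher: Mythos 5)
Your proposal is correct and follows essentially the same route as the paper: reduce from $Z_1'$ to $Z_1$ using the $C^\alpha$ regularity and positivity of $c_0$, bound $X_j Z_1$ and $X_0 Z_1$ via the representation of $Z_1$ and Proposition~\ref{Prop bad theta}(ii) applied to the third-order compositions (giving $\phi_{\alpha-1}$ and $\phi_{\alpha-2}$ respectively), and then close with Proposition~\ref{Prop Lagrange} on $B(x_1, d(x_1,y)/2)$. The only difference from the paper's proof is cosmetic: in the final step you pass to the pointwise bound $d^{\alpha-1}/|B|$ via Lemma~\ref{Lemma nonintegrale} and then re-absorb $1/|B(x_1,d(x_1,y))|$ into $\phi_0(x_1,y)$, whereas the paper stays inside the integral defining $\phi_{\alpha-1}$ and uses $d(x_1,x_2)^{1-\alpha} \le r^{1-\alpha}$ for $r$ in the integration range to land directly on $\phi_0$; both chains are valid and produce the same estimate.
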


\begin{proof}
Since%
\[
Z_{1}^{\prime}\left(  x,y\right)  =\frac{1}{c_{0}\left(  x\right)  }%
Z_{1}\left(  x,y\right)
\]
with $c_{0}$ H\"{o}lder continuous and bounded away from zero, it suffices to
prove (\ref{HolderZprimo}) with $Z_{1}^{^{\prime}}$ replaced by $Z_{1}$. Under
assumptions B, the explicit expression of $Z_{1}$ given in the proof of
Proposition \ref{Prop bound Z_1} shows, by Proposition \ref{Prop bad theta},
that%
\[
Z_{1}\left(  \cdot,y\right)  \in C_{loc}^{1,\alpha}\left(  U\setminus\left\{
y\right\}  \right)  .
\]
In particular, for fixed $y,x_{1}$, we have that $Z_{1}\left(  \cdot,y\right)
\in C^{1,\alpha}\left(  B\left(  x_{1},\frac{1}{2}d\left(  x_{1},y\right)
\right)  \right)  $ and we can apply Proposition \ref{Prop Lagrange} with
$R=\frac{1}{2}d\left(  x_{1},y\right)  $, writing
\begin{align}
\left\vert Z_{1}\left(  x_{1},y\right)  -Z_{1}\left(  x_{2},y\right)
\right\vert  &  \leqslant cd\left(  x_{1},x_{2}\right)  \left(  \overset
{n}{\underset{i=1}{\sum}}\sup_{x\in B\left(  x_{1},\frac{1}{2}d\left(
x_{1},y\right)  \right)  }\left\vert X_{i}Z_{1}\left(  x,y\right)  \right\vert
+\right. \label{Lagrange}\\
&  \left.  +d\left(  x_{1},x_{2}\right)  \underset{x\in B\left(  x_{1}%
,\frac{1}{2}d\left(  x_{1},y\right)  \right)  }{\sup}\left\vert X_{0}%
Z_{1}\left(  x,y\right)  \right\vert \right) \nonumber
\end{align}
for $d\left(  x_{1},y\right)  \geqslant2d\left(  x_{1},x_{2}\right)  $. Let us
estimate $\sup_{x\in B\left(  x_{1},\frac{1}{2}d\left(  x_{1},y\right)
\right)  }\left\vert X_{i}Z_{1}\left(  x,y\right)  \right\vert $. We know
that:%
\[
Z_{1}\left(  x,y\right)  =\sum_{i=1}^{3}\int_{\mathbb{R}^{m}}\int
_{\mathbb{R}^{m}}Q_{i}\left(  y,k;~x,h\right)  \varphi_{i}\left(  h\right)
\varphi\left(  k\right)  dhdk,
\]
where the $Q_{i}$'s are defined in the proof of Proposition
\ref{Prop bound Z_1}. Let us bound $X_{i}Z_{1}$ for one of the terms $Q_{i},$
for instance%
\[
R_{j}^{\left(  y,k\right)  }R_{j}^{\left(  y,k\right)  }\Gamma\left(
\Theta_{\left(  y,k\right)  }\left(  x,h\right)  \right)
\]
(since the other terms do not behave worse than this). We have, for
$i=1,2,...,n,$%
\begin{align}
&  X_{i}\int_{\mathbb{R}^{m}}\int_{\mathbb{R}^{m}}R_{j}^{\left(  y,k\right)
}R_{j}^{\left(  y,k\right)  }\Gamma\left(  \Theta_{\left(  y,k\right)
}\left(  x,h\right)  \right)  \varphi\left(  h\right)  \varphi\left(
k\right)  dhdk\nonumber\\
&  =\int_{\mathbb{R}^{m}}\int_{\mathbb{R}^{m}}\widetilde{X}_{i}\left[
R_{j}^{\left(  y,k\right)  }R_{j}^{\left(  y,k\right)  }\Gamma\left(
\Theta_{\left(  y,k\right)  }\left(  x,h\right)  \right)  \varphi\left(
h\right)  \right]  \varphi\left(  k\right)  dhdk\nonumber\\
&  =\int_{\mathbb{R}^{m}}\int_{\mathbb{R}^{m}}R_{j}^{\left(  y,k\right)
}R_{j}^{\left(  y,k\right)  }\Gamma\left(  \Theta_{\left(  y,k\right)
}\left(  x,h\right)  \right)  \left(  \widetilde{X}\varphi_{i}\right)  \left(
h\right)  \varphi\left(  k\right)  dhdk\label{holder 1}\\
&  +\int_{\mathbb{R}^{m}}\int_{\mathbb{R}^{m}}\left(  Y_{i}R_{j}^{\left(
y,k\right)  }R_{j}^{\left(  y,k\right)  }\Gamma\right)  \left(  \Theta
_{\left(  y,k\right)  }\left(  x,h\right)  \right)  \varphi\left(  h\right)
\varphi\left(  k\right)  dhdk\nonumber\\
&  +\int_{\mathbb{R}^{m}}\int_{\mathbb{R}^{m}}\left(  R_{i}^{\left(
y,k\right)  }R_{j}^{\left(  y,k\right)  }R_{j}^{\left(  y,k\right)  }%
\Gamma\right)  \left(  \Theta_{\left(  y,k\right)  }\left(  x,h\right)
\right)  \varphi\left(  h\right)  \varphi\left(  k\right)  dhdk.\nonumber
\end{align}
Now, by Proposition \ref{Prop bad theta}, (ii),%
\begin{align*}
&  \left\vert \int_{\mathbb{R}^{m}}\int_{\mathbb{R}^{m}}\left(  Y_{i}%
R_{j}^{\left(  y,k\right)  }R_{j}^{\left(  y,k\right)  }\Gamma\right)  \left(
\Theta_{\left(  y,k\right)  }\left(  x,h\right)  \right)  \varphi\left(
h\right)  \varphi\left(  k\right)  dhdk\right\vert \\
&  \leqslant c\int_{\mathbb{R}^{m}}\int_{\mathbb{R}^{m}}\frac{\varphi\left(
h\right)  \varphi\left(  k\right)  }{\left\Vert \Theta_{\left(  y,k\right)
}\left(  x,h\right)  \right\Vert ^{Q+1-\alpha}}dhdk\leqslant c\int_{d\left(
x,y\right)  }^{R}\frac{r^{\alpha-2}}{\left\vert B\left(  x,r\right)
\right\vert }dr,
\end{align*}
and the other two terms in (\ref{holder 1}) are bounded by the same quantity.
Next, we have to take the supremum of the last quantity for $x\in B\left(
x_{1},\frac{1}{2}d\left(  x_{1},y\right)  \right)  .$ Since $d\left(
x_{1},y\right)  <2d\left(  x,y\right)  $, by (\ref{comparable phi}),
this sup is bounded by%
\[
c\int_{d\left(  x_{1},y\right)  }^{R}\frac{r^{\alpha-2}}{\left\vert B\left(
x,r\right)  \right\vert }dr,
\]
hence
\[
d\left(  x_{1},x_{2}\right)  \overset{n}{\underset{i=1}{\sum}}\sup_{x\in
B\left(  x_{1},\frac{1}{2}d\left(  x_{1},y\right)  \right)  }\left\vert
X_{i}Z_{1}\left(  x,y\right)  \right\vert \leqslant cd\left(  x_{1}%
,x_{2}\right)  \int_{d\left(  x_{1},y\right)  }^{R}\frac{r^{\alpha-2}%
}{\left\vert B\left(  x,r\right)  \right\vert }dr
\]
since $d\left(  x_{1},x_{2}\right)  \leqslant\frac{1}{2}d\left(
x_{1},y\right)  <r$%
\begin{align*}
&  \leqslant cd\left(  x_{1},x_{2}\right)  ^{\alpha}\int_{d\left(
x_{1},y\right)  }^{R}r^{1-\alpha}\frac{r^{\alpha-2}}{\left\vert B\left(
x,r\right)  \right\vert }dr\\
&  =cd\left(  x_{1},x_{2}\right)  ^{\alpha}\int_{d\left(  x_{1},y\right)
}^{R}\frac{r^{-1}}{\left\vert B\left(  x,r\right)  \right\vert }dr\\
&  =cd\left(  x_{1},x_{2}\right)  ^{\alpha}\phi_{0}\left(  x_{1},y\right)  .
\end{align*}
An analogous computation gives
\begin{align*}
d\left(  x_{1},x_{2}\right)  ^{2}\sup_{x\in B\left(  x_{1},\frac{1}{2}d\left(
x_{1},y\right)  \right)  }\left\vert X_{0}Z_{1}\left(  x,y\right)
\right\vert  &  \leqslant cd\left(  x_{1},x_{2}\right)  ^{2}\int_{d\left(
x_{1},y\right)  }^{R}\frac{r^{\alpha-3}}{\left\vert B\left(  x,r\right)
\right\vert }dr\\
&  =cd\left(  x_{1},x_{2}\right)  ^{\alpha}\int_{d\left(  x_{1},y\right)
}^{R}r^{2-\alpha}\frac{r^{\alpha-3}}{\left\vert B\left(  x,r\right)
\right\vert }dr\\
&  =cd\left(  x_{1},x_{2}\right)  ^{\alpha}\phi_{0}\left(  x_{1},y\right)  .
\end{align*}
Then (\ref{Lagrange}) implies%
\[
\left\vert Z_{1}\left(  x_{1},y\right)  -Z_{1}\left(  x_{2},y\right)
\right\vert \leqslant cd\left(  x_{1},x_{2}\right)  ^{\alpha}\phi_{0}\left(
x_{1},y\right)
\]
and the lemma is proved.
\end{proof}

Next we need the following:

\begin{lemma}
\label{Lemma Holder beta}For any $\beta>0,$ let%
\[
A\left(  x_{1},x_{2},y\right)  =\int_{U}\left\vert Z_{1}^{\prime}\left(
x_{1},z\right)  -Z_{1}^{\prime}\left(  x_{2},z\right)  \right\vert \phi
_{\beta}\left(  z,y\right)  dz.
\]
For any $\varepsilon>0$ there exists $c>0$ such that%
\[
A\left(  x_{1},x_{2},y\right)  \leqslant cd\left(  x_{1},x_{2}\right)
^{\alpha-\varepsilon}\phi_{\beta+\varepsilon}\left(  x_{1},y\right)
\]
for $d\left(  x_{1},y\right)  \geqslant3d\left(  x_{1},x_{2}\right)  $.
\end{lemma}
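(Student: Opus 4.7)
Let me write $\delta = d(x_1, x_2)$; by hypothesis $d(x_1, y) \geqslant 3\delta$. My plan is to split the $z$-integration in $U$ into two regions based on whether $z$ is close to or far from $x_1$:
$$
U_1 = \{z \in U : d(x_1, z) < 2\delta\}, \qquad U_2 = \{z \in U : d(x_1, z) \geqslant 2\delta\}.
$$
On $U_2$ the Hölder estimate of Lemma \ref{Prop Z_1 Holder} applies, while on $U_1$ I will only use the triangle inequality together with the pointwise bound $|Z_1'(x,z)| \leqslant c\,\phi_\alpha(x,z)$ from Proposition \ref{Prop bound Z_1} and its analogue for $Z_1'$ (Remark \ref{Remark c_0}).

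On $U_2$, Lemma \ref{Prop Z_1 Holder} gives $|Z_1'(x_1,z) - Z_1'(x_2,z)| \leqslant c\,\delta^{\alpha}\phi_0(x_1,z)$. Since the kernel $\phi_0$ cannot be handled directly by Theorem \ref{Thm phi alfa}, I bound it against an $\varepsilon$-improvement of itself: writing $r^{-1} = r^{\varepsilon-1}\cdot r^{-\varepsilon}$ under the integral defining $\phi_0$, and using $r\geqslant d(x_1,z)\geqslant 2\delta$, one obtains
$$
\phi_0(x_1, z) \;\leqslant\; d(x_1,z)^{-\varepsilon}\phi_\varepsilon(x_1, z) \;\leqslant\; c\,\delta^{-\varepsilon}\phi_\varepsilon(x_1,z)\qquad (z \in U_2).
$$
Therefore, by Theorem \ref{Thm phi alfa}(1),
$$
\int_{U_2} |Z_1'(x_1,z) - Z_1'(x_2,z)|\,\phi_\beta(z,y)\,dz \;\leqslant\; c\,\delta^{\alpha-\varepsilon}\!\int_U \phi_\varepsilon(x_1,z)\phi_\beta(z,y)\,dz \;\leqslant\; c_\varepsilon\,\delta^{\alpha-\varepsilon}\phi_{\beta+\varepsilon}(x_1,y).
$$

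On $U_1$ the Hölder estimate is unavailable, so I use $|Z_1'(x_1,z) - Z_1'(x_2,z)| \leqslant c\,[\phi_\alpha(x_1,z) + \phi_\alpha(x_2,z)]$. The key observation is that on $U_1$ we have $d(x_1,z)<2\delta\leqslant \tfrac{2}{3}d(x_1,y)$, hence $d(z,y)\geqslant d(x_1,y)/3$; combined with the doubling condition this yields $|B(z,r)|\geqslant c\,|B(x_1,r)|$ for $r\geqslant d(x_1,y)/3$, and then (as in (\ref{comparable phi})) $\phi_\beta(z,y)\leqslant c\,\phi_\beta(x_1,y)$ uniformly in $z \in U_1$. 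Corollary \ref{coroll Psi beta}, applied both centered at $x_1$ and at $x_2$ (observing $U_1 \subseteq \{d(x_2,z)<3\delta\}$), gives $\int_{U_1}[\phi_\alpha(x_1,z)+\phi_\alpha(x_2,z)]\,dz \leqslant c_\varepsilon \delta^{\alpha-\varepsilon'}$ for any small $\varepsilon' \geqslant 0$; in particular
$$
\int_{U_1}|Z_1'(x_1,z)-Z_1'(x_2,z)|\phi_\beta(z,y)\,dz \;\leqslant\; c\,\delta^{\alpha}\phi_\beta(x_1,y).
$$
Finally, the size restriction $\delta \leqslant d(x_1,y)/3$ lets me trade $\delta^\varepsilon$ for $d(x_1,y)^\varepsilon$, and then the elementary inequality $d(x_1,y)^\varepsilon \phi_\beta(x_1,y) \leqslant \phi_{\beta+\varepsilon}(x_1,y)$ (immediate from the definition of $\phi_\beta$, since $r\geqslant d(x_1,y)$ on the range of integration) converts the right-hand side to $c\,\delta^{\alpha-\varepsilon}\phi_{\beta+\varepsilon}(x_1,y)$, as desired. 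Summing the contributions from $U_1$ and $U_2$ completes the proof.

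The main obstacle is the logarithmic divergence of $\phi_0$ produced by the Hölder estimate of Lemma \ref{Prop Z_1 Holder}: this kernel falls just outside the hypotheses of Theorem \ref{Thm phi alfa}, and the only way to use that theorem is to sacrifice an arbitrarily small $\varepsilon$ to upgrade $\phi_0$ to $\phi_\varepsilon$. This is precisely why the statement of the lemma contains the loss $\alpha - \varepsilon$ rather than $\alpha$.
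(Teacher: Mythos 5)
Your proof is correct and matches the paper's own argument essentially line for line: the same split of $U$ at $d(x_1,z)=2d(x_1,x_2)$, the same use of Lemma \ref{Prop Z_1 Holder} together with the upgrade $d(x_1,x_2)^\alpha\phi_0(x_1,z)\leqslant c\,d(x_1,x_2)^{\alpha-\varepsilon}\phi_\varepsilon(x_1,z)$ on the far region, and on the near region the same uniform comparison $\phi_\beta(z,y)\leqslant c\,\phi_\beta(x_1,y)$ followed by Corollary \ref{coroll Psi beta} and the trade $d(x_1,x_2)^\varepsilon\leqslant c\,d(x_1,y)^\varepsilon$.
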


\begin{proof}
Let us split:%
\[
A\left(  x_{1},x_{2},y\right)  =\int_{d\left(  x_{1},z\right)  \geqslant
2d\left(  x_{1},x_{2}\right)  }\left(  \ldots\right)  dz+\int_{d\left(
x_{1},z\right)  <2d\left(  x_{1},x_{2}\right)  }\left(  \ldots\right)
dz\equiv I+II.
\]
By Lemma \ref{Prop Z_1 Holder},%
\[
I\leqslant c\int_{d\left(  x_{1},z\right)  \geqslant2d\left(  x_{1}%
,x_{2}\right)  }d\left(  x_{1},x_{2}\right)  ^{\alpha}\phi_{0}\left(
x_{1},z\right)  \phi_{\beta}\left(  z,y\right)  dz.
\]
Now, for any $\varepsilon>0,$ and $d\left(  x_{1},z\right)  \geqslant2d\left(
x_{1},x_{2}\right)  ,$ we have%
\begin{align*}
&  d\left(  x_{1},x_{2}\right)  ^{\alpha}\phi_{0}\left(  x_{1},z\right)
\leqslant cd\left(  x_{1},x_{2}\right)  ^{\alpha-\varepsilon}d\left(
x_{1},z\right)  ^{\varepsilon}\int_{d\left(  x_{1},z\right)  }^{R}\frac
{r^{-1}}{\left\vert B\left(  x_{1},r\right)  \right\vert }dr\\
&  \leqslant cd\left(  x_{1},x_{2}\right)  ^{\alpha-\varepsilon}\int_{d\left(
x_{1},z\right)  }^{R}\frac{r^{\varepsilon-1}}{\left\vert B\left(
x_{1},r\right)  \right\vert }dr=cd\left(  x_{1},x_{2}\right)  ^{\alpha
-\varepsilon}\phi_{\varepsilon}\left(  x_{1},z\right)
\end{align*}
hence, by Theorem \ref{Thm phi alfa}%
\begin{align}
I  &  \leqslant cd\left(  x_{1},x_{2}\right)  ^{\alpha-\varepsilon}%
\int_{d\left(  x_{1},z\right)  \geqslant2d\left(  x_{1},x_{2}\right)  }%
\phi_{\varepsilon}\left(  x_{1},z\right)  \phi_{\beta}\left(  z,y\right)
dz\label{Holder (I)}\\
&  \leqslant cd\left(  x_{1},x_{2}\right)  ^{\alpha-\varepsilon}\phi
_{\beta+\varepsilon}\left(  x_{1},y\right)  .\nonumber
\end{align}
Next,%
\begin{align*}
II  &  \leqslant\int_{d\left(  x_{1},z\right)  <2d\left(  x_{1},x_{2}\right)
}\left[  \phi_{\alpha}\left(  x_{1},z\right)  +\phi_{\alpha}\left(
x_{2},z\right)  \right]  \phi_{\beta}\left(  z,y\right)  dz\\
&  \equiv II_{A}+II_{B}.
\end{align*}
From $d\left(  x_{1},y\right)  \geqslant3d\left(  x_{1},x_{2}\right)  $ and
$d\left(  x_{1},z\right)  <2d\left(  x_{1},x_{2}\right)  ,$ we deduce
$d\left(  y,z\right)  \geqslant d\left(  x_{1},x_{2}\right)  ,$ hence
$d\left(  x_{1},z\right)  \leqslant2d\left(  y,z\right)  $ and%
\[
d\left(  x_{1},y\right)  \leqslant d\left(  x_{1},z\right)  +d\left(
z,y\right)  \leqslant3d\left(  z,y\right)
\]
which allows us to write%
\[
II_{A}\leqslant c\phi_{\beta}\left(  x_{1},y\right)  \int_{d\left(
x_{1},z\right)  <2d\left(  x_{1},x_{2}\right)  }\phi_{\alpha}\left(
x_{1},z\right)  dz
\]
by Corollary \ref{coroll Psi beta}%
\[
\leqslant c\phi_{\beta}\left(  x_{1},y\right)  d\left(  x_{1},x_{2}\right)
^{\alpha}.
\]
By the same reason,%
\begin{align*}
II_{B}  &  \leqslant c\phi_{\beta}\left(  x_{1},y\right)  \int_{d\left(
x_{1},z\right)  <2d\left(  x_{1},x_{2}\right)  }\phi_{\alpha}\left(
x_{2},z\right)  dz\\
&  \leqslant c\phi_{\beta}\left(  x_{1},y\right)  \int_{d\left(
x_{2},z\right)  <3d\left(  x_{1},x_{2}\right)  }\phi_{\alpha}\left(
x_{2},z\right)  dz\\
&  \leqslant c\phi_{\beta}\left(  x_{1},y\right)  d\left(  x_{1},x_{2}\right)
^{\alpha}%
\end{align*}
as above. We conclude, for $d\left(  x_{1},y\right)  \geqslant3d\left(
x_{1},x_{2}\right)  ,$%
\begin{align*}
II  &  \leqslant c\phi_{\beta}\left(  x_{1},y\right)  d\left(  x_{1}%
,x_{2}\right)  ^{\alpha}\leqslant\\
&  \leqslant cd\left(  x_{1},x_{2}\right)  ^{\alpha-\varepsilon}d\left(
x_{1},y\right)  ^{\varepsilon}\int_{d\left(  x_{1},y\right)  }^{R}%
\frac{r^{\beta-1}}{\left\vert B\left(  x_{1},r\right)  \right\vert }dr\\
&  \leqslant cd\left(  x_{1},x_{2}\right)  ^{\alpha-\varepsilon}\phi
_{\beta+\varepsilon}\left(  x_{1},y\right)  ,
\end{align*}
which together with (\ref{Holder (I)}) gives the assertion.
\end{proof}

\begin{proof}
[Proof of Proposition \ref{Phi holder}]Let $x_{1},x_{2},y\in U$ with $d\left(
x_{1},y\right)  \geqslant3d\left(  x_{1},x_{2}\right)  $. By the identity
(\ref{int Eq prime}) we can write%
\[
\Phi^{\prime}\left(  x_{1},y\right)  -\Phi^{\prime}\left(  x_{2},y\right)
=Z_{1}^{\prime}\left(  x_{1},y\right)  -Z_{1}^{\prime}\left(  x_{2},y\right)
+\int_{U}\left[  Z_{1}^{\prime}\left(  x_{1},z\right)  -Z_{1}^{\prime}\left(
x_{2},z\right)  \right]  \Phi^{\prime}\left(  z,y\right)  dz
\]
which by (\ref{phi bound}) gives%
\[
\left\vert \Phi^{\prime}\left(  x_{1},y\right)  -\Phi^{\prime}\left(
x_{2},y\right)  \right\vert \leqslant\left\vert Z_{1}^{\prime}\left(
x_{1},y\right)  -Z_{1}^{\prime}\left(  x_{2},y\right)  \right\vert +c\int
_{U}\left\vert Z_{1}^{\prime}\left(  x_{1},z\right)  -Z_{1}^{\prime}\left(
x_{2},z\right)  \right\vert \phi_{\alpha}\left(  z,y\right)  dz.
\]
Exploiting Lemmas \ref{Prop Z_1 Holder} and \ref{Lemma Holder beta}, for any
$\varepsilon>0$ we get%
\begin{align*}
\left\vert \Phi^{\prime}\left(  x_{1},y\right)  -\Phi^{\prime}\left(
x_{2},y\right)  \right\vert  &  \leqslant cd\left(  x_{1},x_{2}\right)
^{\alpha}\phi_{0}\left(  x_{1},y\right)  +cd\left(  x_{1},x_{2}\right)
^{\alpha-\varepsilon}\phi_{\beta+\varepsilon}\left(  x_{1},y\right) \\
&  \leqslant cd\left(  x_{1},x_{2}\right)  ^{\alpha-\varepsilon}%
\phi_{\varepsilon}\left(  x_{1},y\right)
\end{align*}
as desired.
\end{proof}

\subsection{Estimates on the second derivatives of the fundamental solution}

We are now going to prove the existence and a sharp bound of H\"{o}lder type
of the second derivatives of our local fundamental solution.

\begin{theorem}
[Second derivatives of the fundamental solution]\label{Thm XXgamma}Under
Assumptions B, for $i,j=1,2,...,n$ and for $x,y\in U,x\neq y$, the following
assertions hold true.

(i) There exist the second derivatives $X_{j}X_{i}J^{\prime}\left(
x,y\right)  $, $X_{0}J^{\prime}\left(  x,y\right)  $, $X_{i}X_{j}\gamma\left(
x,y\right)  $, $X_{0}\gamma\left(  x,y\right)  $ continuous in the joint
variables for $x\neq y$; in particular,%
\[
\gamma\left(  \cdot,y\right)  \in C_{X}^{2}\left(  U\setminus\left\{
y\right\}  \right)  \text{ for any }y\in U\text{.}%
\]

(ii) For every $\varepsilon\in\left(  0,\alpha\right)  $, every $U^{\prime
}\Subset U$ there exists $c>0$ such that for every $x\in U^{\prime}$ and $y\in
U,$%
\begin{equation}
\left\vert X_{j}X_{i}J^{\prime}\left(  x,y\right)  \right\vert ,\left\vert
X_{0}J^{\prime}\left(  x,y\right)  \right\vert \leqslant cR^{\varepsilon}%
\frac{d\left(  x,y\right)  ^{\alpha-\varepsilon}}{\left\vert B\left(
x,d\left(  x,y\right)  \right)  \right\vert } \label{Stima XXJ}%
\end{equation}
with $R$ as at the beginning of \S 3, and%
\begin{equation}
\left\vert X_{j}X_{i}\gamma\left(  x,y\right)  \right\vert ,\left\vert
X_{0}\gamma\left(  x,y\right)  \right\vert \leqslant c\frac{1}{\left\vert
B\left(  x,d\left(  x,y\right)  \right)  \right\vert }. \label{Stima XXgamma}%
\end{equation}

\end{theorem}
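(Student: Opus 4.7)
The plan writes $\gamma(x,y)=[P(x,y)+J'(x,y)]/c_0(y)$ and reduces the theorem to proving the corresponding existence, continuity and bounds for the second $X$-derivatives of $P$ and $J'$ separately. Proposition~\ref{Prop P} already supplies $X_jX_iP$ and $X_0P$ as continuous functions off the diagonal with $|X_jX_iP|,|X_0P|\leqslant c\phi_0(x,y)\leqslant c/|B(x,d(x,y))|$ (the last step by Lemma~\ref{Lemma nonintegrale} with $\beta=0<p$). The novelty is thus confined to $J'(x,y)=\int_U P(x,z)\Phi'(z,y)\,dz$, and the decisive new ingredient is the H\"older continuity of $\Phi'$ in its first argument (Proposition~\ref{Phi holder} and Corollary~\ref{coroll Phi holder}), which will allow an add-and-subtract trick to absorb the otherwise non-integrable singularity of $X_jX_iP$.

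I would regularize as in Proposition~\ref{Prop J}: set $\Gamma_\delta=\omega_\delta\Gamma$ and let $P_\delta$, $J'_\delta(x,y)=\int_U P_\delta(x,z)\Phi'(z,y)\,dz$. Since $P_\delta(\cdot,z)$ is smooth, $J'_\delta(\cdot,y)$ is smooth in $x$, and the uniform bound $|X_iP_\delta|\leqslant c\phi_1$ yields $X_iJ'_\delta\to X_iJ'$ locally uniformly off the diagonal. Classical differentiation gives, via the add-and-subtract trick,
\begin{equation*}
X_jX_iJ'_\delta(x,y)=\int_U X_jX_iP_\delta(x,z)[\Phi'(z,y)-\Phi'(x,y)]\,dz+\Phi'(x,y)\int_U X_jX_iP_\delta(x,z)\,dz.
\end{equation*}
The first integral converges as $\delta\to 0$ by dominated convergence: $|X_jX_iP_\delta|\leqslant c\phi_0(x,z)$ uniformly in $\delta$ (from the proof of Proposition~\ref{Prop P} combined with Lemma~\ref{Cutoff}), while Corollary~\ref{coroll Phi holder} provides the cancellation $|\Phi'(z,y)-\Phi'(x,y)|\leqslant c\,d(x,z)^{\alpha-\varepsilon}[\phi_\varepsilon(x,y)+\phi_\varepsilon(z,y)]$; the product is integrable in $z$ by Corollary~\ref{coroll Psi beta} and Theorem~\ref{Thm phi alfa}, and a careful bookkeeping yields the $\phi_{\alpha-\varepsilon}$-type bound needed for~(\ref{Stima XXJ}) after conversion via Lemma~\ref{Lemma nonintegrale}.

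The delicate part is the second integral. I would rewrite $\int_U X_jX_iP_\delta(x,z)\,dz=X_j M_i^\delta(x)$ with $M_i^\delta(x)=\int_U X_iP_\delta(x,z)\,dz$. Since $|X_iP|\leqslant c\phi_1$ is locally integrable, $M_i^\delta\to M_i:=\int_U X_iP(x,z)\,dz$ uniformly; what must be shown is that $X_jM_i^\delta$ itself converges locally uniformly to a continuous limit. The natural route is to expand $X_jX_iP_\delta$ in the lifted and group-approximated representation used in Proposition~\ref{Prop P}: the leading-order contribution is the singular integral built from $Y_jY_i\Gamma$, whose limit is controlled by the homogeneity of $\Gamma$ on $\mathbb{G}$, while the remaining terms carry the extra weight $\alpha$ from the $R^\eta$ fields (Proposition~\ref{Prop bad theta}, Corollary~\ref{Coroll second order remainder}) and so are easier to handle. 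Lemma~\ref{Lemma derivata classica} then certifies that $X_jX_iJ'$ exists classically and equals the sum of the two limits, whence $X_jX_i\gamma$ exists. The case $X_0J'$ runs in parallel, since $|X_0P|\leqslant c\phi_0$ has the same form. Finally, (\ref{Stima XXgamma}) is immediate from (\ref{Stima XXP}) and (\ref{Stima XXJ}), the $J'$-contribution carrying an extra $R^\varepsilon d(x,y)^{\alpha-\varepsilon}$ factor; joint continuity off the diagonal follows from a local-uniformity argument on the representation, in the style of Proposition~\ref{Prop stime P J}.

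The principal obstacle is the uniform convergence of $X_jM_i^\delta(x)$ — equivalently, making sense of the \emph{full-derivative} singular integral $\int_U X_jX_iP(x,z)\,dz$ in a $\delta$-independent way. This step seems to require working with the explicit group approximation $\Theta$ rather than abstract real-variable estimates alone, and mirrors the technical heart of the ``explicit representation formula'' for $X_iX_jw$ foreshadowed in the introduction.
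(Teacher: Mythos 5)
Your overall architecture coincides with the paper's: reduce to $J'$, regularize with $\omega_\delta$, add and subtract $\Phi'(x,y)$ so that the Hölder estimate of Proposition~\ref{Phi holder}/Corollary~\ref{coroll Phi holder} tames the non-integrable singularity, and treat separately the ``constant'' term $\Phi'(x,y)\int_U X_jX_iP_\delta(x,z)\,dz$. The first half of your argument (dominated convergence for $\int X_jX_iP_\delta(x,z)[\Phi'(z,y)-\Phi'(x,y)]\,dz$, the conversion to the bound (\ref{Stima XXJ}), the parallel treatment of $X_0$, and the use of Lemma~\ref{Lemma derivata classica} to certify the classical derivative) matches what is done in Theorem~\ref{Thm Big J_0} and the proof of Theorem~\ref{Thm XXgamma}, up to the inessential difference that the paper first strips off the remainder terms $R^\eta$ and applies the add-and-subtract only to the pure part $Y_jD_1\Gamma$.

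However, the step you yourself flag as ``the principal obstacle'' is a genuine gap, not a routine verification: you give no argument that $\lim_{\delta\to 0}\int_U X_jX_iP_\delta(x,z)\,dz$ exists and defines a continuous (in fact $C^{\alpha}_{X,loc}$) function of $x$. Homogeneity of $Y_jY_i\Gamma$ alone only gives $|Y_jY_i\Gamma(u)|\leqslant c\Vert u\Vert^{-Q}$, which is not integrable, and no cancellation is visible as long as the integration runs over the parameter $\eta=(z,k)$ of $\Theta_\eta(\xi)$ — the ``bad'' variable, in which $\Theta$ is a priori only $C^{\alpha}$. The paper's resolution is precisely Proposition~\ref{Prop bad theta}(iii), which is where Assumptions B (as opposed to A) are indispensable: under $C^{r,\alpha}$ regularity the map $\eta\mapsto u=\Theta_\eta(\xi)$ is a $C^{1,\alpha}$ diffeomorphism with Jacobian $d\eta=c(\xi)\left(1+\chi(\xi,u)\right)du$ and $|\chi(\xi,u)|\leqslant c\Vert u\Vert^{\alpha}$. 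One then changes variables in the $\eta$-integral, absorbs the $\chi$-correction into an absolutely convergent integral (the extra $\Vert u\Vert^{\alpha}$ beats the $\Vert u\Vert^{-Q}$ singularity after writing $Y_j(\omega_\delta D_1\Gamma)$), and removes the remaining derivative by integration by parts on the group, leaving boundary terms and convergent integrals whose Hölder dependence on $x$ comes again from \ref{Prop bad theta}(iii); the far-from-the-pole piece $I_2$ needs a separate Lipschitz-type estimate on the level sets $\{\Vert\Theta_\eta(x,h)\Vert>r\}$. Your proposed reformulation via $M_i^\delta(x)=\int_U X_iP_\delta(x,z)\,dz$ does not circumvent any of this: uniform convergence of $X_jM_i^\delta$ is exactly the statement that needs the change of variables in the bad variable plus integration by parts, so as written the proof of part (i), and hence of the bound (\ref{Stima XXJ}) for the full second derivative, is incomplete.
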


Note the presence, at the right-hand side of (\ref{Stima XXJ}),
(\ref{Stima XXgamma}), of the kernels $d\left(  x,y\right)  ^{\alpha
-\varepsilon}\left\vert B\left(  x,d\left(  x,y\right)  \right)  \right\vert
^{-1}$, $\left\vert B\left(  x,d\left(  x,y\right)  \right)  \right\vert
^{-1}$, instead of $\phi_{\alpha-\varepsilon}\left(  x,y\right)  ,\phi
_{0}\left(  x,y\right)  $, which one could expect.

In order to reduce the length of some computation in the proof of this theorem
and some of the following ones, it is convenient to introduce first the
following abstract definitions, and make a preliminary study of the involved concept.

\begin{definition}
\label{Def abstract remainder}We say that $R_{\ell}\left(  x,y\right)  $ is a
remainder of type $\ell$ ($=0,1,2,3$) if for $x\neq y$%
\[
R_{\ell}\left(  x,y\right)  =\sum_{s=1}^{m}\int_{\mathbb{R}^{m}}%
\int_{\mathbb{R}^{m}}D_{\ell,s}^{\left(  y,k\right)  }\Gamma\left(
\Theta_{\left(  y,k\right)  }\left(  x,h\right)  \right)  a_{s}\left(
h\right)  b_{s}\left(  k\right)  dhdk
\]
where $D_{\ell,s}^{\left(  y,k\right)  }$ are differential operators given by
the composition of at most $\ell$ vector fields of the kind $Y_{i}$ or
$R_{i}^{\left(  y,k\right)  }$, of total weight $\geqslant\alpha-\ell$,
depending on $\left(  y,k\right)  $ in a $C^{\alpha}$ way and $a_{s}$, $b_{s}$
are cutoff functions. Here and in the following, the number $\alpha$ is fixed,
and is the exponent appearing in Assumptions B.
\end{definition}

\begin{definition}
\label{Def abstract k_l}We say that $k_{\ell}\left(  x,y\right)  $ is a
\emph{kernel of type }$\ell$ ($=0,1,2,3$) if for $x\neq y$%
\[
k_{\ell}\left(  x,y\right)  =\int_{\mathbb{R}^{m}}\int_{\mathbb{R}^{m}}%
D_{\ell}\Gamma\left(  \Theta_{\left(  y,k\right)  }\left(  x,h\right)
\right)  a_{0}\left(  h\right)  b_{0}\left(  k\right)  dhdk+R_{\ell}\left(
x,y\right)
\]
where $D_{\ell}$ is a left invariant differential operator homogeneous of
degree $\ell$, $a_{0}$, $b_{0}$ are cutoff functions and $R_{\ell}\left(
x,y\right)  $ is a remainder of type $\ell$. If $R_{\ell}\left(  x,y\right)
\equiv0$, we say that $k_{\ell}\left(  x,y\right)  $ is a \emph{pure kernel of
type }$\ell$.
\end{definition}

\begin{theorem}
\label{Thm Abstract k_l}Under Assumptions B, let $k_{\ell}\left(  x,y\right)
$ be a kernel of type $\ell$. Then for $x\neq y,$ $k_{\ell}\left(  x,y\right)
$ is jointly continuous and satisfies the bound:%
\[
\left\vert k_{\ell}\left(  x,y\right)  \right\vert \leqslant c\phi_{2-\ell
}\left(  x,y\right)  .
\]
Moreover, if $\ell\leqslant2$,$\mathbf{\ }$then $X_{i}k_{\ell}\left(
x,y\right)  $ is a kernel of type $\ell+1$ for $i=1,2,...,n$; if
$\ell\leqslant1$, then $X_{0}k_{\ell}\left(  x,y\right)  $ is a kernel of type
$\ell+2$.

Let $R_{\ell}\left(  x,y\right)  $ be a remainder of type $\ell=0,1,2,3$.
Then, for $x\neq y$, $R_{\ell}\left(  x,y\right)  $ is jointly continuous and
satisfies the bound:%
\[
\left\vert R_{\ell}\left(  x,y\right)  \right\vert \leqslant c\phi
_{2+\alpha-\ell}\left(  x,y\right)  .
\]
Also, if $\ell\leqslant2$,$\mathbf{\ }$then $X_{i}R_{\ell}\left(  x,y\right)
$ is a remainder of type $\ell+1$ for $i=1,2,...,n$; if $\ell\leqslant1$, then
$X_{0}R_{\ell}\left(  x,y\right)  $ is a remainder of type $\ell+2$.
\end{theorem}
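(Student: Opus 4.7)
The plan is to establish the four assertions in the following order: the pointwise bounds on $|k_\ell|$ and $|R_\ell|$; joint continuity off the diagonal; and finally the derivative-closure properties for $X_i$ and $X_0$. Everything rests on the explicit integral representation and on the machinery of Sections~\ref{sec known results}--\ref{sec geometric}.

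For the bounds, observe that in Definition~\ref{Def abstract k_l} the operator $D_\ell$ is left-invariant and homogeneous of degree $\ell$ with respect to the dilations $D(\lambda)$, so $D_\ell\Gamma$ is homogeneous of degree $2-Q-\ell$ and hence $|D_\ell\Gamma(u)| \leqslant c\|u\|^{-(Q-2+\ell)}$ near the origin, cf.\ (\ref{bound Gamma}). For a remainder term, $D_{\ell,s}^{(y,k)}$ is a composition of at most $\ell$ vector fields of the form $Y_i$ (weight $-p_i$) or $R_i^{(y,k)}$ (weight $\geqslant \alpha - p_i$), with total weight $\geqslant \alpha - \ell$; iterating the reasoning behind (\ref{remainder}) and Corollary~\ref{Coroll second order remainder}, which is legitimate under Assumptions~B in view of Proposition~\ref{Prop bad theta}, part (ii), yields $|D_{\ell,s}^{(y,k)}\Gamma(u)| \leqslant c\|u\|^{-(Q-2+\ell-\alpha)}$. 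Combining these estimates with the equivalence (\ref{equiv Theta d}) and Lemma~\ref{Lemma NSW} (taking $\beta = 2-\ell$ and $\beta = 2+\alpha-\ell$ respectively) yields the claimed bounds $c\,\phi_{2-\ell}(x,y)$ and $c\,\phi_{2+\alpha-\ell}(x,y)$. Joint continuity off the diagonal is then obtained by the same argument used for $P$ in Proposition~\ref{Prop P}: $(D_\ell\Gamma)(\Theta_{(y,k)}(x,h))$ is smooth in $(x,h)$ away from the pole, while Proposition~\ref{Prop bad theta} ensures that $\Theta_\eta$ and the coefficients of $R_i^\eta$ depend on $\eta$ in a $C^\alpha$ manner, locally uniformly in $\xi$, producing a locally uniform $C^\alpha$ modulus of continuity in $y$.

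For the derivative-closure statements, apply $X_i$ to the main summand of $k_\ell$, push it inside the integral via (\ref{X Xtilda}) to turn it into $\widetilde{X}_i$, and use (\ref{approx theta}) to obtain
\[
\widetilde{X}_i\bigl[(D_\ell\Gamma) \circ \Theta_{(y,k)}\bigr] = \bigl(Y_i D_\ell\,\Gamma + R_i^{(y,k)} D_\ell\,\Gamma\bigr) \circ \Theta_{(y,k)}.
\]
The composition $Y_i D_\ell$ is again a left-invariant homogeneous operator, now of degree $\ell+1$, so it supplies the main term of a kernel of type $\ell+1$; meanwhile $R_i^{(y,k)} D_\ell$ is a composition of $\ell+1$ factors of total weight $\geqslant (\alpha-1) + (-\ell) = \alpha - (\ell+1)$, giving a remainder of type $\ell+1$. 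When $\widetilde{X}_i$ instead differentiates the cutoff $a_0$, one obtains $(D_\ell\Gamma) \circ \Theta_{(y,k)}$ multiplied by the new smooth compactly supported function $\widetilde{X}_i a_0$; since $\alpha\leqslant 1$, one has $-\ell \geqslant \alpha - (\ell+1)$, so $D_\ell$ itself qualifies as a composition of at most $\ell+1$ factors of weight $\geqslant \alpha - (\ell+1)$, and the term reabsorbs into a remainder of type $\ell+1$. The analogous scheme applied termwise to $R_\ell$ yields the closure statement for remainders; the $X_0$-case is identical in spirit, with $Y_0$ of degree $2$ and $R_0^{(y,k)}$ of weight $\geqslant \alpha - 2$, so the type is raised by $2$ instead of $1$.

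The main bookkeeping obstacle is the handling of the polynomial factors $u_{kj}(x,h_1,\dots,h_{j-1})$ in the lifted vector fields $\widetilde{X}_k$: when $\widetilde{X}_k$ lands on a cutoff $a_s(h)$ or $b_s(k)$ it produces a factor smooth in $x$ and compactly supported in $(h,k)$, which does not strictly fit the form of Definitions~\ref{Def abstract remainder}--\ref{Def abstract k_l} as written but is absorbed once those definitions are read as permitting bounded smooth $(x,y)$-multipliers. Such multipliers are harmless for the Lemma~\ref{Lemma NSW}--based bounds, so the weight-and-degree accounting above is unaffected, and the assertions follow.
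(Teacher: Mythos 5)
Your proposal is correct and follows essentially the same route as the paper: bound $D_\ell\Gamma$ and $D_{\ell,s}^{(y,k)}\Gamma$ via homogeneity and the weight estimates, invoke Lemma~\ref{Lemma NSW}, inherit joint continuity from the argument in Proposition~\ref{Prop P}, and obtain the derivative-closure statements by applying $\widetilde{X}_i$ via (\ref{X Xtilda}) and (\ref{approx theta}) and reorganizing the resulting terms by degree and weight. You also flag, correctly, a technicality the paper glosses over: $\widetilde{X}_i a_0(h)=\sum_j u_{ij}(x,h)\partial_{h_j}a_0(h)$ is not a cutoff in $h$ alone, so it does not literally match Definitions~\ref{Def abstract remainder}--\ref{Def abstract k_l}; your remedy (allow bounded smooth multipliers, which leave the Lemma~\ref{Lemma NSW}-type bounds and continuity untouched) is the right way to close that gap and is implicitly what the paper does.
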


\begin{proof}
The continuity properties follow as in the proof of Proposition \ref{Prop P}.
Also, we have%
\begin{align*}
\left\vert D_{\ell}\Gamma\left(  \Theta_{\left(  y,k\right)  }\left(
x,h\right)  \right)  \right\vert  &  \leqslant\frac{c}{\left\Vert
\Theta_{\left(  y,k\right)  }\left(  x,h\right)  \right\Vert ^{Q-2+\ell}},\\
\left\vert D_{\ell,s}^{\left(  y,k\right)  }\Gamma\left(  \Theta_{\left(
y,k\right)  }\left(  x,h\right)  \right)  \right\vert  &  \leqslant\frac
{c}{\left\Vert \Theta_{\left(  y,k\right)  }\left(  x,h\right)  \right\Vert
^{Q-2+\ell-\alpha}}%
\end{align*}
hence by Lemma \ref{Lemma NSW} we have%
\begin{align*}
\left\vert k_{\ell}\left(  x,y\right)  \right\vert  &  \leqslant c\phi
_{2-\ell}\left(  x,y\right)  ,\\
\left\vert R_{\ell}\left(  x,y\right)  \right\vert  &  \leqslant c\phi
_{2-\ell+\alpha}\left(  x,y\right)  .
\end{align*}
Let us compute, for $x\neq y,$%
\begin{align*}
X_{i}k_{\ell}\left(  x,y\right)   &  =\int_{\mathbb{R}^{m}}\int_{\mathbb{R}%
^{m}}\widetilde{X}_{i}\left[  D_{\ell}\Gamma\left(  \Theta_{\left(
y,k\right)  }\left(  x,h\right)  \right)  a_{0}\left(  h\right)  \right]
b_{0}\left(  k\right)  dhdk\\
&  +\sum_{s=1}^{m}\int_{\mathbb{R}^{m}}\int_{\mathbb{R}^{m}}\widetilde{X}%
_{i}\left[  D_{\ell,s}^{\left(  y,k\right)  }\Gamma\left(  \Theta_{\left(
y,k\right)  }\left(  x,h\right)  \right)  a_{s}\left(  h\right)  \right]
b_{s}\left(  k\right)  dhdk
\end{align*}%
\begin{align*}
&  =\int_{\mathbb{R}^{m}}\int_{\mathbb{R}^{m}}Y_{i}D_{\ell}\Gamma\left(
\Theta_{\left(  y,k\right)  }\left(  x,h\right)  \right)  a_{0}\left(
h\right)  b_{0}\left(  k\right)  dhdk\\
&  +\int_{\mathbb{R}^{m}}\int_{\mathbb{R}^{m}}R_{i}^{\left(  y,k\right)
}D_{\ell}\Gamma\left(  \Theta_{\left(  y,k\right)  }\left(  x,h\right)
\right)  a_{0}\left(  h\right)  b_{0}\left(  k\right)  dhdk\\
&  +\int_{\mathbb{R}^{m}}\int_{\mathbb{R}^{m}}D_{\ell}\Gamma\left(
\Theta_{\left(  y,k\right)  }\left(  x,h\right)  \right)  \widetilde{X}%
_{i}a_{0}\left(  h\right)  b_{0}\left(  k\right)  dhdk\\
&  +\sum_{s=1}^{m}\int_{\mathbb{R}^{m}}\int_{\mathbb{R}^{m}}Y_{i}D_{\ell
,s}^{\left(  y,k\right)  }\Gamma\left(  \Theta_{\left(  y,k\right)  }\left(
x,h\right)  \right)  a_{s}\left(  h\right)  b_{s}\left(  k\right)  dhdk\\
&  +\sum_{s=1}^{m}\int_{\mathbb{R}^{m}}\int_{\mathbb{R}^{m}}R_{i}^{\left(
y,k\right)  }D_{\ell,s}^{\left(  y,k\right)  }\Gamma\left(  \Theta_{\left(
y,k\right)  }\left(  x,h\right)  \right)  a_{s}\left(  h\right)  b_{s}\left(
k\right)  dhdk\\
&  +\sum_{s=1}^{m}\int_{\mathbb{R}^{m}}\int_{\mathbb{R}^{m}}D_{\ell
,s}^{\left(  y,k\right)  }\Gamma\left(  \Theta_{\left(  y,k\right)  }\left(
x,h\right)  \right)  \widetilde{X}_{i}a_{s}\left(  h\right)  b_{s}\left(
k\right)  dhdk
\end{align*}
by Proposition \ref{Prop bad theta} and Definition
\ref{Def abstract remainder}%
\begin{align*}
&  =\int_{\mathbb{R}^{m}}\int_{\mathbb{R}^{m}}D_{\ell+1}\Gamma\left(
\Theta_{\left(  y,k\right)  }\left(  x,h\right)  \right)  a_{0}\left(
h\right)  b_{0}\left(  k\right)  dhdk\\
&  +\sum_{s=1}^{m^{\prime}}\int_{\mathbb{R}^{m}}\int_{\mathbb{R}^{m}}%
D_{\ell+1,s}^{\left(  y,k\right)  }\Gamma\left(  \Theta_{\left(  y,k\right)
}\left(  x,h\right)  \right)  a_{s}^{\prime}\left(  h\right)  b_{s}^{\prime
}\left(  k\right)  dhdk
\end{align*}
which gives the desired result for $X_{i}k_{\ell}$; analogously one can handle
$X_{0}k_{\ell}$.
\end{proof}

\begin{definition}
Let $\Phi_{0}:\left\{  \left(  x,y\right)  \in U\times U:x\neq y\right\}
\rightarrow\mathbb{R}$. We say that $\Phi_{0}$ is a function of $\left(
\phi,\alpha\right)  $-type if it is continuous (in the joint variables),
satisfies%
\[
\left\vert \Phi_{0}\left(  x,y\right)  \right\vert \leqslant c\phi_{\alpha
}\left(  x,y\right)
\]
and for every $\varepsilon\in\left(  0,\alpha\right)  $ there exists a
constant $c_{\varepsilon}$ such that for every $x_{1},x_{2},y\in U$ with
$d\left(  x_{1},y\right)  \geqslant3d\left(  x_{1},x_{2}\right)  $%
\[
\left\vert \Phi_{0}\left(  x_{1},y\right)  -\Phi_{0}\left(  x_{2},y\right)
\right\vert \leqslant c_{\varepsilon}d\left(  x_{1},x_{2}\right)
^{\alpha-\varepsilon}\phi_{\varepsilon}\left(  x_{1},y\right)  .
\]

\end{definition}

\begin{lemma}
\label{Lemma Phi holder}Let $\Phi_{0}$ be a $\left(  \phi,\alpha\right)
$-type function. For every $\varepsilon\in\left(  0,\alpha\right)  $ there
exists $c_{\varepsilon}$ such that for every $x_{1},x_{2},y\in U$ we have%
\[
\left\vert \Phi_{0}\left(  x_{1},y\right)  -\Phi_{0}\left(  x_{2},y\right)
\right\vert \leqslant c_{\varepsilon}d\left(  x_{1},x_{2}\right)
^{\alpha-\varepsilon}\left[  \frac{d\left(  x_{1},y\right)  ^{\varepsilon}%
}{\left\vert B\left(  x_{1},d\left(  x_{1},y\right)  \right)  \right\vert
}+\frac{d\left(  x_{2},y\right)  ^{\varepsilon}}{\left\vert B\left(
x_{2},d\left(  x_{2},y\right)  \right)  \right\vert }\right]  .
\]

\end{lemma}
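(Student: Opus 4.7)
The strategy is to mimic the proof of Corollary \ref{coroll Phi holder}, replacing the specific function $\Phi^{\prime}$ by an arbitrary $\left( \phi,\alpha\right) $-type function $\Phi_{0}$, and splitting according to the relative sizes of $d\left( x_{1},y\right) $, $d\left( x_{2},y\right) $ and $d\left( x_{1},x_{2}\right) $. Three cases naturally appear.

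\textbf{Case 1:} $d\left( x_{1},y\right) \geqslant 3 d\left( x_{1},x_{2}\right) $. Here the H\"{o}lder property from the definition of $\left( \phi,\alpha\right) $-type function applies directly, yielding
\[
\left\vert \Phi_{0}\left( x_{1},y\right) - \Phi_{0}\left( x_{2},y\right) \right\vert \leqslant c_{\varepsilon} d\left( x_{1},x_{2}\right) ^{\alpha-\varepsilon} \phi_{\varepsilon}\left( x_{1},y\right) .
\]
Since $\varepsilon<\alpha\leqslant 1 \leqslant p$, Lemma \ref{Lemma nonintegrale} gives $\phi_{\varepsilon}\left( x_{1},y\right) \leqslant c\, d\left( x_{1},y\right) ^{\varepsilon}/\left\vert B\left( x_{1},d\left( x_{1},y\right) \right) \right\vert $, which is dominated by the right hand side of the desired inequality.

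\textbf{Case 2:} $d\left( x_{2},y\right) \geqslant 3 d\left( x_{1},x_{2}\right) $. By symmetry of $d$ in its two arguments, the defining H\"{o}lder property applies with the roles of $x_{1}$ and $x_{2}$ interchanged, and the argument of Case 1 (with $x_{1}$ replaced by $x_{2}$) gives the bound $c_{\varepsilon} d\left( x_{1},x_{2}\right) ^{\alpha-\varepsilon} d\left( x_{2},y\right) ^{\varepsilon}/\left\vert B\left( x_{2},d\left( x_{2},y\right) \right) \right\vert $.

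\textbf{Case 3:} $d\left( x_{1},y\right) <3d\left( x_{1},x_{2}\right) $ and $d\left( x_{2},y\right) <3d\left( x_{1},x_{2}\right) $. Now the H\"{o}lder hypothesis is not available, so we use the pointwise size estimate and the triangle inequality:
\[
\left\vert \Phi_{0}\left( x_{1},y\right) - \Phi_{0}\left( x_{2},y\right) \right\vert \leqslant c\bigl( \phi_{\alpha}\left( x_{1},y\right) + \phi_{\alpha}\left( x_{2},y\right) \bigr) .
\]
Applying Lemma \ref{Lemma nonintegrale} (for $\alpha<p$) and writing $d\left( x_{i},y\right) ^{\alpha}=d\left( x_{i},y\right) ^{\varepsilon}\cdot d\left( x_{i},y\right) ^{\alpha-\varepsilon}$, the assumption $d\left( x_{i},y\right) <3d\left( x_{1},x_{2}\right) $ yields $d\left( x_{i},y\right) ^{\alpha-\varepsilon}\leqslant 3^{\alpha-\varepsilon}d\left( x_{1},x_{2}\right) ^{\alpha-\varepsilon}$, so each of the two terms is controlled by the required expression.

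No real obstacle is expected: the result is an essentially formal consequence of the definition of $\left( \phi,\alpha\right) $-type function, together with Lemma \ref{Lemma nonintegrale}, and the case analysis above exhausts all possibilities since if both $d\left( x_{1},y\right) <3d\left( x_{1},x_{2}\right) $ and $d\left( x_{2},y\right) <3d\left( x_{1},x_{2}\right) $ fail then we are in Case 1 or Case 2. The only thing one must be mildly careful about is that $\varepsilon$ (and $\alpha$) do not exceed $p$, which is automatic since $p\geqslant 1$ and $\varepsilon<\alpha\leqslant 1$.
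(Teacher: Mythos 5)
Your proof is correct and follows exactly the same three-case decomposition that the paper uses in the proof of Corollary \ref{coroll Phi holder}, which is precisely what the paper means when it says the lemma "follows from the above definition as in the proof of Corollary \ref{coroll Phi holder}." The only minor point worth noting is that the definition of a $\left(\phi,\alpha\right)$-type function is stated with a universal quantifier over $x_{1},x_{2}$, so the "symmetry" you invoke in Case 2 is simply a relabelling of the quantified variables, which is legitimate as you argue.
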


The Lemma follows from the above definition as in the proof of Corollary
\ref{coroll Phi holder}. We will also need the following easy

\begin{lemma}
\label{NSW mod} If $\beta\in\mathbb{R}$ and $\varepsilon>0$, then there exists
$c>0$ such that%
\begin{equation}
\int_{\mathbb{R}^{m}}\int_{\mathbb{R}^{m}}\frac{\psi\left(  h\right)
\varphi\left(  k\right)  }{\left\Vert \Theta_{\left(  y,k\right)  }\left(
x,h\right)  \right\Vert ^{Q-\beta}}\chi_{\left\{  h:\left\Vert \Theta_{\left(
y,k\right)  }\left(  x,h\right)  \right\Vert <\delta\right\}  }dhdk\leqslant
c\delta^{\varepsilon}\phi_{\beta-\varepsilon}\left(  x,y\right)  .
\label{Geo1}%
\end{equation}

\end{lemma}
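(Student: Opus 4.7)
The plan is to reduce this to Lemma \ref{Lemma NSW} via a simple pointwise estimate that trades the cutoff $\chi_{\{\|\Theta\|<\delta\}}$ for the factor $\delta^{\varepsilon}$ and a shift of the exponent from $\beta$ to $\beta-\varepsilon$.

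First, on the set where $\left\Vert \Theta_{(y,k)}(x,h)\right\Vert <\delta$ I would write
\[
\frac{1}{\left\Vert \Theta_{(y,k)}(x,h)\right\Vert ^{Q-\beta}}
=\frac{\left\Vert \Theta_{(y,k)}(x,h)\right\Vert ^{\varepsilon}}{\left\Vert \Theta_{(y,k)}(x,h)\right\Vert ^{Q-(\beta-\varepsilon)}}
\leqslant \frac{\delta^{\varepsilon}}{\left\Vert \Theta_{(y,k)}(x,h)\right\Vert ^{Q-(\beta-\varepsilon)}}.
\]
Dropping the characteristic function (which only enlarges the integrand on a set where both sides are nonnegative) and pulling $\delta^{\varepsilon}$ outside, the left-hand side of \eqref{Geo1} is bounded by
\[
\delta^{\varepsilon}\int_{\mathbb{R}^{m}}\int_{\mathbb{R}^{m}}\frac{\psi(h)\,\varphi(k)}{\left\Vert \Theta_{(y,k)}(x,h)\right\Vert ^{Q-(\beta-\varepsilon)}}\,dh\,dk.
\]

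Then I would apply Lemma \ref{Lemma NSW} with $\beta$ replaced by $\beta-\varepsilon$ (the lemma is stated for every real exponent, so this is legitimate whether $\beta-\varepsilon$ is positive, negative, or zero), obtaining the bound $c\,\delta^{\varepsilon}\phi_{\beta-\varepsilon}(x,y)$, which is precisely \eqref{Geo1}. There is really no obstacle here: the whole content of the statement is the elementary trick of converting a cutoff into a power of $\delta$ by redistributing the exponent of $\left\Vert \Theta\right\Vert$, and once this is done the conclusion is immediate from Lemma \ref{Lemma NSW}.
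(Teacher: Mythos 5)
Your proposal is correct and is essentially identical to the paper's own proof: both use the pointwise estimate $\left\Vert\Theta\right\Vert^{-(Q-\beta)} \leqslant \delta^{\varepsilon}\left\Vert\Theta\right\Vert^{-(Q-\beta+\varepsilon)}$ on the set $\{\left\Vert\Theta\right\Vert<\delta\}$ and then invoke Lemma~\ref{Lemma NSW} with exponent $\beta-\varepsilon$.
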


\begin{proof}
To prove (\ref{Geo1}) it is enough to observe that%
\begin{align*}
&  \int_{\mathbb{R}^{m}}\int_{\mathbb{R}^{m}}\frac{\psi\left(  h\right)
\varphi\left(  k\right)  }{\left\Vert \Theta_{\left(  y,k\right)  }\left(
x,h\right)  \right\Vert ^{Q-\beta}}\chi_{\left\{  h:\left\Vert \Theta_{\left(
y,k\right)  }\left(  x,h\right)  \right\Vert <\delta\right\}  }dhdk\\
&  \leqslant\delta^{\varepsilon}\int_{\mathbb{R}^{m}}\int_{\mathbb{R}^{m}%
}\frac{\psi\left(  h\right)  \varphi\left(  k\right)  }{\left\Vert
\Theta_{\left(  y,k\right)  }\left(  x,h\right)  \right\Vert ^{Q-\beta
+\varepsilon}}\chi_{\left\{  h:\left\Vert \Theta_{\left(  y,k\right)  }\left(
x,h\right)  \right\Vert <\delta\right\}  }dhdk\leqslant c\delta^{\varepsilon
}\phi_{\beta-\varepsilon}\left(  x,y\right)
\end{align*}
by Lemma \ref{Lemma NSW}.
\end{proof}

\begin{theorem}
\label{Thm Big J_0}Let $k$ be a kernel of type $\ell=0$ and let $\Phi_{0}$ be
a function of $\left(  \phi,\alpha\right)  $-type. If
\[
J_{0}\left(  x,y\right)  =\int_{U}k\left(  x,z\right)  \Phi_{0}\left(
z,y\right)  dz,
\]
then for $i=1,2,...,n,$%
\[
X_{i}J_{0}\left(  x,y\right)  =\int_{U}X_{i}k\left(  x,z\right)  \Phi
_{0}\left(  z,y\right)  dz
\]
and%
\[
\left\vert X_{i}J_{0}\left(  x,y\right)  \right\vert \leqslant c\phi
_{1+\alpha}\left(  x,y\right)  .
\]
Let $\omega_{\delta}\in C^{\infty}\left(  \mathbb{G}\right)  $ such that
$\omega_{\delta}\left(  u\right)  =1$ for $\left\Vert u\right\Vert >\delta$
and $\omega_{\delta}\left(  u\right)  =0$ for $\left\Vert u\right\Vert
<\delta/2$, then, for $i,j=1,2,...,n$, $X_{j}X_{i}J_{0}\left(  x,y\right)  $
exists and is continuous in the joint variables for $x\neq y$ and can be
computed as follows%
\begin{align}
&  X_{j}X_{i}J_{0}\left(  x,y\right) \nonumber\\
&  =\lim_{\delta\rightarrow0}\int_{U}\int_{\mathbb{R}^{m}}\int_{\mathbb{R}%
^{m}}Y_{j}\left(  \omega_{\delta}D_{1}\Gamma\right)  \left(  \Theta_{\left(
z,k\right)  }\left(  x,h\right)  \right)  a_{0}\left(  h\right)  b_{0}\left(
k\right)  dhdk\,\Phi_{0}\left(  z,y\right)  dz\nonumber\\
&  +\int_{U}R_{2}^{\prime}\left(  x,z\right)  \Phi_{0}\left(  z,y\right)
dz\nonumber\\
&  =\int_{U}\int_{\mathbb{R}^{m}}\int_{\mathbb{R}^{m}}\left(  Y_{j}D_{1}%
\Gamma\right)  \left(  \Theta_{\left(  z,k\right)  }\left(  x,h\right)
\right)  a_{0}\left(  h\right)  b_{0}\left(  k\right)  dhdk\,\left[  \Phi
_{0}\left(  z,y\right)  -\Phi_{0}\left(  x,y\right)  \right]  dz\nonumber\\
&  +C\left(  x\right)  \Phi_{0}\left(  x,y\right)  +\int_{U}R_{2}%
^{\prime\prime}\left(  x,z\right)  \Phi_{0}\left(  z,y\right)  dz
\label{Big J_0 1}%
\end{align}
where $D_{1}$ is a left invariant homogeneous vector field of degree $1$,
$R_{2}^{\prime}\left(  x,z\right)  $ and $R_{2}^{\prime\prime}\left(
x,z\right)  $ are suitable remainders of type $2$ and $C\in C_{X,loc}^{\alpha
}\left(  U\right)  $. Moreover, for any $U^{\prime}\Subset U$ there exists
$c>0$ such that for every $x\in U^{\prime}$, $y\in U$, $x\neq y$
\begin{equation}
\left\vert X_{j}X_{i}J_{0}\left(  x,y\right)  \right\vert \leqslant
c\frac{d\left(  x,y\right)  ^{\alpha-\varepsilon}}{\left\vert B\left(
x,d\left(  x,y\right)  \right)  \right\vert }\text{.} \label{Big J_0 2}%
\end{equation}

\end{theorem}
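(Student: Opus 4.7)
The plan is to obtain part (i) by differentiating under the integral sign, and part (ii) by regularizing $k$ with a cutoff at small $\|\Theta\|$ and splitting the resulting integral into a H\"older-tamed singular part plus a constant-times-kernel part whose limit, after a change of variables, produces the multiplicative term $C(x)\Phi_0(x,y)$. For part (i), Theorem \ref{Thm Abstract k_l} ensures that $X_ik(x,z)$ is a kernel of type $1$, so $|X_ik(x,z)|\le c\phi_1(x,z)$; combined with $|\Phi_0(z,y)|\le c\phi_\alpha(z,y)$ and Theorem \ref{Thm phi alfa} this yields absolute convergence of $\int_U X_ik(x,z)\Phi_0(z,y)\,dz$ together with the bound $c\phi_{1+\alpha}(x,y)$. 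To commute $X_i$ with the $z$-integral I would argue by dominated convergence on difference quotients along an integral curve of $X_i$: for $x$ in a compact subset of $U\setminus\{y\}$ the family $k(\cdot,z)$ is uniformly $C^1$ on a small ball around $x$, with derivative dominated by $\phi_1(\cdot,z)$ locally uniformly in $x$.

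For part (ii), since $X_jX_ik$ is only a kernel of type $2$ (with $|X_jX_ik|\le c\phi_0(x,z)$, which is not locally integrable in $z$ near $x$), I would regularize with the cutoff $\omega_\delta$ of Lemma \ref{Cutoff}, setting
\[
k^\delta(x,z)=\int\!\!\int(\omega_\delta D_1\Gamma)(\Theta_{(z,k)}(x,h))\,a_0(h)b_0(k)\,dh\,dk+R_1(x,z),
\]
so that $k^\delta\to X_ik$ pointwise for $x\ne z$, with quantitative control on compacts by Lemma \ref{NSW mod}, and $k^\delta$ is smooth in $x$. Setting $J_0^\delta(x,y)=\int_Uk^\delta(x,z)\Phi_0(z,y)\,dz$ we have $J_0^\delta\to X_iJ_0$ pointwise. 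Differentiating under the integral sign and invoking the approximation theorem as in the proof of Theorem \ref{Thm Abstract k_l} gives
\[
X_jk^\delta(x,z)=\int\!\!\int Y_j(\omega_\delta D_1\Gamma)(\Theta_{(z,k)}(x,h))\,a_0(h)b_0(k)\,dh\,dk+R_2^\delta(x,z),
\]
with $R_2^\delta$ a remainder whose coefficients are bounded uniformly in $\delta$ by those of a remainder of type $2$. I then split $\Phi_0(z,y)=[\Phi_0(z,y)-\Phi_0(x,y)]+\Phi_0(x,y)$ in the resulting $z$-integral.

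The difference-part integrand is dominated on $\{\|\Theta\|>\delta/2\}$ by $c\|\Theta_{(z,k)}(x,h)\|^{-Q}\cdot|\Phi_0(z,y)-\Phi_0(x,y)|$; the H\"older estimate of Lemma \ref{Lemma Phi holder} supplies the factor $d(x,z)^{\alpha-\varepsilon}$ that restores uniform integrability in $\delta$, and dominated convergence yields the first singular integral in (\ref{Big J_0 1}). For the constant-part factor
\[
I^\delta(x)=\int_U\int\!\!\int Y_j(\omega_\delta D_1\Gamma)(\Theta_{(z,k)}(x,h))\,a_0(h)b_0(k)\,dh\,dk\,dz,
\]
I would perform the change of variables $\eta=(z,k)\mapsto u=\Theta_\eta(x,h)$ from Proposition \ref{Prop bad theta}(iii), which converts the $(z,k)$-integration into an integral on $\mathbb{G}$ against the Jacobian $c(x,h)(1+\chi((x,h),u))$. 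An integration by parts in $u$ shifts $Y_j$ onto this $C^\alpha$ factor, removing the $\delta$-dependent singularity and leaving an integrand bounded by $c\|u\|^{1-Q}$, for which dominated convergence produces a finite limit $C(x)$; the joint $C^\alpha$-regularity of $\chi$ from Proposition \ref{Prop bad theta}(iii) gives $C\in C_{X,\mathrm{loc}}^\alpha(U)$. Together with the remainder term (controlled by dominated convergence against $\phi_\alpha$), Lemma \ref{Lemma derivata classica} then establishes the existence of $X_jX_iJ_0$ and the representation (\ref{Big J_0 1}).

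Finally, for (\ref{Big J_0 2}) I would split the singular integral at $d(x,z)=d(x,y)/2$. On the inner region $d(z,y)\sim d(x,y)$ and Lemma \ref{Lemma Phi holder} yields $|\Phi_0(z,y)-\Phi_0(x,y)|\le c_\varepsilon d(x,z)^{\alpha-\varepsilon}d(x,y)^\varepsilon/|B(x,d(x,y))|$; multiplying by $\phi_0(x,z)$ and integrating via Corollary \ref{coroll Psi beta} gives the desired $c_\varepsilon d(x,y)^{\alpha-\varepsilon}/|B(x,d(x,y))|$. On the outer region $d(x,z)\gtrsim d(x,y)$, I would use $|\Phi_0(z,y)-\Phi_0(x,y)|\le c(\phi_\alpha(z,y)+\phi_\alpha(x,y))$ together with Theorem \ref{Thm phi alfa} to bound $\int\phi_0(x,z)\phi_\alpha(z,y)\,dz$. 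The multiplicative term $C(x)\Phi_0(x,y)$ is controlled by Lemma \ref{Lemma nonintegrale}, and the remainder integral by Theorem \ref{Thm phi alfa}. The main obstacle is the identification and $C^\alpha$-regularity of $C(x)$, which depends crucially on the fine change-of-variables structure of Proposition \ref{Prop bad theta}(iii) and on the precise way the integration by parts removes the $\omega_\delta$-singularity.
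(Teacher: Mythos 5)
Your part (i), the splitting of $\Phi_{0}(z,y)$ into $[\Phi_{0}(z,y)-\Phi_{0}(x,y)]+\Phi_{0}(x,y)$ after regularizing with $\omega_{\delta}$, and the final estimate (\ref{Big J_0 2}) all follow the paper's route and are essentially sound (modulo the remark that Theorem \ref{Thm phi alfa} needs strictly positive exponents, so on the outer region you must first trade $\phi_{0}(x,z)$ for $d(x,y)^{-\varepsilon}\phi_{\varepsilon}(x,z)$, exactly as the paper does in the terms $D$ and $E$). The genuine gap is in the one step you yourself flag as the crux, the term producing $C(x)$, and your sketch of it would not work as written. First, Proposition \ref{Prop bad theta} (iii) only gives that $\eta\mapsto u=\Theta_{\eta}(\xi)$ is a diffeomorphism \emph{in a neighborhood of the origin}; you cannot convert the whole $(z,k)$-integral over $U\times I$ into a single integral on $\mathbb{G}$. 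One must first split at a fixed scale $r$ into $\{\|\Theta_{(z,k)}(x,h)\|<r\}$ and $\{\|\Theta_{(z,k)}(x,h)\|\geqslant r\}$; the far piece ($I_{2}$ in the paper) is independent of $\delta$ but its H\"older continuity in $x$ is not automatic: the paper proves it by a separate argument estimating the measure of the symmetric difference of the sets $\{\|\Theta_{\eta}(x_{1},h)\|>r\}$ and $\{\|\Theta_{\eta}(x_{2},h)\|>r\}$, a step entirely missing from your plan.

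Second, your integration by parts "shifting $Y_{j}$ onto the $C^{\alpha}$ factor" is not legitimate: the Jacobian factor $1+\chi(\xi,u)$ is only H\"older continuous in $u$, so it cannot absorb a derivative, and the claimed bound $c\|u\|^{1-Q}$ for the resulting integrand does not follow. The paper avoids this by splitting the near-region integral into the part carrying $\chi$, which needs no integration by parts at all because $|\chi(\xi,u)|\leqslant c\|u\|^{\alpha}$ already tames $|Y_{j}(\omega_{\delta}D_{1}\Gamma)(u)|\leqslant c\|u\|^{-Q}$ to an integrable singularity, and the part without $\chi$, where $Y_{j}$ is moved by parts onto the function $\widetilde{b}_{0}(\xi,\cdot)$ (which is $C^{1,\alpha}$ in $u$ by Proposition \ref{Prop bad theta} (iii)), at the price of a boundary integral on $\{\|u\|=r\}$ that must also be shown to depend on $x$ in a $C^{\alpha}$ way. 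Without the radius-$r$ localization, the correct placement of the integration by parts, the boundary term, and the far-region H\"older estimate, the identification of $C(x)$ and the claim $C\in C_{X,loc}^{\alpha}(U)$ remain unproved, and these are precisely what the rest of the theorem (and later Theorem \ref{Thm schauder fundam}) hinges on.
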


\begin{proof}
Since $X_{i}k\equiv k_{1}$ is a kernel of type $1$ we have%
\[
\left\vert X_{i}k\left(  x,y\right)  \right\vert \leqslant c\phi_{1}\left(
x,y\right)  \leqslant c\frac{d\left(  x,y\right)  }{\left\vert B\left(
x,d\left(  x,y\right)  \right)  \right\vert },
\]
so that we can differentiate under the integral sign. Therefore%
\[
X_{i}J_{0}\left(  x,y\right)  =\int_{U}k_{1}\left(  x,z\right)  \Phi
_{0}\left(  z,y\right)  dz
\]
with%
\[
\left\vert X_{i}J_{0}\left(  x,y\right)  \right\vert \leqslant c\phi
_{1+\alpha}\left(  x,y\right)  .
\]
In order to compute $X_{j}X_{i}J_{0}\left(  x,y\right)  $ we rewrite%
\[
k_{1}\left(  x,z\right)  =\int_{\mathbb{R}^{m}}\int_{\mathbb{R}^{m}}%
D_{1}\Gamma\left(  \Theta_{\left(  z,k\right)  }\left(  x,h\right)  \right)
a_{0}\left(  h\right)  b_{0}\left(  k\right)  dhdk+R_{1}\left(  x,z\right)
\]
where $R_{1}\left(  x,z\right)  $ is a remainder of type $1$. Then%
\begin{align*}
X_{i}J_{0}\left(  x,y\right)   &  =\int_{U}\int_{\mathbb{R}^{m}}%
\int_{\mathbb{R}^{m}}D_{1}\Gamma\left(  \Theta_{\left(  z,k\right)  }\left(
x,h\right)  \right)  a_{0}\left(  h\right)  b_{0}\left(  k\right)
dhdk\,\Phi_{0}\left(  z,y\right)  dz\\
&  +\int_{U}R_{1}\left(  x,z\right)  \Phi_{0}\left(  z,y\right)  dz\\
&  \equiv B_{1}\left(  x,y\right)  +B_{2}\left(  x,y\right)  .
\end{align*}
As to $B_{2}$ we can simply write%
\begin{align*}
X_{j}B_{2}\left(  x,y\right)   &  =\int_{U}X_{j}R_{1}\left(  x,z\right)
\Phi_{0}\left(  z,y\right)  dz\\
&  \equiv\int_{U}R_{2}\left(  x,z\right)  \Phi_{0}\left(  z,y\right)  dz
\end{align*}
where $R_{2}\left(  x,z\right)  $ is a remainder of type $2$.

To handle $B_{1}\left(  x,y\right)  $ we consider%
\[
B_{1}^{\delta}\left(  x,y\right)  =\int_{U}\int_{\mathbb{R}^{m}}%
\int_{\mathbb{R}^{m}}\left(  \omega_{\delta}D_{1}\Gamma\right)  \left(
\Theta_{\left(  z,k\right)  }\left(  x,h\right)  \right)  a_{0}\left(
h\right)  b_{0}\left(  k\right)  dhdk\,\Phi_{0}\left(  z,y\right)  dz.
\]
Due to the presence of this cutoff function, we can compute the derivative%
\begin{align*}
X_{j}B_{1}^{\delta}\left(  x,y\right)   &  =\int_{U}\int_{\mathbb{R}^{m}}%
\int_{\mathbb{R}^{m}}\widetilde{X}_{j}\left[  \left(  \omega_{\delta}%
D_{1}\Gamma\right)  \left(  \Theta_{\left(  z,k\right)  }\left(  x,h\right)
\right)  a_{0}\left(  h\right)  \right]  b_{0}\left(  k\right)  dhdk\,\Phi
_{0}\left(  z,y\right)  dz\\
&  =\int_{U}\int_{\mathbb{R}^{m}}\int_{\mathbb{R}^{m}}Y_{j}\left(
\omega_{\delta}D_{1}\Gamma\right)  \left(  \Theta_{\left(  z,k\right)
}\left(  x,h\right)  \right)  a_{0}\left(  h\right)  b_{0}\left(  k\right)
dhdk\,\Phi_{0}\left(  z,y\right)  dz\\
&  +\int_{U}\int_{\mathbb{R}^{m}}\int_{\mathbb{R}^{m}}R_{j}^{\left(
y,k\right)  }\left(  \omega_{\delta}D_{1}\Gamma\right)  \left(  \Theta
_{\left(  z,k\right)  }\left(  x,h\right)  \right)  a_{0}\left(  h\right)
b_{0}\left(  k\right)  dhdk\,\Phi_{0}\left(  z,y\right)  dz\\
&  +\int_{U}\int_{\mathbb{R}^{m}}\int_{\mathbb{R}^{m}}\left(  \omega_{\delta
}D_{1}\Gamma\right)  \left(  \Theta_{\left(  z,k\right)  }\left(  x,h\right)
\right)  \widetilde{X}_{j}a_{0}\left(  h\right)  \,b_{0}\left(  k\right)
dhdk\,\Phi_{0}\left(  z,y\right)  dz\\
&  =B_{1,1}^{\delta}\left(  x,y\right)  +B_{1,2}^{\delta}\left(  x,y\right)
+B_{1,3}^{\delta}\left(  x,y\right)  .
\end{align*}
An argument similar to one already used shows that for any fixed $\delta$ the
function $X_{j}B_{1}^{\delta}\left(  x,y\right)  $ is continuous in the joint
variables for any $x,y\in U$, $x\neq y$.

First of all we observe that%
\[
\lim_{\delta\rightarrow0}B_{1,2}^{\delta}\left(  x,y\right)  =\int_{U}%
\int_{\mathbb{R}^{m}}\int_{\mathbb{R}^{m}}\left(  R_{j}^{\left(  y,k\right)
}D_{1}\Gamma\right)  \left(  \Theta_{\left(  z,k\right)  }\left(  x,h\right)
\right)  a_{0}\left(  h\right)  b_{0}\left(  k\right)  dhdk\,\Phi_{0}\left(
z,y\right)  dz
\]
since%
\[
\left(  R_{j}^{\left(  y,k\right)  }\omega_{\delta}\right)  \left(  u\right)
\neq0\text{ for }\frac{\delta}{2}<\left\Vert u\right\Vert <\delta\text{,}%
\]
hence%
\begin{align*}
&  \left\vert \int_{U}\int_{\mathbb{R}^{m}}\int_{\mathbb{R}^{m}}\left(
\left(  R_{j}^{\left(  y,k\right)  }\omega_{\delta}\right)  D_{1}%
\Gamma\right)  \left(  \Theta_{\left(  z,k\right)  }\left(  x,h\right)
\right)  a_{0}\left(  h\right)  b_{0}\left(  k\right)  dhdk\,\Phi_{0}\left(
z,y\right)  dz\right\vert \\
&  \leqslant\int_{\mathbb{R}^{m}}\int_{\left\Vert \Theta_{\left(  z,k\right)
}\left(  x,h\right)  \right\Vert <\delta}\frac{c}{\left\Vert \Theta_{\left(
z,k\right)  }\left(  x,h\right)  \right\Vert ^{Q-\alpha}}\left\vert
a_{0}\left(  h\right)  b_{0}\left(  k\right)  \Phi_{0}\left(  z,y\right)
\right\vert dk\,dzdh\\
&  \leqslant\int_{\mathbb{R}^{m}}\int_{\left\Vert \Theta_{\left(  z,k\right)
}\left(  x,h\right)  \right\Vert <\delta}\frac{c}{\left\Vert \Theta_{\left(
z,k\right)  }\left(  x,h\right)  \right\Vert ^{Q-\alpha}}dk\,dz\left\vert
a_{0}\left(  h\right)  \right\vert dh\leqslant c\delta^{\alpha}\rightarrow
0\text{ as }\delta\rightarrow0
\end{align*}
for some constant $c$ depending on $d\left(  x,y\right)  $.

Also%
\[
\lim_{\delta\rightarrow0}B_{1,3}^{\delta}\left(  x,y\right)  =\int_{U}%
\int_{\mathbb{R}^{m}}\int_{\mathbb{R}^{m}}\left(  D_{1}\Gamma\right)  \left(
\Theta_{\left(  z,k\right)  }\left(  x,h\right)  \right)  \widetilde{X}%
_{j}a_{0}\left(  h\right)  \,b_{0}\left(  k\right)  dhdk\,\Phi_{0}\left(
z,y\right)  dz
\]
so that%
\[
\lim_{\delta\rightarrow0}X_{j}B_{1}^{\delta}\left(  x,y\right)  =\lim
_{\delta\rightarrow0}B_{1,1}^{\delta}\left(  x,y\right)  +\int_{U}R_{3}\left(
x,z\right)  \Phi_{0}\left(  z,y\right)  dz
\]
where $R_{3}\left(  x,z\right)  $ is still another remainder of type $2$.

Let us now consider
\[
B_{1,1}^{\delta}\left(  x,y\right)  =\int_{U}\int_{\mathbb{R}^{m}}%
\int_{\mathbb{R}^{m}}Y_{j}\left(  \omega_{\delta}D_{1}\Gamma\right)  \left(
\Theta_{\left(  z,k\right)  }\left(  x,h\right)  \right)  a_{0}\left(
h\right)  b_{0}\left(  k\right)  dhdk\,\Phi_{0}\left(  z,y\right)  dz.
\]
We write%
\begin{align*}
B_{1,1}^{\delta}\left(  x,y\right)   &  =\int_{U}\int_{\mathbb{R}^{m}}%
\int_{\mathbb{R}^{m}}Y_{j}\left(  \omega_{\delta}D_{1}\Gamma\right)  \left(
\Theta_{\left(  z,k\right)  }\left(  x,h\right)  \right)  a_{0}\left(
h\right)  b_{0}\left(  k\right)  dhdk\,\left[  \Phi_{0}\left(  z,y\right)
-\Phi_{0}\left(  x,y\right)  \right]  dz\\
&  +\Phi_{0}\left(  x,y\right)  \int_{U}\int_{\mathbb{R}^{m}}\int
_{\mathbb{R}^{m}}Y_{j}\left(  \omega_{\delta}D_{1}\Gamma\right)  \left(
\Theta_{\left(  z,k\right)  }\left(  x,h\right)  \right)  a_{0}\left(
h\right)  b_{0}\left(  k\right)  dhdk\,dz\\
&  \equiv B_{1,1,1}^{\delta}\left(  x,y\right)  +B_{1,1,2}^{\delta}\left(
x,y\right)  .
\end{align*}
We have%
\begin{align*}
B_{1,1,1}^{\delta}\left(  x,y\right)   &  =\int_{U}\int_{\mathbb{R}^{m}}%
\int_{\mathbb{R}^{m}}\left(  Y_{j}\omega_{\delta}\cdot D_{1}\Gamma\right)
\left(  \Theta_{\left(  z,k\right)  }\left(  x,h\right)  \right)  a_{0}\left(
h\right)  b_{0}\left(  k\right)  dhdk\,\left[  \Phi_{0}\left(  z,y\right)
-\Phi_{0}\left(  x,y\right)  \right]  dz\\
&  +\int_{U}\int_{\mathbb{R}^{m}}\int_{\mathbb{R}^{m}}\left(  \omega_{\delta
}\,Y_{j}D_{1}\Gamma\right)  \left(  \Theta_{\left(  z,k\right)  }\left(
x,h\right)  \right)  a_{0}\left(  h\right)  b_{0}\left(  k\right)
dhdk\,\left[  \Phi_{0}\left(  z,y\right)  -\Phi_{0}\left(  x,y\right)
\right]  dz.
\end{align*}
Since $Y_{j}\omega_{\delta}\left(  \Theta_{\left(  z,k\right)  }\left(
x,h\right)  \right)  $ is supported in $\left\{  \frac{\delta}{2}<\left\Vert
\Theta_{\left(  z,k\right)  }\left(  x,h\right)  \right\Vert <\delta\right\}
$ and bounded by $\delta^{-1}$, by Lemma \ref{NSW mod} , Corollary
\ref{coroll Phi holder} and Lemma \ref{Lemma nonintegrale} the first term of
$B_{1,1,1}^{\delta}\left(  x,y\right)  $ is bounded by%
\begin{align*}
&  c\int_{U}\int_{\mathbb{R}^{m}}\int_{\mathbb{R}^{m}}\chi_{\left\{
\delta/2\leqslant\left\Vert \Theta_{\left(  z,k\right)  }\left(  x,h\right)
\right\Vert \leqslant\delta\right\}  }\left\Vert \Theta_{\left(  z,k\right)
}\left(  x,h\right)  \right\Vert ^{-Q}a_{0}\left(  h\right)  b_{0}\left(
k\right)  dhdk\,\left\vert \Phi_{0}\left(  z,y\right)  -\Phi_{0}\left(
x,y\right)  \right\vert dz\\
&  \leqslant c\int_{U}\delta^{\varepsilon}\phi_{-\varepsilon}\left(
x,z\right)  \left\vert \Phi_{0}\left(  z,y\right)  -\Phi_{0}\left(
x,y\right)  \right\vert dz\\
&  \leqslant c\delta^{\varepsilon}\int_{U}\frac{d\left(  x,z\right)
^{\alpha-2\varepsilon}}{\left\vert B\left(  x,d\left(  x,z\right)  \right)
\right\vert }\left(  \frac{d\left(  z,y\right)  ^{\varepsilon}}{\left\vert
B\left(  z,d\left(  z,y\right)  \right)  \right\vert }+\frac{d\left(
x,y\right)  ^{\varepsilon}}{\left\vert B\left(  x,d\left(  x,y\right)
\right)  \right\vert }\right)  dz.
\end{align*}
Since this last integral converges the first term in $B_{1,1,1}^{\delta
}\left(  x,y\right)  $ vanishes uniformly in $x$ (as long as $x$ stays away
from $y$) as $\delta\rightarrow0$. We will show now that the second term
converges uniformly to%
\[
\int_{U}\int_{\mathbb{R}^{m}}\int_{\mathbb{R}^{m}}\left(  Y_{j}D_{1}%
\Gamma\right)  \left(  \Theta_{\left(  z,k\right)  }\left(  x,h\right)
\right)  a_{0}\left(  h\right)  b_{0}\left(  k\right)  dhdk\,\left[  \Phi
_{0}\left(  z,y\right)  -\Phi_{0}\left(  x,y\right)  \right]  dz
\]
as $\delta\rightarrow0$. Again, by Lemma \ref{NSW mod}, Lemma
\ref{Lemma Phi holder} and Lemma \ref{Lemma nonintegrale} we have
\begin{align*}
&  \int_{U}\int_{\mathbb{R}^{m}}\int_{\mathbb{R}^{m}}\left\vert \left(
\left(  1-\omega_{\delta}\right)  \,Y_{j}D_{1}\Gamma\right)  \left(
\Theta_{\left(  z,k\right)  }\left(  x,h\right)  \right)  \right\vert
a_{0}\left(  h\right)  b_{0}\left(  k\right)  dhdk\,\left\vert \Phi_{0}\left(
z,y\right)  -\Phi_{0}\left(  x,y\right)  \right\vert dz\\
&  \leqslant c\int_{U}\int_{\mathbb{R}^{m}}\int_{\mathbb{R}^{m}}\chi_{\left\{
\left\Vert \Theta_{\left(  z,k\right)  }\left(  x,h\right)  \right\Vert
\leqslant\delta\right\}  }\left\Vert \Theta_{\left(  z,k\right)  }\left(
x,h\right)  \right\Vert ^{-Q}\left\vert a_{0}\left(  h\right)  b_{0}\left(
k\right)  \right\vert dhdk\,\left\vert \Phi_{0}\left(  z,y\right)  -\Phi
_{0}\left(  x,y\right)  \right\vert dz\\
&  \leqslant c\delta^{\varepsilon}\int_{U}\phi_{-\varepsilon}\left(
x,z\right)  \left\vert \Phi_{0}\left(  z,y\right)  -\Phi_{0}\left(
x,y\right)  \right\vert dz
\end{align*}
and from this bound we conclude as above that this term converges to $0$ as
$\delta\rightarrow0$, uniformly as soon as $d\left(  x,y\right)  \geqslant c$.

To handle $B_{1,1,2}^{\delta}\left(  x,y\right)  $, let us first fix some
notation. Let $U^{\prime}\Subset U$, $I\subset\mathbb{R}^{m}$ and $r>0$ such
that $I\supset\operatorname{sprt}a_{0}\cup\operatorname{sprt}b_{0}$ and:%
\[
\left(  x,h\right)  \in U^{\prime}\times\operatorname{sprt}a_{0}\text{ and
}\left\Vert \Theta_{\left(  z,k\right)  }\left(  x,h\right)  \right\Vert
<r\Rightarrow\left(  z,k\right)  \in U\times I\equiv\Sigma\text{.}%
\]
Then for any $x\in U^{\prime}$ we have:%
\begin{align*}
&  B_{1,1,2}^{\delta}\left(  x,y\right)  =\Phi_{0}\left(  x,y\right)
\int_{\mathbb{R}^{m}}a_{0}\left(  h\right)  \left(  \int_{\Sigma}Y_{j}\left(
\omega_{\delta}D_{1}\Gamma\right)  \left(  \Theta_{\left(  z,k\right)
}\left(  x,h\right)  \right)  b_{0}\left(  k\right)  dkdz\right)  dh\\
&  =\Phi_{0}\left(  x,y\right)  \int_{\mathbb{R}^{m}}a_{0}\left(  h\right)
\left(  \int_{\left\Vert \Theta_{\left(  z,k\right)  }\left(  x,h\right)
\right\Vert <r}\left(  ...\right)  dkdz+\int_{\Sigma,\left\Vert \Theta
_{\left(  z,k\right)  }\left(  x,h\right)  \right\Vert \geqslant r}\left(
...\right)  dkdz\right)  dh\\
&  =\Phi_{0}\left(  x,y\right)  \left[  I_{1}^{\delta}\left(  x\right)
+I_{2}^{\delta}\left(  x\right)  \right]  .
\end{align*}
Next, making the change of variables $\left(  z,k\right)  \mapsto
u=\Theta_{\left(  z,k\right)  }\left(  x,h\right)  $ and letting
$\widetilde{b}_{0}\left(  \xi,u\right)  =b_{0}\left(  \Theta_{\cdot}\left(
x,h\right)  ^{-1}\left(  u\right)  \right)  ,$
\begin{align*}
I_{1}^{\delta}\left(  x\right)   &  =\int_{\mathbb{R}^{m}}c\left(  \xi\right)
a_{0}\left(  h\right)  \left(  \int_{\left\Vert u\right\Vert <r}\left[
Y_{j}\left(  \omega_{\delta}D_{1}\Gamma\right)  \right]  \left(  u\right)
\left(  1+\chi\left(  \xi,u\right)  \right)  \widetilde{b}_{0}\left(
\xi,u\right)  du\right)  dh\\
&  =\int_{\mathbb{R}^{m}}c\left(  \xi\right)  a_{0}\left(  h\right)  \left(
\int_{\left\Vert u\right\Vert <r}\left[  Y_{j}\left(  \omega_{\delta}%
D_{1}\Gamma\right)  \right]  \left(  u\right)  \chi\left(  \xi,u\right)
\widetilde{b}_{0}\left(  \xi,u\right)  du\right)  dh\\
&  +\int_{\mathbb{R}^{m}}c\left(  \xi\right)  a_{0}\left(  h\right)  \left(
\int_{\left\Vert u\right\Vert <r}\left[  Y_{j}\left(  \omega_{\delta}%
D_{1}\Gamma\right)  \right]  \left(  u\right)  \widetilde{b}_{0}\left(
\xi,u\right)  du\right)  dh\\
&  \equiv\beta_{1}^{\delta}\left(  x\right)  +\beta_{2}^{\delta}\left(
x\right)  .
\end{align*}
By Proposition \ref{Prop bad theta} we know that $\left\vert \chi\left(
\xi,u\right)  \right\vert \leqslant c\left\Vert u\right\Vert ^{\alpha}$, hence
for $\delta\rightarrow0$ (by the same argument used to compute the limit of
$B_{1,2}^{\delta}$)%
\[
\beta_{1}^{\delta}\left(  x\right)  \rightarrow\int_{\mathbb{R}^{m}}c\left(
\xi\right)  a_{0}\left(  h\right)  \left(  \int_{\left\Vert u\right\Vert
<r}\left(  Y_{j}D_{1}\Gamma\right)  \left(  u\right)  \chi\left(
\xi,u\right)  \widetilde{b}_{0}\left(  \xi,u\right)  du\right)  dh\equiv
\beta_{1}\left(  x\right)  .
\]
Note $\beta_{1}\in C^{\alpha}\left(  U^{\prime}\right)  $. Namely, the
functions $c\left(  \cdot\right)  ,\chi\left(  \cdot,u\right)  $ are
H\"{o}lder continuous by \ref{Prop bad theta} (iii); since $\Theta_{\cdot
}\left(  x,h\right)  ^{-1}\left(  u\right)  $ is $C^{1,\alpha}$ also
$\widetilde{b}_{0}\left(  \cdot,u\right)  $ is H\"{o}lder continuous.

To handle $\beta_{2}^{\delta}\left(  x\right)  $ we integrate by parts;
writing $Y_{j}=\sum_{k=1}^{N}a_{jk}\left(  u\right)  \partial_{u_{k}}$ and
denoting by $\nu=\left(  \nu_{1},\nu_{2},...,\nu_{N}\right)  $ the unit outer
normal we get%
\begin{align*}
\beta_{2}^{\delta}\left(  x\right)   &  =-\int_{\mathbb{R}^{m}}c\left(
\xi\right)  a_{0}\left(  h\right)  \left(  \int_{\left\Vert u\right\Vert
<r}\left(  \omega_{\delta}D_{1}\Gamma\right)  \left(  u\right)  \left(
Y_{j}\widetilde{b}_{0}\left(  \xi,\cdot\right)  \right)  \left(  u\right)
du\right)  dh\\
&  +\int_{\mathbb{R}^{m}}c\left(  \xi\right)  a_{0}\left(  h\right)  \left(
\int_{\left\Vert u\right\Vert =r}\left(  \omega_{\delta}D_{1}\Gamma\right)
\left(  u\right)  \widetilde{b}_{0}\left(  \xi,u\right)  \sum_{k}a_{jk}\left(
u\right)  \nu_{k}d\sigma\left(  u\right)  \right)  dh\\
&  \rightarrow-\int_{\mathbb{R}^{m}}c\left(  \xi\right)  a_{0}\left(
h\right)  \left(  \int_{\left\Vert u\right\Vert <r}\left(  D_{1}\Gamma\right)
\left(  u\right)  \left(  Y_{j}\widetilde{b}_{0}\left(  \xi,\cdot\right)
\right)  \left(  u\right)  du\right)  dh\\
&  +\int_{\mathbb{R}^{m}}c\left(  \xi\right)  a_{0}\left(  h\right)  \left(
\int_{\left\Vert u\right\Vert =r}\left(  D_{1}\Gamma\right)  \left(  u\right)
\widetilde{b}_{0}\left(  \xi,u\right)  \sum_{k=1}^{N}a_{jk}\left(  u\right)
\nu_{k}d\sigma\left(  u\right)  \right)  dh\\
&  \equiv\beta_{2}\left(  x\right)
\end{align*}
which is again a $C^{\alpha}\left(  U^{\prime}\right)  $ function by Proposition
\ref{Prop bad theta} (iii). Hence for any $x\in U^{\prime},$%
\[
I_{1}^{\delta}\left(  x\right)  \rightarrow\beta_{1}\left(  x\right)
+\beta_{2}\left(  x\right)  ,
\]
which is a $C^{\alpha}\left(  U^{\prime}\right)  $ function.

As to $I_{2}^{\delta}\left(  x\right)  $, for any $\delta<r$ we have (writing
$\eta=\left(  z,k\right)  $)%
\[
I_{2}^{\delta}\left(  x\right)  =I_{2}\left(  x\right)  \equiv\int
_{\mathbb{R}^{m}}a_{0}\left(  h\right)  \left(  \int_{\Sigma,\left\Vert
\Theta_{\eta}\left(  x,h\right)  \right\Vert >r}\left(  Y_{j}D_{1}%
\Gamma\right)  \left(  \Theta_{\eta}\left(  x,h\right)  \right)  b_{0}\left(
k\right)  d\eta\right)  dh.
\]
Let us show that $I_{2}$ is H\"{o}lder continuous. Actually, we will show
that
\[
\left\vert I_{2}\left(  x_{1}\right)  -I_{2}\left(  x_{2}\right)  \right\vert
\leqslant c\left\vert x_{1}-x_{2}\right\vert
\]
for small $\left\vert x_{1}-x_{2}\right\vert $. Since $I_{2}$ is clearly
bounded, this is enough to conclude H\"{o}lder continuity in $U^{\prime}$.%
\begin{align*}
&  I_{2}\left(  x_{1}\right)  -I_{2}\left(  x_{2}\right) \\
&  =\int_{\mathbb{R}^{m}}a_{0}\left(  h\right)  \left(  \int_{\Sigma
,\left\Vert \Theta_{\eta}\left(  x_{1},h\right)  \right\Vert >r}\left[
Y_{j}D_{1}\Gamma\left(  \Theta_{\eta}\left(  x_{1},h\right)  \right)
-Y_{j}D_{1}\Gamma\left(  \Theta_{\eta}\left(  x_{2},h\right)  \right)
\right]  b_{0}\left(  k\right)  d\eta\right)  dh\\
&  +\int_{\mathbb{R}^{m}}a_{0}\left(  h\right)  \left(  \int_{\Sigma
,\left\Vert \Theta_{\eta}\left(  x_{1},h\right)  \right\Vert >r}Y_{j}%
D_{1}\Gamma\left(  \Theta_{\eta}\left(  x_{2},h\right)  \right)  b_{0}\left(
k\right)  d\eta\right)  dh\\
&  -\int_{\mathbb{R}^{m}}a_{0}\left(  h\right)  \left(  \int_{\Sigma
,\left\Vert \Theta_{\eta}\left(  x_{2},h\right)  \right\Vert >r}Y_{j}%
D_{1}\Gamma\left(  \Theta_{\eta}\left(  x_{2},h\right)  \right)  b_{0}\left(
k\right)  d\eta\right)  dh\\
&  \equiv A+B-C.
\end{align*}
Note that for some small $c_{1}\left(  r\right)  ,c_{2}\left(  r\right)  >0$,
if $\left\vert x_{1}-x_{2}\right\vert <c_{1}$ and $\left\Vert \Theta_{\eta
}\left(  x_{1},h\right)  \right\Vert >r$ then also $\left\Vert \Theta_{\eta
}\left(  x_{2},h\right)  \right\Vert >c_{2}r$. Then%
\[
\left\vert A\right\vert \leqslant c\left(  r\right)  \left\vert x_{1}%
-x_{2}\right\vert \int_{\mathbb{R}^{m}}a_{0}\left(  h\right)  \int
_{\Sigma,\left\Vert \Theta_{\eta}\left(  x_{1},h\right)  \right\Vert >r}%
b_{0}\left(  k\right)  d\eta dh\leqslant c\left(  r\right)  \left\vert
x_{1}-x_{2}\right\vert .
\]
Moreover, letting%
\[
\Lambda=\left\{  \eta:\left\Vert \Theta_{\eta}\left(  x_{2},h\right)
\right\Vert >r,\left\Vert \Theta_{\eta}\left(  x_{1},h\right)  \right\Vert
\leqslant r\ \right\}  \cup\left\{  \eta:\left\Vert \Theta_{\eta}\left(
x_{1},h\right)  \right\Vert >r,\left\Vert \Theta_{\eta}\left(  x_{2},h\right)
\right\Vert \leqslant r\right\}  =\Lambda_{1}\cup\Lambda_{2}%
\]
we have:%
\[
\left\vert B-C\right\vert \leqslant\int_{\mathbb{R}^{m}}a_{0}\left(  h\right)
\left(  \int_{\Sigma\cap\Lambda}\left\vert Y_{j}D_{1}\Gamma\left(
\Theta_{\eta}\left(  x_{2},h\right)  \right)  \right\vert b_{0}\left(
\eta\right)  d\eta\right)  dh
\]
In $\Lambda_{1}$ we have%
\begin{align*}
r  &  <\left\Vert \Theta_{\eta}\left(  x_{2},h\right)  \right\Vert
\leqslant\left\Vert \Theta_{\eta}\left(  x_{1},h\right)  \right\Vert
+\left\Vert \Theta_{\eta}\left(  x_{2},h\right)  -\Theta_{\eta}\left(
x_{1},h\right)  \right\Vert \\
&  \leqslant r+\left\Vert \Theta_{\eta}\left(  x_{2},h\right)  -\Theta_{\eta
}\left(  x_{1},h\right)  \right\Vert \leqslant r+c\left\vert x_{1}%
-x_{2}\right\vert
\end{align*}
hence%
\begin{align*}
&  \int_{\mathbb{R}^{m}}a_{0}\left(  h\right)  \left(  \int_{\Sigma\cap
\Lambda_{1}}\left\vert Y_{j}D_{1}\Gamma\left(  \Theta_{\eta}\left(
x_{2},h\right)  \right)  \right\vert b_{0}\left(  \eta\right)  d\eta\right)
dh\\
&  \leqslant c\left(  r\right)  \int_{\mathbb{R}^{m}}a_{0}\left(  h\right)
\left(  \int_{r<\left\Vert \Theta_{\eta}\left(  x_{2},h\right)  \right\Vert
<r+c\left\vert x_{1}-x_{2}\right\vert }b_{0}\left(  \eta\right)  d\eta\right)
dh\\
&  \leqslant c\int_{\mathbb{R}^{m}}a_{0}\left(  h\right)  \left(
\int_{r<\left\Vert u\right\Vert <r+c\left\vert x_{1}-x_{2}\right\vert
}du\right)  dh\\
&  \leqslant c\left[  \left(  r+c\left\vert x_{1}-x_{2}\right\vert \right)
^{Q}-r^{Q}\right]  \leqslant c\left(  r\right)  \left\vert x_{1}%
-x_{2}\right\vert .
\end{align*}
Since for $\eta\in\Lambda_{2}$ we have $\left\Vert \Theta_{\eta}\left(
x_{2},h\right)  \right\Vert >c_{2}r$ (by the above remark), we can still write%
\begin{align*}
&  \int_{\mathbb{R}^{m}}a_{0}\left(  h\right)  \left(  \int_{\Sigma\cap
\Lambda_{2}}\left\vert Y_{j}D_{1}\Gamma\left(  \Theta_{\eta}\left(
x_{2},h\right)  \right)  \right\vert b_{0}\left(  \eta\right)  d\eta\right)
dh\\
&  \leqslant c\int_{\mathbb{R}^{m}}a_{0}\left(  h\right)  \left(
\int_{r<\left\Vert \Theta_{\eta}\left(  x_{1},h\right)  \right\Vert
<r+c\left\vert x_{1}-x_{2}\right\vert }b_{0}\left(  \eta\right)  d\eta\right)
dh\\
&  \leqslant c\left\vert x_{1}-x_{2}\right\vert .
\end{align*}
We can conclude that%
\[
B_{1,1,2}^{\delta}\left(  x,y\right)  \rightarrow C\left(  x\right)  \Phi
_{0}\left(  x,y\right)  \text{ as }\delta\rightarrow0,
\]
where $C(x)$ is a suitable $C_{X,loc}^{\alpha}\left(  U\right)  $ function.
This completes the proof of (\ref{Big J_0 1}). In particular, for any $x\in
U^{\prime},y\in U,$%
\begin{align*}
\left\vert X_{k}X_{i}J_{0}\left(  x,y\right)  \right\vert  &  \leqslant
c_{1}\int_{U}\phi_{0}\left(  x,z\right)  \left\vert \Phi_{0}\left(
z,y\right)  -\Phi_{0}\left(  x,y\right)  \right\vert dz\\
&  +c_{2}\phi_{\alpha}\left(  x,y\right)  +c_{3}\int_{U}\phi_{\alpha}\left(
x,z\right)  \phi_{\alpha}\left(  z,y\right)  dz.
\end{align*}

Let now%
\begin{align*}
&  \int_{U}\phi_{0}\left(  x,z\right)  \left\vert \Phi_{0}\left(  z,y\right)
-\Phi_{0}\left(  x,y\right)  \right\vert dz\\
&  \leqslant c\int_{\left\{  d\left(  x,y\right)  \geqslant3d\left(
x,z\right)  \right\}  }\phi_{0}\left(  x,z\right)  d^{\alpha-\varepsilon
}\left(  x,z\right)  \phi_{\varepsilon}\left(  x,y\right)  dz\\
&  +c\int_{\left\{  d\left(  x,y\right)  <3d\left(  x,z\right)  \right\}
}\phi_{0}\left(  x,z\right)  \left(  \phi_{\alpha}\left(  z,y\right)
+\phi_{\alpha}\left(  x,y\right)  \right)  dz\\
&  =D+E.
\end{align*}
Then%
\begin{align*}
D  &  \leqslant c\phi_{\varepsilon}\left(  x,y\right)  \int_{\left\{  d\left(
x,y\right)  \geqslant3d\left(  x,z\right)  \right\}  }\phi_{0}\left(
x,z\right)  d^{\alpha-\varepsilon}\left(  x,z\right)  dz\\
&  \leqslant c\phi_{\varepsilon}\left(  x,y\right)  \int_{\left\{  d\left(
x,y\right)  \geqslant3d\left(  x,z\right)  \right\}  }\phi_{\alpha
-\varepsilon}\left(  x,z\right)  dz\\
&  \leqslant c\phi_{\varepsilon}\left(  x,y\right)  d\left(  x,y\right)
^{\alpha-\varepsilon}\leqslant c\frac{d\left(  x,y\right)  ^{\alpha}%
}{\left\vert B\left(  x,d\left(  x,y\right)  \right)  \right\vert }.
\end{align*}
and%
\begin{align*}
E  &  \leqslant\frac{c}{d\left(  x,y\right)  ^{\varepsilon}}\int_{U}\left(
\phi_{\alpha}\left(  z,y\right)  +\phi_{\alpha}\left(  x,y\right)  \right)
\phi_{\varepsilon}\left(  x,z\right)  dz\\
&  \leqslant\frac{c}{d\left(  x,y\right)  ^{\varepsilon}}\left[  \phi
_{\alpha+\varepsilon}\left(  x,y\right)  +\phi_{\alpha}\left(  x,y\right)
\int_{U}\phi_{\varepsilon}\left(  x,z\right)  dz\right] \\
&  \leqslant\frac{c}{d\left(  x,y\right)  ^{\varepsilon}}\left[  \phi
_{\alpha+\varepsilon}\left(  x,y\right)  +\phi_{\alpha}\left(  x,y\right)
R^{\varepsilon}\right] \\
&  \leqslant\frac{c}{d\left(  x,y\right)  ^{\varepsilon}}\left[
\frac{d\left(  x,y\right)  ^{\alpha+\varepsilon}}{\left\vert B\left(
x,d\left(  x,y\right)  \right)  \right\vert }+\frac{d\left(  x,y\right)
^{\alpha}R^{\varepsilon}}{\left\vert B\left(  x,d\left(  x,y\right)  \right)
\right\vert }\right] \\
&  \leqslant cR^{\varepsilon}\frac{d\left(  x,y\right)  ^{\alpha-\varepsilon}%
}{\left\vert B\left(  x,d\left(  x,y\right)  \right)  \right\vert }.
\end{align*}
It follows that%
\begin{align*}
\left\vert X_{k}X_{i}J_{0}\left(  x,y\right)  \right\vert  &  \leqslant
cR^{\varepsilon}\frac{d\left(  x,y\right)  ^{\alpha-\varepsilon}}{\left\vert
B\left(  x,d\left(  x,y\right)  \right)  \right\vert }+c_{2}\phi_{\alpha
}\left(  x,y\right)  +c_{3}\phi_{2\alpha}\left(  x,y\right) \\
&  \leqslant c\frac{d\left(  x,y\right)  ^{\alpha-\varepsilon}}{\left\vert
B\left(  x,d\left(  x,y\right)  \right)  \right\vert },
\end{align*}
which proves (\ref{Big J_0 2}).
\end{proof}

\bigskip

\begin{proof}
[Proof of Theorem \ref{Thm XXgamma}]It is enough to prove (\ref{Stima XXJ})
and the continuity of $X_{i}X_{j}J^{\prime}\left(  x,y\right)  $,
$X_{0}J^{\prime}\left(  x,y\right)  $ in the joint variables, for $x\neq y$,
because these facts together with Proposition \ref{Prop P} imply
(\ref{Stima XXgamma}) and the continuity properties of $X_{i}X_{j}%
\gamma\left(  x,y\right)  $, $X_{0}\gamma\left(  x,y\right)  $. The results
about $X_{i}X_{j}J^{\prime}\left(  x,y\right)  $ immediately follow by Theorem
\ref{Thm Big J_0} choosing $\Phi_{0}=\Phi^{\prime}$. The proof of the analog
result for $X_{0}J^{\prime}\left(  x,y\right)  $ is very similar: we can start
from%
\[
J_{\delta}^{\prime}\left(  x,y\right)  =\int_{U}\int_{\mathbb{R}^{m}}%
\int_{\mathbb{R}^{m}}\left(  \omega_{\delta}\Gamma\right)  \left(
\Theta_{\left(  z,k\right)  }\left(  x,h\right)  \right)  a_{0}\left(
h\right)  b_{0}\left(  k\right)  dhdk\,\Phi^{\prime}\left(  z,y\right)  dz
\]
and compute%
\[
X_{j}J_{\delta}^{\prime}\left(  x,y\right)  =\int_{U}\int_{\mathbb{R}^{m}}%
\int_{\mathbb{R}^{m}}\widetilde{X}_{j}\left[  \left(  \omega_{\delta}%
\Gamma\right)  \left(  \Theta_{\left(  z,k\right)  }\left(  x,h\right)
\right)  a_{0}\left(  h\right)  \right]  b_{0}\left(  k\right)  dhdk\,\Phi
^{\prime}\left(  z,y\right)  dz
\]
From this point the computation of $X_{0}J^{\prime}\left(  x,y\right)  $
proceeds as above.
\end{proof}

We can now refine the previous analysis of the second derivatives of our local
fundamental solution and prove a sharp bound of H\"{o}lder type on $X_{i}%
X_{j}\gamma$. This is both interesting in its own, and will be a basic
ingredient to deduce, via the theory of singular integrals, local H\"{o}lder
estimates for the second derivatives of the local solution to the equation
$Lw=f$ that we will build in the next section.

\begin{theorem}
\label{Thm schauder fundam}For every $\varepsilon\in\left(  0,\alpha\right)  $
and $U^{\prime}\Subset U$ there exists $c>0$ such that for every $x_{1}%
,x_{2}\in U^{\prime}$, $y\in U$ such that $d\left(  x_{1},y\right)
\geqslant2d\left(  x_{1},x_{2}\right)  $, $i,j=1,2,...,n$,%
\begin{align}
\left\vert X_{i}X_{j}P\left(  x_{1},y\right)  -X_{i}X_{j}P\left(
x_{2},y\right)  \right\vert  &  \leqslant c\frac{d\left(  x_{1},x_{2}\right)
}{d\left(  x_{1},y\right)  }\frac{1}{\left\vert B\left(  x_{1},d\left(
x_{1},y\right)  \right)  \right\vert };\label{standard P}\\
\left\vert X_{i}X_{j}J^{\prime}\left(  x_{1},y\right)  -X_{i}X_{j}J^{\prime
}\left(  x_{2},y\right)  \right\vert  &  \leqslant c\left(  \frac{d\left(
x_{1},x_{2}\right)  }{d\left(  x_{1},y\right)  }\right)  ^{\alpha
-2\varepsilon}\frac{d\left(  x_{1},y\right)  ^{\alpha-\varepsilon}}{\left\vert
B\left(  x_{1},d\left(  x_{1},y\right)  \right)  \right\vert }%
;\label{standard J}\\
\left\vert X_{i}X_{j}\gamma\left(  x_{1},y\right)  -X_{i}X_{j}\gamma\left(
x_{2},y\right)  \right\vert  &  \leqslant c\left(  \frac{d\left(  x_{1}%
,x_{2}\right)  }{d\left(  x_{1},y\right)  }\right)  ^{\alpha-\varepsilon}%
\frac{1}{\left\vert B\left(  x_{1},d\left(  x_{1},y\right)  \right)
\right\vert };\label{standard gamma}\\
\left\vert X_{0}\gamma\left(  x_{1},y\right)  -X_{0}\gamma\left(
x_{2},y\right)  \right\vert  &  \leqslant c\left(  \frac{d\left(  x_{1}%
,x_{2}\right)  }{d\left(  x_{1},y\right)  }\right)  ^{\alpha-\varepsilon}%
\frac{1}{\left\vert B\left(  x_{1},d\left(  x_{1},y\right)  \right)
\right\vert }. \label{X_0 gamma}%
\end{align}
In particular, for every $\varepsilon\in\left(  0,\alpha\right)  $ and $y\in
U,$
\[
\gamma\left(  \cdot,y\right)  \in C_{X,loc}^{2,\alpha-\varepsilon}\left(
U\setminus\left\{  y\right\}  \right)  .
\]

\end{theorem}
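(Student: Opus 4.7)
The four estimates will be handled in order, building on one another. The plan for (\ref{standard P}) is the simplest: by Proposition \ref{Prop P} the function $P(\cdot,y)$ is smooth on $B(x_1,\tfrac{1}{2}d(x_1,y))$, and by the same differentiation under the integral sign used there, $X_kX_iX_jP(x,y)$ is a pure kernel of type $3$ in the sense of Definition \ref{Def abstract k_l}, hence bounded by $c\phi_{-1}(x,z) \leqslant c/(d(x,y)|B(x,d(x,y))|)$ via Lemma \ref{Lemma nonintegrale}; similarly $X_0X_iX_jP$ is bounded by $\phi_{-2}$. Applying Proposition \ref{Prop Lagrange} on the ball $B(x_1,\tfrac{1}{2}d(x_1,y))$ (noting that $d(x,y)\sim d(x_1,y)$ throughout this ball, so the denominator is essentially constant) yields (\ref{standard P}).

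The bulk of the work is (\ref{standard J}), which I would attack through the explicit representation (\ref{Big J_0 1}) of Theorem \ref{Thm Big J_0} with $\Phi_0=\Phi'$:
\[
X_jX_iJ'(x,y) = I_{\text{sing}}(x,y) + C(x)\Phi'(x,y) + I_{\text{rem}}(x,y),
\]
where $I_{\text{sing}}$ is the integral against $(Y_jD_1\Gamma)\circ\Theta$ of $[\Phi'(z,y)-\Phi'(x,y)]$, and $I_{\text{rem}}$ involves a remainder of type $2$. The multiplicative term $C(x)\Phi'(x,y)$ is handled by writing
\[
C(x_1)\Phi'(x_1,y)-C(x_2)\Phi'(x_2,y)
= [C(x_1)-C(x_2)]\Phi'(x_1,y) + C(x_2)[\Phi'(x_1,y)-\Phi'(x_2,y)],
\]
using $C\in C_{X,loc}^\alpha$, Proposition \ref{Phi holder}, and Lemma \ref{Lemma nonintegrale}. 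The remainder $I_{\text{rem}}$ is handled by differentiating the type-$2$ kernel $R_2''(x,z)$ once more (obtaining a remainder of type $3$, bounded by $\phi_{\alpha-1}$) and applying Proposition \ref{Prop Lagrange} on $B(x_1,\tfrac{1}{2}d(x_1,x_2)^{\alpha/\text{something}})$—more precisely, since $R_2''$ is smooth in $x$ and we have the extra $\phi_\alpha$ gain when paired with $\Phi'$, a direct Lagrange argument on the appropriate ball works.

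The singular term $I_{\text{sing}}$ is the main obstacle. I would decompose
\[
I_{\text{sing}}(x_1,y)-I_{\text{sing}}(x_2,y) = A_{\text{near}} + A_{\text{far}},
\]
splitting on the region $\{d(x_1,z)<Kd(x_1,x_2)\}$ versus $\{d(x_1,z)\geqslant Kd(x_1,x_2)\}$ for suitable $K$. In the near region one estimates each piece separately using $|k_2(x_i,z)|\leqslant c\phi_0(x_i,z)$ together with Corollary \ref{coroll Phi holder} applied to $\Phi'(z,y)-\Phi'(x_i,y)$; the resulting integrals are controlled by Corollary \ref{coroll Psi beta} and Theorem \ref{Thm phi alfa}. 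In the far region, a further differentiation yields that $X_kk_2(x,z)$ is a kernel of type $3$, so Proposition \ref{Prop Lagrange} gives a ``standard kernel'' estimate $|k_2(x_1,z)-k_2(x_2,z)| \leqslant c\,d(x_1,x_2)\phi_{-1}(x_1,z)$; combining with the Hölder bound on $\Phi'(z,y)-\Phi'(x_1,y)$ and performing the $\phi_\beta\star\phi_\gamma$ convolution via Theorem \ref{Thm phi alfa} produces the claimed exponent $\alpha-2\varepsilon$ (one $\varepsilon$ coming from the trade-off in making $\phi_\varepsilon$ pointwise, and another from the Hölder exponent of $\Phi'$). The absorbing of $I_{\text{sing}}(x_2,y)$ itself, evaluated against $\Phi'(x_1,y)-\Phi'(x_2,y)$, uses (\ref{Stima XXJ}) plus Proposition \ref{Phi holder}.

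Estimate (\ref{standard gamma}) is immediate by combining (\ref{standard P}) and (\ref{standard J}), using that $1/c_0$ is $C_{X,loc}^\alpha$ and bounded away from zero. For (\ref{X_0 gamma}), the same scheme works: $X_0P$ satisfies a standard Lipschitz estimate by the analog of Step~1 (differentiating once more produces $X_kX_0P$, still a kernel of type $3$, so Proposition \ref{Prop Lagrange} applies), and $X_0J'$ admits a representation entirely parallel to (\ref{Big J_0 1}), sketched at the very end of the proof of Theorem \ref{Thm XXgamma}; the three resulting summands are controlled exactly as above. The final statement $\gamma(\cdot,y)\in C^{2,\alpha-\varepsilon}_{X,loc}(U\setminus\{y\})$ then follows at once from (\ref{Stima XXgamma}), (\ref{standard gamma}), (\ref{X_0 gamma}) and the analogous (easier) first-order estimates on $X_i\gamma$ derived from Theorem \ref{Thm Big J_0}. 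The main difficulty, as indicated, is the careful splitting and reassembly of the singular integral $I_{\text{sing}}$, where the exponent $\alpha-2\varepsilon$ must be produced by balancing the Hölder gain on $\Phi'$ against the standard-kernel gain on $k_2$.
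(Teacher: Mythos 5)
Your scheme matches the paper's proof almost exactly. Estimate (\ref{standard P}) is obtained, as you propose, by applying Proposition \ref{Prop Lagrange} to $X_iX_jP(\cdot,y)$ on $B\left(x_1,\tfrac12 d(x_1,y)\right)$ and reading off the type-$3$ and type-$4$ kernel bounds $\phi_{-1}$, $\phi_{-2}$ via Lemma \ref{Lemma nonintegrale}. Estimate (\ref{standard J}) is proved from the three-term representation of Theorem \ref{Thm Big J_0} with $\Phi_0=\Phi'$; the multiplicative piece and the remainder piece are handled as you describe (near/far split, Lagrange on $R_2$, bound $\phi_{\alpha-1}$ from Proposition \ref{Prop bad theta} (ii)), and the singular piece is decomposed on $\{d(x_1,z)\geqslant 2d(x_1,x_2)\}$ versus its complement, with $[k(x_2,z)-k(x_1,z)][\Phi'(z,y)-\Phi'(x_2,y)]$ and $[\Phi'(x_1,y)-\Phi'(x_2,y)]\int k(x_1,z)\,dz$ treated separately, exactly as you indicate. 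Estimate (\ref{standard gamma}) is immediate from the two.

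The one place you genuinely depart from the paper is (\ref{X_0 gamma}). You propose rebuilding a representation for $X_0J'$ parallel to (\ref{Big J_0 1}) and rerunning the entire singular-integral argument. The paper instead observes that $X_0\gamma(x,y)=-\sum_{j=1}^{n}X_j^2\gamma(x,y)$ for $x\neq y$ (a consequence of $L\gamma(\cdot,y)=0$ off the pole, which follows from the weak identity (\ref{gam 4}) plus the continuity of $L\gamma(\cdot,y)$ established in Theorem \ref{Thm XXgamma}), so (\ref{X_0 gamma}) falls out of (\ref{standard gamma}) with no new work. Your route is viable but costs a second full pass through the estimate; the paper's trick is shorter and uses information already in hand. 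Two small inaccuracies worth noting: the ball in your remainder paragraph should read $B\left(x_1,\tfrac12 d(x_1,z)\right)$, and the term $[\Phi'(x_1,y)-\Phi'(x_2,y)]\int k_2(x_1,z)\,dz$ is controlled by the pointwise bound $|k_2|\leqslant c\phi_0$ integrated over the far region together with Proposition \ref{Phi holder}, not via (\ref{Stima XXJ}).
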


\begin{proof}
The proof will be achieved in several steps. We know that%
\begin{equation}
X_{i}X_{j}\gamma\left(  x,y\right)  =\frac{1}{c_{0}\left(  y\right)  }\left[
X_{i}X_{j}P\left(  x,y\right)  +X_{i}X_{j}J^{\prime}\left(  x,y\right)
\right]  , \label{XXgamma holder}%
\end{equation}
hence (\ref{standard gamma}) will follow from (\ref{standard P}) and
(\ref{standard J}). Also (\ref{X_0 gamma}) will follow from
(\ref{standard gamma}) since $X_{0}\gamma\left(  x,y\right)  =-\sum_{j=1}%
^{n}X_{j}^{2}\gamma\left(  x,y\right)  $ for $x\neq y$.

Let us first prove (\ref{standard P}). To do this, let us apply
\textquotedblleft Lagrange theorem\textquotedblright\ (Proposition
\ref{Prop Lagrange}) to the function
\[
f\left(  x\right)  =X_{i}X_{j}P\left(  x,y\right)  \text{ for }x\in B\left(
x_{1},\frac{1}{2}d\left(  x_{1},y\right)  \right)  \text{:}%
\]%
\begin{align*}
\left\vert X_{i}X_{j}P\left(  x_{1},y\right)  -X_{i}X_{j}P\left(
x_{2},y\right)  \right\vert  &  \leqslant cd\left(  x_{1},x_{2}\right)
\left(  \overset{n}{\underset{k=1}{\sum}}\underset{B\left(  x_{1},\frac{1}%
{2}d\left(  x_{1},y\right)  \right)  }{\sup}\left\vert X_{k}X_{i}X_{j}P\left(
\cdot,y\right)  \right\vert +\right. \\
&  \left.  +d\left(  x_{1},x_{2}\right)  \underset{B\left(  x_{1},\frac{1}%
{2}d\left(  x_{1},y\right)  \right)  }{\sup}\left\vert X_{0}X_{i}X_{j}P\left(
\cdot,y\right)  \right\vert \right)  .
\end{align*}
Note that since, under our assumptions, the coefficients of the $X_{i}$'s
belong to $C^{r,\alpha}$, with $r\geqslant2$, the compositions $X_{k}%
X_{i}X_{j}$, $X_{0}X_{i}X_{j}$ are actually well defined. Reasoning like in
the proof of Proposition \ref{Prop P} we get, for $x\in B\left(  x_{1}%
,\frac{1}{2}d\left(  x_{1},y\right)  \right)  $%
\begin{align*}
\left\vert X_{k}X_{i}X_{j}P\left(  x,y\right)  \right\vert  &  \leqslant
c\phi_{-1}\left(  x,y\right)  \leqslant c\phi_{-1}\left(  x_{1},y\right) \\
&  \leqslant\frac{c}{d\left(  x_{1},y\right)  \left\vert B\left(
x_{1},d\left(  x_{1},y\right)  \right)  \right\vert }%
\end{align*}
by Lemma \ref{Lemma nonintegrale}. Analogously,%
\[
\left\vert X_{0}X_{i}X_{j}P\left(  x,y\right)  \right\vert \leqslant
c\phi_{-2}\left(  x,y\right)  \leqslant\frac{c}{d\left(  x_{1},y\right)
^{2}\left\vert B\left(  x_{1},d\left(  x_{1},y\right)  \right)  \right\vert }%
\]
so that, for $2d\left(  x_{1},x_{2}\right)  \leqslant d\left(  x_{1},y\right)
,$
\[
\left\vert X_{i}X_{j}P\left(  x_{1},y\right)  -X_{i}X_{j}P\left(
x_{2},y\right)  \right\vert \leqslant c\frac{d\left(  x_{1},x_{2}\right)
}{d\left(  x_{1},y\right)  }\frac{1}{\left\vert B\left(  x_{1},d\left(
x_{1},y\right)  \right)  \right\vert }%
\]
and (\ref{standard P}) is proved. Applying Theorem \ref{Thm Big J_0} with
$\Phi_{0}=\Phi^{\prime}$, we know that for any $x\in U^{\prime},y\in U$%
\begin{align*}
&  X_{j}X_{i}J^{\prime}\left(  x,y\right) \\
&  =\int_{U}\int_{\mathbb{R}^{m}}\int_{\mathbb{R}^{m}}\left(  Y_{j}D_{1}%
\Gamma\right)  \left(  \Theta_{\left(  z,k\right)  }\left(  x,h\right)
\right)  a_{0}\left(  h\right)  b_{0}\left(  k\right)  dhdk\,\left[
\Phi^{\prime}\left(  z,y\right)  -\Phi^{\prime}\left(  x,y\right)  \right]
dz\\
&  +C\left(  x\right)  \Phi^{\prime}\left(  x,y\right)  +\int_{U}R_{2}\left(
x,z\right)  \Phi^{\prime}\left(  z,y\right)  dz\\
&  \equiv A\left(  x,y\right)  +B\left(  x,y\right)  +C\left(  x,y\right)  .
\end{align*}
Let us start from the last two terms, which are easier. By Proposition
\ref{Phi holder} and the local H\"{o}lder continuity of $C\left(  x\right)  $
we have%
\begin{align*}
\left\vert B\left(  x_{2},y\right)  -B\left(  x_{1},y\right)  \right\vert  &
\leqslant\left\vert C\left(  x_{2}\right)  -C\left(  x_{1}\right)  \right\vert
\Phi^{\prime}\left(  x_{2},y\right)  +\left\vert C\left(  x_{1}\right)
\right\vert \left\vert \Phi^{\prime}\left(  x_{2},y\right)  -\Phi^{\prime
}\left(  x_{1},y\right)  \right\vert \\
&  \leqslant cd\left(  x_{1},x_{2}\right)  ^{\alpha}\phi_{\alpha}\left(
x_{2},y\right)  +cd\left(  x_{1},x_{2}\right)  ^{\alpha-\varepsilon}%
\phi_{\varepsilon}\left(  x_{1},y\right) \\
&  \leqslant cd\left(  x_{1},x_{2}\right)  ^{\alpha-\varepsilon}%
\phi_{\varepsilon}\left(  x_{1},y\right)  \leqslant c\left(  \frac{d\left(
x_{1},x_{2}\right)  }{d\left(  x_{1},y\right)  }\right)  ^{\alpha-\varepsilon
}\frac{d\left(  x_{1},y\right)  ^{\alpha}}{\left\vert B\left(  x_{1},d\left(
x_{1},y\right)  \right)  \right\vert }.
\end{align*}
As to $C,$%
\begin{align*}
C\left(  x_{2},y\right)  -C\left(  x_{1},y\right)   &  =\int_{U}\left[
R_{2}\left(  x_{2},z\right)  -R_{2}\left(  x_{1},z\right)  \right]
\Phi^{\prime}\left(  z,y\right)  dz\\
&  =\int_{U,d\left(  z,x_{1}\right)  >2d\left(  x_{1},x_{2}\right)  }\left(
...\right)  dz+\int_{U,d\left(  z,x_{1}\right)  \leqslant2d\left(  x_{1}%
,x_{2}\right)  }\left(  ...\right)  dz\\
&  \equiv C_{1}+C_{2}.
\end{align*}
To bound $C_{1}$ we apply Lagrange theorem:%
\begin{align*}
\left\vert R_{2}\left(  x_{2},z\right)  -R_{2}\left(  x_{1},z\right)
\right\vert  &  \leqslant cd\left(  x_{1},x_{2}\right)  \left(  \overset
{n}{\underset{k=1}{\sum}}\underset{B\left(  x_{1},\frac{1}{2}d\left(
x_{1},z\right)  \right)  }{\sup}\left\vert X_{k}R_{2}\left(  \cdot,z\right)
\right\vert \right. \\
&  \left.  +d\left(  x_{1},x_{2}\right)  \underset{B\left(  x_{1},\frac{1}%
{2}d\left(  x_{1},z\right)  \right)  }{\sup}\left\vert X_{0}R_{2}\left(
\cdot,z\right)  \right\vert \right) \\
&  \leqslant cd\left(  x_{1},x_{2}\right)  \phi_{-1+\alpha}\left(
x_{1},z\right)
\end{align*}
where the bounds on $\left\vert X_{k}R_{2}\left(  \cdot,z\right)  \right\vert
,\left\vert X_{0}R_{2}\left(  \cdot,z\right)  \right\vert $ exploit
Proposition \ref{Prop bad theta} (ii). Hence%
\begin{align*}
\left\vert C_{1}\right\vert  &  \leqslant cd\left(  x_{1},x_{2}\right)
\int_{U,d\left(  z,x_{1}\right)  >2d\left(  x_{1},x_{2}\right)  }%
\phi_{-1+\alpha}\left(  x_{1},z\right)  \phi_{\alpha}\left(  z,y\right)  dz\\
&  \leqslant cd\left(  x_{1},x_{2}\right)  ^{\alpha-\varepsilon}%
\int_{U,d\left(  z,x_{1}\right)  >2d\left(  x_{1},x_{2}\right)  }d\left(
x_{1},z\right)  ^{1-\alpha+\varepsilon}\phi_{-1+\alpha}\left(  x_{1},z\right)
\phi_{\alpha}\left(  z,y\right)  dz\\
&  \leqslant cd\left(  x_{1},x_{2}\right)  ^{\alpha-\varepsilon}\int_{U}%
\phi_{\varepsilon}\left(  x_{1},z\right)  \phi_{\alpha}\left(  z,y\right)
dz\leqslant cd\left(  x_{1},x_{2}\right)  ^{\alpha-\varepsilon}\phi
_{\alpha+\varepsilon}\left(  x_{1},y\right)  ,
\end{align*}
while%
\[
\left\vert C_{2}\right\vert \leqslant\int_{U,d\left(  z,x_{1}\right)
\leqslant2d\left(  x_{1},x_{2}\right)  }\left[  \phi_{\alpha}\left(
x_{2},z\right)  +\phi_{\alpha}\left(  x_{1},z\right)  \right]  \phi_{\alpha
}\left(  z,y\right)  dz.
\]
Since $d\left(  x_{1},z\right)  \leqslant2d\left(  x_{1},x_{2}\right)  $ and
$d\left(  x_{1},y\right)  \geqslant3d\left(  x_{1},x_{2}\right)  $ implies
$d\left(  x_{1},y\right)  \leqslant3d\left(  z,y\right)  $%
\begin{align*}
\left\vert C_{2}\right\vert  &  \leqslant c\int_{U,d\left(  z,x_{1}\right)
\leqslant2d\left(  x_{1},x_{2}\right)  }\left[  \phi_{\alpha}\left(
x_{2},z\right)  +\phi_{\alpha}\left(  x_{1},z\right)  \right]  \phi_{\alpha
}\left(  x_{1},y\right)  dz\\
&  \leqslant c\phi_{\alpha}\left(  x_{1},y\right)  \left(  \int_{U,d\left(
z,x_{2}\right)  \leqslant3d\left(  x_{1},x_{2}\right)  }\phi_{\alpha}\left(
x_{2},z\right)  dz+\int_{U,d\left(  z,x_{1}\right)  \leqslant2d\left(
x_{1},x_{2}\right)  }\phi_{\alpha}\left(  x_{1},z\right)  dz\right) \\
&  \leqslant c\phi_{\alpha}\left(  x_{1},y\right)  d\left(  x_{1}%
,x_{2}\right)  ^{\alpha}%
\end{align*}
by Corollary \ref{coroll Psi beta}. Hence%
\begin{align*}
&  \left\vert C\left(  x_{2},y\right)  -C\left(  x_{1},y\right)  \right\vert
\leqslant cd\left(  x_{1},x_{2}\right)  ^{\alpha-\varepsilon}\phi
_{\alpha+\varepsilon}\left(  x_{1},y\right)  +c\phi_{\alpha}\left(
x_{1},y\right)  d\left(  x_{1},x_{2}\right)  ^{\alpha}\\
&  \leqslant c\left(  \frac{d\left(  x_{1},x_{2}\right)  }{d\left(
x_{1},y\right)  }\right)  ^{\alpha-\varepsilon}\frac{d\left(  x_{1},y\right)
^{2\alpha}}{\left\vert B\left(  x_{1},d\left(  x_{1},y\right)  \right)
\right\vert }+c\left(  \frac{d\left(  x_{1},x_{2}\right)  }{d\left(
x_{1},y\right)  }\right)  ^{\alpha}\frac{d\left(  x_{1},y\right)  ^{2\alpha}%
}{\left\vert B\left(  x_{1},d\left(  x_{1},y\right)  \right)  \right\vert }\\
&  \leqslant c\left(  \frac{d\left(  x_{1},x_{2}\right)  }{d\left(
x_{1},y\right)  }\right)  ^{\alpha-\varepsilon}\frac{d\left(  x_{1},y\right)
^{2\alpha}}{\left\vert B\left(  x_{1},d\left(  x_{1},y\right)  \right)
\right\vert }.
\end{align*}
As to $A$, let%
\[
k\left(  x,z\right)  =\int_{\mathbb{R}^{m}}\int_{\mathbb{R}^{m}}\left(
Y_{j}D_{1}\Gamma\right)  \left(  \Theta_{\left(  z,k\right)  }\left(
x,h\right)  \right)  a_{0}\left(  h\right)  b_{0}\left(  k\right)  dhdk,
\]
then%
\begin{align*}
A\left(  x_{2},y\right)  -A\left(  x_{1},y\right)   &  =\int_{U}\left\{
k\left(  x_{2},z\right)  \left[  \Phi^{\prime}\left(  z,y\right)
-\Phi^{\prime}\left(  x_{2},y\right)  \right]  -k\left(  x_{1},z\right)
\left[  \Phi^{\prime}\left(  z,y\right)  -\Phi^{\prime}\left(  x_{1},y\right)
\right]  \right\}  dz\\
&  =\int_{U,d\left(  x_{1},z\right)  \geqslant2d\left(  x_{1},x_{2}\right)
}\left\{  \cdots\right\}  dz+\int_{U,d\left(  x_{1},z\right)  <2d\left(
x_{1},x_{2}\right)  }\left\{  \cdots\right\}  dz\\
&  \equiv A_{1}\left(  x_{1},x_{2},y\right)  +A_{2}\left(  x_{1}%
,x_{2},y\right)  .
\end{align*}%
\begin{align*}
A_{1}\left(  x_{1},x_{2},y\right)   &  =\int_{U,d\left(  x_{1},z\right)
\geqslant2d\left(  x_{1},x_{2}\right)  }\left[  k\left(  x_{2},z\right)
-k\left(  x_{1},z\right)  \right]  \left[  \Phi^{\prime}\left(  z,y\right)
-\Phi^{\prime}\left(  x_{2},y\right)  \right]  dz\\
&  +\left[  \Phi^{\prime}\left(  x_{1},y\right)  -\Phi^{\prime}\left(
x_{2},y\right)  \right]  \int_{U,d\left(  x_{1},z\right)  \geqslant2d\left(
x_{1},x_{2}\right)  }k\left(  x_{1},z\right)  dz\\
&  \equiv A_{1,1}\left(  x_{1},x_{2},y\right)  +A_{1,2}\left(  x_{1}%
,x_{2},y\right)  .
\end{align*}
Since, for $d\left(  x_{1},z\right)  \geqslant2d\left(  x_{1},x_{2}\right)  $
we have%
\[
\left\vert k\left(  x_{2},z\right)  -k\left(  x_{1},z\right)  \right\vert
\leqslant d\left(  x_{1},x_{2}\right)  \phi_{-1}\left(  x_{1},z\right)
\]
we obtain%
\[
\left\vert A_{1,1}\left(  x_{1},x_{2},y\right)  \right\vert \leqslant
cd\left(  x_{1},x_{2}\right)  \int_{U,d\left(  x_{1},z\right)  \geqslant
2d\left(  x_{1},x_{2}\right)  }\phi_{-1}\left(  x_{1},z\right)  \left\vert
\Phi^{\prime}\left(  z,y\right)  -\Phi^{\prime}\left(  x_{2},y\right)
\right\vert dz.
\]
We now split the domain of integration $\left\{  z\in U:d\left(
x_{1},z\right)  \geqslant2d\left(  x_{1},x_{2}\right)  \right\}  $ into two
pieces%
\begin{align*}
U_{1}  &  =\left\{  z:U:d\left(  x_{1},z\right)  \geqslant2d\left(
x_{1},x_{2}\right)  ,~d\left(  z,y\right)  \geqslant3d\left(  z,x_{2}\right)
\right\}  ,\\
U_{2}  &  =\left\{  z:U:d\left(  x_{1},z\right)  \geqslant2d\left(
x_{1},x_{2}\right)  ,~d\left(  z,y\right)  <3d\left(  z,x_{2}\right)
\right\}  ,
\end{align*}
so that%
\begin{align*}
\left\vert A_{1,1}\left(  x_{1},x_{2},y\right)  \right\vert  &  \leqslant
cd\left(  x_{1},x_{2}\right)  \int_{U_{1}}\left(  \cdots\right)  dz+d\left(
x_{1},x_{2}\right)  \int_{U_{2}}\left(  \cdots\right)  dz\\
&  \equiv A_{1,1,1}\left(  x_{1},x_{2},y\right)  +A_{1,1,2}\left(  x_{1}%
,x_{2},y\right)  .
\end{align*}
Note that $d\left(  x_{1},z\right)  $ and $d\left(  x_{2},z\right)  $ are
equivalent on $U_{1}$ and $U_{2}$. Also on $U_{1}$ (since $d\left(
z,y\right)  \geqslant3d\left(  z,x_{2}\right)  $) we have $\left\vert
\Phi^{\prime}\left(  z,y\right)  -\Phi^{\prime}\left(  x_{2},y\right)
\right\vert \leqslant d\left(  z,x_{2}\right)  ^{\alpha-\varepsilon}%
\phi_{\varepsilon}\left(  z,y\right)  $ and therefore%
\begin{align*}
\left\vert A_{1,1,1}\left(  x_{1},x_{2},y\right)  \right\vert  &  \leqslant
cd\left(  x_{1},x_{2}\right)  \int_{U,d\left(  x_{1},z\right)  \geqslant
2d\left(  x_{1},x_{2}\right)  }\phi_{-1}\left(  x_{1},z\right)  d\left(
z,x_{2}\right)  ^{\alpha-\varepsilon}\phi_{\varepsilon}\left(  z,y\right)
dz\\
&  \leqslant cd\left(  x_{1},x_{2}\right)  \int_{U,d\left(  x_{1},z\right)
\geqslant2d\left(  x_{1},x_{2}\right)  }\phi_{-1}\left(  x_{1},z\right)
d\left(  z,x_{1}\right)  ^{\alpha-\varepsilon}\phi_{\varepsilon}\left(
z,y\right)  dz\\
&  \leqslant cd\left(  x_{1},x_{2}\right)  ^{\alpha-2\varepsilon}%
\int_{U,d\left(  x_{1},z\right)  \geqslant2d\left(  x_{1},x_{2}\right)  }%
\phi_{-1}\left(  x_{1},z\right)  d\left(  z,x_{1}\right)  ^{1+\varepsilon}%
\phi_{\varepsilon}\left(  z,y\right)  dz\\
&  \leqslant cd\left(  x_{1},x_{2}\right)  ^{\alpha-2\varepsilon}\int_{U}%
\phi_{\varepsilon}\left(  x_{1},z\right)  \phi_{\varepsilon}\left(
z,y\right)  dz\\
&  \leqslant cd\left(  x_{1},x_{2}\right)  ^{\alpha-2\varepsilon}%
\phi_{2\varepsilon}\left(  x_{1},y\right)  \leqslant c\left(  \frac{d\left(
x_{1},x_{2}\right)  }{d\left(  x_{1},y\right)  }\right)  ^{\alpha
-2\varepsilon}\frac{d\left(  x_{1},y\right)  ^{\alpha}}{\left\vert B\left(
x_{1},d\left(  x_{1},y\right)  \right)  \right\vert }.
\end{align*}
We now consider the second term. We have%
\begin{align*}
\left\vert A_{1,1,2}\left(  x_{1},x_{2},y\right)  \right\vert  &  \leqslant
cd\left(  x_{1},x_{2}\right)  \int_{U_{2}}\phi_{-1}\left(  x_{1},z\right)
\left(  \phi_{\alpha}\left(  z,y\right)  +\phi_{\alpha}\left(  x_{2},y\right)
\right)  dz\\
&  \equiv A_{1,1,2}^{\prime}+A_{1,1,2}^{\prime\prime}.
\end{align*}
Since $d\left(  y,z\right)  \leqslant\frac{1}{2}d\left(  x_{1},y\right)  $
implies $d\left(  x_{1},y\right)  \leqslant2d\left(  x_{1},z\right)  ,$%
\begin{align*}
A_{1,1,2}^{\prime}  &  \leqslant cd\left(  x_{1},x_{2}\right)  \frac{d\left(
x_{1},y\right)  ^{1+\varepsilon}}{d\left(  x_{1},y\right)  ^{1+\varepsilon}%
}\int_{U_{2}\cap\left\{  d\left(  y,z\right)  \leqslant\frac{1}{2}d\left(
x_{1},y\right)  \right\}  }\phi_{-1}\left(  x_{1},z\right)  \phi_{\alpha
}\left(  z,y\right)  dz\\
&  +cd\left(  x_{1},x_{2}\right)  ^{\alpha-\varepsilon}\int_{U_{2}\cap\left\{
d\left(  y,z\right)  >\frac{1}{2}d\left(  x_{1},y\right)  \right\}  }d\left(
x_{1},x_{2}\right)  ^{1-\alpha+\varepsilon}\phi_{-1}\left(  x_{1},z\right)
\frac{d\left(  z,y\right)  ^{\alpha}}{\left\vert B\left(  z,d\left(
z,y\right)  \right)  \right\vert }dz\\
&  \leqslant c\frac{d\left(  x_{1},x_{2}\right)  }{d\left(  x_{1},y\right)
^{1+\varepsilon}}\int_{U}\phi_{\varepsilon}\left(  x_{1},z\right)
\phi_{\alpha}\left(  z,y\right)  dz\\
&  +\frac{cd\left(  x_{1},x_{2}\right)  ^{\alpha-\varepsilon}}{\left\vert
B\left(  y,d\left(  x_{1},y\right)  \right)  \right\vert }\int_{U_{2}%
\cap\left\{  d\left(  y,z\right)  >\frac{1}{2}d\left(  x_{1},y\right)
\right\}  }d\left(  x_{1},z\right)  ^{1-\alpha+\varepsilon}\phi_{-1}\left(
x_{1},z\right)  d\left(  z,x_{1}\right)  ^{\alpha}dz\\
&  \leqslant c\frac{d\left(  x_{1},x_{2}\right)  }{d\left(  x_{1},y\right)
^{1+\varepsilon}}\phi_{\alpha+\varepsilon}\left(  x_{1},y\right)
+\frac{cd\left(  x_{1},x_{2}\right)  ^{\alpha-\varepsilon}}{\left\vert
B\left(  y,d\left(  x_{1},y\right)  \right)  \right\vert }\int_{U}%
\phi_{\varepsilon}\left(  x_{1},z\right)  dz\\
&  \leqslant cd\left(  x_{1},x_{2}\right)  \frac{d\left(  x_{1},y\right)
^{\alpha-1}}{\left\vert B\left(  y,d\left(  x_{1},y\right)  \right)
\right\vert }+c\frac{d\left(  x_{1},x_{2}\right)  ^{\alpha-\varepsilon}%
}{\left\vert B\left(  x_{1},d\left(  x_{1},y\right)  \right)  \right\vert
}R^{\varepsilon}%
\end{align*}%
\begin{align*}
&  \leqslant c\left(  \frac{d\left(  x_{1},x_{2}\right)  }{d\left(
x_{1},y\right)  }\right)  ^{\alpha-\varepsilon}\left(  \frac{d\left(
x_{1},y\right)  ^{\alpha-1}d\left(  x_{1},x_{2}\right)  }{d\left(  x_{1}%
,x_{2}\right)  ^{\alpha-\varepsilon}}\frac{d\left(  x_{1},y\right)
^{\alpha-\varepsilon}}{\left\vert B\left(  y,d\left(  x_{1},y\right)  \right)
\right\vert }+\frac{d\left(  x_{1},y\right)  ^{\alpha-\varepsilon}}{\left\vert
B\left(  x_{1},d\left(  x_{1},y\right)  \right)  \right\vert }\right) \\
&  \leqslant c\left(  \frac{d\left(  x_{1},x_{2}\right)  }{d\left(
x_{1},y\right)  }\right)  ^{\alpha-\varepsilon}\left(  R^{\varepsilon}%
\frac{d\left(  x_{1},y\right)  ^{\alpha-\varepsilon}}{\left\vert B\left(
y,d\left(  x_{1},y\right)  \right)  \right\vert }+\frac{d\left(
x_{1},y\right)  ^{\alpha-\varepsilon}}{\left\vert B\left(  x_{1},d\left(
x_{1},y\right)  \right)  \right\vert }\right)  .
\end{align*}
Since in $U_{2}$%
\[
d\left(  x_{2},y\right)  \leqslant d\left(  x_{2},z\right)  +d\left(
z,y\right)  \leqslant cd\left(  z,x_{2}\right)  \leqslant cd\left(
z,x_{1}\right)
\]
we have%
\begin{align*}
A_{1,1,2}^{\prime\prime}  &  \leqslant d\left(  x_{1},x_{2}\right)
^{\alpha-\varepsilon}\frac{\phi_{\alpha}\left(  x_{2},y\right)  }{d\left(
x_{2},y\right)  ^{\alpha}}\int_{U_{2}}\phi_{\varepsilon}\left(  x_{1}%
,z\right)  dz\leqslant c\frac{d\left(  x_{1},x_{2}\right)  ^{\alpha
-\varepsilon}}{\left\vert B\left(  x_{2},d\left(  x_{2},y\right)  \right)
\right\vert }R^{\varepsilon}\\
&  \leqslant c\left(  \frac{d\left(  x_{1},x_{2}\right)  }{d\left(
x_{2},y\right)  }\right)  ^{\alpha-\varepsilon}\frac{d\left(  x_{2},y\right)
^{\alpha-\varepsilon}}{\left\vert B\left(  x_{2},d\left(  x_{2},y\right)
\right)  \right\vert }.
\end{align*}
Hence%
\[
\left\vert A_{1,1,2}\left(  x_{1},x_{2},y\right)  \right\vert \leqslant
c\left(  \frac{d\left(  x_{1},x_{2}\right)  }{d\left(  x_{1},y\right)
}\right)  ^{\alpha-\varepsilon}\frac{d\left(  x_{1},y\right)  ^{\alpha
-\varepsilon}}{\left\vert B\left(  x_{1},d\left(  x_{1},y\right)  \right)
\right\vert }%
\]
and%
\[
\left\vert A_{1,1}\left(  x_{1},x_{2},y\right)  \right\vert \leqslant c\left(
\frac{d\left(  x_{1},x_{2}\right)  }{d\left(  x_{1},y\right)  }\right)
^{\alpha-2\varepsilon}\frac{d\left(  x_{1},y\right)  ^{\alpha-\varepsilon}%
}{\left\vert B\left(  x_{1},d\left(  x_{1},y\right)  \right)  \right\vert }.
\]

We now consider $A_{1,2}$. Observe that for $d\left(  x_{1},y\right)
>3d\left(  x_{1},x_{2}\right)  $ we have%
\begin{align*}
\left\vert A_{1,2}\left(  x_{1},x_{2},y\right)  \right\vert  &  \leqslant
\left\vert \Phi^{\prime}\left(  x_{1},y\right)  -\Phi^{\prime}\left(
x_{2},y\right)  \right\vert \int_{U,d\left(  x_{1},z\right)  \geqslant
2d\left(  x_{1},x_{2}\right)  }\phi_{0}\left(  x_{1},z\right)  dz\\
&  \leqslant cd\left(  x_{1},x_{2}\right)  ^{\alpha-\varepsilon}%
\phi_{\varepsilon}\left(  x_{1},y\right)  \int_{U,d\left(  x_{1},z\right)
\geqslant2d\left(  x_{1},x_{2}\right)  }\phi_{0}\left(  x_{1},z\right)  dz\\
&  \leqslant cd\left(  x_{1},x_{2}\right)  ^{\alpha-2\varepsilon}%
\phi_{\varepsilon}\left(  x_{1},y\right)  \int_{U,d\left(  x_{1},z\right)
\geqslant2d\left(  x_{1},x_{2}\right)  }\phi_{\varepsilon}\left(
x_{1},z\right)  dz\\
&  \leqslant cd\left(  x_{1},x_{2}\right)  ^{\alpha-2\varepsilon}%
\phi_{\varepsilon}\left(  x_{1},y\right)  R^{\varepsilon}\leqslant c\left(
\frac{d\left(  x_{1},x_{2}\right)  }{d\left(  x_{1},y\right)  }\right)
^{\alpha-2\varepsilon}\frac{d\left(  x_{1},y\right)  ^{\alpha-\varepsilon}%
}{\left\vert B\left(  x_{1},d\left(  x_{1},y\right)  \right)  \right\vert }.
\end{align*}
Finally we have to bound $A_{2}\left(  x_{1},x_{2},y\right)  $. We have%
\begin{align*}
\left\vert A_{2}\left(  x_{1},x_{2},y\right)  \right\vert  &  \leqslant
\int_{U,d\left(  x_{2},z\right)  <3d\left(  x_{1},x_{2}\right)  }\phi
_{0}\left(  x_{2},z\right)  \left\vert \Phi^{\prime}\left(  z,y\right)
-\Phi^{\prime}\left(  x_{2},y\right)  \right\vert dz\\
&  +\int_{U,d\left(  x_{1},z\right)  <2d\left(  x_{1},x_{2}\right)  }\phi
_{0}\left(  x_{1},z\right)  \left\vert \Phi^{\prime}\left(  z,y\right)
-\Phi^{\prime}\left(  x_{1},y\right)  \right\vert dz.
\end{align*}
Since the two terms are similar it is enough to bound the second. We have%
\begin{align*}
&  \int_{U,d\left(  x_{1},z\right)  <2d\left(  x_{1},x_{2}\right)  }\phi
_{0}\left(  x_{1},z\right)  \left\vert \Phi^{\prime}\left(  z,y\right)
-\Phi^{\prime}\left(  x_{1},y\right)  \right\vert dz\\
&  =\int_{U,d\left(  x_{1},z\right)  <2d\left(  x_{1},x_{2}\right)  ,d\left(
y,z\right)  \leqslant\frac{1}{2}d\left(  x_{1},y\right)  }\left\{
\cdots\right\}  dz+\int_{U,d\left(  x_{1},z\right)  <2d\left(  x_{1}%
,x_{2}\right)  ,d\left(  y,z\right)  >\frac{1}{2}d\left(  x_{1},y\right)
}\left\{  \cdots\right\}  dz\\
&  \equiv A_{2,1}\left(  x_{1},x_{2},y\right)  +A_{2,2}\left(  x_{1}%
,x_{2},y\right)  .
\end{align*}
As to $A_{2,1}\left(  x_{1},x_{2},y\right)  ,$ we note that, under the
assumption $d\left(  x_{1},y\right)  \geqslant3d\left(  x_{1},x_{2}\right)  $,
in the domain of integration the following equivalences hold:%
\[
d\left(  x_{1},y\right)  \simeq d\left(  z,y\right)  \simeq d\left(
x_{1},z\right)  .
\]
Therefore%
\[
\left\vert \Phi^{\prime}\left(  z,y\right)  -\Phi^{\prime}\left(
x_{1},y\right)  \right\vert \leqslant\phi_{\alpha}\left(  z,y\right)
+\phi_{\alpha}\left(  x_{1},y\right)  \leqslant c\phi_{\alpha}\left(
z,y\right)
\]
and%
\begin{align*}
&  A_{2,1}\left(  x_{1},x_{2},y\right)  \leqslant c\int_{\left\{  d\left(
x_{1},z\right)  <2d\left(  x_{1},x_{2}\right)  ,d\left(  y,z\right)
\leqslant\frac{1}{2}d\left(  x_{1},y\right)  \right\}  }\phi_{0}\left(
x_{1},z\right)  \phi_{\alpha}\left(  z,y\right)  dz\\
&  \leqslant\frac{c}{d\left(  x_{1},y\right)  ^{\alpha}}\int_{\left\{
d\left(  x_{1},z\right)  <2d\left(  x_{1},x_{2}\right)  ,d\left(  y,z\right)
\leqslant\frac{1}{2}d\left(  x_{1},y\right)  \right\}  }d\left(
x_{1},z\right)  ^{\alpha}\phi_{0}\left(  x_{1},z\right)  \phi_{\alpha}\left(
z,y\right)  dz\\
&  \leqslant\frac{c}{d\left(  x_{1},y\right)  ^{\alpha}}\phi_{\alpha}\left(
x_{1},y\right)  \int_{\left\{  d\left(  x_{1},z\right)  <2d\left(  x_{1}%
,x_{2}\right)  \right\}  }\phi_{\alpha}\left(  x_{1},z\right)  dz\\
&  \leqslant c\left(  \frac{d\left(  x_{1},x_{2}\right)  }{d\left(
x_{1},y\right)  }\right)  ^{\alpha}\phi_{\alpha}\left(  x_{1},y\right)
\leqslant c\left(  \frac{d\left(  x_{1},x_{2}\right)  }{d\left(
x_{1},y\right)  }\right)  ^{\alpha}\frac{d\left(  x_{1},y\right)  ^{\alpha}%
}{\left\vert B\left(  x_{1},d\left(  x_{1},y\right)  \right)  \right\vert }.
\end{align*}

On the other hand, since $d\left(  x_{1},z\right)  \leqslant2d\left(
x_{1},x_{2}\right)  \leqslant\frac{2}{3}d\left(  x_{1},y\right)  <3d\left(
x_{1},y\right)  $, by Proposition \ref{Phi holder}
\begin{align*}
&  A_{2,2}\left(  x_{1},x_{2},y\right)  \leqslant\\
&  \leqslant c\int_{U,d\left(  x_{1},z\right)  <2d\left(  x_{1},x_{2}\right)
,d\left(  y,z\right)  >\frac{1}{2}d\left(  x_{1},y\right)  }\phi_{0}\left(
x_{1},z\right)  d\left(  x_{1},z\right)  ^{\alpha-\varepsilon}\phi
_{\varepsilon}\left(  z,y\right)  dz\\
&  \leqslant cd\left(  x_{1},x_{2}\right)  ^{\alpha-2\varepsilon}%
\int_{U,d\left(  x_{1},z\right)  <2d\left(  x_{1},x_{2}\right)  ,d\left(
y,z\right)  >\frac{1}{2}d\left(  x_{1},y\right)  }d\left(  x_{1},z\right)
^{\varepsilon}\phi_{0}\left(  x_{1},z\right)  \phi_{\varepsilon}\left(
z,y\right)  dz\\
&  \leqslant cd\left(  x_{1},x_{2}\right)  ^{\alpha-2\varepsilon}\int_{U}%
\phi_{\varepsilon}\left(  x_{1},z\right)  \phi_{\varepsilon}\left(
z,y\right)  dz\\
&  \leqslant cd\left(  x_{1},x_{2}\right)  ^{\alpha-2\varepsilon}%
\phi_{2\varepsilon}\left(  x_{1},y\right)  \leqslant c\left(  \frac{d\left(
x_{1},x_{2}\right)  }{d\left(  x_{1},y\right)  }\right)  ^{\alpha
-2\varepsilon}\frac{d\left(  x_{1},y\right)  ^{\alpha}}{\left\vert B\left(
x_{1},d\left(  x_{1},y\right)  \right)  \right\vert }.
\end{align*}
We can conclude that%
\[
\left\vert A\left(  x_{2},y\right)  -A\left(  x_{1},y\right)  \right\vert
\leqslant c\left(  \frac{d\left(  x_{1},x_{2}\right)  }{d\left(
x_{1},y\right)  }\right)  ^{\alpha-2\varepsilon}\frac{d\left(  x_{1},y\right)
^{\alpha-\varepsilon}}{\left\vert B\left(  x_{1},d\left(  x_{1},y\right)
\right)  \right\vert }.
\]
This completes the proof of (\ref{standard J}).
\end{proof}

\subsection{Local solvability and H\"{o}lder estimates on the highest
derivatives of the solution\label{sec highest}}

Throughout this section we keep Assumptions B, stated at the beginning of
\S \ref{sec:further regularity}. We can now prove one of the main results in
this paper:

\begin{theorem}
[Local solvability of $L$]\label{ThLocalSolv}Under Assumptions B, the function
$\gamma$ is a solution to the equation%
\[
L\gamma\left(  \cdot,y\right)  =0\text{ in }U\setminus\left\{  y\right\}
\text{, for any }y\in U.
\]
Moreover, for any $\beta>0,$ $f\in C_{X}^{\beta}\left(  U\right)  $, the
function%
\begin{equation}
w\left(  x\right)  =-\int_{U}\gamma\left(  x,y\right)  f\left(  y\right)  dy
\label{u}%
\end{equation}
is a $C_{X}^{2}\left(  U\right)  $ solution to the equation $Lw=f$ in $U$ (in
the sense of Definition \ref{def solution})$.$ Hence the operator $L$ is
locally solvable\emph{ }in $\Omega$.

Moreover, if $X_{0}\equiv0$, choosing $U$ small enough, we have the following
positivity property: if $f\in C_{X}^{\beta}\left(  U\right)  ,f\leqslant0$ in
$U$, then the equation $Lw=f$ has at least a $C_{X}^{2}\left(  U\right)  $
solution $w\geqslant0$ in $U$.
\end{theorem}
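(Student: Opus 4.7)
The plan is to combine the regularity of $\gamma$ off the diagonal (Theorem \ref{Thm XXgamma}) with the weak identity (\ref{gam 4}) to obtain the classical equation $L\gamma(\cdot,y)=0$ away from $y$, and then to mimic the classical Levi scheme to produce a $C_X^2$ solution $w$ to $Lw=f$, using the splitting $\gamma=(P+J')/c_0(y)$ already developed.

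First I would establish $L\gamma(x,y)=0$ pointwise for $x\neq y$. Given $\psi\in C_0^\infty(U\setminus\{y\})$, we have $\psi(y)=0$, so (\ref{gam 4}) reduces to $\int_U\gamma(x,y)L^*\psi(x)\,dx=0$. Under Assumptions B the coefficients of the $X_i$ lie in $C^{r,\alpha}$ with $r\geqslant 2$ (and $\alpha=1$ when $r=2$), hence are at least $C^2$, and by Theorem \ref{Thm XXgamma} $\gamma(\cdot,y)\in C_X^2(U\setminus\{y\})$; so integration by parts is legitimate and yields $\int_U L\gamma(x,y)\psi(x)\,dx=0$ for every such $\psi$. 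Continuity of $L\gamma(\cdot,y)$ off the diagonal then forces $L\gamma(x,y)=0$ there.

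Next I would analyze $w(x)=-\int_U\gamma(x,y)f(y)\,dy$. Set $g(y)=f(y)/c_0(y)$, still H\"{o}lder continuous with exponent $\beta'=\min(\alpha,\beta)$ since $c_0\in C^\alpha$ is bounded and bounded away from zero; write $w=w_P+w_J$ with $w_P(x)=-\int_U g(y)P(x,y)\,dy$ and $w_J(x)=-\int_U g(y)J'(x,y)\,dy$. Continuity of $w$ and of $X_iw(x)=-\int X_i\gamma(x,y)f(y)\,dy$ follow from the integrable bounds $|\gamma|\leqslant c\phi_2$, $|X_i\gamma|\leqslant c\phi_1$ of Propositions \ref{Prop P} and \ref{Prop stime P J}. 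The $J'$-piece of the second derivatives is easy: Theorem \ref{Thm XXgamma} provides $|X_jX_iJ'|,|X_0J'|\leqslant cR^\varepsilon d(x,y)^{\alpha-\varepsilon}/|B(x,d(x,y))|$, which is integrable in $y$, so one differentiates under the integral sign to obtain $X_jX_iw_J(x)=-\int g(y)X_jX_iJ'(x,y)\,dy$ (continuous in $x$ by Proposition \ref{Lemma joint continuity}), and similarly for $X_0w_J$. The delicate point, and the main obstacle, is $X_jX_iw_P$: since $|X_jX_iP|\leqslant c\phi_0$ is not integrable, naive differentiation under the integral fails. I would follow the scheme of Theorem \ref{Thm Big J_0}, regularizing $P$ by replacing $\Gamma$ with $\omega_\delta\Gamma$, differentiating the resulting smooth kernel, and passing to the limit $\delta\to 0$ after the algebraic splitting $g(y)=(g(y)-g(x))+g(x)$. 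The H\"{o}lder difference supplies the missing decay, since $|X_jX_iP(x,y)|\,|g(y)-g(x)|\leqslant c\phi_0(x,y)d(x,y)^{\beta'}\leqslant c\phi_{\beta'}(x,y)$ is integrable; the ``constant'' part $g(x)$ contributes a multiplicative term $-C(x)g(x)$ with $C\in C_{X,loc}^\alpha(U)$, plus a type-$2$ remainder controlled by Theorem \ref{Thm Abstract k_l}, exactly as in the proof of Theorem \ref{Thm Big J_0}. The same regularization handles $X_0w_P$, and altogether this shows $w\in C_X^2(U)$.

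Once $w\in C_X^2(U)$, the identity $Lw=f$ follows by Fubini: for every $\psi\in C_0^\infty(U)$,
\[
\int_U w(x)L^*\psi(x)\,dx=-\int_U f(y)\left(\int_U\gamma(x,y)L^*\psi(x)\,dx\right)dy=\int_U f(y)\psi(y)\,dy
\]
by (\ref{gam 4}), so $Lw=f$ in the distributional sense; since $w\in C_X^2$, $f$ is continuous, and the coefficients are smooth enough to justify integration by parts back, this coincides with the classical pointwise identity $Lw(x)=f(x)$, establishing local solvability. For the positivity statement, Theorem \ref{Thm gamma} (formula (\ref{gam 3})) gives $\gamma(x,y)>0$ whenever $d(x,y)<\varepsilon$; after shrinking $U$ so that its $d$-diameter is less than $\varepsilon$, one has $\gamma>0$ on $U\times U\setminus\Delta$, whence $w(x)=\int_U\gamma(x,y)[-f(y)]\,dy\geqslant 0$ whenever $f\leqslant 0$.
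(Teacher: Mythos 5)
Your proposal is correct and follows essentially the same route as the paper: the weak identity (\ref{gam 4}) plus the $C_X^2$ regularity of $\gamma(\cdot,y)$ off the diagonal gives $L\gamma(\cdot,y)=0$ pointwise, the splitting of $w$ through $\widetilde f=f/c_0$ into the $P$- and $J'$-parts with differentiation under the integral for the $J'$-part and the Theorem \ref{Thm Big J_0}-type argument (with the splitting $\widetilde f(z)-\widetilde f(x)$ plus the multiplicative term and type-$2$ remainder) for the $P$-part gives $w\in C_X^2(U)$, and then Fubini together with (\ref{gam 4}) and the continuity of $Lw$ yields $Lw=f$ classically, with positivity from (\ref{gam 3}) after shrinking $U$. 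This is exactly the structure of the paper's proof, so no further comment is needed.
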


\begin{proof}
By Theorem \ref{Thm gamma} and Theorem \ref{Thm XXgamma} we already know that
$\gamma\left(  \cdot,y\right)  \in C_{X}^{2}\left(  U\setminus\left\{
y\right\}  \right)  .$ For fixed $y\in U$ and $r>0,$ let $\omega\in
C_{0}^{\infty}\left(  U\right)  ,$ $\omega$ vanishing in the ball $B\left(
y,r\right)  $. Then, by Theorem \ref{Thm gamma} we have%
\[
0=\int\gamma\left(  x,y\right)  L^{\ast}\omega\left(  x\right)  dx=\int
L\gamma\left(  x,y\right)  \omega\left(  x\right)  dx
\]
with $L\gamma\left(  \cdot,y\right)  $ continuous in the support of $\omega$.
Since $r$ and $\omega$ are arbitrary, we get $L\gamma\left(  x,y\right)  =0$
for every $x\in U\setminus\left\{  y\right\}  $, any $y\in U.$

Let now $w$ be as in (\ref{u}) for some $f\in C^{\beta}\left(  U\right)  ,$
$\beta>0$; for any $\psi\in C_{0}^{\infty}\left(  U\right)  $ we can write, by
Theorem \ref{Thm gamma},%
\begin{align}
\int_{U}w\left(  x\right)  L^{\ast}\psi\left(  x\right)  dx  &  =\int
_{U}\left(  -\int_{U}\gamma\left(  x,y\right)  f\left(  y\right)  dy\right)
L^{\ast}\psi\left(  x\right)  dx\nonumber\\
&  =-\int_{U}\left(  \int_{U}\gamma\left(  x,y\right)  L^{\ast}\psi\left(
x\right)  dx\right)  f\left(  y\right)  dy\nonumber\\
&  =\int_{U}\psi\left(  y\right)  f\left(  y\right)  dy. \label{wL*}%
\end{align}
Hence if we show that $Lw$ actually exists and is continuous in $U$, we can
write
\[
\int_{U}w\left(  x\right)  L^{\ast}\psi\left(  x\right)  dx=\int_{U}Lw\left(
x\right)  \psi\left(  x\right)  dx\text{ }\forall\psi\in C_{0}^{\infty}\left(
U\right)  ,
\]
which coupled with (\ref{wL*}) gives $Lw=f$. Actually, we will prove that
$w\in C_{X}^{2}\left(  U\right)  $.

By the results in \S \ref{sec parametrix} it is easy to see that $w\in
C_{X}^{1}\left(  U\right)  $. Namely, by Proposition
\ref{Lemma joint continuity} (ii), $w\in C\left(  U\right)  $ by the estimate
(\ref{gam 1}) while
\[
X_{i}w\left(  x\right)  =-\int_{U}X_{i}\gamma\left(  x,y\right)  f\left(
y\right)  dy
\]
is continuous in $U$ by the estimate (\ref{gam 2}).

Let us write:%
\begin{align*}
X_{j}X_{i}w\left(  x\right)   &  =-X_{j}X_{i}\int_{U}\gamma\left(  x,y\right)
f\left(  y\right)  dy=\\
&  =-X_{j}X_{i}\int_{U}\frac{1}{c_{0}\left(  y\right)  }\left[  P\left(
x,y\right)  +J^{\prime}\left(  x,y\right)  \right]  f\left(  y\right)
dy\equiv A\left(  x\right)  +B\left(  x\right)  .
\end{align*}
By Theorem \ref{Thm XXgamma} we can write%
\begin{equation}
B\left(  x\right)  =-\int_{U}X_{j}X_{i}J^{\prime}\left(  x,y\right)
\widetilde{f}\left(  y\right)  dy, \label{B(x)}%
\end{equation}
having set%
\begin{equation}
\widetilde{f}\left(  y\right)  =\frac{f\left(  y\right)  }{c_{0}\left(
y\right)  } \label{def ftilde}%
\end{equation}
and again by Proposition \ref{Lemma joint continuity} (ii), and the bound
(\ref{Stima XXgamma}), $B$ is continuous in $U$.

Let us now consider%
\begin{equation}
A\left(  x\right)  =-X_{j}X_{i}\int_{U}P\left(  x,y\right)  \widetilde
{f}\left(  y\right)  dy. \label{A(x)}%
\end{equation}
From the computation in the proof of Theorem \ref{Thm Big J_0} we read that%
\[
-X_{i}P\left(  x,y\right)  =k_{1}\left(  x,y\right)
\]
with $k_{1}\left(  x,y\right)  $ kernel of type $1$ in the sense of Definition
\ref{Def abstract k_l}, hence%
\begin{equation}
A\left(  x\right)  =X_{j}\int_{U}k_{1}\left(  x,y\right)  \widetilde{f}\left(
y\right)  dy \label{A(x) int}%
\end{equation}
where the function $\widetilde{f}$ is H\"{o}lder continuous in $U$. To show
that $A\left(  x\right)  $ exists and is continuous we can now proceed as we
did in the proof of Theorem \ref{Thm Big J_0} for the term $X_{j}B\left(
x,y\right)  $, getting, analogously to (\ref{Big J_0 1}) and with the same
notation,
\begin{align*}
A\left(  x\right)   &  =\int_{\mathbb{R}^{m}}a_{0}\left(  h\right)
\int_{\Sigma}Y_{j}D_{1}\Gamma\left(  \Theta_{\eta}\left(  \xi\right)  \right)
b_{0}\left(  k\right)  \left[  \widetilde{f}\left(  z\right)  -\widetilde
{f}\left(  x\right)  \right]  d\eta dh\\
&  +c_{1}\left(  x\right)  \widetilde{f}\left(  x\right)  +\int_{U}%
R_{2}\left(  x,z\right)  \widetilde{f}\left(  z\right)  dz
\end{align*}
where $\xi=\left(  x,h\right)  $, $\eta=\left(  z,k\right)  $, $\Sigma=U\times
I$, $I\subset\mathbb{R}^{m}$ such that $I\supset\operatorname{sprt}a_{0}%
\cup\operatorname{sprt}b_{0}$. Note that here $\widetilde{f}$ plays the role
of the function $\Phi_{0}\left(  \cdot,y\right)  $ in the proof of Theorem
\ref{Thm Big J_0}; since $\widetilde{f}\in C_{X}^{\beta}\left(  U\right)  $
for some $\beta>0,$ it obviously satisfies the properties required in the
definition of $\Phi_{0}\left(  \cdot,y\right)  $. Hence%
\begin{align*}
X_{j}X_{i}w\left(  x\right)   &  =\int_{\mathbb{R}^{m}}a_{0}\left(  h\right)
\int_{\Sigma}Y_{j}D_{1}\Gamma\left(  \Theta_{\eta}\left(  \xi\right)  \right)
b_{0}\left(  k\right)  \left[  \widetilde{f}\left(  z\right)  -\widetilde
{f}\left(  x\right)  \right]  d\eta dh\\
&  +c_{1}\left(  x\right)  \widetilde{f}\left(  x\right)  +\int_{U}%
R_{2}\left(  x,z\right)  \widetilde{f}\left(  z\right)  dz-\int_{U}X_{j}%
X_{i}J^{\prime}\left(  x,z\right)  \widetilde{f}\left(  z\right)  dz,
\end{align*}
and this function is continuous in $U$.

To complete the proof we should prove the existence and continuity of%
\[
X_{0}\int_{U}P\left(  x,z\right)  \widetilde{f}\left(  z\right)  dz.
\]
However, this is very similar to what we have just done.

Finally, the positivity property of $L$ when $X_{0}\equiv0$ and $U$ is small
enough immediately follows from (\ref{u}) and (\ref{gam 3}). So we have finished.
\end{proof}

\bigskip

From the proof of the above theorem we read in particular a representation
formula for the second derivatives $X_{i}X_{j}w$ of our solution. In view of
the proof of local H\"{o}lder continuity of $X_{i}X_{j}w$, we have to localize
our representation formula.

For $\overline{x}\in U\ $and $B\left(  \overline{x},R\right)  \subset U$, pick
a cutoff function%
\begin{equation}
b\in C_{0}^{\infty}\left(  B\left(  \overline{x},R\right)  \right)  \text{
such that }b=1\text{ in }B\left(  \overline{x},\frac{3}{4}R\right)  \text{.}
\label{cutoff 2}%
\end{equation}

For any $\beta>0$, $f\in C_{X}^{\beta}\left(  U\right)  $, let $w$ be the
solution to $Lw=f$ in $U$ assigned by (\ref{u}). Then, for any $x\in B\left(
\overline{x},R\right)  $ we can write:%
\begin{equation}
w\left(  x\right)  =-\int_{B\left(  \overline{x},R\right)  }\gamma\left(
x,y\right)  b\left(  y\right)  f\left(  y\right)  dy+\int_{U}\gamma\left(
x,y\right)  \left[  b\left(  y\right)  -1\right]  f\left(  y\right)  dy.
\label{defw}%
\end{equation}
We also have:

\begin{corollary}
\label{Corollary rep formula}With the notation and assumptions just recalled,
for every $x\in B\left(  \overline{x},\frac{R}{2}\right)  $ and
$i,j=1,2,...,n,$ we have:%
\begin{align*}
X_{j}X_{i}w\left(  x\right)   &  =\int_{U}X_{i}X_{j}\gamma\left(  x,y\right)
\left[  b\left(  y\right)  -1\right]  f\left(  y\right)  dy+c_{1}\left(
x\right)  \widetilde{f}\left(  x\right) \\
&  +\int_{B\left(  \overline{x},R\right)  }k_{2}\left(  x,z\right)  \left[
\widetilde{f}\left(  z\right)  -\widetilde{f}\left(  x\right)  \right]
b\left(  z\right)  dz\\
&  +\int_{B\left(  \overline{x},R\right)  }R_{2}\left(  x,z\right)  b\left(
z\right)  \widetilde{f}\left(  z\right)  dz-\int_{B\left(  \overline
{x},R\right)  }X_{j}X_{i}J^{\prime}\left(  x,z\right)  b\left(  z\right)
\widetilde{f}\left(  z\right)  dz\\
&  \equiv\sum_{k=1}^{5}T_{k}f\left(  x\right)  ,
\end{align*}
where $c_{1}\in C_{X}^{\alpha}\left(  B\left(  \overline{x},\frac{R}%
{2}\right)  \right)  $, $k_{2}$ and $R_{2}$ are a pure kernel and a remainder
of type $2$, respectively, in the sense of Definition \ref{Def abstract k_l}
and $\widetilde{f}$ is defined in (\ref{def ftilde}).

\end{corollary}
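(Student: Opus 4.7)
The plan is to rerun the derivation of the representation formula for $X_jX_iw$ obtained in the proof of Theorem \ref{ThLocalSolv}, but this time starting from the localized splitting of $w$ given in (\ref{defw}). Write $w = w_1 + w_2$, where $w_1(x) = -\int_{B(\overline{x},R)} \gamma(x,y) b(y) f(y)\,dy$ is the localized contribution and $w_2(x) = \int_U \gamma(x,y)[b(y) - 1] f(y)\,dy$ is the far-field contribution. For $x \in B(\overline{x}, R/2)$ the $y$-support of $b(y)-1$ lies outside $B(\overline{x}, 3R/4)$, so $d(x,y) \geqslant R/4$ on the integrand's support. Since $X_jX_i\gamma(x,y)$ is jointly continuous off the diagonal and locally bounded there (Theorem \ref{Thm XXgamma}), two successive applications of Lemma \ref{Lemma derivata classica} (via regularization and uniform convergence of difference quotients) justify differentiating twice under the integral sign, yielding $X_jX_iw_2(x) = \int_U X_iX_j\gamma(x,y)[b(y)-1]f(y)\,dy = T_1f(x)$.

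For the local contribution $w_1$, I decompose $\gamma(x,y) = [P(x,y) + J'(x,y)]/c_0(y)$ and treat the two pieces separately. For the $J'$-piece, the bound on $X_jX_iJ'$ from Theorem \ref{Thm XXgamma} combined with the boundedness of $b\widetilde{f}$ permits differentiation under the integral, giving $T_5f(x)$. For the $P$-piece I replay verbatim the argument of the proof of Theorem \ref{ThLocalSolv} (which is itself modeled on Theorem \ref{Thm Big J_0}) with the H\"older function $b\widetilde{f}$ playing the role of $\widetilde{f}$. Writing $-X_iP = k_1$ with $k_1$ a kernel of type $1$, and splitting $k_1$ into its main (pure kernel) part plus a remainder $R_1$ as in Definition \ref{Def abstract k_l}, the $R_1$-contribution after another $X_j$ derivative gives the remainder term $T_4f(x)$, since $X_jR_1$ is a remainder $R_2$ of type $2$. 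The main part, by the cutoff-and-limit argument of Theorem \ref{Thm Big J_0}, produces
\[
\int_{B(\overline{x},R)} k_2(x,z)\bigl[b(z)\widetilde{f}(z) - b(x)\widetilde{f}(x)\bigr]\,dz + C(x)\,b(x)\widetilde{f}(x),
\]
where $k_2$ is a pure kernel of type $2$ and $C \in C_{X,loc}^{\alpha}(U)$.

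The final step, and the only real bookkeeping point, is to bring this expression into the form stated in the corollary. Since $x \in B(\overline{x}, R/2)$ forces $b(x) = 1$, I rewrite the singular integral above as
\[
\int k_2(x,z)[\widetilde{f}(z) - \widetilde{f}(x)]\,b(z)\,dz \;+\; \widetilde{f}(x)\int k_2(x,z)[b(z) - 1]\,dz \;=\; T_3f(x) + \widetilde{f}(x)\,c_2(x).
\]
The auxiliary function $c_2(x) := \int k_2(x,z)[b(z)-1]\,dz$ is in fact smooth (hence $C_X^{\alpha}$) on $B(\overline{x}, R/2)$, since on the $z$-support of $b-1$ we have $d(x,z)\geqslant R/4$, where $k_2(x,z)$ depends smoothly on $x$ by smoothness of $\Gamma$ away from the origin and smooth dependence of $\Theta_{(z,k)}(x,h)$ on $x$. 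Setting $c_1(x) := C(x) + c_2(x) \in C_X^{\alpha}(B(\overline{x}, R/2))$ then produces the $T_2$-term $c_1(x)\widetilde{f}(x)$, and collecting $T_1,\dots,T_5$ yields the stated representation. The main (mild) obstacle is ensuring the H\"older regularity of $c_1$ on the smaller ball, which reduces to the already-established regularity of $C$ in Theorem \ref{Thm Big J_0} and the trivial smoothness of $c_2$ noted above.
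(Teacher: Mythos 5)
Your proposal is correct and follows essentially the same route as the paper's proof: the same split of $w$ into the localized and far-field parts given by (\ref{defw}), the same replay of the Theorem \ref{Thm Big J_0} argument with $b\widetilde{f}$ in place of $\widetilde{f}$, and the same splitting of the singular term using $b(x)=1$ on $B\left(\overline{x},\frac{R}{2}\right)$, with $c_{1}$ absorbing the extra term $c_{2}(x)=\int k_{2}(x,z)\left[b(z)-1\right]dz$. The only (harmless) deviation is your justification that $c_{2}\in C_{X}^{\alpha}$: you invoke smoothness of $k_{2}(\cdot,z)$ away from the diagonal, uniformly in $z$, whereas the paper obtains the same conclusion from the standard kernel estimates (\ref{standard k_2}) combined with Proposition \ref{Prop Lagrange}; both arguments rest on the same observation that $d(x,z)\geqslant R/4$ on the support of $b-1$.
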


\begin{proof}
Let us write
\begin{align*}
w\left(  x\right)   &  =-\int_{B\left(  \overline{x},R\right)  }\gamma\left(
x,y\right)  b\left(  y\right)  f\left(  y\right)  dy+\int_{U}\gamma\left(
x,y\right)  \left[  b\left(  y\right)  -1\right]  f\left(  y\right)  dy\\
&  \equiv K_{1}f\left(  x\right)  +K_{2}f\left(  x\right)  .
\end{align*}
Note that for $x\in B\left(  \overline{x},R/2\right)  $ the integral defining
$K_{2}f\left(  x\right)  $ can be freely differentiated since $\left[
b\left(  y\right)  -1\right]  \neq0$ only if $d\left(  x,y\right)  \geqslant
R/4$, so%
\[
X_{i}X_{j}K_{2}f(x)=\int_{U}X_{i}X_{j}\gamma\left(  x,y\right)  \left[
b\left(  y\right)  -1\right]  f\left(  y\right)  dy.
\]
Arguing as in the proof of Theorems \ref{ThLocalSolv} and \ref{Thm Big J_0} we
have therefore (with $\eta=\left(  z,k\right)  $, $\xi=\left(  x,h\right)  $,
$\Sigma=U\times I$ for $I\supset\operatorname{sprt}a_{0}\cup
\operatorname{sprt}b_{0}$)%
\begin{align*}
X_{j}X_{i}w\left(  x\right)   &  =\int_{U}X_{i}X_{j}\gamma\left(  x,y\right)
\left[  b\left(  y\right)  -1\right]  f\left(  y\right)  dy+c_{1}\left(
x\right)  \widetilde{f}\left(  x\right) \\
&  +\int_{\mathbb{R}^{m}}a_{0}\left(  h\right)  \int_{\Sigma}Y_{j}D_{1}%
\Gamma\left(  \Theta_{\eta}\left(  \xi\right)  \right)  \left[  \widetilde
{f}\left(  z\right)  b\left(  z\right)  -\widetilde{f}\left(  x\right)
b\left(  x\right)  \right]  b_{0}\left(  k\right)  d\eta dh\\
&  +\int_{B\left(  \overline{x},R\right)  }R_{2}\left(  x,z\right)  b\left(
z\right)  \widetilde{f}\left(  z\right)  dz-\int_{B\left(  \overline
{x},R\right)  }X_{j}X_{i}J^{\prime}\left(  x,z\right)  b\left(  z\right)
\widetilde{f}\left(  z\right)  dz.
\end{align*}
Let us rewrite the third term as%
\begin{align*}
&  \int_{\mathbb{R}^{m}}a_{0}\left(  h\right)  \int_{\Sigma}Y_{j}D_{1}%
\Gamma\left(  \Theta_{\eta}\left(  \xi\right)  \right)  \left[  \widetilde
{f}\left(  z\right)  -\widetilde{f}\left(  x\right)  \right]  b_{0}\left(
k\right)  b\left(  z\right)  d\eta dh\\
&  +\widetilde{f}\left(  x\right)  \int_{\mathbb{R}^{m}}a_{0}\left(  h\right)
\int_{\Sigma}Y_{j}D_{1}\Gamma\left(  \Theta_{\eta}\left(  \xi\right)  \right)
\left[  b\left(  z\right)  -b\left(  x\right)  \right]  b_{0}\left(  k\right)
d\eta dh\\
&  =\int_{B\left(  \overline{x},R\right)  }k_{2}\left(  x,z\right)  \left[
\widetilde{f}\left(  z\right)  -\widetilde{f}\left(  x\right)  \right]
b\left(  z\right)  dz\\
&  +\widetilde{f}\left(  x\right)  c_{2}\left(  x\right)
\end{align*}
where $k_{2}$ is a kernel of type 2, while%
\begin{align*}
c_{2}\left(  x\right)   &  =\int_{U}k_{2}\left(  x,z\right)  \left[  b\left(
z\right)  -b\left(  x\right)  \right]  dz\\
&  =\int_{U}k_{2}\left(  x,z\right)  \left[  b\left(  z\right)  -1\right]  dz
\end{align*}
is another $C_{X}^{\alpha}\left(  B\left(  \overline{x},\frac{R}{2}\right)
\right)  $ function. Namely, recalling that $b=1$ in $B\left(  \overline
{x},\frac{3}{4}R\right)  ,$ for any $x_{1},x_{2}\in B\left(  \overline
{x},R/2\right)  $, we have%
\begin{equation}
\left\vert c_{2}\left(  x_{2}\right)  -c_{2}\left(  x_{1}\right)  \right\vert
\leqslant\int_{U}\left\vert k_{2}\left(  x_{2},z\right)  -k_{2}\left(
x_{1},z\right)  \right\vert \left[  1-b\left(  z\right)  \right]  dz.
\label{c_2 holder}%
\end{equation}
Note that, from%
\[
k_{2}\left(  x,y\right)  =\int_{\mathbb{R}^{m}}\int_{\mathbb{R}^{m}}%
D_{2}\Gamma\left(  \Theta_{\left(  y,k\right)  }\left(  x,h\right)  \right)
a_{0}\left(  h\right)  b_{0}\left(  k\right)  dhdk,
\]
by Proposition \ref{Prop bad theta} (ii) we read that%
\begin{align}
\left\vert k_{2}\left(  x,y\right)  \right\vert  &  \leqslant c\phi_{0}\left(
x,y\right)  ;\label{standard 1 k_2}\\
\left\vert X_{i}k_{2}\left(  x,y\right)  \right\vert  &  \leqslant c\phi
_{-1}\left(  x,y\right)  \text{ for }i=1,2,...,n;\nonumber\\
\left\vert X_{0}k_{2}\left(  x,y\right)  \right\vert  &  \leqslant c\phi
_{-2}\left(  x,y\right)  ,\nonumber
\end{align}
hence by Lagrange theorem (Proposition \ref{Prop Lagrange}),
\begin{equation}
\left\vert k_{2}\left(  x_{2},z\right)  -k_{2}\left(  x_{1},z\right)
\right\vert \leqslant c\frac{d\left(  x_{1},x_{2}\right)  }{d\left(
x_{1},z\right)  }\frac{1}{\left\vert B\left(  x_{1},d\left(  x_{1},z\right)
\right)  \right\vert }\text{ for }d\left(  x_{1},z\right)  \geqslant2d\left(
x_{1},x_{2}\right)  . \label{standard k_2}%
\end{equation}
Now, note that the integrand function in (\ref{c_2 holder}) does not vanish
only for $d\left(  x_{1},z\right)  \geqslant R/4,$ $d\left(  x_{2},z\right)
\geqslant R/4.$ Hence if $d\left(  x_{1},x_{2}\right)  \leqslant R/8$ by
(\ref{standard k_2}) we get%
\[
\left\vert c_{2}\left(  x_{2}\right)  -c_{2}\left(  x_{1}\right)  \right\vert
\leqslant c\left(  R\right)  d\left(  x_{1},x_{2}\right)  .
\]
On the other hand, if $d\left(  x_{1},x_{2}\right)  >R/8,$%
\[
\left\vert c_{2}\left(  x_{2}\right)  -c_{2}\left(  x_{1}\right)  \right\vert
\leqslant\left\vert c_{2}\left(  x_{2}\right)  \right\vert +\left\vert
c_{2}\left(  x_{1}\right)  \right\vert \leqslant c\left(  R\right)  \leqslant
c\left(  R\right)  d\left(  x_{1},x_{2}\right)  ,
\]
and $c_{2}\in C_{X}^{\alpha}\left(  B\left(  \overline{x},\frac{R}{2}\right)
\right)  $. This completes the proof.
\end{proof}

The rest of this section will be devoted to the proof of the following:

\begin{theorem}
\label{Thm holder w}For any $\beta\in\left(  0,\alpha\right)  \ $and $f\in
C_{X}^{\beta}\left(  U\right)  $, let $w\in C_{X}^{2}\left(  U\right)  $ be
the solution to $Lw=f$ in $U$ assigned by (\ref{u}).\ Then $w\in
C_{X,loc}^{2,\beta}\left(  U\right)  $. More precisely, for any $U^{\prime
}\Subset U$ there exists $c>0$ (depending on $U$, $U^{\prime}$, $\beta$ and on
the vector fields as specified at the beginning of section
\ref{sec:further regularity}) such that%
\begin{equation}
\left\Vert w\right\Vert _{C_{X}^{2,\beta}\left(  U^{\prime}\right)  }\leqslant
c\left\Vert f\right\Vert _{C_{X}^{\beta}\left(  U\right)  }.
\label{schauder compact}%
\end{equation}

\end{theorem}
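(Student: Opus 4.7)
The strategy is to exploit the representation formula of Corollary \ref{Corollary rep formula}, which already expresses $X_iX_jw(x)$ as a sum of five explicit pieces
\[
X_iX_jw(x)=T_1f(x)+T_2f(x)+T_3f(x)+T_4f(x)+T_5f(x),
\]
and to prove that each $T_k$ sends $C_X^\beta(U)$ continuously into $C_X^\beta(U')$ for every $U'\Subset U$. Boundedness in the sup norm is already contained in the proof of Theorem \ref{ThLocalSolv}, so the real work is the H\"older seminorm estimate; together with the $C_X^1(U)$ control on $w$ and $X_iw$ already known from \S 4, this will give (\ref{schauder compact}). The plan is to handle $T_1,T_2,T_4,T_5$ by essentially elementary arguments and to concentrate the hard work on the singular integral $T_3$.

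The easy terms are $T_1$ and $T_2$. For $T_1f(x)=\int_U X_iX_j\gamma(x,y)[b(y)-1]f(y)\,dy$, the factor $b(y)-1$ vanishes on $B(\overline x,3R/4)$, so for $x\in B(\overline x,R/2)$ the integrand is smooth in $x$ with $d(x,y)\geqslant R/4$; a straightforward application of Proposition \ref{Prop Lagrange} together with the pointwise bounds on $X_kX_iX_j\gamma$ and $X_0X_iX_j\gamma$ (obtained from the same kernel-of-type-$\ell$ calculus used in Theorem \ref{Thm schauder fundam}) gives even Lipschitz control on $T_1f$ with norm bounded by $\|f\|_\infty$. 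For $T_2f(x)=c_1(x)\widetilde f(x)$, Corollary \ref{Corollary rep formula} tells us $c_1\in C_X^\alpha$ and $\widetilde f=f/c_0\in C_X^\beta$ since $c_0$ is in $C^\alpha$ and bounded away from zero; the product lies in $C_X^\beta(U')$ with the required bound because $\beta<\alpha$.

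The fractional-type terms $T_4$ and $T_5$ are only slightly harder. By Theorem \ref{Thm Abstract k_l} the remainder $R_2$ satisfies $|R_2(x,z)|\leqslant c\phi_\alpha(x,z)$, and Lagrange's Proposition \ref{Prop Lagrange}, applied on the ball $B(x_1,\tfrac12 d(x_1,z))$ to $R_2(\cdot,z)$ together with the kernel estimates $|X_kR_2|\leqslant c\phi_{\alpha-1},\ |X_0R_2|\leqslant c\phi_{\alpha-2}$, yields the standard-type estimate
\[
|R_2(x_1,z)-R_2(x_2,z)|\leqslant c\Bigl(\tfrac{d(x_1,x_2)}{d(x_1,z)}\Bigr)^{\alpha-\varepsilon}\tfrac{d(x_1,z)^{\alpha-\varepsilon}}{|B(x_1,d(x_1,z))|}
\]
for $d(x_1,z)\geqslant 2d(x_1,x_2)$. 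Splitting $T_4f(x_1)-T_4f(x_2)$ into the usual ``regular part'' (where $d(x_1,z)\geqslant 2d(x_1,x_2)$) and ``local part'' (where it is not) and using $|\widetilde f(z)|\leqslant\|\widetilde f\|_\infty$ together with Corollary \ref{coroll Psi beta} and Theorem \ref{Thm phi alfa} gives $C_X^{\alpha-\varepsilon}\subset C_X^\beta$ control on $T_4f$. The term $T_5$ is treated identically, using directly the growth estimate (\ref{Stima XXJ}) and the H\"older estimate (\ref{standard J}) on $X_iX_jJ'$ from Theorem \ref{Thm schauder fundam}; here the gain of $R^\varepsilon$ in (\ref{Stima XXJ}) only affects the constant.

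The heart of the proof, and the main obstacle, is the singular integral
\[
T_3f(x)=\int_{B(\overline x,R)}k_2(x,z)\bigl[\widetilde f(z)-\widetilde f(x)\bigr]b(z)\,dz
\]
with $k_2$ a pure kernel of type $2$. Unlike the previous terms, this involves a kernel which is critical for integrability, so the analysis cannot be reduced to size estimates alone. The strategy is to invoke the abstract theory of singular integrals in locally homogeneous spaces developed in \cite{BZ2}, which requires three inputs: (i) the standard size bound $|k_2(x,z)|\leqslant c\phi_0(x,z)\leqslant c/|B(x,d(x,z))|$ from Theorem \ref{Thm Abstract k_l}; (ii) the standard H\"older estimate $|k_2(x_1,z)-k_2(x_2,z)|\leqslant c(d(x_1,x_2)/d(x_1,z))^{\alpha-\varepsilon}/|B(x_1,d(x_1,z))|$ in the first variable, obtained from Proposition \ref{Prop Lagrange} and the type-$3$ kernel calculus exactly as in (\ref{standard 1 k_2})--(\ref{standard k_2}) of the proof of Corollary \ref{Corollary rep formula}; (iii) a cancellation property
\[
\Bigl|\int_{r_1<d(x,z)<r_2}k_2(x,z)\,dz\Bigr|\leqslant c\qquad\text{for every } 0<r_1<r_2\leqslant R,
\]
uniformly in $x\in U'$. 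I expect (iii) to be the most delicate step: it has to be extracted from the integral representation of $k_2$ via $\Gamma\circ\Theta_{\eta}$ by a change of variables $u=\Theta_{(z,k)}(x,h)$ and Theorem \ref{Thm liftapprox}(2) for the Jacobian, reducing the cancellation to the analogous (and known) cancellation of the second-order derivatives of the homogeneous $\Gamma$ on the group $\mathbb G$, plus a controllable error coming from the H\"older factor $\chi(\xi,u)$ of Proposition \ref{Prop bad theta}(iii). Once (i)--(iii) are established, the abstract theorem of \cite{BZ2}, together with a revision of the arguments of \cite{BB} as announced in the introduction, gives $\|T_3f\|_{C_X^\beta(U')}\leqslant c\|\widetilde f\|_{C_X^\beta(U)}$. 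Combining all five estimates and using that $\widetilde f=f/c_0$ is equivalent to $f$ in the $C_X^\beta$ norm yields (\ref{schauder compact}).
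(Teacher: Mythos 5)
Your plan follows the paper's proof essentially step by step: the same $T_1,\dots,T_5$ decomposition, the same treatment of the easy terms by Lagrange-type estimates on $\phi$-kernels, the same identification of the singular piece $T_3$ as the crux, and the same three ingredients (size, H\"older variation, cancellation) for applying the singular-integral theory of \cite{BZ2} and \cite{BB}. You also correctly anticipate that the cancellation property for $k_2$ is proved via the change of variables $u=\Theta_{(z,k)}(x,h)$, reducing matters to the vanishing mean of $D_2\Gamma$ on annuli plus a controllable error from the $C^\alpha$ factor $\chi(\xi,u)$ of Proposition \ref{Prop bad theta}(iii), which is exactly what the paper does.
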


\begin{corollary}
[$C_{X}^{2,\beta}$ local solvability]Under assumptions B, for every $\beta
\in\left(  0,\alpha\right)  $ the operator $L$ is locally $C_{X}^{2,\beta}$
solvable in $\Omega$ in the following senses:

(i) for every $\overline{x}\in\Omega$ there exists a neighborhood $U$ of
$\overline{x}$ such that for every $f\in C_{X}^{\beta}\left(  U\right)  $
there exists a solution $u\in C_{X,loc}^{2,\beta}\left(  U\right)  $ to $Lu=f$
in $U.$

(ii) for every $\overline{x}\in\Omega$ there exists a neighborhood $U$ of
$\overline{x}$ such that for every $f\in C_{X,0}^{\beta}\left(  U\right)  $
there exists a solution $u\in C_{X}^{2,\beta}\left(  U\right)  $ to $Lu=f$ in
$U.$
\end{corollary}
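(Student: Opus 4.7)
The plan is to start from the representation formula in Corollary \ref{Corollary rep formula}, $X_j X_i w(x) = \sum_{k=1}^5 T_k f(x)$, and prove that each $T_k$ maps $C_X^\beta(U)$ continuously into $C_X^\beta(U')$. Combined with routine estimates giving $w, X_i w \in C_X^\beta(U')$ directly from the bounds on $\gamma$ and $X_i\gamma$ in Theorems \ref{Thm gamma} and \ref{Thm schauder fundam}, and with the algebraic identity $X_0 w = f - \sum_i X_i^2 w$ for the drift term, this yields $w \in C_{X,loc}^{2,\beta}(U)$ and the quantitative estimate (\ref{schauder compact}).

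Four of the five terms are essentially routine. The term $T_1 f(x) = \int_U X_i X_j \gamma(x,y)[b(y)-1] f(y)\, dy$ involves an integrand that vanishes for $d(x,y) < R/4$ when $x\in B(\bar x,R/2)$, so a Lagrange-theorem argument using $|X_k X_i X_j \gamma| \leq c\phi_{-1}$ and $|X_0 X_i X_j\gamma|\leq c\phi_{-2}$ (exactly as in the proof of (\ref{standard P})) yields a uniform bound on the $C_X^{1}$-norm of $T_1 f$. The term $T_2 f(x) = c_1(x)\widetilde f(x)$ is a product of the $C_X^\alpha$ function $c_1$ and the $C_X^\beta$ function $\widetilde f = f/c_0$ (recall $c_0\in C^\alpha$ is bounded away from zero), hence belongs to $C_X^\beta$ since $\beta<\alpha$. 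The remainder $T_4 f$ and the "gamma-plus-correction" $T_5 f$ are fractional-integral type operators: the bound $|R_2| \leq c\phi_\alpha$ (with analogous estimates on $X_i R_2, X_0 R_2$ coming from Proposition \ref{Prop bad theta}(ii)) and (\ref{standard J}) put them in the framework where the standard splitting-near-and-far argument with respect to $d(x_1,x_2)$ (as used in \cite{BB}) gives the $C_X^\beta$ estimate. The lower order pieces $w$ and $X_i w$ are handled by the same splitting argument applied to the kernels $\gamma$ and $X_i\gamma$, whose size and Hölder regularity are provided by Theorems \ref{Thm gamma} and \ref{Thm schauder fundam}.

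The main obstacle is the singular integral term $T_3 f(x) = \int_{B(\bar x,R)} k_2(x,z)\bigl[\widetilde f(z) - \widetilde f(x)\bigr] b(z)\, dz$, where $k_2$ is a pure kernel of type $2$ with the critical size $|k_2(x,z)| \leq c/|B(x,d(x,z))|$ from (\ref{standard 1 k_2}). The plan here has three steps. First, show that $k_2$ is a "standard kernel" for the locally homogeneous space $(U,d,dx)$: the size bound follows from Theorem \ref{Thm Abstract k_l}, and the first-order Hölder smoothness
\[
|k_2(x_1,z) - k_2(x_2,z)| \leq c\Bigl(\tfrac{d(x_1,x_2)}{d(x_1,z)}\Bigr)^{\eta}\,\frac{1}{|B(x_1,d(x_1,z))|} \quad\text{for } d(x_1,z) \geq 2d(x_1,x_2)
\]
follows from (\ref{standard k_2}) via the Lagrange estimate of Proposition \ref{Prop Lagrange}. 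Second, establish the cancellation property
\[
\Bigl|\int_{r_1 < d(x,y) < r_2} k_2(x,y)\, dy\Bigr| \leq c \qquad \forall\, 0 < r_1 < r_2 \leq R,
\]
by performing the change of variables $\eta = (y,k) \mapsto u = \Theta_\eta(x,h)$ allowed by Proposition \ref{Prop bad theta}(iii) with Jacobian $c(\xi)(1+\chi(\xi,u))$: since $D_2 = Y_j D_1$ is a divergence of a left invariant homogeneous vector field of degree $1$, integration by parts on the spherical shell in $\mathbb{G}$ transfers the derivative onto the cut-offs (producing bounded boundary terms), while the error $\chi(\xi,u)$ contributes an absolutely convergent integral since $|\chi(\xi,u)|\leq c\|u\|^\alpha$. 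Third, with size, smoothness and cancellation in hand, invoke the abstract Hölder estimate for singular integrals on locally homogeneous spaces proved in \cite{BZ2} (applied to the truncated kernels and passed to the limit using the dominating difference structure $\widetilde f(z)-\widetilde f(x)$), or equivalently rerun the classical splitting argument, to conclude $\|T_3 f\|_{C_X^\beta(U')} \leq c\|\widetilde f\|_{C_X^\beta(U)} \leq c\|f\|_{C_X^\beta(U)}$. The hard part is step two, the cancellation property: unlike the flat Euclidean setting, the kernel $k_2$ is not genuinely left invariant, so cancellation is only recovered up to controlled Hölder perturbations, and the decomposition provided by Proposition \ref{Prop bad theta}(iii) is what makes this recovery quantitative.
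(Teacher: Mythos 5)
The main gap is that you never address point (ii). Your whole argument is an interior Schauder estimate, i.e.\ essentially a re-derivation of Theorem \ref{Thm holder w} (whose estimate (\ref{schauder compact}) you even quote), which together with the existence statement of Theorem \ref{ThLocalSolv} yields only point (i) --- and indeed this is exactly how the paper proves (i), in one line. Point (ii) asserts $u\in C_{X}^{2,\beta}\left(  U\right)  $ on the whole (new) neighborhood when $f\in C_{X,0}^{\beta}$, and this does not follow from the local estimate alone: one needs the shrinking/zero-extension argument of the paper --- let $U$ be the neighborhood of point (i), fix $U^{\prime}\Subset U$, regard $f\in C_{X,0}^{\beta}\left(  U^{\prime}\right)  $ as an element of $C_{X,0}^{\beta}\left(  U\right)  $, solve $Lu=f$ in $U$ obtaining $u\in C_{X,loc}^{2,\beta}\left(  U\right)  $ by (i), and note that $u\in C_{X}^{2,\beta}\left(  U^{\prime}\right)  $ precisely because $U^{\prime}\Subset U$; the neighborhood required in (ii) is then $U^{\prime}$, not $U$. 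Without this (or some substitute) half of the corollary is unproved.

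Within your sketch of (i) there is also a technical misstep: you bound $T_{1}$ by a Lagrange argument using $\left\vert X_{k}X_{i}X_{j}\gamma\right\vert \leqslant c\phi_{-1}$ and $\left\vert X_{0}X_{i}X_{j}\gamma\right\vert \leqslant c\phi_{-2}$. Third derivatives of $\gamma$ are not available: only the parametrix $P\left(  \cdot,y\right)  $ is smooth, whereas $\gamma$ contains $J^{\prime}$, for which already the second derivatives $X_{i}X_{j}J^{\prime}$ required the delicate argument of Theorem \ref{Thm Big J_0} (resting on the H\"{o}lder continuity of $\Phi^{\prime}$), and no third derivatives are established under Assumptions B; the Lagrange-type estimate you invoke is the one behind (\ref{standard P}), which applies to $P$ only. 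The correct and available tool for $T_{1}$ is the already proved H\"{o}lder bound (\ref{standard gamma}) combined with the size bound (\ref{Stima XXgamma}), which is what the paper uses. The remainder of your plan for (i) --- fractional-integral estimates for $T_{4},T_{5}$ via \cite{BZ2}, the cancellation property for the pure kernel $k_{2}$ via the change of variables of Proposition \ref{Prop bad theta} (iii) and the vanishing property of $D_{2}\Gamma$, and the singular-integral H\"{o}lder estimate along the lines of \cite{BB} --- follows the paper's proof of Theorem \ref{Thm holder w} and is sound, but it amounts to re-proving a theorem the corollary is meant simply to invoke.
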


\begin{proof}
Point (i) immediately follows by the above theorem and Theorem
\ref{ThLocalSolv}. As to point (ii), let $U$ be the neighborhood of
$\overline{x}$ given by point (i), and let $U^{\prime}$ be another
neighborhood of $\overline{x}$ such that $U^{\prime}\Subset U$. For any $f\in
C_{X,0}^{\beta}\left(  U^{\prime}\right)  $ we can regard $f$ also as a
function in $C_{X,0}^{\beta}\left(  U\right)  ,$ and solve $Lu=f$ in $U$
getting a $u\in C_{X,loc}^{2,\beta}\left(  U\right)  $ by point (i); hence in
particular $u\in C_{X}^{2,\beta}\left(  U^{\prime}\right)  $. Then $U^{\prime
}$ is the required neighborhood.
\end{proof}

Since, in order to prove the above theorem, we will apply several abstract
results about singular and fractional integrals, it is time to explain what is
the suitable abstract context for the present situation. Recall that in our
neighborhood $U$ we have the distance $d,$ such that the Lebesgue measure is
\emph{locally }doubling (see Theorem \ref{Thm doubling nonsmooth}). However,
we cannot assure the validity of a \emph{global} doubling condition in $U$,
which should mean:%
\begin{equation}
\left\vert B\left(  x,2r\right)  \cap U\right\vert \leqslant c\left\vert
B\left(  x,r\right)  \cap U\right\vert \text{ for any }x\in U,r>0\text{.}
\label{global doubling}%
\end{equation}
Actually, even for the Carnot-Carath\'{e}odory distance induced by smooth
H\"{o}rmander's vector fields, condition (\ref{global doubling}) is known when
$U$ is for instance a metric ball and the drift term $X_{0}$ is lacking; in
presence of a drift, however, the distance $d$ does not satisfy the segment
property, and the validity of a condition (\ref{global doubling}) on some
reasonable $U$ seems to be an open problem (fur further details on this issue
we refer to the introduction of \cite{BZ2}). This means that in our situation
$\left(  U,d,dx\right)  $ is not a space of homogeneous type in the sense of
Coifman-Weiss. However, $\left(  U,d,dx\right)  $ fits the assumptions of
\emph{locally homogeneous spaces }as defined in \cite{BZ2}. We will apply some
results proved in \cite{BZ2} which assure the local $C^{\alpha}$ continuity of
singular and fractional integrals defined by a kernel of the kind%
\[
a\left(  x\right)  k\left(  x,y\right)  b\left(  y\right)
\]
(with $a,b$ smooth cutoff functions) provided that the kernel $k$ satisfies
natural assumptions which never involve integration over domains of the kind
$B\left(  x,r\right)  \cap U$, but only over balls $B\left(  x,r\right)
\Subset U,$ which makes our local doubling condition usable. Before starting
the proof of the above theorem we need the following

\begin{definition}
We say that the a kernel $k\left(  x,y\right)  $ satisfies the standard
estimates of fractional integrals with (positive) exponents $\nu,\beta$ in
$B\left(  \overline{x},R\right)  $ if%
\[
\left\vert k\left(  x,y\right)  \right\vert \leqslant c\frac{d\left(
x,y\right)  ^{\nu}}{\left\vert B\left(  x,d\left(  x,y\right)  \right)
\right\vert }%
\]
for every $x,y\in B\left(  \overline{x},R\right)  $, and%
\[
\left\vert k\left(  x,y\right)  -k\left(  x_{0},y\right)  \right\vert
\leqslant c\frac{d\left(  x_{0},y\right)  ^{\nu}}{\left\vert B\left(
x_{0},d\left(  x_{0},y\right)  \right)  \right\vert }\left(  \frac{d\left(
x_{0},x\right)  }{d\left(  x_{0},y\right)  }\right)  ^{\beta}%
\]
for every $x_{0},x,y\in B\left(  \overline{x},R\right)  $ such that $d\left(
x_{0},y\right)  \geqslant Md\left(  x_{0},x\right)  $ for suitable $M>1.$

We say that $k\left(  x,y\right)  $ satisfies the standard estimates of
singular integrals if the previous estimates hold with $\nu=0$ and some
positive $\beta$.
\end{definition}

\begin{proof}
[Proof of Theorem \ref{Thm holder w}, first part]Fix $U^{\prime}\Subset
U\ $and choose $R_{0}>0$ such that for any $\overline{x}\in U^{\prime}\ $one
has $B\left(  \overline{x},KR_{0}\right)  \subset U$, for some large number
$K>1$ which is not important to specify (it comes out from some proofs in
\cite{BZ2}). For any $R\leqslant R_{0}$, pick a cutoff function $b\in
C_{0}^{\infty}\left(  B\left(  \overline{x},R\right)  \right)  $ such that
$b\equiv1$ in $B\left(  \overline{x},\frac{3}{4}R\right)  $. Then for any
$x\in B\left(  \overline{x},R/2\right)  $ the representation formula proved in
Corollary \ref{Corollary rep formula} holds:%
\[
X_{i}X_{j}w\left(  x\right)  =\sum_{k=1}^{5}T_{k}f\left(  x\right)  \text{ for
}i,j=1,2,....,n\text{.}%
\]
Our proof will mainly consist in showing that for any $\beta\in\left(
0,\alpha\right)  $ and $f\in C_{X}^{\beta}\left(  U\right)  ,$%
\[
\left\vert X_{i}X_{j}w\left(  x_{1}\right)  -X_{i}X_{j}w\left(  x_{2}\right)
\right\vert \leqslant cd\left(  x_{1},x_{2}\right)  ^{\beta}\left\Vert
f\right\Vert _{C^{\beta}\left(  U\right)  }%
\]
for any $x_{1},x_{2}\in B\left(  \overline{x},\frac{R}{2}\right)  $. We are
going to show how to bound the $C^{\beta}\left(  B\left(  \overline{x}%
,\frac{R}{2}\right)  \right)  $ seminorm of each term in this formula,
starting with the easier ones.

Consider the operator%
\[
T_{1}f\left(  x\right)  =\int_{U}X_{i}X_{j}\gamma\left(  x,y\right)  \left[
b\left(  y\right)  -1\right]  f\left(  y\right)  dy.
\]
Then by our choice of the cutoff function $b,$ we have, for $x_{1},x_{2}\in
B\left(  \overline{x},R/2\right)  ,$%
\begin{align*}
&  \left\vert T_{1}f\left(  x_{1}\right)  -T_{1}f\left(  x_{2}\right)
\right\vert \\
&  \leqslant\left\Vert f\right\Vert _{C^{0}\left(  U\right)  }\int_{U,d\left(
\overline{x},y\right)  >\frac{3}{4}R,d\left(  x_{1},y\right)  >\frac{R}%
{4},d\left(  x_{2},y\right)  >\frac{R}{4}}\left\vert X_{i}X_{j}\gamma\left(
x_{1},y\right)  -X_{i}X_{j}\gamma\left(  x_{2},y\right)  \right\vert dy\\
&  =\left\Vert f\right\Vert _{C^{0}\left(  U\right)  }\left(  \int_{2d\left(
x_{1},x_{2}\right)  <d\left(  x_{1},y\right)  ,d\left(  x_{1},y\right)
>\frac{R}{4}}\left(  ...\right)  dy+\int_{2d\left(  x_{1},x_{2}\right)
\geqslant d\left(  x_{1},y\right)  ,d\left(  x_{1},y\right)  >\frac{R}%
{4},d\left(  x_{2},y\right)  >\frac{R}{4}}\left(  ...\right)  dy\right)
\end{align*}
by (\ref{standard gamma}) and (\ref{Stima XXgamma})%
\begin{align*}
&  \leqslant c\left\Vert f\right\Vert _{C^{0}\left(  U\right)  }\left\{
d\left(  x_{1},x_{2}\right)  ^{\alpha-\varepsilon}\int_{d\left(
x_{1},y\right)  >\frac{R}{4}}\frac{dy}{d\left(  x_{1},y\right)  ^{\alpha
-\varepsilon}\left\vert B\left(  x_{1},d\left(  x_{1},y\right)  \right)
\right\vert }\right. \\
&  +\left.  \frac{1}{R}\int_{2d\left(  x_{1},x_{2}\right)  \geqslant d\left(
x_{1},y\right)  }\frac{d\left(  x_{1},y\right)  }{\left\vert B\left(
x_{1},d\left(  x_{1},y\right)  \right)  \right\vert }dy+\frac{1}{R}%
\int_{3d\left(  x_{1},x_{2}\right)  \geqslant d\left(  x_{2},y\right)  }%
\frac{d\left(  x_{2},y\right)  }{\left\vert B\left(  x_{2},d\left(
x_{2},y\right)  \right)  \right\vert }dy\right\} \\
&  \leqslant\left\Vert f\right\Vert _{C^{0}\left(  U\right)  }\left\{
c\left(  R\right)  d\left(  x_{1},x_{2}\right)  ^{\alpha-\varepsilon}%
+\frac{d\left(  x_{1},x_{2}\right)  }{R}\right\}  =cd\left(  x_{1}%
,x_{2}\right)  ^{\alpha-\varepsilon}\left\Vert f\right\Vert _{C^{0}\left(
U\right)  },
\end{align*}
so that%
\[
\left\Vert T_{1}f\right\Vert _{C_{X}^{\beta}\left(  B\left(  \overline
{x},R/2\right)  \right)  }\leqslant c\left(  \beta,R\right)  \left\Vert
f\right\Vert _{C^{0}\left(  U\right)  }\text{ }\forall\beta<\alpha.
\]
Next we introduce a second cutoff function $a\in C_{0}^{\infty}\left(
B\left(  \overline{x},\frac{3}{4}R\right)  \right)  $ such that $a\equiv1$ in
$B\left(  \overline{x},\frac{R}{2}\right)  $. For $x\in B\left(  \overline
{x},\frac{R}{2}\right)  $ we have $T_{k}f\left(  x\right)  =\widetilde{T}%
_{k}f\left(  x\right)  $, $k=4,5$ with%
\begin{align*}
\widetilde{T}_{4}f\left(  x\right)   &  =a\left(  x\right)  \int_{B\left(
\overline{x},R\right)  }R_{2}\left(  x,z\right)  b\left(  z\right)
\widetilde{f}\left(  z\right)  dz\\
\widetilde{T}_{5}f\left(  x\right)   &  =-a\left(  x\right)  \int_{B\left(
\overline{x},R\right)  }X_{j}X_{i}J^{\prime}\left(  x,z\right)  b\left(
z\right)  \widetilde{f}\left(  z\right)  dz.
\end{align*}
These new operators have the form
\[
\widetilde{T}_{j}f(x)=\int_{B\left(  \overline{x},R\right)  }a\left(
x\right)  k_{j}\left(  x,y\right)  \frac{f\left(  y\right)  }{c_{0}\left(
y\right)  }b\left(  y\right)  dy\text{ for }j=4,5,
\]
where the kernels $k_{j}\left(  x,y\right)  $ satisfy the standard estimates
of fractional integrals. Indeed, by Definition \ref{Def abstract remainder}
and Proposition \ref{Prop bad theta} (ii),\ the kernel $k_{4}$ satisfies
\begin{align*}
\left\vert k_{4}\left(  x,z\right)  \right\vert  &  \leqslant c\phi_{\alpha
}\left(  x,z\right)  \leqslant c\frac{d\left(  x,z\right)  ^{\alpha}%
}{\left\vert B\left(  x,d\left(  x,z\right)  \right)  \right\vert };\\
\left\vert X_{k}k_{4}\left(  x,z\right)  \right\vert  &  \leqslant
c\phi_{\alpha-1}\left(  x,z\right)  ;\\
\left\vert X_{0}k_{4}\left(  x,z\right)  \right\vert  &  \leqslant
c\phi_{\alpha-2}\left(  x,z\right)  .
\end{align*}
If $d\left(  x_{1},z\right)  \geqslant2d\left(  x_{1},x_{2}\right)  $, then by
Lagrange theorem we can bound
\begin{align}
\left\vert k_{4}\left(  x_{1},z\right)  -k_{4}\left(  x_{2},z\right)
\right\vert  &  \leqslant c\left\{  d\left(  x_{1},x_{2}\right)  \phi
_{\alpha-1}\left(  x_{1},z\right)  +d\left(  x_{1},x_{2}\right)  ^{2}%
\phi_{\alpha-2}\left(  x_{1},z\right)  \right\} \nonumber\\
&  \leqslant cd\left(  x_{1},x_{2}\right)  ^{\alpha-\varepsilon}%
\phi_{\varepsilon}\left(  x_{1},z\right)  . \label{k_2-alfa}%
\end{align}
Then $k_{4}$ satisfies the standard estimates of fractional integrals with
exponents $\nu,\alpha$, for any $\nu<\alpha$;

The kernel $k_{5}$ satisfies, by (\ref{Stima XXJ}) and (\ref{standard J})
(note that the cutoff function $a\left(  x\right)  $ compensates the local
charachter of those bounds), the standard estimates of fractional integrals
with exponents $\nu,\beta,$ for any $\nu$ and $\beta$ both $<\alpha$, hence by
\cite[Thm. 5.8]{BZ2}, for any $\beta<\alpha$%
\[
\left\Vert T_{j}f\right\Vert _{C_{X}^{\beta}\left(  B\left(  \overline
{x},R/2\right)  \right)  }=\left\Vert \widetilde{T}_{j}f\right\Vert
_{C_{X}^{\beta}\left(  B\left(  \overline{x},R/2\right)  \right)  }\leqslant
c\left\Vert f\right\Vert _{C_{X}^{\beta}\left(  B\left(  \overline
{x},R\right)  \right)  }\text{ for }j=4,5,
\]
with $c$ depending on $R$ and $\beta$.

Next, $T_{2}f(x)=\frac{c_{1}\left(  x\right)  }{c_{0}\left(  x\right)  }f(x)$,
with $c_{1},c_{0}$ H\"{o}lder continuous functions of exponent $\alpha$ and
$c_{0}$ bounded away from zero.


We are left to handle the term%
\[
T_{3}f(x)=\int_{B\left(  \overline{x},R\right)  }k_{2}\left(  x,z\right)
\left[  \widetilde{f}\left(  z\right)  -\widetilde{f}\left(  x\right)
\right]  b\left(  z\right)  dz
\]
with $k_{2}$ pure kernel of order $2$, satisfying the standard estimates of
singular integrals (see (\ref{standard 1 k_2}), (\ref{standard k_2})).
Moreover, the same is true for the kernel
\[
\widetilde{k}_{2}\left(  x,y\right)  =a\left(  x\right)  k_{2}\left(
x,y\right)  b\left(  y\right)  .
\]
In order to deduce an H\"{o}lder estimate for $T_{3}f$ we still need to
establish a suitable cancellation property for $\widetilde{k}_{2}$. So, let us
pause for a moment this proof and pass to this auxiliary result.
\end{proof}

\begin{proposition}
[Cancellation property]There exists $C>0$ such that for a.e. $x\in B\left(
\overline{x},R\right)  $ and $0<\varepsilon_{1}<\varepsilon_{2}<\infty$
\begin{equation}
\left\vert \int_{\varepsilon_{1}<d(x,y)<\varepsilon_{2}}a(x)k_{2}\left(
x,y\right)  b(y)\,dy\right\vert \leqslant C. \label{canc}%
\end{equation}

\end{proposition}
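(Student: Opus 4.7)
The plan is to reduce the cancellation estimate to the classical mean-value property of derivatives of the fundamental solution on the homogeneous group $\mathbb{G}$, via a change of variables. Since $k_2$ is a \emph{pure} kernel of type $2$ (Definition \ref{Def abstract k_l}),
\[
k_2(x,y) = \int_{\mathbb{R}^m}\!\!\int_{\mathbb{R}^m} D_2\Gamma\bigl(\Theta_{(y,k)}(x,h)\bigr)\,a_0(h)\,b_0(k)\,dh\,dk,
\]
with $D_2$ left-invariant and homogeneous of degree $2$ on $\mathbb{G}$, so that $D_2\Gamma$ is $(-Q)$-homogeneous. By Fubini, the integral in (\ref{canc}) equals $a(x)\int a_0(h)\,G_{x,h}(\varepsilon_1,\varepsilon_2)\,dh$, where
\[
G_{x,h}(\varepsilon_1,\varepsilon_2) = \int\!\!\int_{\varepsilon_1<d(x,y)<\varepsilon_2} D_2\Gamma\bigl(\Theta_{(y,k)}(x,h)\bigr)\,b_0(k)\,b(y)\,dy\,dk.
\]
For each fixed $(x,h)$, by Proposition \ref{Prop bad theta}(iii) the joint map $\eta=(y,k)\mapsto u = \Theta_\eta(x,h)$ is a $C^{1,\alpha}$ diffeomorphism with Jacobian $c(x,h)(1+\chi((x,h),u))$. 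Performing this change of variables reduces matters to bounding
\[
c(x,h)\int_{\mathcal{R}_{x,h}} D_2\Gamma(u)\,\Phi_{x,h}(u)\,du,
\]
where $\Phi_{x,h}(u)=(1+\chi((x,h),u))\,b_0(k(u))\,b(y(u))$ is a compactly supported, $C^\alpha$ function of $u$, and $\mathcal{R}_{x,h}$ is the image under $\Theta$ of $\{(y,k)\colon\varepsilon_1<d(x,y)<\varepsilon_2\}\cap\operatorname{supp}(b\otimes b_0)$.

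The cancellation ingredient is the classical Folland--Stein mean-value property: since $D_2\Gamma$ is $(-Q)$-homogeneous and smooth off the origin, polar decomposition on $\mathbb{G}$ gives $\int_{r_1<\|u\|<r_2}D_2\Gamma(u)\,du=\log(r_2/r_1)\int_{\|\omega\|=1}D_2\Gamma(\omega)\,d\sigma(\omega)=0$ for every $0<r_1<r_2$ (the spherical mean vanishes because $D_2\mathcal{L}^{-1}$ is a Calder\'on--Zygmund convolution operator in Folland's theory). I would split $\Phi_{x,h}(u)=\Phi_{x,h}(0)+[\Phi_{x,h}(u)-\Phi_{x,h}(0)]$; the second piece, by $C^\alpha$-regularity in $u$ and the local integrability of $\|u\|^{\alpha-Q}$, contributes a uniformly bounded error $\int|D_2\Gamma(u)|\,\|u\|^\alpha\,du<\infty$. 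The ``constant'' piece $\Phi_{x,h}(0)\int_{\mathcal{R}_{x,h}}D_2\Gamma(u)\,du$ is then compared with the integral over a clean group annulus $\mathcal{A}=\{c_1\varepsilon_1<\|u\|<c_2\varepsilon_2\}$, on which the integral vanishes; the remaining contribution is that of the symmetric difference $\mathcal{R}_{x,h}\triangle\mathcal{A}$, which I would bound via the $C^{1,\alpha}$ regularity of the change of variables, observing that this set lies in a thin shell whose $D_2\Gamma$-mass is controlled using $|D_2\Gamma(u)|\leq c\|u\|^{-Q}$ and the doubling/volume estimates of Section \ref{sec geometric}.

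\textbf{Main obstacle.} The real difficulty lies in this last comparison: the truncation set $\{\varepsilon_1<d(x,y)<\varepsilon_2\}$ involves only the base distance $d$, whereas the cancellation of $D_2\Gamma$ is natural on annuli in the group norm $\|\cdot\|$. From (\ref{equiv Theta d}) we have $d(x,y)\leq c\|\Theta_{(y,k)}(x,h)\|$, but the reverse inequality fails in general because $\|u\|$ also records the ``vertical'' separation between $h$ and $k$ (controlled above only by the compact supports of $a_0,b_0$). Quantifying the symmetric difference $\mathcal{R}_{x,h}\triangle\mathcal{A}$ and extracting a cancellation-plus-small-error bound uniform in $\varepsilon_1,\varepsilon_2$ is the technical crux of the argument, paralleling techniques developed for kernels of this form in \cite{BB}.
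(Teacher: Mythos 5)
Your overall skeleton matches the paper's treatment of the \emph{main} term: change variables $(y,k)\mapsto u=\Theta_{(y,k)}(x,h)$ via Proposition \ref{Prop bad theta} (iii), use the vanishing of $\int_{r_1<\|u\|<r_2}D_2\Gamma(u)\,du$, and absorb the non-constant part of the amplitude (the Jacobian correction $\chi$, and the cutoff mismatch, e.g. $b_0(k)-b_0(h)$) by H\"older continuity against the integrable weight $\|u\|^{\alpha-Q}$. This is essentially how the paper handles the piece of the integral living on $\{\varepsilon_1<c\|\Theta_{(y,k)}(x,h)\|<\varepsilon_2\}$ (after first reducing, via \cite[Prop.\ 5.1]{BZ2}, from the cutoff kernel $a\,k_2\,b$ to $k_2$ on balls $B(x,\varepsilon_2)\subset U$, which also disposes of the range $\varepsilon_2$ large).

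However, the step you yourself flag as the ``main obstacle'' is a genuine gap, and the way you propose to close it would not work: the discrepancy between the truncation $\{\varepsilon_1<d(x,y)<\varepsilon_2\}$ and a group-norm annulus is \emph{not} a thin shell. For instance the set $\{d(x,y)<\varepsilon_2,\ c\|\Theta_{(y,k)}(x,h)\|>\varepsilon_2\}$ extends in $\|u\|$ from $\sim\varepsilon_2$ all the way up to the size of the supports, because $\|\Theta\|$ can be much larger than $d(x,y)$; so its $D_2\Gamma$-mass cannot be estimated by a shell-volume argument. The paper resolves this without any cancellation on these regions: writing $\chi_{\{\varepsilon_1<d<\varepsilon_2\}}=\chi_{\{\varepsilon_1<c\|\Theta\|<\varepsilon_2\}}+\chi_{\{c\|\Theta\|>\varepsilon_2,\,d<\varepsilon_2\}}-\chi_{\{c\|\Theta\|>\varepsilon_1,\,d<\varepsilon_1\}}$, it bounds each mismatch term in absolute value by picking a small $\delta>0$ and estimating $|D_2\Gamma(\Theta)|\leqslant c\|\Theta\|^{-Q}\leqslant c\,\varepsilon^{-\delta}\|\Theta\|^{-Q+\delta}$ on $\{c\|\Theta\|>\varepsilon\}$; integrating first in $(h,k)$ by Lemma \ref{Lemma NSW} this gives $c\,\varepsilon^{-\delta}\phi_{\delta}(x,y)$, and then integrating in $y$ over $\{d(x,y)<\varepsilon\}$ and using Corollary \ref{coroll Psi beta} yields $c\,\varepsilon^{-\delta}\cdot\varepsilon^{\delta}=c$, uniformly in $\varepsilon_1,\varepsilon_2$. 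Note that this estimate is carried out \emph{before} any change of variables, precisely so that the $\phi_\beta$-machinery of Section \ref{sec geometric} applies; this quantitative interplay between sublevel sets of $d$ and superlevel sets of $\|\Theta\|$ is the missing ingredient in your argument.
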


\begin{proof}
By Proposition 5.1 in \cite{BZ2}, it is enough to prove the following
cancellation property for $k_{2}$: there exists $C>0$ such that for a.e. $x\in
B\left(  \overline{x},R_{0}\right)  $ and every $\varepsilon_{1}%
,\varepsilon_{2}$ such that $0<\varepsilon_{1}<\varepsilon_{2}$ and $B\left(
x,\varepsilon_{2}\right)  \subset U,$%
\begin{equation}
\left\vert \int_{\varepsilon_{1}<d(x,y)<\varepsilon_{2}}k_{2}\left(
x,y\right)  \,dy\right\vert \leqslant C. \label{standard 3}%
\end{equation}
According to Definition \ref{Def abstract k_l} of kernel of type $2$ we write
\begin{align*}
&  \int_{\varepsilon_{1}<d(x,y)<\varepsilon_{2}}k_{2}\left(  x,y\right)
\,dy=\\
=  &  \int_{\varepsilon_{1}<d(x,y)<\varepsilon_{2}}\int_{\mathbb{R}^{m}}%
\int_{\mathbb{R}^{m}}D_{2}\Gamma\left(  \Theta_{\left(  y,k\right)  }\left(
x,h\right)  \right)  a_{0}\left(  h\right)  b_{0}\left(  k\right)
dhdk\,dy+\int_{\varepsilon_{1}<d(x,y)<\varepsilon_{2}}R_{2}\left(  x,y\right)
\,dy,
\end{align*}
where the last integral is uniformly bounded in $\varepsilon_{1}%
,\varepsilon_{2}$ since the remainder $R_{2}$ is locally integrable.

We can assume $\varepsilon_{2}<1$. Let us recall that%
\[
c\left\Vert \Theta_{\left(  y,k\right)  }\left(  x,h\right)  \right\Vert
\geqslant d_{\widetilde{X}}\left(  \left(  x,h\right)  ,\left(  y,k\right)
\right)  \geqslant d\left(  x,y\right)  ,
\]
then
\begin{align*}
\int_{\varepsilon_{1}<d\left(  x,y\right)  <\varepsilon_{2}}  &  \left(
\int_{\mathbb{R}^{m}}\int_{\mathbb{R}^{m}}D_{2}\Gamma\left(  \Theta_{\left(
y,k\right)  }\left(  x,h\right)  \right)  a_{0}\left(  h\right)  b_{0}\left(
k\right)  dhdk\right)  dy\\
&  =\int_{\mathbb{R}^{m}}a_{0}\left(  h\right)  \left(  \int_{\varepsilon
_{1}<c\left\Vert \Theta_{\left(  y,k\right)  }\left(  x,h\right)  \right\Vert
<\varepsilon_{2}}D_{2}\Gamma\left(  \Theta_{\left(  y,k\right)  }\left(
x,h\right)  \right)  b_{0}\left(  k\right)  dkdy\right)  dh\\
&  +\int_{\mathbb{R}^{m}}a_{0}\left(  h\right)  \left(  \int_{c\left\Vert
\Theta_{\left(  y,k\right)  }\left(  x,h\right)  \right\Vert >\varepsilon
_{2},d\left(  x,y\right)  <\varepsilon_{2}}D_{2}\Gamma\left(  \Theta_{\left(
y,k\right)  }\left(  x,h\right)  \right)  b_{0}\left(  k\right)  dkdy\right)
dh\\
&  -\int_{\mathbb{R}^{m}}a_{0}\left(  h\right)  \left(  \int_{c\left\Vert
\Theta_{\left(  y,k\right)  }\left(  x,h\right)  \right\Vert >\varepsilon
_{1},d\left(  x,y\right)  <\varepsilon_{1}}D_{2}\Gamma\left(  \Theta_{\left(
y,k\right)  }\left(  x,h\right)  \right)  b_{0}\left(  k\right)  dkdy\right)
dh\\
&  \equiv C^{\varepsilon_{1},\varepsilon_{2}}\left(  x\right)  +D^{\varepsilon
_{2}}\left(  x\right)  -E^{\varepsilon_{1}}\left(  x\right)  .
\end{align*}
To handle $C^{\varepsilon_{1},\varepsilon_{2}}\left(  x\right)  $ we start
rewriting%
\begin{align*}
C^{\varepsilon_{1},\varepsilon_{2}}\left(  x\right)   &  =\int_{\mathbb{R}%
^{m}}a_{0}\left(  h\right)  \left(  \int_{\varepsilon_{1}<c\left\Vert
\Theta_{\left(  y,k\right)  }\left(  x,h\right)  \right\Vert <\varepsilon_{2}%
}D_{2}\Gamma\left(  \Theta_{\left(  y,k\right)  }\left(  x,h\right)  \right)
\left[  b_{0}\left(  k\right)  -b_{0}\left(  h\right)  \right]  dkdy\right)
dh\\
&  +\int_{\mathbb{R}^{m}}a_{0}\left(  h\right)  b_{0}\left(  h\right)  \left(
\int_{\varepsilon_{1}<c\left\Vert \Theta_{\left(  y,k\right)  }\left(
x,h\right)  \right\Vert <\varepsilon_{2}}D_{2}\Gamma\left(  \Theta_{\left(
y,k\right)  }\left(  x,h\right)  \right)  dkdy\right)  dh\\
&  \equiv C_{1}^{\varepsilon_{1},\varepsilon_{2}}\left(  x\right)
+C_{2}^{\varepsilon_{1},\varepsilon_{2}}\left(  x\right)  .
\end{align*}
As to $C_{1}^{\varepsilon_{1},\varepsilon_{2}}\left(  x\right)  $, since
\[
\left\vert b_{0}\left(  k\right)  -b_{0}\left(  h\right)  \right\vert
\leqslant c\left\vert k-h\right\vert \leqslant c\left\Vert \Theta_{\left(
y,k\right)  }\left(  x,h\right)  \right\Vert ,
\]
we have%
\begin{align*}
\left\vert C_{1}^{\varepsilon_{1},\varepsilon_{2}}\left(  x\right)
\right\vert  &  \leqslant\int_{\mathbb{R}^{m}}\left\vert a_{0}\left(
h\right)  \right\vert \left(  \int_{\left\Vert \Theta_{\left(  y,k\right)
}\left(  x,h\right)  \right\Vert <\varepsilon_{2}}\frac{c}{\left\Vert
\Theta_{\left(  y,k\right)  }\left(  x,h\right)  \right\Vert ^{Q-1}%
}dkdy\right)  dh\\
&  \leqslant c\varepsilon_{2}\int_{\mathbb{R}^{m}}\left\vert a_{0}\left(
h\right)  \right\vert dh\leqslant c.
\end{align*}
As to $C_{2}^{\varepsilon_{1},\varepsilon_{2}}\left(  x\right)  $, by the
change of variables $\left(  y,k\right)  \mapsto u=\Theta_{\left(  y,k\right)
}\left(  x,h\right)  $ and Proposition \ref{Prop bad theta} we have, letting
$\xi=\left(  x,h\right)  ,$%
\[
C_{2}^{\varepsilon_{1},\varepsilon_{2}}\left(  x\right)  =\int_{\mathbb{R}%
^{m}}a_{0}\left(  h\right)  b_{0}\left(  h\right)  c\left(  \xi\right)
\left(  \int_{\varepsilon_{1}<c\left\Vert u\right\Vert <\varepsilon_{2}}%
D_{2}\Gamma\left(  u\right)  \left(  1+\chi\left(  \xi,u\right)  \right)
du\right)  dh.
\]
Keeping in mind the vanishing property of $D_{2}\Gamma,$ that is%
\[
\int_{\varepsilon_{1}<c\left\Vert u\right\Vert <\varepsilon_{2}}D_{2}%
\Gamma\left(  u\right)  du=0,
\]
we have%
\[
C_{2}^{\varepsilon_{1},\varepsilon_{2}}\left(  x\right)  =\int_{\mathbb{R}%
^{m}}a_{0}\left(  h\right)  b_{0}\left(  h\right)  c\left(  \xi\right)
\left(  \int_{\varepsilon_{1}<c\left\Vert u\right\Vert <\varepsilon_{2}}%
D_{2}\Gamma\left(  u\right)  \chi\left(  \xi,u\right)  du\right)  dh
\]
which is uniformly bounded in $\varepsilon_{1},\varepsilon_{2}$ since%
\[
\int_{\varepsilon_{1}<c\left\Vert u\right\Vert <\varepsilon_{2}}\left\vert
D_{2}\Gamma\left(  u\right)  \chi\left(  \xi,u\right)  \right\vert
du\leqslant\int_{c\left\Vert u\right\Vert <\varepsilon_{2}}\frac{c}{\left\Vert
u\right\Vert ^{Q-\alpha}}du\leqslant c\varepsilon_{2}^{\alpha}\leqslant c.
\]
Let us come to the terms $D^{\varepsilon_{2}}\left(  x\right)  $ and
$E^{\varepsilon_{1}}\left(  x\right)  $. Choosing some small $\delta>0$ we can
write, by Corollary \ref{coroll Psi beta},%
\begin{align*}
\left\vert D^{\varepsilon_{2}}\left(  x\right)  \right\vert  &  \leqslant
\int_{\mathbb{R}^{m}}a_{0}\left(  h\right)  \left(  \int_{c\left\Vert
\Theta_{\left(  y,k\right)  }\left(  x,h\right)  \right\Vert >\varepsilon
_{2},d\left(  x,y\right)  <\varepsilon_{2}}\left\Vert \Theta_{\left(
y,k\right)  }\left(  x,h\right)  \right\Vert ^{-Q}b_{0}\left(  k\right)
dkdy\right)  dh\\
&  \leqslant\frac{1}{\varepsilon_{2}^{\delta}}\int_{d\left(  x,y\right)
\leqslant\varepsilon_{2}}\left(  \int_{\mathbb{R}^{m}}\int_{\mathbb{R}^{m}%
}\left\Vert \Theta_{\left(  y,k\right)  }\left(  x,h\right)  \right\Vert
^{-Q+\delta}a_{0}\left(  h\right)  b_{0}\left(  k\right)  dkdh\right)  dy\\
&  \leqslant\frac{c}{\varepsilon_{2}^{\delta}}\int_{d\left(  x,y\right)
\leqslant\varepsilon_{2}}\phi_{\delta}\left(  x,y\right)  dy\leqslant\frac
{c}{\varepsilon_{2}^{\delta}}\cdot\varepsilon_{2}^{\delta}=c
\end{align*}
and the term $E^{\varepsilon_{1}}\left(  x\right)  $ can be bounded at the
same way.
\end{proof}

\bigskip

\begin{proof}
[Conclusion of the proof of Theorem \ref{Thm holder w}]We are left to prove
the $C_{X}^{\beta}$ continuity of the operator $T_{3}$. Let us consider first%
\[
\widetilde{T}_{3}f(x)=\int_{B\left(  \overline{x},R\right)  }\widetilde{k}%
_{2}\left(  x,y\right)  \left[  \widetilde{f}\left(  y\right)  -\widetilde
{f}\left(  x\right)  \right]  dy.
\]
We know that the kernel $\widetilde{k}_{2}\left(  x,y\right)  $ satisfies the
standard estimates of singular integrals with exponent $\beta=1$ (see the end
of the first part of this proof) and the cancellation property
(\ref{standard 3}). This is enough to repeat \textit{verbatim} the proof of
Theorem 2.7 in \cite{BB}: the quantity%
\[
\widetilde{T}_{3}f\left(  x\right)  -\widetilde{T}_{3}f\left(  x_{0}\right)
\]
is exactly the quantity which is called $A$ in that proof, see \cite[p.183]%
{BB}, and the proof of the bound%
\begin{equation}
\left\vert T_{3}f\left(  x\right)  -T_{3}f\left(  x_{0}\right)  \right\vert
=\left\vert \widetilde{T}_{3}f\left(  x\right)  -\widetilde{T}_{3}f\left(
x_{0}\right)  \right\vert \leqslant cd\left(  x,x_{0}\right)  ^{\beta
}\left\Vert f\right\Vert _{C_{X}^{\beta}\left(  B\left(  \overline
{x},R\right)  \right)  }\text{ \ \ }\forall\beta<1 \label{T_3 holder}%
\end{equation}
for any $x,x_{0}\in B\left(  \overline{x},R/2\right)  $ only relies on the
properties of the kernel that we have already pointed out. In particular,
since the integral defining $\widetilde{T}_{3}f$ is over $B\left(
\overline{x},R\right)  $ and $B\left(  \overline{x},3R\right)  \subset U$, we
can safely apply the local doubling condition on the small balls which are
involved in that proof. Combining (\ref{T_3 holder}) with the first part of
the proof of this theorem, we can write%
\[
\left\vert X_{i}X_{j}w\left(  x_{1}\right)  -X_{i}X_{j}w\left(  x_{2}\right)
\right\vert \leqslant cd\left(  x_{1},x_{2}\right)  ^{\beta}\left\Vert
f\right\Vert _{C_{X}^{\beta}\left(  U\right)  }\text{\ \ }\forall\beta<\alpha
\]
for any $x_{1},x_{2}\in B\left(  \overline{x},\frac{R}{2}\right)  $, with some
constant $c$ also depending on $R$.

An analogous, but easier, inspection of each term $T_{j}f$ also shows that
\begin{equation}
\sup_{x\in B\left(  \overline{x},\frac{R}{2}\right)  }\left\vert X_{i}%
X_{j}w\left(  x\right)  \right\vert \leqslant c\left\Vert f\right\Vert
_{C_{X}^{\beta}\left(  U\right)  }. \label{upper bound ball}%
\end{equation}
By a covering argument this implies
\begin{equation}
\sup_{x\in U^{\prime}}\left\vert X_{i}X_{j}w\left(  x\right)  \right\vert
\leqslant c\left\Vert f\right\Vert _{C_{X}^{\beta}\left(  U\right)  }
\label{upper compact}%
\end{equation}
so that for each couple of points $x_{1},x_{2}\in U^{\prime}$ we can write
\[
\left\vert X_{i}X_{j}w\left(  x_{1}\right)  -X_{i}X_{j}w\left(  x_{2}\right)
\right\vert \leqslant cd\left(  x_{1},x_{2}\right)  ^{\beta}\left\Vert
f\right\Vert _{C_{X}^{\beta}\left(  U\right)  }%
\]
if $d\left(  x_{1},x_{2}\right)  <R_{0}/2,$ and, by (\ref{upper compact}),
\[
\left\vert X_{i}X_{j}w\left(  x_{1}\right)  -X_{i}X_{j}w\left(  x_{2}\right)
\right\vert \leqslant2\sup_{x\in U^{\prime}}\left\vert X_{i}X_{j}w\left(
x\right)  \right\vert \leqslant c\left(  \frac{d\left(  x_{1},x_{2}\right)
}{R_{0}}\right)  ^{\beta}\left\Vert f\right\Vert _{C_{X}^{\beta}\left(
U\right)  }%
\]
if $d\left(  x_{1},x_{2}\right)  \geqslant R_{0}/2$. Hence%
\[
\left\Vert X_{i}X_{j}w\right\Vert _{C_{X}^{\beta}\left(  U^{\prime}\right)
}\leqslant c\left\Vert f\right\Vert _{C_{X}^{\beta}\left(  U\right)  }.
\]
The norms $\left\Vert X_{i}w\right\Vert _{C_{X}^{\beta}\left(  U^{\prime
}\right)  }$, $i=1,\ldots n$, and $\left\Vert w\right\Vert _{C_{X}^{\beta
}\left(  U^{\prime}\right)  }$ can be more easily handled and
(\ref{schauder compact}) follows.
\end{proof}

\section{Appendix. Examples of nonsmooth H\"{o}rmander's operators satisfying
assumptions A or B}

\begin{example}
[Nonsmooth sublaplacian of Heisenberg type]In $\mathbb{R}^{3}\ni\left(
x,y,t\right)  $, let%
\begin{align*}
X_{1}  &  =\frac{\partial}{\partial x}+y\left(  1+\left\vert y\right\vert
\right)  \frac{\partial}{\partial t};\,X_{2}=\frac{\partial}{\partial
y}-x\left(  1+\left\vert x\right\vert \right)  \frac{\partial}{\partial
t};\,\,\\
\lbrack X_{1},X_{2}]  &  =-2\left(  1+\left\vert x\right\vert +\left\vert
y\right\vert \right)  \frac{\partial}{\partial t};\\
L  &  =X_{1}^{2}+X_{2}^{2}.
\end{align*}
The vector fields $X_{1},X_{2}$ are $C^{1,1}$ and satisfy H\"{o}rmander's
condition with $r=2$, hence Assumptions A hold. Replacing $\left\vert
x\right\vert ,\left\vert y\right\vert $ with $x\left\vert x\right\vert
,y\left\vert y\right\vert $ we find $C^{2,1}$ vector fields, satisfying
Assumptions B.
\end{example}

\begin{example}
[Nonsmooth operator of Kolmogorov type]In $\mathbb{R}^{3}\ni\left(
x,y,t\right)  ,$ with $\alpha\in(0,1]$, let:%
\begin{align*}
X_{1}  &  =\frac{\partial}{\partial x};\,X_{0}=x\left(  1+\left\vert
x\right\vert ^{\alpha}\right)  \frac{\partial}{\partial y}+\frac{\partial
}{\partial t};\,\,[X_{1},X_{0}]=\left(  1+\left(  \alpha+1\right)  \left\vert
x\right\vert ^{\alpha}\right)  \frac{\partial}{\partial y};\\
L  &  =X_{1}^{2}+X_{0}.
\end{align*}
$X_{1},X_{0}$ satisfy H\"{o}rmander's condition at weighted step $r=3$;
$X_{1}\in C^{2,\alpha}$, $X_{0}\in C^{1,\alpha}$, hence Assumptions A hold.
Replacing $\left\vert x\right\vert ^{\alpha}$ with $x\left\vert x\right\vert
^{\alpha}$, Assumptions B hold.
\end{example}

\begin{example}
[Nonsmooth operators of Grushin type with high step $r$]In $\mathbb{R}^{2}%
\ni\left(  x,y\right)  $, with $\alpha\in(0,1],$ $r\geqslant2$ positive
integer, let%
\begin{align*}
X_{1}  &  =\frac{\partial}{\partial x};\,X_{2}=x^{r-1}\left(  1+\left\vert
x\right\vert ^{\alpha}\right)  \frac{\partial}{\partial y};\\
L  &  =X_{1}^{2}+X_{2}^{2}.
\end{align*}
$X_{1},X_{2}$ satisfy H\"{o}rmander's condition at step $r$; $X_{2}\in
C^{r-1,\alpha}$, hence Assumptions A hold (if $r=2$ we need to take $\alpha
=1$). Replacing $\left\vert x\right\vert ^{\alpha}$ with $x\left\vert
x\right\vert ^{\alpha}$, Assumptions B hold.
\end{example}

\pagebreak

\bigskip

\noindent\textsc{Dipartimento di Matematica}

\noindent\textsc{Politecnico di Milano}

\noindent\textsc{Via Bonardi 9, 20133 Milano, ITALY}

\noindent\texttt{marco.bramanti@polimi.it}

\bigskip

\noindent\textsc{Dipartimento di Ingegneria}

\noindent\textsc{Universit\`{a} di Bergamo}

\noindent\textsc{Viale Marconi 5, 24044 Dalmine BG, ITALY}

\noindent\texttt{luca.brandolini@unibg.it}

\bigskip

\noindent\textsc{Dipartimento di Matematica}

\noindent\textsc{Universit\`{a} di Bologna}

\noindent\textsc{Piazza Porta S. Donato 5, 40126 Bologna BO, ITALY}

\noindent\texttt{manfredi@dm.unibo.it}

\bigskip

\noindent\textsc{Dipartimento di Ingegneria}

\noindent\textsc{Universit\`{a} di Bergamo}

\noindent\textsc{Viale Marconi 5, 24044 Dalmine BG, ITALY}

\noindent\texttt{marco.pedroni@unibg.it}

\end{document}